\newtheorem{thm}{Theorem}[subsection]
\newtheorem{prop}[thm]{Proposition}
\newtheorem{lemma}[thm]{Lemma}
\newtheorem{cor}[thm]{Corollary}
\newtheorem{defn}[thm]{Definition}
\newtheorem{ex}[thm]{Example}
\newtheorem{rmk}[thm]{Remark}
\numberwithin{equation}{section}
\newcommand{\C}{\mathbb C}
\newcommand{\R}{\mathbb R}
\newcommand{\Z}{\mathbb Z}
\def \mc{\mathcal}
\begin{document}

\title{Stability of equivariant vector bundles over toric varieties}

\author{Jyoti Dasgupta}
\address{Department of Mathematics, Indian Institute of Technology-Madras, Chennai, India}
\email{jdasgupta.maths@gmail.com}

\author{Arijit Dey}
\address{Department of Mathematics, Indian Institute of Technology-Madras, Chennai, India}
\email{arijitdey@gmail.com}

\author{Bivas Khan}
\address{Department of Mathematics, Indian Institute of Technology-Madras, Chennai, India}
\email{bivaskhan10@gmail.com}

\subjclass[2010]{14M25, 14J60, 14J45.}

\keywords{Toric variety, equivariant sheaf, (semi)stability, Bott tower, pseudo-symmetric variety, indecomposable vector bundle}

\begin{abstract}
	We give a complete answer to the question of (semi)stability of tangent bundle of any nonsingular projective complex toric variety with Picard number 2 by using combinatorial crietrion of (semi)stability of an equivariant sheaf. We also give a complete answer to the question of (semi)stability of tangent bundle of all toric Fano 4-folds with Picard number \(\leq\) 3 which are classified by Batyrev \cite{batyrev}.  We have constructed a collection of equivariant indecomposable rank 2 vector bundles on Bott tower and pseudo-symmetric toric Fano varieties. Further in case of Bott tower, we have shown the existence of an equivariant stable rank 2 vector bundle with certain Chern classes with respect to a suitable polarization.


\end{abstract}

\maketitle

\section{Introduction}

Let $X$ be a toric variety of dimension $n$, equipped with
an action of the $n$-dimensional torus $T$ with an associated fan $\Delta$ over an algebraically closed field $k$. A quasi-coherent sheaf $\mc{E}$ on $X$ is said to be $T$-equivariant or simply an equivariant sheaf if it admits a lift of the $T$-action on $X$, which is 
linear on the stalks of $\mc{E}$.  An equivariant structure on a sheaf $\mc{E}$ need not be unique.  Any line bundle on a toric variety has an equivariant structure.  It is well known that any locally free sheaf $\mc{E}$ on $X$ is equivariant if and only if $t^{*}\mc{E} \,\cong\, \mc{E}$ for every $t\in T$ (see \cite[Proposition 1.2.1]{kly}). Equivariant vector bundles over a nonsingular complete toric variety up to isomorphism were first classified by Kaneyama \cite{kane}, \cite{kaneyama2} by involving both combinatorial and linear algebraic data modulo an equivalence relation. Recently this work has been generalized for equivariant principal $G$--bundles over smooth complex toric variety, where $G$ is a complex linear algebraic group \cite{BDPIJM}. Later in a foundational paper \cite{kly}, Klyachko classified equivariant vector bundles more systematically. In this paper, he gave a complete classification of equivariant bundles over arbitrary toric variety  in terms of a family of decreasing filtrations on a fixed finite dimensional vector space indexed by one dimensional cones satisfying certain compatibility condition \cite[Theorem 2.2.1]{kly}. Most of the topological and algebraic invariants of equivariant vector bundles like Chern classes, global sections, cohomology spaces; he could decode from this filtration data. As a major application, later he used classification of equivariant vector bundles over $\mathbb{P}^2$ to prove Horn's conjecture on eigenvalues of sums of Hermitian matrices (\cite{kly2}). Recently over the complex numbers, this classification result has been generalized for equivariant principal $G$-bundles over any complex toric variety using two different approaches  (\cite{BDPTG}, \cite{KM1}, \cite{KM2}), where $G$ is a complex reductive algebraic group. 

In an unpublished preprint \cite{kly_sheaf}, Klyachko gave a generalization of the above classification theorem for equivariant torsion free sheaves, and gave a sketch without all details. Thereafter, Perling introduced the notion of $\Delta$-families \(\{E^{\sigma}_m\}_{\sigma \in \Delta, m \in M}\) for any quasi-coherent equivariant sheaf $\mc E$ which is constructed from the $T$-eigenspace decompositions of the modules of sections, together with the multiplication maps for regular $T$-eigenfunctions. He showed that the category of $\Delta$-families is equivalent to the category of equivariant quasi-coherent sheaves \cite{perling}. When the sheaf $\mathcal E$ is torsion free, corresponding $\Delta$-family induces a family of multifiltrations of subspaces \(\{E^{\sigma}_m\}_{\sigma \in \Delta, m \in M}\) on a fixed finite dimensional vector space \(\mathbf{E}^0\) satisfying certain compatibility condition (see Theorem \ref{per5.18}). Further if we restrict ourselves to reflexive sheaves then the entire Perling data becomes a family of increasing full finite dimensional filtered vector spaces $(\mathbf{E}^0, \{E^{\rho}(i)\}_{\rho \in \Delta(1)})$ without any compatibility condition, where \(E^{\rho}_m=E^{\rho}(\langle m, v_{\rho} \rangle)\). Conversely any such family of filtered vector spaces corresponds to an equivariant reflexive sheaf \cite[Theorem 5.19]{perling}. This crucial observation of Perling is the starting point of the paper. Further the first Chern class of an equivariant coherent sheaf can be computed from its associated $\Delta$-family (see \cite[Corollary 3.18]{kool}). 

Let $H$ be an equivariant very ample line bundle (equivalently, $T$-invariant very ample divisor) of $X$. An equivariant torsion free sheaf $\mc{E}$ on $X$ is said to be equivariantly (semi)stable with respect to $H$ if $\mu(\mc{F}) (\leq)< \mu(\mc{E})$ for every proper equivariant subsheaf $\mc{F}\,\subset \,\mc{E}$ (see Section \ref{sec:stability}). From the uniqueness of the Harder-Narasimhan filtration it follows easily that the notions of semistability and equivariant semistability of an equivariant torsion free sheaf on a nonsingular projective toric variety are equivalent. Further, the notions of equivariant stability and stability also coincide for any equivariant torsion free  sheaf(see \cite[Theorem 2.1]{biswas2018stability}). When $\mc{E}$ is an equivariant reflexive sheaf, to determine its (semi)stability it is enough to consider equivariant reflexive subsheaves of $\mc{E}$ (see Remark \ref{ref2}). 

The purpose of this paper is two fold. First we study (semi)stability of tangent bundle of a nonsingular projective toric variety with Picard number atmost $3$ (in Section 4 and 5). Secondly we construct new examples of rank $2$ equivariant vector bundles which are indecomposable or even stable over a large collection of nonsingular projective toric varieties of arbitrary dimension (in Section 6). 
Both these results rely on the key fact that one can combinatorially classify equivariant reflexive subsheaves of an equivariant reflexive sheaf (see Corollary \ref{RS1}). This turns out to be central theme of the paper. In fact with this technique, theoretically it is possible to check (semi)stability of any equivariant torsion free sheaf on a nonsingular projective toric variety. But as the Picard number grows and the fan structure becomes more and more complicated, the task of computing degree of subsheaves becomes cumbersome. We hope one can write a computer program to check (semi)stability of any equivariant torsion free sheaf from its given combinatorial data and the fan structure of the toric variety with respect to any polarization. 

Tangent bundles $\mc{T}_{X}$ are natural examples of equivariant vector bundles on nonsingular toric varieties. The filtration data \( \left( \mathscr{T}, \{\mathscr{T}^{\rho}(i)\}_{\rho \in \Delta(1), i \in \Z} \right)  \) associated to \(\mathcal{T}_X\) is relatively simple, it has a two step filtration of flag type $(1,n-1)$ for each $\rho \in \Delta (1)$ (see Corollary \ref{pertangb}). The first main step of this paper is to show that equivariant reflexive subsheaves of \(\mathcal{T}_X\) are in one-one correspondence with induced subfiltrations $\left( \mathbf{F}^0, \{F^{\rho}(i) \}_{\rho \in \Delta(1)} \right)$ of  \( \left( \mathscr{T}, \{\mathscr{T}^{\rho}(i)\}_{\rho \in \Delta(1), i \in \Z} \right) \). In fact this holds for all equivariant torsion free (respectively, reflexive) subsheaves of any equivariant torsion free (respectively, reflexive) sheaf (see Corollary \ref{RS1}). This result is a natural generalization of \cite[Proposition 4.1.1]{bj},  where equivariant subbundles of an equivariant vector bundle were classified. We first apply this result to give a very simple proof of stability of tangent bundle of a projective space (see Proposition \ref{stabtanP}). Next we study (semi)stability of tangent bundle of a nonsingular projective toric variety with Picard number 2. A theorem of Kleinschmidt \cite[Theorem 7.3.7]{Cox} tells us that any such variety $X$ is isomorphic to \(\mathbb{P}(\mathcal{O}_{\mathbb{P}^s}  \oplus \mathcal{O}_{\mathbb{P}^s}(a_1) \oplus \cdots \oplus \mathcal{O}_{\mathbb{P}^s}(a_r)  )\), where \(s, r \geq 1\), \(s+r = \text{dim}(X)\) and \(0 \leq a_1 \leq \ldots \leq a_r\) are integers. In this case, tangent bundle will be always unstable with respect to any polarization whenever \((a_1, \ldots, a_r) \neq (0, 0, \ldots, 0, 1)\) (see Theorem \ref{M1}). When \((a_1, \ldots, a_r)\,=\, (0, 0, \ldots, 0, 1)\), we give a necessary and sufficient condition for (semi)stability of tangent bundle with respect to any polarization (see Theorem \ref{M3}). As a corollary we give a complete answer to (semi)stability of tangent bundle with respect to anticanonical divisor \(-K_X\) for any Fano toric variety with Picard number $2$ (see Corollary \ref{stabonPic2cor}). This generalizes a very recent result of \cite[Theorem 9.3]{biswas2018stability}.

By the result of Kobayashi \cite{kobayashi} and L$\ddot{\text{u}}$bke \cite{lubke}, stability of tangent bundle with respect to $-K_X$ for a nonsingular Fano variety is considered to be algebraic geometric analogue of 
existence of K$\ddot{\text{a}}$hler-Einstein metric on a smooth manifold. It is an open question if tangent bundle of a nonsingular Fano variety with Picard number $1$ is stable with respect to $-K_X$. Though the conjecture is known for many cases (see \cite{Ramanan},\cite{PW},\cite{Hwang},\cite{tian},\cite{fahlaoui} etc.), this question is wide open in general.  If the Picard number is $> 1$, tangent bundle is not necessarily stable due to the geometry of contractions of extremal rays, for $3$-folds this has been studied completely by Steffens \cite{steffens}. By the result described in previous paragraph we have settled this question completely for any nonsingular toric Fano variety with Picard number $2$. In Section 5, we study (semi)stability of tangent bundle of nonsingular Fano toric  $4$-folds with Picard number $3$. In \cite{batyrev}, \cite{sato}, Batyrev and subsequently Sato have given a complete list of isomorphism classes of all nonsingular Fano toric $4$-folds. There are in total one hundred twenty four non-isomorphic toric Fano $4$-folds. Among them there are twenty eight isomorphism classes with Picard number $3$, out of which eight are toric blow ups and nineteen of them are projectivizations of split vector bundle over a toric variety and the last one is neither a blow up nor a projectivization of splittable vector bundle. Among them six are stable, three are strictly semistable and rest of them have unstable tangent bundle with respect to the anticanonical polarization (see Table 1, Section 5). 

In Section 6, first we consider a class of nonsingular projective toric varieties, known as Bott towers.  A Bott tower of height $n$ 
\begin{equation*}
M_n \rightarrow M_{n-1} \rightarrow \cdots \rightarrow M_2 \rightarrow M_1 \rightarrow M_0=\{ \text{point} \}
\end{equation*}
 is defined inductively as an iterated projective bundle so that each stage $M_k$ of the tower is of the form $\mathbb{P}(\mathcal{O}_{M_{k-1}}  \oplus \mathcal{L}$) for an arbitrarily chosen line bundle $\mathcal{L}$ over the previous stage $M_{k-1}$. Bott towers were shown to be deformation equivalent to Bott-Samelson varieties by Grossberg and Karshon in \cite{grossbergkarshon}. In this section we construct a collection (finite) of indecomposable rank $2$ equivariant vector bundles over $M_k$ ($k \ge 2$) (Proposition \ref{existance of indecomp b on Bott T}). Further, we show that among these collection there exists a stable rank $2$ vector bundle over $M_k$ ($k \ge 2$), for certain Chern classes with respect to a suitable choice of polarization (Proposition \ref{existance of stable b on Bott T}). The approach here is to construct a dimension $2$ filtration corresponding to an equivariant vector bundle such that it does not have any induced subfiltration which violates the stability. In the next subsection we consider pseudo-symmetric toric varieties which are very important examples of toric varieties appears in classification of projective Fano toric varieties (see \cite{sato} for details). In \cite{Alexandru}, Cotignoli and Sterian have constructed indecomposable rank $2$ vector bundle over pseudo-symmetric toric Fano varieties other than product of $\mathbb P^{1}$'s. It is not clear to us if they are equivariant or not. In this subsection, we construct a collection of rank $2$ equivariant indecomposable vector bundles on any pseudo-symmetric toric Fano variety (Proposition \ref{pseudo}).

We summarize our results as follows. 
\begin{enumerate}
	\item Classification of equivariant torsion free (respectively, reflexive) subsheaves of a given equivariant torsion free (respectively, reflexive) sheaf. 
	\item A simple proof for stability of tangent bundle of a projective space.
	\item A necessary and sufficient condition for (semi)stability of tangent bundle of a nonsingular projective toric variety with Picard number $2$ with respect to any polarization. 
	\item A complete answer to the question of (semi)stability of tangent bundle for Fano toric $4$-folds with Picard number $3$ from the classification due to Batyrev \cite{batyrev}.
	\item Construction of equivariant indecomposable as well as stable rank $2$ vector bundles over a Bott tower.  
	\item Construction of equivariant indecompossable rank $2$ vector bundles on pseudo-symmetric Fano toric variety. 
\end{enumerate}

After this work got complete, we came to know about the work of Hering,  Nill  and S\"{u}ss, where they have studied (semi)stability of tangent bundles of smooth toric variety for Picard number $2$. Their result \cite[Theorem 1.4]{Hering_Nill_Suss} matches with our result in section 4.

\noindent
{\bf Acknowledgements:}  The last author thank the Council of Scientific and Industrial Research (CSIR) for their financial support. The second author would like to thank NBHM and SERB for partial financial support. 

\section{Preliminaries and some basic facts}

In this section we briefly review some basic definitions and results on toric varieties and torus equivariant sheaves which will be needed later. 

\subsection{Toric Varieties}

Let  \(T \cong (k^*)^n\) be the \(n\)-dimensional algebraic torus, where \(k\) is any algebraically closed field. A toric variety \(X\) of dimension \(n\) is a normal variety which contains \(T\) as an open dense subset such that the torus multiplication extends to an action of \(T\) on \(X\). Toric varieties have a rich combinatorial structure which arises due to the action of the dense torus. We recall some basic facts about toric varieties which will be used in subsequent sections. For more details see \cite{Cox}, \cite{Ful} and \cite{Oda}.

Let \(M=\text{Hom}(T, k^*)  \cong \Z^n\) be the character lattice of \(T\) and \(N=\text{Hom}(M, \Z)\) be the dual lattice. We denote by \(\langle , \rangle : M \times N \rightarrow \Z \) the natural pairing between \(M\) and \(N\). Let $\Delta$ be a fan in \(N\otimes_{\Z} \R \) which defines a nonsingular projective toric variety \(X=X(\Delta)\) of dimension \(n\) over \( k\) under the action of \(T\). Let \(S_{\sigma}:=\sigma^{\vee} \cap M \) be the affine semigroup and  \(U_{\sigma}:=\text{Spec }k[S_{\sigma}]\) be the affine toric variety corresponding to a cone $\sigma \in \Delta$. Let \(x_{\sigma}\) denote the distinguished point of \(U_{\sigma}\) (see \cite[Section 2.1]{Ful}). Then \(T_{\sigma}:=\text{Stab}(x_{\sigma})\) is a subtorus of \(T\) with character lattice \(M_{\sigma}:=M / S_{\sigma}^{\perp}\), where \(S_{\sigma}^{\perp}:=\sigma^{\perp} \cap M \). The \(T\)-invariant closed subvariety corresponding to a cone $\sigma$ is denoted by \(V(\sigma)\), which is the closure of the \(T\)-orbit through \(x_{\sigma}\) and \(\text{dim }V(\sigma)=n-\text{dim }\sigma\). We denote the set of all cones of dimension \(d\) in $\Delta$ by $\Delta(d)$. Elements of $\Delta(1)$ are called rays. Each ray $\rho$ has a unique minimal ray generator which we denote by \(v_{\rho}\). Sometime we will use the ray $\rho$ and its minimal generator $v_{\rho}$ interchangeably. Each ray $\rho$ corresponds to a \(T\)-invariant prime divisor \(D_{\rho}:=V(\rho)\). 

The following proposition on toric intersection theory will be extensively used in latter sections, while computing slope of equivariant sheaves over a toric variety.

\begin{prop}$($\cite[Corollary 6.4.3, Lemma 6.4.4, Lemma 12.5.2]{Cox}$)$ \label{wallreln}
Let $X(\Delta)$ be a nonsingular projective toric variety. To compute the intersection product \(D_{\rho} \cdot V(\tau) \),  where \(\tau \in \Delta(n-1)\) is a wall, i.e. $\tau=\sigma \cap \sigma'$ for some \(\sigma, \sigma' \in \Delta(n)\), write 
\(	\sigma  =\text{Cone}(v_{\rho_1}, \ldots, v_{\rho_n}), 
	\sigma' =\text{Cone}(v_{\rho_2}, \ldots, v_{\rho_{n+1}}) \text{ and } 
	\tau =\text{Cone}(v_{\rho_2}, \ldots, v_{\rho_n}).\)
Then \(v_{\rho_1}, \ldots, v_{\rho_{n+1}} \) satisfy the linear relation \(v_{\rho_1} + \sum\limits_{i=2}^n b_i v_{\rho_i} + v_{\rho_{n+1}}=0, \ b_i \in \Z \), called the wall relation. Then 

 \begin{center}
 	$D_{\rho} \cdot V(\tau)= \left\{ \begin{array}
	{r@{\quad \quad}l}
	0 & \text{ for all }  \rho \notin \{\rho_1, \ldots, \rho_{n+1}\} \\ 
	1 & \text{ for } \rho \in \{\rho_1, \rho_{n+1}\} \\
	b_i &  \text{ for } \rho \in \{\rho_2, \ldots, \rho_{n}\}.
\end{array} \right. 
$
 \end{center}
More generally, for distinct rays $\rho_1, \ldots, \rho_d \in \Delta(1)$ we have 

\begin{center}
	$D_{\rho_1} \cdot D_{\rho_2} \cdots D_{\rho_d}= \left\{ \begin{array}{ll}
	[V(\sigma)]  \in A^{\bullet}(X)& \text{ if } \sigma=\text{Cone}(\rho_1, \ldots, \rho_d) \in \Delta\\
	0 & \text{ otherwise. }  
	\end{array} \right. 
	$
\end{center}
Here \(	[V(\sigma)]\) denotes the rational equivalence class of \(V(\sigma)\) in the Chow ring \(A^{\bullet}(X)\).
\end{prop}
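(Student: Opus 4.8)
\medskip
\noindent\textbf{Proof proposal.} This is a standard fact of toric intersection theory (compare \cite[Chapter 6 and Chapter 12]{Cox}), so I will only indicate the mechanism. The plan is to reduce every assertion to two inputs—the orbit--cone correspondence and the rational triviality of principal divisors $\operatorname{div}(\chi^m)$—and then to use the nonsingularity of $X(\Delta)$ solely to pin down the relevant intersection multiplicities to be $1$.

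First I would dispose of the vanishing statements. Since $D_\rho = V(\rho)$ and $V(\tau)$ are closures of $T$-orbits, $D_\rho \cap V(\tau) = \emptyset$ unless $\rho$ and $\tau$ span a common cone of $\Delta$; as $\tau = \sigma \cap \sigma'$ is a wall, this forces $\rho \in \{\rho_1,\dots,\rho_{n+1}\}$, whence $D_\rho \cdot V(\tau) = 0$ for all other $\rho$. Likewise, if the distinct rays $\rho_1,\dots,\rho_d$ do not span a cone of $\Delta$ then the scheme-theoretic intersection $D_{\rho_1} \cap \cdots \cap D_{\rho_d}$ is empty, so $D_{\rho_1}\cdots D_{\rho_d} = 0$ in $A^\bullet(X)$.

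Next I would extract the coefficients. For $m \in M$ the Weil divisor $\operatorname{div}(\chi^m) = \sum_{\rho \in \Delta(1)} \langle m, v_\rho \rangle D_\rho$ is principal, hence meets the complete curve $V(\tau) \cong \mathbb{P}^1$ in degree $0$, giving $\sum_\rho \langle m, v_\rho \rangle\,(D_\rho \cdot V(\tau)) = 0$. Smoothness of $\sigma = \operatorname{Cone}(v_{\rho_1},\dots,v_{\rho_n})$ means $v_{\rho_1},\dots,v_{\rho_n}$ is a $\Z$-basis of $N$, so I may choose $m$ realizing any prescribed values on these generators; substituting the wall relation $v_{\rho_1} + \sum_{i=2}^n b_i v_{\rho_i} + v_{\rho_{n+1}} = 0$ and taking $m$ that annihilates all of $v_{\rho_2},\dots,v_{\rho_n}$ except one yields $D_{\rho_i} \cdot V(\tau) = b_i\,(D_{\rho_{n+1}} \cdot V(\tau))$ for $2 \le i \le n$, while taking $m$ annihilating all of $v_{\rho_2},\dots,v_{\rho_n}$ yields $D_{\rho_1} \cdot V(\tau) = D_{\rho_{n+1}} \cdot V(\tau)$. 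That this common value is $1$ is the one quantitative use of nonsingularity: on the chart $U_\sigma \cong \mathbb{A}^n$ the divisor $D_{\rho_1}$ is a coordinate hyperplane meeting the curve $V(\tau)$ transversally in the single $T$-fixed point $x_\sigma = V(\sigma)$, so $D_{\rho_1} \cdot V(\tau) = 1$.

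Finally, to handle a product of $d$ distinct divisors with $\sigma = \operatorname{Cone}(\rho_1,\dots,\rho_d) \in \Delta$, I would use that smoothness of $\sigma$ identifies $U_\sigma \cong \mathbb{A}^d \times (k^*)^{n-d}$ with $D_{\rho_1},\dots,D_{\rho_d}$ the $d$ coordinate hyperplanes; thus they intersect transversally along $V(\sigma)$ and $D_{\rho_1}\cdots D_{\rho_d} = [V(\sigma)]$ in $A^\bullet(X)$. Alternatively one can induct on $d$, at each step restricting to the smooth toric subvariety $V(\rho_d)$—whose fan is the star of $\rho_d$—and invoking the projection formula. The only genuinely delicate point throughout is the multiplicity-one claim; it rests entirely on the smoothness hypothesis forcing the relevant local intersections to be transverse, and everything else is bookkeeping with orbit closures and principal divisors.
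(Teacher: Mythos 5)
The paper does not prove this proposition at all—it is quoted directly from Cox--Little--Schenck \cite{Cox}—and your sketch reproduces exactly the standard argument given there: vanishing via the orbit--cone correspondence, the coefficients via $\operatorname{div}(\chi^m)\cdot V(\tau)=0$ with $m$ running over the dual basis of the smooth cone $\sigma$, and multiplicity one from transversality of coordinate hyperplanes on the chart $U_\sigma$. So your proposal is correct and takes essentially the same route as the cited source; the only cosmetic point is that when isolating $b_i$ you should take $m$ to vanish on $v_{\rho_1}$ as well (i.e.\ the dual basis element), which your ``any prescribed values'' remark already permits.
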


We recall the fan structures of the following two classes of toric varieties which will be used in Section \ref{sec:stability-of-tangent-bundle-on-fano-4-folds-with-picard-number-3} while studying (semi)stability of tangent bundle of toric Fano 4-folds.

\subsubsection{Projectivization of direct sum of line bundles on toric varieties}\label{pbundle}
 Let $D_0, D_1, \ldots, D_m$ be \(T\)-invariant Cartier divisors on a nonsingular toric variety $X=X(\Delta)$. Then the fan $\Delta'$ of $X'= \mathbb{P}(\mathcal{O}_X(D_0) \oplus \mathcal{O}_X(D_1) \oplus \cdots \oplus \mathcal{O}_X(D_m))$ is described as following (see \cite[Page 58]{Oda} for details). Let $h_0, h_1, \ldots, h_m$ be the $\Delta$-linear support functions corresponding to $D_0, D_1, \ldots, D_m$ respectively (see \cite[Section 2.1]{Oda}). Choose the standard $\Z$-basis $\{e_1, \ldots, e_m\}$ of $\R^m$ and let $e_0=-e_1- \ldots - e_m$. Consider the $\R$-linear map $\Phi: \R^n \rightarrow \R^n \oplus \R^m, \text{ given by } y  \mapsto (y, -\sum_{j=0}^m h_j(y)e_j).$ Now let $\tilde{\sigma}_i = \text{Cone}(e_0, \ldots, \widehat{e_i}, \ldots, e_m)$ for each $0\leq i \leq m$ (henceforth by $\widehat{e}_i$ we mean that $e_i$ is omitted from the relevant collection). Let $\tilde{\Delta}$ be the fan in $\R^m$ generated by $\tilde{\sigma}_i$ for $0\leq i \leq m$. Then $\Delta'=\{\Phi(\sigma) + \tilde{\sigma} : \sigma \in \Delta, \tilde{\sigma}\in \tilde{\Delta}\}$.  

\subsubsection{Blowup of a toric variety along an invariant subvariety}\label{blowup}
Let \(X=X(\Delta)\) be a nonsingular toric variety. Let $\tau$ be a cone in $\Delta$ and \(\tilde{X}=Bl_{V(\tau)}(X)\) be the blowup of \(X\) along the \(T\)-invariant subvariety \(V(\tau)\). Let $\tilde{\Delta}$ be the fan  corresponding to \(\tilde{X}\). Then \(\tilde{\Delta}=\{\sigma \in \Delta : \tau \nsubseteq \sigma\} \cup \bigcup\limits_{\tau \preceq \sigma} \Delta^*_{\sigma}(\tau)\), where \(\Delta^*_{\sigma}(\tau)=\{\text{Cone}(A) : A \subseteq \{u_{\tau}\} \cup \sigma(1) , \tau(1) \nsubseteq A \}\) for any cone $\sigma$ containing $\tau$, denoted as $\sigma \succeq \tau $ and \(u_{\tau}=\sum\limits_{\rho \in \tau(1)} v_{\rho}\) (see \cite[Definition 3.3.17]{Cox}).

\subsection{Equivariant sheaves}\label{sec:equivariant-sheaves}

We briefly recall the combinatorial description of equivariant sheaves on toric varieties introduced by Perling \cite{perling}  which will be used in latter sections. 
 
Let $X\,=\,X(\Delta)$  be a toric variety corresponding to a fan $\Delta$. For each cone $\sigma \in \Delta$, define a relation \(\leq_{\sigma }\) on \(M\) by setting \(m \leq_{\sigma } m'\) if and only if \(m'-m \in S_{\sigma}\). We write \(m <_{\sigma} m'\) if \(m \leq_{\sigma } m'\)  holds but  \(m' \leq_{\sigma } m\) does not hold.  A $\sigma$-family, denoted by \(\widehat{E}^{\sigma}\), is a family of \(k\)-vector spaces \(\{E^{\sigma}_m\}_{m \in M}\) together with a vector space homomorphism \(\chi^{\sigma}_{m,m'} : E^{\sigma}_{m} \rightarrow E^{\sigma}_{m'}\), whenever \(m \leq_{\sigma } m'\) such that \(\chi^{\sigma}_{m,m}=1\) and \(\chi^{\sigma}_{m,m''}=\chi^{\sigma}_{m',m''} \circ \chi^{\sigma}_{m,m'} \) for every triple \(m \leq_{\sigma } m' \leq_{\sigma } m''\).

\begin{rmk}\label{RM1}{\rm
	Note that \(\chi^{\sigma}_{m,m'}\) is an isomorphism whenever \(m'-m \in S_{\sigma}^{\perp}\) $($see \cite[Lemma 5.3]{perling}$)$, hence we restrict our attention to $\sigma$-families having \(\chi^{\sigma}_{m,m'}=1\) (and hence \(E^{\sigma}_{m} = E^{\sigma}_{m'}\)) for all \(m'-m \in S_{\sigma}^{\perp}\).}
\end{rmk}

Let $\mathcal{E}$ be an equivariant quasi-coherent sheaf on the toric variety \(X=X(\Delta)\) (see \cite{perling} for detail definition of equivariant sheaves). The \(T\)-action on $\mathcal{ E }$ gives rise to an isomorphism \(\Phi_t: t^* \mathcal{E } \stackrel{\cong} \longrightarrow  \mathcal{E }\) for all \(t \in T\). This induces an action of \(T\) on the space of global sections \(E^{\sigma}:=\Gamma(U_{\sigma}, \mathcal{E})\) given by \(t \cdot f=\Phi_t(t^* f) \), where \(f \in E^{\sigma}\) and \(t^* f \in \Gamma(U_{\sigma}, t^*\mathcal{E}) \) is its canonically lifted section. Hence we get the \(T\)-isotypical decomposition \(E^{\sigma}=\bigoplus\limits_{m \in M} E^{\sigma}_{m}\), which makes \(E^{\sigma}\) naturally an \(M\)-graded \(k[S_{\sigma}]\)-module as follows. Recall that the action of the torus on the affine open variety \(U_{\sigma}\) induces the \(T\)-isotypical decomposition $k[S_{\sigma}]=\bigoplus\limits_{m \in S_{\sigma}} k \chi^m$. Then the \(M\)-graded \(k[S_{\sigma}]\)-module structure on \(E^{\sigma}\) is given by the following multiplication: \(\chi^{\sigma}_{m,m'} : E^{\sigma}_{m} \rightarrow E^{\sigma}_{m'}, ~ e \mapsto \chi^{m'-m} \cdot e,\) where \(m, m' \in M\) and \(m'-m \in S_{\sigma}\). Then the following three categories are equivalent (see \cite[Proposition 5.5]{perling}):
\begin{enumerate}[(i)]
	\item Equivariant quasi-coherent sheaves over \(U_{\sigma}\),
	
	\item \(M\)-graded \(k[S_{\sigma}]\)-modules with morphisms of degree 0, 
	
	\item $\sigma$-families. 
\end{enumerate}

For each pair $\tau \preceq \sigma$, we denote by \(i_{\tau \sigma} : U_{\tau} \hookrightarrow U_{\sigma}\) the inclusion. Let \(\widehat{E}^{\sigma}\) be a $\sigma$-family. We denote by \(E^{\sigma}:=\bigoplus_{m \in M} E^{\sigma}_m \) the corresponding \(M\)-graded \(k[S_{\sigma}]\)-module. The pull back \(i_{\tau \sigma}^*E^{\sigma}=E^{\sigma} \otimes_{k[S_{\sigma}] } k[S_{\tau}]\) is naturally an \(M\)-graded \(k[S_{\tau}]\)-module (see \cite[Section 5.2]{perling}) and hence corresponds to a $\tau$-family (by the above equivalence of categories), which we denote by \(i_{\tau \sigma}^* \widehat{E}^{\sigma}\).

Going from affine toric varieties to general toric varieties, following notion of $\Delta$-families was introduced by Perling.

\begin{defn} \cite[Definition 5.8]{perling} \label{delta fam} A collection $\{\widehat{E}^{\sigma} \}_{\sigma \in \Delta}$ of $\sigma$-families is called a $\Delta$-family, denoted by $\widehat{E}^{\Delta}$, if for each pair $\tau \preceq \sigma$ there exists an isomorphism of families $\eta_{\tau \sigma}: i_{\tau \sigma}^{*}(\widehat{E}^{\sigma}) \cong \widehat{E}^{\tau} $ such that for each triple $\rho \preceq \tau \preceq \sigma$ there is an equality $\eta_{\rho \sigma}=\eta_{\rho \tau} \circ  i_{\rho \tau}^*\eta_{\tau \sigma}$.\\
	A morphism of $\Delta$-families is a collection of morphisms $\{ \widehat{\phi}^{\sigma}: \widehat{E}^{\sigma} \rightarrow \widehat{F}^{\sigma}\}_{\sigma \in \Delta}$ such that for all $\sigma, \tau$ and $\tau \preceq \sigma$, the following diagram commutes:
	
	\begin{center}
		{\footnotesize
			\begin{tikzpicture}[description/.style={fill=white,inner sep=2pt}]
			
			\matrix (m) [matrix of math nodes, row sep=3em,
			column sep=3em, text height=1.5ex, text depth=0.25ex]
			{ i_{\tau \sigma}^{*}(\widehat{E}^{\sigma})  & &  i_{\tau \sigma}^{*}(\widehat{F}^{\sigma}) \\
				\widehat{E}^{\tau} & & \widehat{F}^{\tau}\\ };
			\path[->] (m-1-1) edge node[auto] {$ i_{\tau \sigma}^{*} \widehat{\phi}^{\sigma}$}(m-1-3);
			\path[->] (m-2-1) edge node[below] {$\widehat{\phi}^{\tau}$}(m-2-3);
			\path[->] (m-1-1) edge node[auto] {$ \eta^E_{\tau \sigma}$}(m-2-1);
			\path[->] (m-1-3) edge node[auto] {$\eta^F_{\tau \sigma} $} (m-2-3);
			
			\end{tikzpicture}
		}
	\end{center}
	
\end{defn}

\begin{thm}\cite[Theorem 5.9]{perling}
The category of $\Delta$-families is equivalent to the category of equivariant quasi-coherent sheaves on \(X\).
\end{thm}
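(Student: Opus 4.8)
The plan is to exhibit a pair of mutually quasi-inverse functors between $\Delta$-families and equivariant quasi-coherent sheaves on $X$, bootstrapping from the affine statement already recorded above (\cite[Proposition 5.5]{perling}) via the usual gluing lemma for quasi-coherent sheaves. The combinatorial input one needs is that the affine opens $U_\sigma$, $\sigma\in\Delta$, cover $X$ and that $U_\sigma\cap U_{\sigma'}=U_{\sigma\cap\sigma'}$, with $\sigma\cap\sigma'$ a common face of $\sigma$ and $\sigma'$ and hence again a cone of $\Delta$; thus the face relations $\tau\preceq\sigma$ in $\Delta$ already record every pairwise and triple overlap of this cover, and the extra data in a $\Delta$-family — the isomorphisms $\eta_{\tau\sigma}$ together with their cocycle identity — is exactly gluing data for it.

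First I would construct the functor from equivariant quasi-coherent sheaves to $\Delta$-families. Given such a sheaf $\mc{E}$ on $X$, restriction to $U_\sigma$ produces an equivariant quasi-coherent sheaf on $U_\sigma$, hence by \cite[Proposition 5.5]{perling} a $\sigma$-family $\widehat{E}^\sigma$ with $E^\sigma_m=\Gamma(U_\sigma,\mc{E})_m$. For $\tau\preceq\sigma$, base change of quasi-coherent sheaves along the open immersion $i_{\tau\sigma}$ identifies $i_{\tau\sigma}^{*}E^\sigma=E^\sigma\otimes_{k[S_\sigma]}k[S_\tau]$ with $E^\tau$ as $M$-graded $k[S_\tau]$-modules, i.e.\ gives an isomorphism of $\tau$-families $\eta_{\tau\sigma}\colon i_{\tau\sigma}^{*}\widehat{E}^\sigma\cong\widehat{E}^\tau$; transitivity of restriction over $\rho\preceq\tau\preceq\sigma$ yields the cocycle identity $\eta_{\rho\sigma}=\eta_{\rho\tau}\circ i_{\rho\tau}^{*}\eta_{\tau\sigma}$ of Definition \ref{delta fam}. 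A morphism $\mc{E}\to\mc{F}$ restricts on each $U_\sigma$ to a morphism of $\sigma$-families, and its compatibility with the restriction maps is precisely the commuting square in Definition \ref{delta fam}, so this assignment is a functor.

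Conversely, from a $\Delta$-family $\{\widehat{E}^\sigma\}_{\sigma\in\Delta}$ with gluing isomorphisms $\{\eta_{\tau\sigma}\}$, I would first pass via \cite[Proposition 5.5]{perling} to equivariant quasi-coherent sheaves $\mc{E}^\sigma$ on the $U_\sigma$, and then, for each pair $\sigma,\sigma'$, form the isomorphism of $(\sigma\cap\sigma')$-families
\[
i_{\sigma\cap\sigma',\sigma'}^{*}\widehat{E}^{\sigma'}\ \xrightarrow{\ \eta_{\sigma\cap\sigma',\sigma'}\ }\ \widehat{E}^{\sigma\cap\sigma'}\ \xrightarrow{\ \eta_{\sigma\cap\sigma',\sigma}^{-1}\ }\ i_{\sigma\cap\sigma',\sigma}^{*}\widehat{E}^{\sigma},
\]
equivalently an isomorphism $\varphi_{\sigma\sigma'}\colon\mc{E}^{\sigma'}|_{U_{\sigma\cap\sigma'}}\xrightarrow{\sim}\mc{E}^{\sigma}|_{U_{\sigma\cap\sigma'}}$ of sheaves. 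The cocycle relation $\varphi_{\sigma\sigma'}\circ\varphi_{\sigma'\sigma''}=\varphi_{\sigma\sigma''}$ on $U_{\sigma\cap\sigma'\cap\sigma''}$ then drops out of the cocycle identity for the $\eta$'s after restricting all the face maps involved to the common face $\sigma\cap\sigma'\cap\sigma''$. The gluing lemma yields a quasi-coherent sheaf $\mc{E}$ on $X$ with $\mc{E}|_{U_\sigma}\cong\mc{E}^\sigma$ compatibly with the $\varphi$'s; and since each $\mc{E}^\sigma$ is $T$-equivariant with structure isomorphism $\Phi_t^\sigma\colon t^{*}\mc{E}^\sigma\cong\mc{E}^\sigma$, and each $\varphi_{\sigma\sigma'}$ is $T$-equivariant (morphisms of $\sigma$-families correspond to $T$-equivariant sheaf morphisms under \cite[Proposition 5.5]{perling}), the $\Phi_t^\sigma$ glue to $\Phi_t\colon t^{*}\mc{E}\cong\mc{E}$, and one checks the cocycle condition in $t$, so $\mc{E}$ is equivariant. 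Morphisms of $\Delta$-families glue to sheaf morphisms by the morphism form of the gluing lemma.

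Finally I would verify that the two functors are quasi-inverse: starting from a sheaf, restricting to the $U_\sigma$ and re-gluing returns it by uniqueness in the gluing lemma; starting from a $\Delta$-family, gluing and restricting to $U_\sigma$ returns $\widehat{E}^\sigma$ together with the original $\eta_{\tau\sigma}$ (the data on a non-maximal face $\tau$ being, consistently with the $\Delta$-family axioms, recovered as $i_{\tau\sigma}^{*}\widehat{E}^\sigma\cong\widehat{E}^\tau$, so nothing is lost). I do not expect a genuine geometric obstacle; the work — and the only place to be careful — is the bookkeeping: that the cocycle identity for the $\eta_{\tau\sigma}$ is exactly what the sheaf gluing lemma demands of the $\varphi_{\sigma\sigma'}$, that the module-theoretic pullback $-\otimes_{k[S_\sigma]}k[S_\tau]$ of $\sigma$-families corresponds under \cite[Proposition 5.5]{perling} to sheaf restriction along $i_{\tau\sigma}$, and that the $T$-action is transported coherently through every step.
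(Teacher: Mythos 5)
The paper itself offers no proof of this statement---it is quoted directly from Perling (Theorem 5.9 of \cite{perling})---and your gluing argument is correct and is essentially Perling's own: reduce to the affine equivalence of \cite[Proposition 5.5]{perling} and observe that, since $U_\sigma\cap U_{\sigma'}=U_{\sigma\cap\sigma'}$ with $\sigma\cap\sigma'\in\Delta$, the isomorphisms $\eta_{\tau\sigma}$ and their cocycle identity are precisely descent data for the cover $\{U_\sigma\}_{\sigma\in\Delta}$, with sheaf restriction along $i_{\tau\sigma}$ matching $-\otimes_{k[S_\sigma]}k[S_\tau]$ on $\sigma$-families. The only point to tighten is the treatment of equivariance, which is more properly encoded as an isomorphism over $T\times X$ satisfying the cocycle condition rather than a family of isomorphisms $\Phi_t$ for individual $t\in T$; the same gluing argument applies verbatim to that datum over the cover $\{T\times U_\sigma\}$, so nothing essential changes.
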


In order to classify equivariant coherent sheaves, Perling introduced the following notion of finite $\Delta$-family.

\begin{defn}\label{finitedelfam} \cite[Definition 5.10]{perling}
	A $\sigma$-family 	\(\widehat{E}^{\sigma}\) is said to be finite if:
	\begin{enumerate}[(i)]
		\item \(E^{\sigma}_m \)'s are finite dimensional for all \(m \in M\),
		\item for each chain \(\ldots <_{\sigma} { m_{i-1}} <_{\sigma} { m_i} <_{\sigma} \ldots \) of characters in \(M\) there exists an \(i_0 \in \Z \) such that \(E^{\sigma}_{m_i}=0\) for  all \(i < i_0\),
		\item there are only finitely many vector spaces \(E^{\sigma}_m\) such that the map	\(\bigoplus\limits_{m' {<_{\sigma}} m} E^{\sigma}_{m'} \longrightarrow E^{\sigma}_m \), defined by the summation of the \(\chi^{\sigma}_{m',m}\), is not surjective.
	\end{enumerate}
A \(\Delta\)-family is said to be finite if all of its $\sigma$-families are finite.
\end{defn}

\begin{prop}\cite[Proposition 5.11]{perling}
	A quasi-coherent equivariant sheaf is coherent if and only if its associated $\Delta$-family is finite.	
\end{prop}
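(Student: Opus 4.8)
The statement to prove is Perling's Proposition 5.11: an equivariant quasi-coherent sheaf $\mathcal{E}$ on $X(\Delta)$ is coherent if and only if its associated $\Delta$-family is finite. The plan is to reduce immediately to the affine case, since coherence is a local condition and $X$ is covered by the finitely many affine charts $U_\sigma$ for $\sigma \in \Delta$; by the equivalence of categories (iii)$\Leftrightarrow$(ii) recalled above, $\mathcal{E}|_{U_\sigma}$ corresponds to the $M$-graded $k[S_\sigma]$-module $E^\sigma = \bigoplus_{m \in M} E^\sigma_m$, and $\mathcal{E}$ is coherent if and only if each $E^\sigma$ is a finitely generated $k[S_\sigma]$-module (here one uses that $X$ is Noetherian, so coherent $=$ locally finitely generated). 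So it suffices to show: the $\sigma$-family $\widehat{E}^\sigma$ is finite in the sense of Definition \ref{finitedelfam} if and only if $E^\sigma$ is a finitely generated $M$-graded $k[S_\sigma]$-module.

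For the direction ``finite $\Rightarrow$ finitely generated'': assume conditions (i)--(iii). Let $\mathcal{M} \subseteq M$ be the finite set of characters $m$ where $\bigoplus_{m' <_\sigma m} E^\sigma_{m'} \to E^\sigma_m$ fails to be surjective (condition (iii)); enlarging $\mathcal{M}$ by finitely many elements if needed, I claim $\bigoplus_{m \in \mathcal{M}} E^\sigma_m$ generates $E^\sigma$ over $k[S_\sigma]$. Indeed, given any $m \in M$, the submodule generated by all $E^\sigma_{m'}$ with $m' <_\sigma m$ already surjects onto $E^\sigma_m$ unless $m \in \mathcal{M}$; iterating this downward and using condition (ii) to guarantee the descent terminates (every chain $\cdots <_\sigma m_{i-1} <_\sigma m_i$ has $E^\sigma_{m_i} = 0$ for $i \ll 0$, and $\leq_\sigma$ on the relevant truncated poset is well-founded because $S_\sigma$ is a finitely generated monoid), one expresses $E^\sigma_m$ in terms of finitely many $E^\sigma_{m'}$ with $m' \in \mathcal{M}$. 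Combined with condition (i) — each $E^\sigma_m$ is finite dimensional — the set $\bigcup_{m \in \mathcal{M}} E^\sigma_m$ is a finite generating set.

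For the converse ``finitely generated $\Rightarrow$ finite'': suppose $E^\sigma$ is generated by finitely many homogeneous elements $e_1, \dots, e_r$ of degrees $m_1, \dots, m_r$. Condition (i) is then immediate: $E^\sigma_m$ is spanned by the finitely many products $\chi^{m - m_j} \cdot e_j$ with $m_j \leq_\sigma m$. Condition (ii) follows because $E^\sigma_{m} \neq 0$ forces $m_j \leq_\sigma m$ for some $j$, and along a chain $\cdots <_\sigma m_{i-1} <_\sigma m_i$ the condition $m_j \leq_\sigma m_i$ can hold only for $i$ above some threshold (using Remark \ref{RM1} to normalize along $S_\sigma^\perp$ and the structure of $S_\sigma$ inside $M$). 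Condition (iii): if $m \notin \{m_j + s : s \in S_\sigma, j\}$ lies strictly above all generator degrees that are $\leq_\sigma m$, then any product landing in degree $m$ factors through some intermediate degree $m' <_\sigma m$, so $\bigoplus_{m' <_\sigma m} E^\sigma_{m'} \to E^\sigma_m$ is surjective; the set of $m$ where this can fail is contained in a finite union of ``boxes'' near the generator degrees, hence finite, after again invoking finite generation of $S_\sigma$.

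The main obstacle I anticipate is the careful bookkeeping in conditions (ii) and (iii): the order $\leq_\sigma$ is only a partial order on $M$ (it is a genuine partial order modulo $S_\sigma^\perp$, which is why Remark \ref{RM1} is needed), and one must argue that the relevant ``downward'' sets are well-founded and that only finitely many degrees are ``new'' generators. This is precisely where finite generation of the affine semigroup $S_\sigma = \sigma^\vee \cap M$ (Gordan's lemma) enters, together with the observation that $S_\sigma$ spans a full-rank sublattice of $M$ modulo $S_\sigma^\perp$ so that chains in the $<_\sigma$-direction are discrete. Everything else — the reduction to the affine charts, the translation between coherence and local finite generation, and the finite-dimensionality bookkeeping — is routine given the category equivalence already stated.
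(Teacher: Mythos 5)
The paper itself offers no proof of this statement: it is quoted directly from Perling \cite{perling} (his Proposition 5.11) and used as a black box, so there is nothing in the paper to compare your argument against. On its own terms, your outline follows the standard route, which is also Perling's: reduce to the affine charts $U_{\sigma}$ via the equivalence with $M$-graded $k[S_{\sigma}]$-modules, and then show that the three conditions of Definition \ref{finitedelfam} are equivalent to finite generation of $E^{\sigma}$ over $k[S_{\sigma}]$. Both implications are argued along correct lines, and you rightly identify where finite generation of $S_{\sigma}$ (Gordan's lemma) and well-foundedness of the truncated order enter; for the descent in the direction ``finite $\Rightarrow$ finitely generated'' a pairing with a lattice point in the relative interior of $\sigma$ makes the termination argument clean.

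One point needs more care than your sketch gives it. In the direction ``finitely generated $\Rightarrow$ finite'', condition (iii) is not literally a finiteness statement about characters $m\in M$ unless $\sigma$ is full-dimensional: if $S_{\sigma}^{\perp}\neq 0$ (e.g.\ the zero cone), the degrees $m$ at which $\bigoplus_{m'<_{\sigma}m}E^{\sigma}_{m'}\to E^{\sigma}_m$ can fail to be surjective form a union of cosets $m_j+S_{\sigma}^{\perp}$, which is infinite, so your ``finite union of boxes, hence finite'' conclusion fails as stated. The condition must be read modulo $S_{\sigma}^{\perp}$, i.e.\ counting the distinct vector spaces after the normalization of Remark \ref{RM1}, and with that reading your verification does go through (surjectivity can only fail when $m\equiv m_j \bmod S_{\sigma}^{\perp}$ for one of the finitely many generator degrees $m_j$). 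The same normalization is also what makes the converse direction produce finitely many module generators: one homogeneous element per $S_{\sigma}^{\perp}$-coset suffices, since $\chi^{m'-m}$ is a unit in $k[S_{\sigma}]$ whenever $m'-m\in S_{\sigma}^{\perp}$.
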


Note that given a \(\sigma\)-family \(\widehat{E}^{\sigma}\), the collection \(\{E^{\sigma}_m, \chi^{\sigma}_{m,m'}\}\) forms a directed system of vector spaces. Let \(\mathbf{E}^{\sigma}:= \varinjlim\limits_{m \in M} \widehat{E}^{\sigma}\). Any element of \(\mathbf{E}^{\sigma}\) can be written as equivalence classes \([e,m]\), where \(e \in E^{\sigma}_m\) and \([e,m]=[e',m']\) if and only if there exists \(m''\in M\) satisfying \(m, m' \leq_{\sigma }m''\) such that \(\chi^{\sigma}_{m,m''}(e)=\chi^{\sigma}_{m',m''}(e')\).

   If $\mathcal{E}$ is an equivariant torsion free sheaf of rank \(r\) on \(X\). Then for
all \(\sigma \in \Delta\) and \(m \leq_{\sigma } m'\), the maps in the following diagram are injective (see \cite[Proposition 5.13]{perling}):
\begin{equation}\label{torsionfree_maps}
	\begin{tikzpicture}[descr/.style={fill=white,inner sep=2.5pt}]
\matrix (m) [matrix of math nodes, row sep=3em,
column sep=3em]
{ E^{\sigma}_m & E^{\sigma}_{m'} \\
	& \mathbf{E}^{\sigma} \\ };
\path[->,font=\scriptsize]
(m-1-1) edge node[auto] {$ \chi^{\sigma}_{m, m'} $} (m-1-2)
edge node[auto] {} (m-2-2)
edge node[auto,swap] {} (m-2-2)
(m-1-2)  edge node[auto] {} (m-2-2);
\end{tikzpicture}
\end{equation}

Moreover, the restriction map \(\Gamma(U_{\sigma}, \mathcal{E}) \rightarrow \Gamma(U_{\tau}, \mathcal{E}) \) is injective whenever \(\tau \preceq  \sigma\). Now let \(m_{\tau}\) be an integral element of the interior of $\sigma^{\vee} \cap \tau^{\perp} $ such that \(S_{\tau} = S_{\sigma} + \Z_{\geq 0}(-m_{\tau} )\). Note that \(\Gamma(U_{\tau}, i_{\tau \sigma}^*(\mathcal{E} |_{U_{\sigma}})) =\Gamma(U_{\sigma}, \mathcal{E}) \otimes_{k[S_{\sigma}]}  k[S_{\tau}] =\Gamma(U_{\sigma}, \mathcal{E}) \otimes_{k[S_{\sigma}]}  k[S_{\sigma}][\chi^{-m_{\tau}}]\). So there exists a natural inclusion of $\sigma$-families \(\alpha_{\tau \sigma} : \widehat{E}^{\sigma} \rightarrow i_{\tau \sigma}^* \widehat{E}^{\sigma}\). Hence the following composition 
\begin{equation}\label{TF1}
\widehat{E}^{\sigma} \stackrel{\alpha_{\tau \sigma}} \longhookrightarrow i_{\tau \sigma}^* \widehat{E}^{\sigma} \xrightarrow[\cong]{\eta_{\tau \sigma }} \widehat{E}^{\tau}
\end{equation}
is injective (see Definition \ref{delta fam}, \cite[Proposition 5.14]{perling}), which further induces a natural injection 
\begin{equation}\label{TF2}
\begin{split}
\tilde{\eta}_{\tau \sigma} :  ~\mathbf{E}^{\sigma}  \longhookrightarrow \mathbf{E}^{\tau}, \
  [(e, m)]  \mapsto [(\eta^m_{\tau \sigma} \circ \alpha^m_{\tau \sigma})(e), m)]. 
\end{split}
\end{equation}

Moreover, the system of vector spaces $\mathbf{E}^{\sigma}$ together with the homomorphisms \(\tilde{\eta}_{\tau \sigma} \) for \(\tau \preceq \sigma\), forms a directed partially ordered family whose direct limit can be identified with $\mathbf{E}^0$, here \(0\) denotes the zero cone. Note that we have the following isotypical decomposition $$\Gamma(T, \mathcal{E})=k[M]^r=\bigoplus\limits_{m \in M} (\underbrace{k \chi^m \oplus \cdots \oplus k \chi^m}_{r \text{ times}})$$ and the homomorphisms \(\chi^0_{m ,m'}\) are isomorphisms. Hence we can identify \(\mathbf{E}^0=k \chi^m \oplus \cdots \oplus k \chi^m\) ($r$ times) and thus it is a finite dimensional vector space of dimension \(r\). Also the natural inclusions \(\mathbf{E}^{\sigma} \hookrightarrow \mathbf{E}^0 \) obtained in \eqref{TF2} are isomorphisms \(\mathbf{E}^{\sigma} \cong \mathbf{E}^0 \) (see \cite[Corollary 5.16]{perling}). Thus all the vector spaces in the $\Delta$-family \(\widehat{E}^{\Delta}\) can be realized as vector subspaces of $\mathbf{E}^0$.

The above technical reformulation of the finite \(\Delta\)-family leads to the following definition of family of multifiltrations. 

\begin{defn}\label{multfilt}\cite[Definition 5.17]{perling}
	Let $\Delta$ be a fan, \(V\) be a finite-dimensional \(k\)-vector space, and let for each \(\sigma \in \Delta \) a set of vector subspaces \(\{E^{\sigma}_m\}_{m \in M } \) of \(V\) be given. We say that this system is a family of multifiltrations of \(V\) if: 
	\begin{enumerate}
		\item[(i)] For each \(\sigma \in \Delta \) and \(m \leq_{\sigma } m' \), \(E^{\sigma}_m\) is contained in \(E^{\sigma}_{m'}\).
		\item[(ii)] \(V=\bigcup\limits_{m \in M} E^{\sigma}_{m} \) for each $\sigma \in \Delta$.
		\item[(iii)] For each chain \(\ldots <_{\sigma} { m_{i-1}} <_{\sigma} { m_i} <_{\sigma} \ldots \)of characters in \(M\) there exists an \(i_0 \in \Z \) such that \(E^{\sigma}_{m_i}=0\) for  \(i < i_0\).
		\item[(iv)] For every $\sigma \in \Delta$ there exist only finitely many vector spaces \(E^{\sigma}_m\) such that \(E^{\sigma}_m \nsubseteq  \sum\limits_{m' {<_{\sigma}} m} E^{\sigma}_{m'}\).
		\item[(v)] $($Compatibility condition.$)$ For each $\tau \preceq \sigma$ with \(S_{\tau} = S_{\sigma} + \Z_{\geq 0}(-m_{\tau} )\) we consider with respect to the
		preorder \(\leq_{\sigma }\) the ascending chains \(m + i \cdot m_{\tau}\) for \(i \geq 0\). By condition (iv) and because \(V\) is finite dimensional the sequence of subvector spaces \(E^{\sigma}_{m+ i \cdot m_{\tau}}\) necessarily becomes stationary for some \(i^{\tau}_m \in \Z \). We require that \(E^{\tau}_m= E^{\sigma}_{m+ i^{\tau}_m \cdot m_{\tau}}\) for all \(m \in M\).
	\end{enumerate}	

A morphism between families of multifiltrations \(\{E^{\sigma}_m\}_{m \in M } \) and \(\{F^{\sigma}_m\}_{m \in M } \) is a homomorphism of the corresponding ambient vector spaces $\phi : \mathbf{E}^0 \rightarrow \mathbf{F}^0$ which is compatible with these multifiltratons, i.e.  $\phi(E^{\sigma}_m) \subseteq F^{\sigma}_m$.
\end{defn}

\begin{rmk}\label{rmultfilt}{\rm
Note that in Definition \ref{multfilt}, the condition \((iv)\) can be replaced with the following:

\((iv)'\) For every $\sigma \in \Delta$ there exist only finitely many vector spaces \(E^{\sigma}_m\) such that \(E^{\sigma}_m \nsubseteq  \bigcup\limits_{m' {<_{\sigma}} m} E^{\sigma}_{m'}\) $($see \cite[Definition 4.19]{perlingT}$)$.}
\end{rmk}

\begin{thm}\cite[Theorem 5.18]{perling}\label{per5.18}
	The category of equivariant torsion free sheaves is equivalent to the category of families of multifiltrations of finite-dimensional vector spaces.
\end{thm}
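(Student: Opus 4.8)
The plan is to bootstrap from the equivalences already in hand: equivariant quasi-coherent sheaves on $X$ correspond to $\Delta$-families (\cite[Theorem 5.9]{perling}), and the coherent ones to \emph{finite} $\Delta$-families (\cite[Proposition 5.11]{perling}). So it suffices to identify, inside the category of finite $\Delta$-families, the full subcategory coming from torsion free sheaves, and to show that this subcategory is the category of families of multifiltrations of Definition \ref{multfilt}. The bridge is the slogan ``injective transition maps $\Longleftrightarrow$ subspaces of a fixed vector space'': the $T$-eigenspace data of a torsion free sheaf can be \emph{rigidified} so that every $E^\sigma_m$ becomes an honest subspace of the $r$-dimensional space $\mathbf E^0$ and every $\chi^\sigma_{m,m'}$ becomes an inclusion.

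For the forward direction, start from an equivariant torsion free sheaf $\mathcal E$ of rank $r$ and pass to its finite $\Delta$-family. By \eqref{torsionfree_maps} (that is, \cite[Proposition 5.13]{perling}) all transition maps $\chi^\sigma_{m,m'}$ and all restriction maps $\Gamma(U_\sigma,\mathcal E)\to\Gamma(U_\tau,\mathcal E)$ are injective, and by \eqref{TF2} and \cite[Corollary 5.16]{perling} the canonical maps $\mathbf E^\sigma\to\mathbf E^0$ are isomorphisms; composing, I realise each $E^\sigma_m$ as a subspace of $\mathbf E^0$ with all $\chi$'s inclusions. It then remains to verify (i)--(v) of Definition \ref{multfilt}: (i) is the $\sigma$-family axiom; (ii) says $\mathbf E^\sigma=\mathbf E^0$; (iii) and (iv) are exactly conditions (ii) and (iii) of Definition \ref{finitedelfam} read through the inclusions (surjectivity of $\bigoplus_{m'<_\sigma m}E^\sigma_{m'}\to E^\sigma_m$ turning into $E^\sigma_m=\sum_{m'<_\sigma m}E^\sigma_{m'}$, cf.\ Remark \ref{rmultfilt}); and (v) is read off from the gluing isomorphisms $\eta_{\tau\sigma}$. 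For the last point one computes, for $\tau\preceq\sigma$ with $S_\tau=S_\sigma+\Z_{\ge0}(-m_\tau)$ (so $k[S_\tau]$ is $k[S_\sigma]$ with $\chi^{m_\tau}$ inverted), that the graded piece of $i_{\tau\sigma}^*\widehat E^\sigma$ in degree $m$ is $\varinjlim_{i\ge0}E^\sigma_{m+i\,m_\tau}$, which by (iv) stabilises at $E^\sigma_{m+i^\tau_m m_\tau}$; the isomorphism $\eta_{\tau\sigma}$, normalised via \cite[Corollary 5.16]{perling} to be the identity of $\mathbf E^0$, then forces $E^\tau_m=E^\sigma_{m+i^\tau_m m_\tau}$, which is precisely the compatibility condition.

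For the converse, run this backwards. Given a family of multifiltrations of a finite-dimensional $V$, define a $\sigma$-family by the subspaces $E^\sigma_m$ with the inclusions as $\chi^\sigma_{m,m'}$; finiteness (Definition \ref{finitedelfam}) follows from (iii), (iv), with (i) of Definition \ref{finitedelfam} automatic since $E^\sigma_m\subseteq V$. To promote this to a $\Delta$-family I take each gluing map $\eta_{\tau\sigma}$ to be the identity of $V$, which is legitimate exactly because (v) gives $E^\tau_m=E^\sigma_{m+i^\tau_m m_\tau}=(i_{\tau\sigma}^*\widehat E^\sigma)_m$; the cocycle identity $\eta_{\rho\sigma}=\eta_{\rho\tau}\circ i_{\rho\tau}^*\eta_{\tau\sigma}$ for $\rho\preceq\tau\preceq\sigma$ then reduces, since every $\eta$ is the identity on the ambient space and $i_{\rho\tau}^*\circ i_{\tau\sigma}^*=i_{\rho\sigma}^*$ as a composition of localisations, to a check on the stabilisation indices. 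By \cite[Theorem 5.9]{perling} and \cite[Proposition 5.11]{perling} this produces an equivariant coherent sheaf, and since all $\chi^\sigma_{m,m'}$ are injective it is torsion free (the other implication of \cite[Proposition 5.13]{perling}). Finally, for morphisms: a morphism of families of multifiltrations is a map $\phi\colon\mathbf E^0\to\mathbf F^0$ with $\phi(E^\sigma_m)\subseteq F^\sigma_m$; restricting $\phi$ to each $E^\sigma_m$ gives a morphism of $\sigma$-families, automatically compatible with the inclusions $\chi$ and with the identity gluing maps $\eta$, hence a morphism of $\Delta$-families; conversely a morphism of $\Delta$-families of torsion free type induces such a $\phi$ on direct limits. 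These assignments are mutually inverse, which gives full faithfulness and, with the two constructions above, the equivalence.

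I expect the main obstacle to be the bookkeeping around the compatibility condition (v): verifying that the stabilisation indices $i^\tau_m$ are well behaved, that the graded pieces of $i_{\tau\sigma}^*\widehat E^\sigma$ are the claimed colimits independently of the choice of $m_\tau$, and that the normalised identity isomorphisms $\eta_{\tau\sigma}$ genuinely satisfy the cocycle condition for all triples $\rho\preceq\tau\preceq\sigma$. Everything else is the routine translation between injectivity of transition maps and subspaces of $\mathbf E^0$, which becomes formal once the rigidification via \cite[Corollary 5.16]{perling} is set up.
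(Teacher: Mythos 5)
This statement is quoted from Perling (\cite[Theorem 5.18]{perling}); the paper gives no proof of its own, only the surrounding reformulation in Section 2.2 (injectivity of the maps in \eqref{torsionfree_maps}, the inclusions \eqref{TF1}--\eqref{TF2}, and the identifications $\mathbf{E}^{\sigma}\cong\mathbf{E}^0$ from \cite[Corollary 5.16]{perling}), and your proposal is a correct sketch that follows exactly this machinery and the line of Perling's original argument: rigidify so that each $E^{\sigma}_m$ is a subspace of $\mathbf{E}^0$, match the finiteness axioms of Definition \ref{finitedelfam} with (iii)--(iv) of Definition \ref{multfilt}, and read the compatibility condition (v) off the stabilised graded pieces of $i_{\tau\sigma}^*\widehat{E}^{\sigma}$. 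The only point stated more strongly than what the paper records is the converse ``injective transition maps imply torsion free'' (the paper quotes only the forward implication of \cite[Proposition 5.13]{perling}), but this is a standard graded-module fact and does not affect the correctness of the outline.
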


A coherent sheaf $\mathcal{ E }$ on \(X\) is reflexive if $\mathcal{ E }$ is isomorphic to its double dual $\mathcal{ E }^{* *}$. Equivalently, a coherent sheaf $\mathcal{ E }$ on \(X\) is reflexive if and only if $\mathcal{ E }$ is torsion free and for each open subset \(U \subset X \) and each closed subset \(Y \subset U \) of codimension \(\geq 2\), the restriction map \( \Gamma(U, \mathcal{ E }) \rightarrow \Gamma(U  \setminus Y, \mathcal{ E })\) is bijective (see \cite[Proposition 1.6]{SRS}). Dual of any coherent sheaf is an example of reflexive sheaf (see \cite[Corollary 1.2]{SRS}). Note that any rank 1 reflexive sheaf is locally free. Now let $\mathcal{E}$ be an equivariant reflexive sheaf on \(X=X(\Delta)\). Choose \(Y=\bigcup\limits_{\text{dim} \tau \geq 2} V(\tau)\), a closed subset of \(X\) of codimension at least two. Then \(\Gamma(X, \mathcal{ E })=\Gamma(X \setminus Y, \mathcal{ E })=\Gamma(X(\Delta_1), \mathcal{ E })\), where \(\Delta_1=\Delta(0) \cup \Delta(1) \). In particular, for the affine toric variety $U_{\sigma}$, we have \(\Gamma(U_{\sigma}, \mathcal{ E })=\Gamma\left( \bigcup_{\rho \in \sigma(1)} U_{\rho}, \mathcal{ E }\right) = \bigcap\limits_{\rho \in \sigma(1)}  \Gamma( U_{\rho}, \mathcal{ E })\) as vector subspaces of $\Gamma(T, \mathcal{ E })$. Hence for each graded component of degree \(m\) we have \(\Gamma(U_{\sigma}, \mathcal{ E })_m = \bigcap\limits_{\rho \in \sigma(1)}  \Gamma( U_{\rho}, \mathcal{ E })_m\). Therefore, as vector subspaces of $\mathbf{E}^0$, we have \(E^{\sigma}_m=\bigcap\limits_{\rho \in \sigma(1)} E^{\rho}_m.\) It follows that the compatibility condition (v) of Definition \ref{multfilt} is redundant. Thus an equivariant reflexive sheaf is completely determined by the family of multifiltrations \(\{E^{\rho}_m\}_{m \in M, \rho \in \Delta(1)}\) of the vector space $\mathbf{E}^0$. Note that there is a canonical identification of \(M / S_{\rho}^{\perp}\) with \(\Z\) via the map \(m \mapsto \langle m, v_{\rho} \rangle \). Hence identifying  \(E^{\rho}_m=E^{\rho}(\langle m, v_{\rho} \rangle)\), we get an increasing full filtrations: $0 \subseteq \ldots \subseteq E^{\rho}(i) \subseteq E^{\rho}(i+1) \subseteq \ldots \subseteq \mathbf{E}^0$.

The following theorem shows that any equivariant reflexive sheaf arises from such filtrations.
\begin{thm}\cite[Theorem 5.19]{perling}\label{reflexive}
	The category of equivariant reflexive sheaves on a toric variety \(X\) is equivalent to the category of vector spaces with full filtrations associated to each ray in \(\Delta(1)\). The morphisms in this category are vector space homomorphisms which are compatible with the filtrations in the $\Delta$-family sense.
\end{thm}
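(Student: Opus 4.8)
The plan is to deduce the statement from Theorem \ref{per5.18} (Perling's equivalence for torsion free sheaves) by identifying, inside the category of families of multifiltrations, exactly those objects that come from equivariant reflexive sheaves, and then recognizing that datum as ``a full filtration for each ray.'' The forward functor is essentially the one described in the paragraph preceding the theorem: given an equivariant reflexive sheaf $\mc E$ with associated family of multifiltrations $\{E^\sigma_m\}$, the $S_2$-property of reflexive sheaves together with $\Gamma(U_\sigma,\mc E)=\bigcap_{\rho\in\sigma(1)}\Gamma(U_\rho,\mc E)$ gives $E^\sigma_m=\bigcap_{\rho\in\sigma(1)}E^\rho_m$ for all $\sigma,m$, and $E^\rho_m$ depends only on $\langle m,v_\rho\rangle$, so setting $E^\rho(i):=E^\rho_m$ for any $m$ with $\langle m,v_\rho\rangle=i$ we obtain, from conditions (i)--(iv) of Definition \ref{multfilt} read ray by ray, that each $\{E^\rho(i)\}_{i\in\Z}$ is an increasing exhaustive filtration of $\mathbf E^0$ that is stationary for $i\ll 0$ and jumps only finitely often, i.e. a full filtration. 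Thus $\mc E\mapsto(\mathbf E^0,\{E^\rho(i)\}_{\rho\in\Delta(1)})$, and an equivariant morphism induces a map of $\mathbf E^0$'s compatible with every ray filtration.

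Next I would construct the inverse functor. Given $(\mathbf E^0,\{E^\rho(i)\}_{\rho})$ with each $E^\rho(\bullet)$ a full filtration, define $E^\sigma_m:=\bigcap_{\rho\in\sigma(1)}E^\rho(\langle m,v_\rho\rangle)$ for all $\sigma\in\Delta$, $m\in M$ (with $E^0_m=\mathbf E^0$), and claim $\{E^\sigma_m\}$ is a family of multifiltrations. Conditions (i), (ii), (iii) are immediate from the corresponding properties of the ray filtrations together with finiteness of $\sigma(1)$, and condition (iv) (equivalently $(iv)'$ of Remark \ref{rmultfilt}) holds because each ray filtration has only finitely many jumps. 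The step requiring care is the compatibility condition (v): for $\tau\preceq\sigma$ choose $m_\tau$ in the relative interior of $\sigma^\vee\cap\tau^\perp$ with $S_\tau=S_\sigma+\Z_{\geq 0}(-m_\tau)$; then $\langle m_\tau,v_\rho\rangle=0$ for $\rho\in\tau(1)$ while $\langle m_\tau,v_\rho\rangle>0$ for $\rho\in\sigma(1)\setminus\tau(1)$. Hence along the ascending chain $m+i\cdot m_\tau$ the factors indexed by $\tau(1)$ stay constant, while those indexed by $\sigma(1)\setminus\tau(1)$ increase and, by exhaustiveness, equal $\mathbf E^0$ for $i\gg 0$; therefore $E^\sigma_{m+i\cdot m_\tau}$ stabilizes to $\bigcap_{\rho\in\tau(1)}E^\rho(\langle m,v_\rho\rangle)=E^\tau_m$, which is exactly (v).

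By Theorem \ref{per5.18} this family of multifiltrations corresponds to an equivariant torsion free sheaf $\mc E$, and the main remaining point is that $\mc E$ is reflexive. For this I would invoke the characterization recalled before the theorem: it suffices to show that for every open $U\subset X$ and closed $Y\subset U$ of codimension $\geq 2$ the restriction $\Gamma(U,\mc E)\to\Gamma(U\setminus Y,\mc E)$ is bijective. By $T$-equivariance and the toric local structure this reduces to the affine charts $U_\sigma$ with $Y=\bigcup_{\tau\preceq\sigma,\ \dim\tau\geq 2}V(\tau)$, so that $U_\sigma\setminus Y=\bigcup_{\rho\in\sigma(1)}U_\rho$ and, since $\mc E$ is torsion free, $\Gamma(U_\sigma\setminus Y,\mc E)_m=\bigcap_{\rho\in\sigma(1)}\Gamma(U_\rho,\mc E)_m=\bigcap_{\rho\in\sigma(1)}E^\rho_m$; comparing with $\Gamma(U_\sigma,\mc E)_m=E^\sigma_m$ and using the defining formula $E^\sigma_m=\bigcap_{\rho\in\sigma(1)}E^\rho_m$ shows the restriction is an isomorphism in each graded degree, hence an isomorphism. (Alternatively one identifies the combinatorial double-dual operation with $\{E^\sigma_m\}\mapsto\{\bigcap_{\rho\in\sigma(1)}E^\rho_m\}$ and observes our $\mc E$ is a fixed point of it.)

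Finally I would check that the two functors are mutually quasi-inverse and match morphisms. Starting from $\mc E$ reflexive, passing to ray filtrations and back recovers $\{E^\sigma_m\}$ because $E^\sigma_m=\bigcap_{\rho\in\sigma(1)}E^\rho_m$ already held; starting from ray filtrations, the intersection family restricted to a single ray $\rho$ returns $E^\rho_m$ since $\rho(1)=\{\rho\}$. For morphisms, a filtration-compatible homomorphism $\phi:\mathbf E^0\to\mathbf F^0$ satisfies $\phi(E^\sigma_m)=\phi\big(\bigcap_{\rho\in\sigma(1)}E^\rho_m\big)\subseteq\bigcap_{\rho\in\sigma(1)}F^\rho_m=F^\sigma_m$, so it is a morphism of multifiltrations, hence of the associated sheaves, and conversely any such sheaf morphism restricts to a ray-compatible map on $\mathbf E^0$; this yields the asserted equivalence of categories. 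I expect the main obstacle to be precisely the reflexivity verification, i.e. turning the combinatorial identity $E^\sigma_m=\bigcap_{\rho\in\sigma(1)}E^\rho_m$ into the $S_2$/double-dual statement at the level of the sheaf rather than only on graded pieces.
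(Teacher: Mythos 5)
The paper itself does not prove this statement: it is quoted from Perling \cite[Theorem 5.19]{perling}, and the only argument in the text is the paragraph preceding the theorem, which establishes the forward direction's key identity $E^{\sigma}_m=\bigcap_{\rho\in\sigma(1)}E^{\rho}_m$ for an equivariant reflexive sheaf and concludes that the compatibility condition (v) of Definition \ref{multfilt} becomes redundant. Your proposal reconstructs an actual proof by reduction to Theorem \ref{per5.18}, and most of it is correct and is the natural route: the forward functor, the intersection construction $E^{\sigma}_m:=\bigcap_{\rho\in\sigma(1)}E^{\rho}(\langle m,v_{\rho}\rangle)$, the verification of conditions (i)--(v) (in particular the observation that $\langle m_{\tau},v_{\rho}\rangle=0$ for $\rho\in\tau(1)$ and $>0$ for $\rho\in\sigma(1)\setminus\tau(1)$, so the chain stabilizes exactly to $E^{\tau}_m$), and the compatibility of morphisms are all sound.

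The step that is not yet a proof is the reflexivity of the sheaf produced by the inverse construction, and you correctly identified it as the delicate point. The criterion from \cite[Proposition 1.6]{SRS} quantifies over \emph{all} open $U$ and \emph{all} closed $Y\subset U$ of codimension $\geq 2$, and these need not be $T$-invariant, so the phrase ``by $T$-equivariance and the toric local structure this reduces to the affine charts with $Y=\bigcup_{\dim\tau\geq 2}V(\tau)$'' is not a reduction; what your computation actually shows is that the natural map $\mathcal{E}\to j_{*}(\mathcal{E}|_{X(\Delta_1)})$ is an isomorphism, where $j\colon X(\Delta_1)=T\cup\bigcup_{\rho}U_{\rho}\hookrightarrow X$. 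To conclude reflexivity you need two further standard inputs: first, $\mathcal{E}|_{U_{\rho}}$ is locally free for every ray (an equivariant torsion free sheaf has $T$-invariant singular locus of codimension $\geq 2$, and $U_{\rho}=T\cup O(\rho)$ has no such invariant subset --- this is exactly the argument used in the proof of Proposition \ref{perdual}); second, the pushforward of a reflexive (e.g.\ locally free) sheaf under an open immersion with complement of codimension $\geq 2$ on a normal variety is again reflexive. Alternatively, your parenthetical double-dual remark can be made precise with Proposition \ref{perdual}: since $\mathcal{E}$ is locally free on each $U_{\rho}$, the reflexive sheaf $\mathcal{E}^{**}$ has the same ray filtrations, hence by the already-established forward direction its multifiltration is the intersection family you started from, and torsion freeness then forces the natural map $\mathcal{E}\to\mathcal{E}^{**}$ to be an isomorphism. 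With either repair the argument is complete and agrees in substance with Perling's proof.
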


The following proposition provides a necessary and sufficient condition for an equivariant reflexive sheaf to be locally free.

\begin{prop}\cite[Proposition 4.24]{perlingT} \label{compatibility for loc free sheaf}
Let $\mathcal{E }$ be an equivariant reflexive sheaf of rank \(r\) over \(X\) with corresponding filtration $(\mathbf{E}^0, \{E^{\rho}(i)\}_{\rho \in \Delta(1)})$. Then $\mathcal{E }$ is locally free if and only if for each $\sigma \in \Delta$ there is an action of \({T}_{\sigma}\) on \(\mathbf{E}^0\) which gives a decomposition of \(\mathbf{E}^0\) into \({T}_{\sigma}\)-eigenspaces \(\mathbf{E}^0=\bigoplus\limits_{m \in M/ S_{\sigma}^{\perp} } \mathbf{E}^0_m  \) such that 
\begin{equation}\label{compatibility for loc free sheaf1}
E^{\rho}(i)=\bigoplus\limits_{\substack{m \in M/ S_{\sigma}^{\perp}\\ \langle m, v_{\rho} \rangle \leq i }} \mathbf{E}^0_m .  
\end{equation}
\end{prop}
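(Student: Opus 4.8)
The plan is to translate the local-freeness condition for an equivariant reflexive sheaf into a purely combinatorial statement about the filtration data $(\mathbf{E}^0, \{E^{\rho}(i)\}_{\rho \in \Delta(1)})$ by localizing at each maximal cone $\sigma$ and invoking the classification of equivariant vector bundles via Klyachko's compatibility condition. Since local freeness is a local property and $X$ is covered by the affine charts $U_\sigma$ with $\sigma \in \Delta$ maximal (in fact it suffices to check on a cofinal system of opens), $\mathcal{E}$ is locally free if and only if $\mathcal{E}|_{U_\sigma}$ is a free $k[S_\sigma]$-module for every $\sigma$. First I would fix $\sigma$ and pass from the reflexive sheaf $\mathcal{E}$ to its associated $\sigma$-family; by Theorem \ref{reflexive} and the discussion preceding it, the graded module $E^\sigma = \Gamma(U_\sigma, \mathcal{E})$ has graded pieces $E^\sigma_m = \bigcap_{\rho \in \sigma(1)} E^\rho(\langle m, v_\rho\rangle) \subseteq \mathbf{E}^0$.

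Next I would show that $\mathcal{E}|_{U_\sigma}$ is free precisely when there is a $T_\sigma$-equivariant structure on $\mathbf{E}^0$ — equivalently an $M/S_\sigma^\perp$-grading $\mathbf{E}^0 = \bigoplus_m \mathbf{E}^0_m$ — for which the filtrations split compatibly as in \eqref{compatibility for loc free sheaf1}. The forward direction: if $\mathcal{E}|_{U_\sigma}$ is free, pick an equivariant trivialization; the $T$-eigenbasis of $\Gamma(U_\sigma,\mathcal{E})$ it provides, read off in $\mathbf{E}^0$, furnishes exactly such a grading, and one checks that each $E^\rho(i)$ is the span of those basis vectors whose weight $m$ satisfies $\langle m, v_\rho\rangle \leq i$, which is \eqref{compatibility for loc free sheaf1}. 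The converse: given a grading satisfying \eqref{compatibility for loc free sheaf1}, choose a homogeneous basis of $\mathbf{E}^0$ adapted to it; lifting these to $T$-eigensections of $\mathcal{E}$ over $U_\sigma$ (using the directed-limit identification $\mathbf{E}^\sigma \cong \mathbf{E}^0$ from the preliminaries) yields a $k[S_\sigma]$-module isomorphism $\bigoplus_j k[S_\sigma]\chi^{m_j} \xrightarrow{\sim} E^\sigma$, since \eqref{compatibility for loc free sheaf1} guarantees that the $m$-graded piece $E^\sigma_m = \bigcap_\rho E^\rho(\langle m,v_\rho\rangle)$ is spanned exactly by those basis vectors which the free module places in degree $m$.

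The main obstacle I expect is verifying the surjectivity/injectivity of this candidate module map degree by degree — i.e. proving that the intersection $\bigcap_{\rho \in \sigma(1)} E^\rho(\langle m, v_\rho\rangle)$ of the full filtrations equals $\bigoplus_{m' \leq_\sigma m} \mathbf{E}^0_{m'}$ under the hypothesis \eqref{compatibility for loc free sheaf1}, and conversely extracting the grading from a trivialization when no such splitting is assumed a priori. This is essentially the content of Klyachko's compatibility condition \cite[Theorem 2.2.1]{kly} for equivariant vector bundles, reformulated in Perling's language; the bookkeeping is in matching the preorder $\leq_\sigma$ on $M/S_\sigma^\perp$ with the partial order on weights induced by the dual cone $\sigma^\vee$, together with the observation that only the values $\langle m, v_\rho\rangle$ for $\rho \in \sigma(1)$ matter since $S_\sigma^\perp$ acts trivially (Remark \ref{RM1}). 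Once this identification is in place for a single $\sigma$, quantifying over all $\sigma \in \Delta$ gives the stated equivalence, and I would remark that it suffices to range over maximal cones since the condition for a face is implied by that for a cone containing it.
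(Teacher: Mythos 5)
The paper itself gives no proof of this proposition (it is quoted from Perling), and your sketch follows essentially the same route as Perling's argument, which the paper mirrors in its proof of Proposition \ref{perdual}: reduce to each affine chart $U_\sigma$, match a $T_\sigma$-eigenspace decomposition of $\mathbf{E}^0$ with a homogeneous basis of the graded module $\Gamma(U_\sigma,\mathcal{E})$, and identify $E^\sigma_m=\bigcap_{\rho\in\sigma(1)}E^{\rho}(\langle m,v_\rho\rangle)$ with $\bigoplus_{m'\leq_\sigma m}\mathbf{E}^0_{m'}$. The one step you assert rather than justify is that equivariant local freeness over $U_\sigma$ yields an equivariant trivialization, i.e.\ a homogeneous basis of the $M$-graded module; this is precisely \cite[Proposition 5.20]{perling} (cf.\ \cite[Proposition 2.1.1]{kly}) and should be cited there rather than taken for granted.
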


 \begin{rmk}\label{dist}{\rm 
 	Recall that a family of linear subspaces $\{V_\lambda \}_{\lambda \in \Lambda}$ of a finite dimensional vector space $V$ is said to form a distributive lattice if, there exists a basis $B$ of $V$ such that $B \cap V_\lambda$ is a basis of $V_\lambda$ for every $\lambda \in \Lambda$. 
 When \(X\) is nonsingular, the compatibility condition of locally free sheaves \eqref{compatibility for loc free sheaf1} in Proposition \ref{compatibility for loc free sheaf} is equivalent to the following: for each $\sigma \in \Delta$, the collection of subspaces $(\mathbf{E}^0, \{E^{\rho}(i)\}_{\rho \in \sigma(1)})$ forms a distributive lattice	(cf. \cite[Remark $2.2.2$]{kly}, arguments following Theorem 2.1.1 in \cite{bj}).}
 \end{rmk}

The first Chern class of an equivariant coherent sheaf can be expressed using its associated $\Delta$-family as follows:

\begin{prop}\cite[Corollary 3.18]{kool}\label{chern}
	Let \(X=X(\Delta)\) be a nonsingular projective toric variety. Let $\mathcal{E}$ be an equivariant coherent sheaf with associated $\Delta$-family $\widehat{E}^{\Delta}$. Then we have
	\[c_1(\mathcal{E})=-\sum_{\rho \in \Delta(1), i \in \Z} i \text{ dim } E^{[\rho]}(i) D_{\rho} ,\]
	where \(E^{[\rho]}(i)= E^{\rho}(i)/E^{\rho}(i-1)\) and \(E^{\rho}(i)=E^{\rho}_m\) such that \( \langle m, v_{\rho} \rangle=i\).
\end{prop}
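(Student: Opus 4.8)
The plan is to deduce the formula from the special case of equivariant line bundles, where it is an immediate computation, by two reductions: from an arbitrary equivariant coherent sheaf to its torsion-free quotient, and from an equivariant reflexive sheaf of rank $r$ to a line bundle by passing to the determinant.

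I would begin with the line bundle case. Every equivariant line bundle on $X=X(\Delta)$ has the form $\mc{O}_X(D)$ for a $T$-invariant divisor $D=\sum_{\rho}a_{\rho}D_{\rho}$, and $c_1(\mc{O}_X(D))=[D]$. The standard description of the $T$-eigenspaces of $\Gamma(U_{\sigma},\mc{O}_X(D))$ (see \cite{Cox}) shows that the component of degree $m$ is nonzero precisely when $\langle m,v_{\rho}\rangle\ge -a_{\rho}$ for every $\rho\in\sigma(1)$; hence, for each ray $\rho$, the associated filtration is $E^{\rho}(i)=\mathbf{E}^0$ for $i\ge -a_{\rho}$ and $E^{\rho}(i)=0$ otherwise, so it has a single jump, at $i=-a_{\rho}$. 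Thus $\dim E^{[\rho]}(i)$ equals $1$ for $i=-a_{\rho}$ and $0$ otherwise, and the right-hand side of the asserted identity is $-\sum_{\rho}(-a_{\rho})D_{\rho}=\sum_{\rho}a_{\rho}D_{\rho}=[D]=c_1(\mc{O}_X(D))$.

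Next I would treat an equivariant reflexive sheaf $\mc{E}$ of rank $r$. By the injectivity of the transition maps recorded in \eqref{torsionfree_maps}, for each ray $\rho$ the data $\{E^{\rho}(i)\}$ is a genuine full increasing filtration of $\mathbf{E}^0$ by subspaces, with finitely many jumps $j_1^{\rho}\le\cdots\le j_r^{\rho}$ counted with multiplicity; equivalently, after the identifications of Remark \ref{RM1}, the $\Z$-graded module $\bigoplus_{i}E^{\rho}(i)$ over the polynomial ring $k[t]$ (where $t$ is a character raising the $\rho$-degree by one) is free, $\cong\bigoplus_{a=1}^{r}k[t](-j_a^{\rho})$. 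Now form $\det\mc{E}=(\wedge^{r}\mc{E})^{**}$, an equivariant line bundle; one has $c_1(\mc{E})=c_1(\det\mc{E})$ by the standard relation between a torsion-free sheaf and its determinant line bundle. Since $\wedge^{r}$ commutes with restriction to the chart $U_{\rho}$ and with Perling's $\Delta$-family description, the $k[t]$-module associated to $\det\mc{E}$ along $\rho$ is $\wedge^{r}_{k[t]}(\bigoplus_{a}k[t](-j_a^{\rho}))=k[t](-\sum_{a}j_a^{\rho})$, i.e. the filtration of $\det\mc{E}$ along $\rho$ has a single jump at $\sum_{a}j_a^{\rho}=\sum_{i\in\Z}i\dim E^{[\rho]}(i)$. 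Feeding this into the line bundle case gives $c_1(\mc{E})=-\sum_{\rho,i}i\dim E^{[\rho]}(i)D_{\rho}$. The same conclusion then holds for an arbitrary equivariant torsion-free sheaf $\mc{F}$: because $U_{\rho}$ contains no $T$-invariant subvariety of codimension $\ge 2$, both $c_1(\mc{F})$ and the filtrations $\{F^{\rho}(i)\}$ are unchanged when $\mc{F}$ is replaced by its reflexive hull $\mc{F}^{**}$.

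Finally, for a general equivariant coherent sheaf $\mc{E}$ one would use the equivariant short exact sequence $0\to\mc{T}\to\mc{E}\to\mc{F}\to 0$, with $\mc{T}$ the torsion subsheaf and $\mc{F}$ torsion-free: $c_1$ is additive along it, and, since each $U_{\rho}$ is affine and hence $\Gamma(U_{\rho},-)$ is exact, the $\Delta$-family data along every ray is compatible with the sequence as well; the torsion-free part is handled by the previous paragraph, the codimension-$\ge 2$ part of $\mc{T}$ contributes zero to both sides, and the codimension-one part of $\mc{T}$ is dealt with by a direct computation on the charts $U_{\rho}$. I expect the main obstacle to be the determinant identification in the reflexive step---checking that applying $\wedge^{r}$ and then the reflexive hull has precisely the effect of adding the jump positions $j_a^{\rho}$ along each ray; this relies on the fact that over the one-parameter chart $U_{\rho}$ a finitely generated graded torsion-free module is graded free, together with the compatibility of exterior powers with the $\Delta$-family formalism. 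The bookkeeping for the torsion contribution in the last step is routine but must be carried out carefully, since there the transition maps $\chi^{\rho}_{m,m'}$ need no longer be injective.
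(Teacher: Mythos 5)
The paper never proves this proposition at all: it is imported verbatim from Kool (\cite[Corollary 3.18]{kool}), so there is no internal argument to compare yours against, and the only question is whether your self-contained derivation is sound. Your first three steps are: the line-bundle computation (which agrees with Example \ref{lb}); the reduction of the reflexive/torsion-free case to the line-bundle case via $\det\mathcal{E}=(\wedge^{r}\mathcal{E})^{**}$, using that an equivariant torsion-free sheaf is locally free on each chart $U_{\rho}$ (the same fact the paper uses in Proposition \ref{perdual}), so that $\Gamma(U_{\rho},\mathcal{E})$ is graded free over $k[S_{\rho}]$ and the determinant's single jump along $\rho$ is $\sum_{a}j_{a}^{\rho}=\sum_{i}i\dim E^{[\rho]}(i)$; and the passage from $\mathcal{F}$ to $\mathcal{F}^{**}$, harmless because the two agree on every $U_{\rho}$ and outside codimension two. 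All of this is correct, and it already covers every application of Proposition \ref{chern} made in this paper, since the proposition is only ever invoked for torsion-free and reflexive sheaves.

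The genuine gap is in your final step, for an arbitrary equivariant coherent sheaf. The expression $E^{[\rho]}(i)=E^{\rho}(i)/E^{\rho}(i-1)$ presupposes injectivity of the maps $\chi^{\rho}_{m,m'}$; if you read it as the cokernel of $E^{\rho}(i-1)\to E^{\rho}(i)$, the identity is simply false for torsion sheaves supported on invariant divisors, so no amount of chart-by-chart bookkeeping can finish the proof in that form. Concretely, for $\mathcal{E}=\mathcal{O}_{D_{\rho_{0}}}$ one has $c_{1}(\mathcal{E})=D_{\rho_{0}}$, while $E^{\rho_{0}}(i)$ equals $k$ at $i=0$ and $0$ otherwise and every other ray contributes nothing, so the cokernel reading of the right-hand side gives $0$. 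The statement does hold for coherent sheaves under the K-theoretic reading $\dim E^{[\rho]}(i):=\dim E^{\rho}(i)-\dim E^{\rho}(i-1)$ (allowed to be negative): writing $T^{\rho}(i)$ for the corresponding data of the torsion subsheaf $\mathcal{T}$, summation by parts gives $-\sum_{i}i\bigl(\dim T^{\rho}(i)-\dim T^{\rho}(i-1)\bigr)=\sum_{i}\dim T^{\rho}(i)$, which is precisely the length of $\mathcal{T}$ at the generic point of $D_{\rho}$, i.e.\ the coefficient of $D_{\rho}$ in $c_{1}(\mathcal{T})$; combined with additivity of both sides along $0\to\mathcal{T}\to\mathcal{E}\to\mathcal{F}\to 0$ (eigenspace-wise exact on the affine charts, where the codimension-$\geq 2$ part of $\mathcal{T}$ restricts to zero because $U_{\rho}$ has no invariant subvariety of codimension $\geq 2$) and with your torsion-free case, this closes the argument. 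So either restrict the proposition to torsion-free sheaves, which is all this paper needs, or carry out the last step with the corrected interpretation; as written, calling it routine bookkeeping conceals the fact that the literal formula fails there.
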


\begin{ex}[Filtrations for line bundles]\cite[Section 4.7]{perlingT} \label{lb}{\rm
		Let $\mathcal{L}=\mathcal{O}_X(D)$ be a toric line bundle on $X$ for some $T$-invariant Cartier divisor $D=\sum_{\rho \in \Delta(1)}a_{\rho}D_{\rho}$, $a_{\rho} \in \Z$. Then the associated filtrations $(L, \{L^{\rho}(i)\}_{\rho \in \Delta(1)})$ are given by:
		\[ L^{\rho}(i) = \left\{ \begin{array}{ll}
		0 & i <- a_{\rho} \\ 
		
		 L(=k)& i \geq - a_{\rho}.
		\end{array} \right. \] }
\end{ex}

Next we obtain the filtrations for the dual of an equivariant torsion free sheaf following the proof of \cite[Proposition 4.24]{perlingT} which will be useful to obtain the filtrations of the tangent bundle from that of the cotangent bundle.

\begin{prop}\label{perdual}
	Let $\mathcal{E}$ be an equivariant torsion free sheaf with associated family of multifiltrations \( \{E^{\sigma}_m\}_{m \in M}  \) of the vector space $\mathbf{E}^0$. Then the filtrations associated to its dual reflexive sheaf $\mathcal{E}^*$ are given by \(\left(F, \{F^{\rho}(i)\}_{\rho \in \Delta(1)} \right) \), where \[F=(\mathbf{E}^0)^* \text{ and } F^{\rho}(i)=\left( \frac{\mathbf{E}^0}{E^{\rho}(-i-1)}\right) ^* \text{ for all } \rho \in \Delta(1), \]
	where \(E^{\rho}(i)=E^{\rho}_m\) for any \(m \in M\) satisfying \(\langle m, v_{\rho} \rangle =i\).
\end{prop}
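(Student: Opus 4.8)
The plan is to reduce the statement to the affine/ray-by-ray situation and then dualize the corresponding graded modules explicitly. Since $\mathcal E^*$ is by definition the dual of a coherent sheaf, it is reflexive (by the cited \cite[Corollary 1.2]{SRS}), hence by Theorem \ref{reflexive} it is completely determined by the full filtrations $\{F^\rho(i)\}_{\rho\in\Delta(1)}$ obtained from $\Gamma(U_\rho,\mathcal E^*)$. So the computation is local on each $U_\rho$, $\rho\in\Delta(1)$, where $U_\rho=\operatorname{Spec}k[S_\rho]$ and $S_\rho$ is a half-space semigroup. The first step is therefore to fix $\rho$, write the $M$-graded $k[S_\rho]$-module $E^\rho=\bigoplus_{m}E^\rho_m$ corresponding to $\widehat E^\rho$, and identify $\Gamma(U_\rho,\mathcal E^*)=\operatorname{Hom}_{k[S_\rho]}(E^\rho,k[S_\rho])$ with its $M$-grading, reading off the degree-$m$ part $(\mathcal E^*)^\rho_m$.

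The second step is the key linear-algebra identification. Writing $j=\langle m,v_\rho\rangle$, a graded homomorphism $\varphi\colon E^\rho\to k[S_\rho]$ of degree $m$ is a compatible family of linear maps $E^\rho(\ell)\to k$ (where $k$ sits in degree $\ell'$ with $\ell'\ge -\,(\text{something})$), and the ``compatible'' constraint forces $\varphi$ to kill $E^\rho(\ell)$ for all $\ell$ with $\ell+j<0$, i.e. $\varphi$ factors through the quotient $\mathbf E^0/E^\rho(-j-1)$. Taking the direct limit over $m$ to pass from $\mathbf E^\rho$ to $\mathbf E^0$ as in \eqref{TF2}, one gets $(\mathbf E^0)^*$ for the ambient space $F$, and the degree-$m$ piece identifies with $\bigl(\mathbf E^0/E^\rho(-j-1)\bigr)^*$ sitting inside $(\mathbf E^0)^*$ as the annihilator of $E^\rho(-j-1)$. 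This is exactly the asserted formula $F^\rho(i)=\bigl(\mathbf E^0/E^\rho(-i-1)\bigr)^*$ after renaming $i=\langle m,v_\rho\rangle$. One should double-check here that these annihilators do form an \emph{increasing} full filtration in $i$: as $i$ grows, $-i-1$ decreases, $E^\rho(-i-1)$ shrinks, so its annihilator grows, and $E^\rho(-i-1)=0$ for $i\gg0$ while $E^\rho(-i-1)=\mathbf E^0$ for $i\ll0$, giving $F^\rho(i)=(\mathbf E^0)^*$ resp.\ $0$ at the two ends — consistent with fullness.

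The third step is to verify that these filtrations indeed assemble into the $\Delta$-family attached to $\mathcal E^*$, i.e. that the reflexive sheaf reconstructed from $\bigl(F,\{F^\rho(i)\}\bigr)$ via Theorem \ref{reflexive} is canonically isomorphic to $\mathcal E^*$. Because $\mathcal E^*$ is reflexive, by the discussion preceding Theorem \ref{reflexive} it is determined by $\Gamma(U_\rho,\mathcal E^*)$ over the rays together with the intersection formula $(\mathcal E^*)^\sigma_m=\bigcap_{\rho\in\sigma(1)}(\mathcal E^*)^\rho_m$; so once the ray-wise identification of Step 2 is in hand, there is nothing further to glue. I would organize this by invoking Theorem \ref{reflexive} to say that it suffices to match the ray filtrations, which Step 2 accomplishes.

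The main obstacle is Step 2: getting the grading shift exactly right, i.e. pinning down that the annihilated subspace is $E^\rho(-i-1)$ and not $E^\rho(-i)$ or $E^\rho(i)$. This hinges on the precise convention relating the degree of a homomorphism into $k[S_\rho]$ to which graded pieces it can be nonzero on, and on the convention $E^\rho(i)=E^\rho_m$ with $\langle m,v_\rho\rangle=i$ together with $m\le_\rho m'$ iff $\langle m'-m,v_\rho\rangle\ge0$. The cleanest way to nail this down, and the route I would follow, is to test the formula on the rank-one case $\mathcal E=\mathcal O_X(D)$ using Example \ref{lb}: there $E^\rho(i)=0$ for $i<-a_\rho$ and $=k$ otherwise, so $\bigl(\mathbf E^0/E^\rho(-i-1)\bigr)^*$ is $k$ for $-i-1<-a_\rho$, i.e. $i\ge a_\rho-1$... which must be reconciled with $\mathcal O_X(D)^*=\mathcal O_X(-D)$ having filtration jumping at $i=a_\rho$; tracking this mismatch is precisely what fixes the correct normalization and shows the stated $-i-1$ is the right shift once the dual of the \emph{line bundle} (not of its fibre) is used. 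Everything else is routine bookkeeping with graded modules and direct limits.
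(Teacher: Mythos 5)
Your overall route is the paper's route: reduce to the rays (an equivariant reflexive sheaf such as $\mathcal{E}^*$ is determined by its filtrations on the $U_{\rho}$, Theorem \ref{reflexive}) and compute the $M$-graded dual $\operatorname{Hom}_{k[S_{\rho}]}(E^{\rho},k[S_{\rho}])$ degree by degree, identifying the degree-$m$ piece with the annihilator of $E^{\rho}(-\langle m,v_{\rho}\rangle-1)$, i.e.\ with $\left(\mathbf{E}^0/E^{\rho}(-i-1)\right)^*$. The one substantive difference is how this identification is carried out. The paper first observes that equivariance plus torsion-freeness forces $\mathcal{E}|_{U_{\rho}}$ to be locally free (the singular set is a $T$-invariant closed set of codimension $\geq 2$, hence misses $U_{\rho}=T\cup O(\rho)$), so $E^{\rho}=\bigoplus_j k[S_{\rho}]e_j$ with $\deg e_j=m_j$, and then simply dualizes the homogeneous basis: the $f_j$ have degree $-m_j$, and the condition $\langle -m_j,v_{\rho}\rangle\leq i$ is exactly the complement of $\langle m_j,v_{\rho}\rangle\leq -i-1$, which gives the shift with no further work. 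Your basis-free version is viable, but then the two assertions you label routine --- that a degree-$m$ graded $k[S_{\rho}]$-linear map is the same thing as a functional on $\mathbf{E}^0$ annihilating $E^{\rho}(-j-1)$, in both directions --- are precisely where the content sits; with the paper's free basis this is immediate, so you should either invoke local freeness of $\mathcal{E}|_{U_{\rho}}$ as the paper does, or actually write out the compatibility check for the extension of a functional to a graded module map.

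Separately, your closing sanity check contains an arithmetic slip that you then misread as a genuine mismatch: for integers, $-i-1<-a_{\rho}$ is equivalent to $i\geq a_{\rho}$, not $i\geq a_{\rho}-1$. So the proposed formula gives $F^{\rho}(i)=k$ exactly for $i\geq a_{\rho}$, which is precisely the filtration of $\mathcal{O}_X(-D)$ from Example \ref{lb}; there is nothing to reconcile and no renormalization to perform. As written, your last paragraph leaves open the possibility that the shift is off by one and that some further convention-chasing is needed, which risks ``correcting'' a formula that is already right; the stated $-i-1$ is correct as it stands.
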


\begin{proof}
	Since $\mathcal{E}$ is torsion free, the singularity set \(S(\mathcal{E})= \{x \in X : \mathcal{E}_x \text{ is not free over } \mathcal{O}_{X,x} \}\) is of codimension at least two. Now for any ray $\rho \in \Delta(1)$, \(U_{\rho}=T \cup O(\rho)\), where \(O(\rho)=T \cdot x_{\rho}\) is a orbit of dimension \(n-1\). Since $\mathcal{ E }$ is equivariant, it must be locally free over \(U_{\rho}\). Then \(E^{\rho}:=\Gamma(U_{\rho}, \mathcal{E})\) is an \(M\)-graded finitely generated free \( k[S_{\rho}] \)-module of rank \(r\) (see \cite[Proposition 5.20]{perling}). We can write
	\begin{equation}\label{prop4.24.1}
		E^{\rho}=\bigoplus_{j=1}^r k[S_{\rho}]e_j,
	\end{equation} where \(e_1, \ldots, e_r\) are homogeneous elements with \(\text{deg }e_j=m_j\) for \(j=1, \ldots, r\). Equivalently, the \(T\)-action on \(E^{\rho}\) is given by \(t \cdot e_j=  \chi^{m_j}(t) e_j\) for \(j=1, \ldots, r\) (see \cite[Proposition 2.31]{perlingT}).
	Set \(L^{\rho}_j:=k[S_{\rho}] e_j \text{ for } j=1, \ldots, r.\) Then for every \(j\) we have:
	\[ (L^{\rho}_j) _m= \left\{ \begin{array}{ll}
	0 & m_j \nleq_{\rho} m\\ 
	
	k \chi^{m-m_j} e_j & m_j \leq_{\rho } m.
	\end{array} \right. \] 
	We denote by \(\mathbf{L}^{\rho}_{j} \) the direct limit of the directed family \(\{(L^{\rho}_j) _m\}_{m \in M}\). Then we see that \((L^{\rho}_j) _m \cong \mathbf{L}^{\rho}_{j}\) for all \(m \geq_{\rho} m_j\). In particular, we have the identification \(\mathbf{L}^{\rho}_{j}=k e_j\).	Thus for \(i=\langle m, v_{\rho} \rangle\) we have
	\[ L^{\rho}_j(i)= \left\{ \begin{array}
	{r@{\quad \quad}l}
	0 & i < \langle m_j, v_{\rho} \rangle\\ 
	
	\mathbf{L}^{\rho}_{j}& i \geq \langle m_j, v_{\rho} \rangle.
	\end{array} \right. \] 
	There is an action of \(T\) on the vector space \(\mathbf{L}^{\rho}_{j}\) as follows:
	\[T \times \mathbf{L}^{\rho}_{j} \longrightarrow \mathbf{L}^{\rho}_{j}, \ (t, l) \longmapsto \chi^{m_j}(t)l .\]
	Since direct limits commutes with direct sum, we have \(\mathbf{E}^{\rho}=\bigoplus\limits_{j=1}^r \mathbf{L}^{\rho}_{j}  \) and thus we get a diagonal action of \(T\) on $\mathbf{E}^{\rho}$ as follows:
	\[T \times \mathbf{E}^{\rho} \longrightarrow  \mathbf{E}^{\rho}, \ (t, e) \longmapsto \text{ diag}(\chi^{m_1}(t), \ldots, \chi^{m_r}(t)) e .\]
	Furthermore we have 
	\begin{equation}\label{perdual1}
	E^{\rho}(i)=\bigoplus_{\langle m_j, v_{\rho} \rangle \leq i }  \mathbf{L}^{\rho}_{j}.
	\end{equation}
	Then using the following commutative diagram we can transfer the \(T\)-action to \(\mathbf{E}^{0}  \)
	\begin{equation}\label{filtdual1}
		\begin{tikzpicture}[description/.style={fill=white,inner sep=2pt}]
		\matrix (m) [matrix of math nodes, row sep=3em,
		column sep=2.5em, text height=1.5ex, text depth=0.25ex]
		{ \mathbf{E}^{\rho}    & &  \bigoplus_{j=1}^r \mathbf{L}^{\rho}_j  \\
			\mathbf{E}^{0}    & &   \bigoplus_{j=1}^r \mathbf{L}^{0}_j   \\ };
		\path[->]  (m-1-1) edge node[auto] {}(m-1-3);
		\path[->] (m-2-1) edge node[below] {}(m-2-3);
		\path[->] (m-1-1) edge node[auto] {$\cong$}(m-2-1);
		\path[->] (m-1-1) edge node[below] {$\cong$}(m-1-3);
		\path[->] (m-1-3) edge node[auto] {$\cong$} (m-2-3);
		\path[->] (m-2-1) edge node[below] {$\cong$} (m-2-3);
		\end{tikzpicture}	
	\end{equation}
	
	and from \eqref{perdual1} we have 
	\begin{equation}\label{perdual2}
	E^{\rho}(i)=\bigoplus_{\langle m_j, v_{\rho} \rangle \leq i }  \mathbf{L}^{0}_{j}.
	\end{equation}
	
	Now for the dual sheaf $\mathcal{E}^*$ we have \(F^{\rho}:=\Gamma(U_{\rho}, \mathcal{E}^*)= \text{Hom }_{k[S_{\rho}]}(E^{\rho}, k[S_{\rho}])\). Define \(f_j \in  F^{\rho}\) for \(j=1, \ldots,r\) by taking \(f_j(e_i)=\delta_{ij}\) (see \eqref{prop4.24.1}). Since the dual action of \(T\) on \(F^{\rho}\) is compatible with the \(M\)-graded \(k[S_{\rho}]\)-module structure of \(F^{\rho}\), it follows that the elements \(f_j\) are homogeneous of degree \(-m_j\) for \(j=1, \ldots, r\). Thus we have \(F^{\rho}= \bigoplus\limits_{j=1}^r L'^{\rho}_j\) where 	\(L'^{\rho}_j:=k[S_{\rho}] f_j.\) Then for every \(j\) we have:
	\[ (L'^{\rho}_j) _m= \left\{ \begin{array}{ll}
	0 & -m_j \nleq_{\rho} m\\ 
	
	k \chi^{m+m_j} f_j & -m_j \leq_{\rho } m
	\end{array} \right. \] 
	As before we see that \((L'^{\rho}_j) _m \cong \mathbf{L'}^{\rho}_{j}\) for all \(m \geq_{\rho} -m_j\). In particular, we have the identification \(\mathbf{L'}^{\rho}_{j}=k f_j=(\mathbf{L}^{\rho}_{j})^*\). Now the action of \(T\) on \(\mathbf{L'}^{\rho}_{j}\) is given by 
	\[T \times \mathbf{L'}^{\rho}_{j} \longrightarrow \mathbf{L'}^{\rho}_{j}, \ (t, l) \longmapsto \chi^{-m_j}(t)l .\]
	Now \(\mathbf{F}^{\rho}=\bigoplus\limits_{j=1}^r \mathbf{L'}^{\rho}_{j}  \) and hence \(\mathbf{F}^{\rho}=(\mathbf{E}^{\rho})^*\). As before we get a diagonal action of \(T\) on $\mathbf{F}^{\rho}$ as follows:
	\[T \times \mathbf{F}^{\rho} \longrightarrow  \mathbf{F}^{\rho}, \ (t, e) \longmapsto \text{ diag}(\chi^{-m_1}(t), \ldots, \chi^{-m_r}(t)) e .\]
	Using the dual diagram of \eqref{filtdual1}, we transfer the \(T\)-action to \((\mathbf{E}^0)^*\). Thus we have
	\begin{equation*}
		\begin{split}
			F^{\rho}(i)& =\bigoplus_{\langle -m_j, v_{\rho} \rangle \leq i }  \mathbf{L'}^{0}_{j}=\left(\bigoplus_{\langle m_j, v_{\rho} \rangle \leq -i-1 }\mathbf{L}^{0}_{j} \right)^{\perp}= \left( E^{\rho}(-i-1)\right)^{\perp}= \left( \frac{\mathbf{E}^{0}}{E^{\rho}(-i-1)}\right) ^* .
		\end{split}
	\end{equation*}
	Hence we get the desired filtrations for $\mathcal{E}^*$.
	\end{proof}

\begin{rmk}{\rm 
Let $\mathcal{E}$ and $\mathcal{F}$ be an equivariant reflexive sheaves with associated filtrations \(\left(\mathbf{E}^0, \{E^{\rho}(i)\}_{\rho \in \Delta(1)} \right) \) and \(\left(\mathbf{F}^0, \{F^{\rho}(i)\}_{\rho \in \Delta(1)} \right) \) respectively. Then arguing similarly as in the proof of Proposition \ref{perdual}, the filtrations associated to their sum and tensor product are given as follows:
\begin{align*}
&\left(\mathbf{E}^0  \oplus \mathbf{F}^0, \{(E \oplus F)^{\rho}(i) \}\right), \text{ where } (E \oplus F)^{\rho}(i) =E^{\rho}(i) \oplus F^{\rho}(i), \\
& \left(\mathbf{E}^0  \otimes \mathbf{F}^0, \{(E \otimes F)^{\rho}(i)\}\right)  , \text{ where } (E \otimes F)^{\rho}(i) =\sum_{s+ t=i} E^{\rho}(s) \otimes F^{\rho}(t).
\end{align*}

}
\end{rmk}

\begin{prop}\label{percotan}
	Let \(X=X(\Delta)\) be a nonsingular complete toric variety of dimension \(n\). Then the filtrations \( \left( \Omega, \{\Omega^{\rho}(i)\}_{\rho \in \Delta(1), i \in \Z} \right)  \) associated to the cotangent bundle \(\Omega_X\) are given by 
	
	\[ \Omega^{\rho}(i) = \left\{ \begin{array}{cc}
	0 & i \leq -1 \\ 
	\text{Span}(v_{\rho})^{\perp} & i=0 \\
	M \otimes_{\Z} {k} & i \geq 1.
	\end{array} \right. \] 
\end{prop}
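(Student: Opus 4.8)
The plan is to compute the filtration ray by ray: restrict $\Omega_X$ to the one-dimensional chart $U_\rho$ and read off the $T$-weights, using the description of the filtration of a locally free equivariant sheaf that appears inside the proof of Proposition~\ref{perdual}. Concretely, if $e_1,\dots,e_n$ is a homogeneous $k[S_\rho]$-basis of the free module $\Gamma(U_\rho,\Omega_X)$ with $\deg e_j=m_j$, then, as subspaces of $\mathbf E^0$,
\[
\Omega^\rho(i)\ =\ \bigoplus_{\langle m_j,v_\rho\rangle\le i}\mathbf L^0_j ,
\]
where $\mathbf L^0_j$ is the line spanned by the image of $e_j$; this is exactly equation~\eqref{perdual2}, applied to $\mathcal E=\Omega_X$ (which is locally free, hence reflexive, since $X$ is nonsingular).

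First I would identify the ambient space. Since $X$ is complete its rays span $N_{\R}$, and on the open torus $\Gamma(T,\Omega_X)=\Omega_{k[M]/k}$ is a free $k[M]$-module with the weight-zero homogeneous basis $\{\,d\chi^m/\chi^m\,\}$ (for $m$ running through a $\Z$-basis of $M$). Passing to the direct limit collapses each graded piece $\chi^m\cdot\mathbf E^0$ onto $\mathbf E^0$ and gives the canonical identification $\mathbf E^0=\Omega\cong M\otimes_\Z k$ via $d\chi^m/\chi^m\leftrightarrow m\otimes 1$ (equivalently, $\mathbf E^0$ is the cotangent space at a $T$-fixed point of any maximal chart, with its natural $T$-action). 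This already forces $\Omega^\rho(i)=M\otimes_\Z k$ for $i$ large, so only the location of the jumps remains.

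Now fix a ray $\rho$. Because $X$ is nonsingular, $v_\rho$ extends to a $\Z$-basis $v_\rho=u_1,u_2,\dots,u_n$ of $N$; write $u_1^\vee,\dots,u_n^\vee$ for the dual basis of $M$ and $x_i=\chi^{u_i^\vee}$. Then $S_\rho=\{m\in M:\langle m,v_\rho\rangle\ge 0\}$, so $k[S_\rho]=k[x_1,x_2^{\pm1},\dots,x_n^{\pm1}]$ and $U_\rho\cong\mathbb A^1\times(k^*)^{n-1}$. The module of Kähler differentials $\Gamma(U_\rho,\Omega_X)=\Omega_{k[S_\rho]/k}$ is free over $k[S_\rho]$ with the homogeneous basis $dx_1,\ dx_2/x_2,\ \dots,\ dx_n/x_n$; here $dx_1$ has weight $u_1^\vee$, with $\langle u_1^\vee,v_\rho\rangle=1$, while each $dx_i/x_i$ $(i\ge 2)$ has weight $0$. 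Under the collapse to $\mathbf E^0=M\otimes_\Z k$ one has $dx_1=x_1\cdot(dx_1/x_1)\mapsto u_1^\vee\otimes 1$ and $dx_i/x_i\mapsto u_i^\vee\otimes 1$, so the lines $\mathbf L^0_j$ are precisely the $n$ coordinate lines $k\,(u_j^\vee\otimes 1)$.

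Substituting into the displayed formula finishes the proof: for $i\le -1$ no index satisfies $\langle m_j,v_\rho\rangle\le i$ (the weights pair with $v_\rho$ to $0$ or to $1$), so $\Omega^\rho(i)=0$; for $i=0$ exactly the indices $j=2,\dots,n$ qualify, so $\Omega^\rho(0)=\bigoplus_{j\ge 2}k\,(u_j^\vee\otimes 1)=\{\,m\in M\otimes_\Z k:\langle m,v_\rho\rangle=0\,\}=\mathrm{Span}(v_\rho)^\perp$; for $i\ge 1$ every index qualifies, so $\Omega^\rho(i)=M\otimes_\Z k$. The description visibly does not depend on the auxiliary completion $u_2,\dots,u_n$ of $v_\rho$, and by Theorem~\ref{reflexive} it characterizes $\Omega_X$. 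The only step needing genuine care is the bookkeeping of $T$-weights through the direct limit, which fixes the identification $\mathbf E^0\cong M\otimes_\Z k$ and hence the position of the subspace $\mathrm{Span}(v_\rho)^\perp$; once that is settled the computation is immediate. (Alternatively one may feed the generalized Euler sequence $0\to\Omega_X\to\bigoplus_{\rho}\mc{O}_X(-D_\rho)\to\mathrm{Cl}(X)\otimes\mc{O}_X\to 0$ into the line-bundle filtrations of Example~\ref{lb}, obtaining $\Omega^{\rho_0}(i)=\mathbf E^0\cap\bigoplus_\rho(\mc{O}_X(-D_\rho))^{\rho_0}(i)$ with $\mathbf E^0=\ker(\bigoplus_\rho k\to\mathrm{Cl}(X)\otimes k)$ identified with $M\otimes_\Z k$ via $m\mapsto(\langle m,v_\rho\rangle)_\rho$, which yields the same answer.)
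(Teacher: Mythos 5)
Your proof is correct and takes essentially the same route as the paper's: both read off the filtration from a homogeneous basis of the differentials on an affine chart via the formula $E^{\rho}(i)=\bigoplus_{\langle m_j,v_{\rho}\rangle\le i}\mathbf{L}^0_j$ established in the proof of Proposition \ref{perdual}. The only (cosmetic) difference is that the paper computes on a maximal smooth chart $U_{\sigma}$ with basis $dz_1,\dots,dz_n$ of weights $u_1,\dots,u_n$, while you work on the ray chart $U_{\rho}$ with basis $dx_1,\ dx_2/x_2,\dots,dx_n/x_n$; the weights pair with $v_{\rho}$ to the same values ($1$ once, $0$ otherwise), so the resulting jumps of $\Omega^{\rho}(i)$ are identical.
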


\begin{proof}
	Let $\sigma \in \Delta(n)$ and write \(\sigma=\text{Cone}(v_1, \ldots, v_n) \). Since \(X\) is nonsingular, \(\{v_1, \ldots, v_n\}\) forms a \(\Z\)-basis of \(N \cong \Z^n \). Let \(u_1, \ldots, u_n\) be the corresponding dual basis of \(M\). Then we have \(U_{\sigma}=\text{Spec }k[\chi^{u_1}, \ldots,\chi^{u_n}]\). Set \(z_j=\chi^{u_j}\) for \(j=1, \ldots, n\). Then $E^{\sigma}:=\Gamma(U_{\sigma}, \Omega_X)$ is a free \(k[S_{\sigma}]\)-module generated by \(dz_1, \ldots, dz_n\).  The action of \(T\) on \(dz_j\) is given by \(t \cdot dz_j=\chi^{u_j}(t)dz_j. \) Thus as an \(M\)-graded \(k[S_{\sigma}]\)-module, \(E^{\sigma}\) is of the form	\(E^{\sigma}=\bigoplus_{j=1}^n k[S_{\sigma}] dz_j\).	Set \(L^{\sigma}_j=k[S_{\sigma}]dz_j \) for \(j=1, \ldots, n\). Then we have 
	\[ (L^{\sigma}_j) _m= \left\{ \begin{array}{ll}
	0 & u_j \nleq_{\sigma} m\\ 
	
	k \chi^{m-u_j} dz_j & u_j \leq_{\sigma } m.
	\end{array} \right. \] 
	Moreover, \(\mathbf{L}^{\sigma}_j=(L^{\sigma}_j) _m \) for all \(m \geq_{\sigma} u_j\). Hence taking \(m=u_j\), we can identify \(\mathbf{L}^{\sigma}_j=k ~ dz_j \text{ for } j=1, \ldots, n\). Note that from the proof of \cite[Proposition 4.24]{perlingT}, we have
	\begin{equation}\label{filtcotan1}
		E^{\rho}(i)=\bigoplus_{\langle u_j, v_{\rho} \rangle \leq i}  \mathbf{L}^{\sigma}_j, \text{ for all } \rho \in \sigma(1).
	\end{equation}

	Now \(E^{\rho}(-1)=E^{\rho}_m\) such that \(\langle m, v_{\rho}\rangle=-1 \). Hence from \eqref{filtcotan1}, we get \(E^{\rho}(-1)=0\), which implies \(E^{\rho}(i)=0 \text{ for all } i \leq -1 .\) Similarly, \(E^{\rho}(1)=E^{\rho}_m\) such that \(\langle m, v_{\rho}\rangle=1 \). Thus from \eqref{filtcotan1}, we get \(E^{\rho}(1)=k ~ dz_1 \oplus \cdots \oplus k ~ dz_n\) which can be identified with 
	\begin{equation}\label{percotan1}
	M \otimes_{\Z} k=k u_1 \oplus \cdots \oplus k u_n \text{ via the identification } dz_j \mapsto u_j .
	\end{equation}
	Thus we get \(E^{\rho}(i)=M \otimes_{\Z} k \text{ for all } i \geq 1 .\)
	
	Let $\rho=\text{Cone }(v_j)$. Now to compute \(E^{\rho}(0)\), we consider an \(m \in M\) satisfying \(\langle m, v_{\rho}\rangle=0\). Then from from \eqref{filtcotan1}, we get \(E^{\rho}(0)=\mathbf{L}^{\sigma}_1 \bigoplus \cdots \bigoplus \widehat{\mathbf{L}}^{\sigma}_j \bigoplus \cdots \bigoplus \mathbf{L}^{\sigma}_n \). Hence we can identify the space \(E^{\rho}(0)\) with \(\text{Span}(v_{\rho})^{\perp} \) (see \eqref{percotan1}). Thus we get the desired filtrations, which we denote by \( \left( \Omega, \{\Omega^{\rho}(i)\}_{\rho \in \Delta(1), i \in \Z} \right)  \). 
\end{proof}
 The following corollary is immediate from Proposition \ref{perdual} and Proposition \ref{percotan}.
\begin{cor}\label{pertangb}
	Let \(X=X(\Delta)\) be a nonsingular complete toric variety of dimension \(n\). Then the filtrations \( \left( \mathscr{T}, \{\mathscr{T}^{\rho}(i)\}_{\rho \in \Delta(1), i \in \Z} \right)  \) associated to the tangent bundle \(\mathcal{T}_X\) are given by 
	
	\[ \mathscr{T}^{\rho}(i) = \left\{ \begin{array}{cc}
	0 & i \leq -2 \\ 
	\text{Span}(v_{\rho}) & i=-1 \\
	N \otimes_{\Z} k & i \geq 0.
	\end{array} \right. \] 
\end{cor}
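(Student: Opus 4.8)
The plan is to obtain the filtrations for $\mathcal{T}_X$ by dualizing the filtrations for the cotangent bundle $\Omega_X$ established in Proposition \ref{percotan}, using the general duality formula of Proposition \ref{perdual}. Since $X$ is nonsingular, $\Omega_X$ is locally free, hence in particular torsion free, so Proposition \ref{perdual} applies and gives $\mathcal{T}_X = \Omega_X^{*}$ with associated filtrations $(\mathscr{T}, \{\mathscr{T}^{\rho}(i)\})$ where $\mathscr{T} = (M\otimes_{\Z} k)^{*} = N \otimes_{\Z} k$ (using the identification from \eqref{percotan1}) and $\mathscr{T}^{\rho}(i) = \left( (M\otimes_{\Z}k)/\Omega^{\rho}(-i-1) \right)^{*}$ for each $\rho \in \Delta(1)$.

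The computation then splits into the three cases according to the formula for $\Omega^{\rho}(j)$ in Proposition \ref{percotan}, applied at $j = -i-1$. First, if $i \geq 0$, then $-i-1 \leq -1$, so $\Omega^{\rho}(-i-1) = 0$ and $\mathscr{T}^{\rho}(i) = (M\otimes_{\Z}k)^{*} = N\otimes_{\Z}k$. Second, if $i \leq -2$, then $-i-1 \geq 1$, so $\Omega^{\rho}(-i-1) = M\otimes_{\Z}k$, whence the quotient is zero and $\mathscr{T}^{\rho}(i) = 0$. Third, if $i = -1$, then $-i-1 = 0$, so $\Omega^{\rho}(0) = \mathrm{Span}(v_{\rho})^{\perp}$, and $\mathscr{T}^{\rho}(-1) = \left( (M\otimes_{\Z}k)/\mathrm{Span}(v_{\rho})^{\perp} \right)^{*}$. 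The only genuine (and still routine) point is to identify this last space with $\mathrm{Span}(v_{\rho}) \subseteq N\otimes_{\Z}k$: the annihilator of $\mathrm{Span}(v_{\rho})^{\perp}$ inside $(M\otimes_{\Z}k)^{*} = N\otimes_{\Z}k$ under the natural pairing is exactly $\mathrm{Span}(v_{\rho})$, by the standard fact that for a subspace $W \subseteq V$ one has $(V/W)^{*} \cong W^{\perp}$ and $(W^{\perp})^{\perp} = W$ in finite dimensions, together with the reflexivity $N \otimes_{\Z} k = (M\otimes_{\Z}k)^{*}$.

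There is essentially no obstacle here; the statement is an immediate corollary as the text already announces. The only thing to be slightly careful about is the index shift: the duality in Proposition \ref{perdual} sends $E^{\rho}(j)$ to $F^{\rho}(i) = (\mathbf{E}^0/E^{\rho}(-i-1))^{*}$, so one must track that the three breakpoints $j \in \{-1, 0, \geq 1\}$ of $\Omega^{\rho}$ map to the breakpoints $i \in \{\geq 0, -1, \leq -2\}$ of $\mathscr{T}^{\rho}$, which is exactly what reversing the sign and subtracting one does. After recording these three cases, one simply writes down the displayed piecewise formula, completing the proof.
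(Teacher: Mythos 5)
Your proposal is correct and follows exactly the route the paper intends: the paper states the corollary as immediate from Proposition \ref{perdual} applied to the cotangent filtrations of Proposition \ref{percotan}, and you have simply filled in the routine index shift $i \mapsto -i-1$ and the identification $\bigl((M\otimes_{\Z}k)/\mathrm{Span}(v_{\rho})^{\perp}\bigr)^{*} \cong \mathrm{Span}(v_{\rho})$ correctly. Nothing further is needed.
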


\subsection{Stability}\label{sec:stability}

Let \(X\) be a nonsingular projective variety of dimension \(n\). Fix a polarization \(H\), i.e. an ample divisor on \(X\). For a torsion free sheaf $\mathcal{E}$ over \(X\), we have \(\text{deg } \mathcal{E}=c_{1}(\mathcal{E}) \cdot H^{n-1}\) and slope $\mu(\mathcal{E})=\frac{\text{deg } \mathcal{E}}{\text{rank}(\mathcal{E})}$. 
\begin{rmk}\label{degpositive}
	For a \(T\)-invariant divisor \(D_{\rho}\) on a nonsingular projective toric variety \(X=X(\Delta)\), we have \(\text{deg } D_{\rho} > 0\) for all \(\rho \in \Delta(1)\) by Nakai-Moishezon criterion \cite[Theorem A.5.1]{hartshorne2013algebraic}. 
\end{rmk}
We consider stability in the sense of Mumford-Takemoto, which is also known as $\mu$-stability. A subsheaf $\mathcal{F}$ of $\mathcal{E}$ is called proper if \(0 < \text{rank}(\mathcal{F})< \text{rank}(\mathcal{E}) \). A torsion free sheaf $\mathcal{E}$ over \(X\) is said to be (semi)stable with respect to \(H\) if for any proper subsheaf $\mathcal{F}$ of $\mathcal{E}$, we have $\mu(\mathcal{F}) (\leq)< \mu(\mathcal{E})$. We say $\mathcal{E}$ is unstable if it is not semistable.

\begin{rmk}\label{ref2}
Let \(X\) be a nonsingular complex toric variety. To check $($semi$)$stability of a reflexive sheaf $\mathcal{E}$, it suffices to consider only proper saturated subsheaves of $\mathcal{E}$ $($see \cite[Proposition 1.2.6]{huybrechts2010geometry}$)$. Since saturated subsheaf of a reflexive sheaf is again reflexive $($see \cite[Lemma 1.1.16]{okonek}$)$, it is enough to consider only reflexive subsheaves of $\mathcal{E}$ for checking its $($semi$)$stability. Furthermore, if $\mathcal{E}$ is equivariant, by \cite[Theorem 2.1]{biswas2018stability}, it is enough to consider only equivariant reflexive subsheaves. 
\end{rmk}

\section{Characterization of equivariant subsheaves of an equivariant sheaf}

We characterize all equivariant subsheaves of a torsion free equivariant sheaf. From now onwards, we take the underlying field \(k=  \C \).

\begin{prop}\label{STF}
	Let \(\mathcal{E}\) be a torsion free equivariant sheaf on \(X\) corresponding to a family of multifiltrations \(\{E^{\sigma}_m\}_{\sigma \in \Delta, m \in M}\) of the vector space \(\mathbf{E}^0\). There is a one-to-one correspondence between equivariant subsheaves of \(\mathcal{E}\) and family of submultifiltrations  \(\{F^{\sigma}_m\}_{\sigma \in \Delta, m \in M}\) of the vector space \(\mathbf{F}^0\), where \(\mathbf{F}^0\) is a subspace of \(\mathbf{E}^0\) and \(F^{\sigma}_m=E^{\sigma}_m \cap \mathbf{F}^0\).
\end{prop}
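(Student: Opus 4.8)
The plan is to use Perling's equivalence (Theorem \ref{per5.18}) between equivariant torsion free sheaves and families of multifiltrations, together with the basic fact that an equivariant subsheaf $\mathcal{F} \subseteq \mathcal{E}$ is itself torsion free and equivariant (the equivariant structure restricts, and subsheaves of torsion free sheaves are torsion free), and that the inclusion $\mathcal{F} \hookrightarrow \mathcal{E}$ is a morphism of equivariant sheaves. Under the equivalence of categories, such an inclusion corresponds to a monomorphism in the category of families of multifiltrations, so I first need to understand what a monomorphism of multifiltrations looks like: by Definition \ref{multfilt} a morphism is a linear map $\phi : \mathbf{F}^0 \to \mathbf{E}^0$ with $\phi(F^\sigma_m) \subseteq E^\sigma_m$, and one checks that $\phi$ being a categorical monomorphism is equivalent to $\phi$ being injective on $\mathbf{F}^0$ (this follows because the associated functor to vector spaces, $\mathcal{F} \mapsto \mathbf{F}^0 = \Gamma(T,\mathcal F)_m$ for a fixed $m$, is faithful and exact on the relevant diagram, or directly from injectivity on global sections over the big torus).

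Next I would produce the correspondence in both directions. Given an equivariant subsheaf $\mathcal F \subseteq \mathcal E$, for each cone $\sigma$ the restriction $\Gamma(U_\sigma, \mathcal F) \subseteq \Gamma(U_\sigma, \mathcal E)$ is a graded $k[S_\sigma]$-submodule, and taking $m$-graded pieces and passing to the direct limit $\mathbf{E}^\sigma \cong \mathbf{E}^0$ identifies $\mathbf{F}^0$ with a subspace of $\mathbf{E}^0$ and each $F^\sigma_m = \Gamma(U_\sigma,\mathcal F)_m$ with a subspace of $E^\sigma_m$; since everything sits inside the fixed ambient space $\mathbf{E}^0$ via the identifications from \eqref{TF2}, and the limit of $\mathbf{F}^\sigma$ is $\mathbf F^0 \subseteq \mathbf E^0$, one gets $F^\sigma_m = \Gamma(U_\sigma, \mathcal F)_m = \Gamma(U_\sigma, \mathcal E)_m \cap \mathbf F^0 = E^\sigma_m \cap \mathbf F^0$ — here the middle equality uses that sections of $\mathcal F$ over $U_\sigma$ in degree $m$ are exactly those sections of $\mathcal E$ over $U_\sigma$ in degree $m$ whose image in the stalk at the generic point lies in $\mathbf F^0$, which is precisely injectivity of the restriction-to-torus maps \eqref{torsionfree_maps} for the torsion free sheaf $\mathcal E$. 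Conversely, given a subspace $\mathbf F^0 \subseteq \mathbf E^0$ and the subspaces $F^\sigma_m := E^\sigma_m \cap \mathbf F^0$, one must verify that $\{F^\sigma_m\}$ satisfies conditions (i)--(v) of Definition \ref{multfilt} — (i), (iii) and (v) are inherited by intersecting with a fixed subspace (for (v), intersecting the stationary chain $E^\sigma_{m+i m_\tau}$ with $\mathbf F^0$ keeps it stationary and commutes with the defining equality), and (ii) holds since $\bigcup_m (E^\sigma_m \cap \mathbf F^0) = (\bigcup_m E^\sigma_m) \cap \mathbf F^0 = \mathbf E^0 \cap \mathbf F^0 = \mathbf F^0$; condition (iv) (or the equivalent $(iv)'$ of Remark \ref{rmultfilt}) needs a small argument, since intersection with a subspace does not obviously preserve "$E^\sigma_m \not\subseteq \sum_{m'<_\sigma m}E^\sigma_{m'}$," but finiteness is safe because there are only finitely many $m$ (mod $S_\sigma^\perp$) where $\{E^\sigma_m\}$ jumps, and $\{F^\sigma_m\}$ can only jump among those. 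Then Theorem \ref{per5.18} produces an equivariant torsion free sheaf $\mathcal F$ together with a morphism $\mathcal F \to \mathcal E$ induced by $\mathbf F^0 \hookrightarrow \mathbf E^0$, which is a monomorphism of sheaves because it is a monomorphism of multifiltrations, hence identifies $\mathcal F$ with an equivariant subsheaf of $\mathcal E$.

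Finally I would check that the two constructions are mutually inverse: starting from $\mathcal F \subseteq \mathcal E$, forming $\mathbf F^0$ and the $F^\sigma_m$, and re-applying the equivalence returns a sheaf isomorphic to $\mathcal F$ compatibly with the inclusion into $\mathcal E$ (this is just the statement that Perling's equivalence is an equivalence, applied to the subobject); and starting from a family of submultifiltrations of the prescribed form $F^\sigma_m = E^\sigma_m \cap \mathbf F^0$, the sheaf it produces has global sections over $U_\sigma$ in degree $m$ equal to $F^\sigma_m$, so feeding it back through the first construction recovers the same data. The main obstacle I anticipate is the careful verification that an arbitrary subspace $\mathbf F^0 \subseteq \mathbf E^0$ with $F^\sigma_m = E^\sigma_m \cap \mathbf F^0$ automatically gives a genuine \emph{sub}multifiltration in Perling's sense — in particular checking condition $(iv)$/$(iv)'$ and the compatibility condition $(v)$ survive intersection with $\mathbf F^0$ — and, dually, that for an honest equivariant subsheaf the graded pieces of its sections over $U_\sigma$ really are obtained by intersecting with $\mathbf F^0$ and not something larger; both reduce to the injectivity statements of \eqref{torsionfree_maps} and the left-exactness of the section functor, but they deserve explicit mention.
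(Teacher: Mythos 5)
The gap is in your forward direction, at the middle equality $\Gamma(U_\sigma,\mathcal F)_m=\Gamma(U_\sigma,\mathcal E)_m\cap\mathbf F^0$, which you justify by ``injectivity of the restriction-to-torus maps \eqref{torsionfree_maps}'' and left-exactness of sections. Injectivity only gives the easy inclusion $F^\sigma_m\subseteq E^\sigma_m\cap\mathbf F^0$; the reverse inclusion is the actual content of the proposition and is not a formal consequence of injectivity or left-exactness. What must be shown is: if $e\in E^\sigma_m$ and its class $[e,m]$ in the limit $\mathbf E^\sigma\cong\mathbf E^0$ lies in $\mathbf F^0=\mathbf F^\sigma$, then $e$ itself lies in $F^\sigma_m$, i.e.\ membership in $\mathcal F$ over the dense torus propagates back over all of $U_\sigma$. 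This is not automatic for a subsheaf: a section of $\mathcal E$ over $U_\sigma$ whose restriction to $T$ is a section of $\mathcal F|_T$ need not be a section of $\mathcal F$ over $U_\sigma$ (already for equivariant subsheaves that differ from the ambient sheaf only by extra vanishing along boundary divisors, such as $\mathcal O_{\mathbb P^1}(-D_\rho)\subset\mathcal O_{\mathbb P^1}$, the generic fibres agree while the graded sections over $U_\rho$ do not), so ``image at the generic point lies in $\mathbf F^0$'' cannot by itself characterize $\Gamma(U_\sigma,\mathcal F)_m$. This is exactly where the paper's proof does its work: it writes $[e,m]=[e',m']$ with $e'\in F^\sigma_{m'}$, pushes both to a common $m''$ so that $\chi^\sigma_{m,m''}(e)\in F^\sigma_{m''}$, uses reductivity of $T$ to choose a $T$-stable graded complement $W^\sigma$ with $E^\sigma_m=F^\sigma_m\oplus W^\sigma_m$, and then invokes injectivity of the multiplication map $\chi^\sigma_{m,m''}$ (torsion-freeness of $\mathcal E$) to force the $W$-component of $e$ to vanish. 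Your proposal asserts the conclusion of this argument rather than proving it, so the crux of the forward direction is missing.

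The rest of your plan is essentially the paper's. For the converse, the paper also defines $F^\sigma_m:=E^\sigma_m\cap\mathbf F^0$ and cites Definition \ref{multfilt} together with Remark \ref{rmultfilt}: your worry about condition $(iv)$ is resolved exactly as the paper intends, since the reformulation $(iv)'$ with unions is inherited under intersection with a fixed subspace (your alternative finiteness argument via the jump set also works), and the subsheaf property follows from $\Gamma(U_\sigma,\mathcal F)=\bigoplus_m F^\sigma_m\subseteq\bigoplus_m E^\sigma_m=\Gamma(U_\sigma,\mathcal E)$. The preliminary discussion of monomorphisms of multifiltrations and the mutual-inverse check are harmless additions, but they do not repair the forward-direction step above.
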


\begin{proof}
	Let \(\mathcal{F}\) be an equivariant subsheaf of \(\mathcal{E}\) and hence \(F^{\sigma}:=\Gamma(U_{\sigma}, \mathcal{F})\) is a \(T\)-stable subspace of \(E^{\sigma}:=\Gamma(U_{\sigma}, \mathcal{E})\) for any \(\sigma \in \Delta\). The isotypical decomposition of both the spaces are given by \(F^{\sigma}=\bigoplus\limits_{m \in M} F^{\sigma}_m \) and \(E^{\sigma}=\bigoplus\limits_{m \in M} E^{\sigma}_m \), where \(F^{\sigma}_m = E^{\sigma}_m \cap F^{\sigma} \) since \(T\) is reductive (see \cite[Theorem 1.23]{brion}). Thus we obtain $\sigma$-families \(\widehat{F}^{\sigma}\) and \(\widehat{E}^{\sigma}\) associated to $\mathcal{F}$ and $\mathcal{E}$ respectively together with an inclusion of $\sigma$-families \(\widehat{F}^{\sigma} \longhookrightarrow \widehat{E}^{\sigma} .\)

The $\Delta$-family associated to $\mathcal{E}$ (respectively, $\mathcal{F}$) encodes the data for gluing the sheaves $\mathcal{E}_{\sigma}:=\mathcal{E}|_{U_{\sigma}}$ (respectively, $\mathcal{F}_{\sigma}:=\mathcal{F}|_{U_{\sigma}}$) on the affine open sets \(U_{\sigma}\). Since the gluing data of $\mathcal{F}$ is the restriction of the gluing data of $\mathcal{E}$, we get the following commuting diagram as $\tau$-families, where \(\tau \preceq \sigma \):
\begin{equation}\label{STFdiag3}
\begin{tikzpicture}[description/.style={fill=white,inner sep=2pt}]
\matrix (m) [matrix of math nodes, row sep=3em,
column sep=2.5em, text height=1.5ex, text depth=0.25ex]
{ i^{*}_{\tau \sigma} \widehat{F}^{\sigma}    & &   \widehat{F}^{\tau}   \\
	i^{*}_{\tau \sigma} \widehat{E}^{\sigma}    & &   \widehat{E}^{\tau}   \\ };
\path[->]  (m-1-1) edge node[auto] {}(m-1-3);
\path[ ->] (m-2-1) edge node[below] {}(m-2-3);
\path[right hook->] (m-1-1) edge node[auto] {}(m-2-1);
\path[->] (m-1-1) edge node[above] {$\eta'_{\tau \sigma}$}(m-1-3);
\path[->] (m-1-1) edge node[below] {$\cong$}(m-1-3);
\path[right hook ->] (m-1-3) edge node[auto] {} (m-2-3);
\path[->] (m-2-1) edge node[above] {$\eta_{\tau \sigma}$} (m-2-3);
\path[->] (m-2-1) edge node[below] {$\cong$} (m-2-3);
\end{tikzpicture}	
\end{equation}
where $\eta_{\tau \sigma }$ is as in the Definition \ref{delta fam} of $\Delta$-family, similarly $\eta'_{\tau \sigma }$ denotes the corresponding isomorphism. Furthermore, the commutative diagram of \(k[S_{\sigma}]\)-modules
\begin{center}
\begin{tikzpicture}[description/.style={fill=white,inner sep=2pt}]
\matrix (m) [matrix of math nodes, row sep=3em,
column sep=2.5em, text height=1.5ex, text depth=0.25ex]
{ \Gamma(U_{\sigma}, \mathcal{F})   & &  \Gamma(U_{\tau}, i^{*}_{\tau \sigma}\mathcal{F})  \\
\Gamma(U_{\sigma}, \mathcal{E})    & &  \Gamma(U_{\tau}, i^{*}_{\tau \sigma}\mathcal{E})   \\ };
\path[right hook ->]  (m-1-1) edge node[auto] {}(m-1-3);
\path[right hook ->] (m-2-1) edge node[below] {}(m-2-3);
\path[right hook->] (m-1-1) edge node[auto] {}(m-2-1);
\path[right hook ->] (m-1-3) edge node[auto] {} (m-2-3);
\end{tikzpicture}	
\end{center}

induces the following commutative diagram of $\sigma$-families (cf. \eqref{TF1})
\begin{equation}\label{STFdiag3.1}
\begin{tikzpicture}[description/.style={fill=white,inner sep=2pt}]
\matrix (m) [matrix of math nodes, row sep=3em,
column sep=2.5em, text height=1.5ex, text depth=0.25ex]
{ \widehat{F}^{\sigma}    & &  i^{*}_{\tau \sigma} \widehat{F}^{\sigma}   \\
	\widehat{E}^{\sigma}    & &  i^{*}_{\tau \sigma} \widehat{E}^{\sigma}   \\ };
\path[right hook ->]  (m-1-1) edge node[auto] {$\alpha'_{\tau \sigma}$}(m-1-3);
\path[right hook ->] (m-2-1) edge node[below] {$\alpha_{\tau \sigma}$}(m-2-3);
\path[right hook->] (m-1-1) edge node[auto] {}(m-2-1);
\path[right hook ->] (m-1-3) edge node[auto] {} (m-2-3);
\end{tikzpicture}	
\end{equation}

Combining \eqref{TF2}, the diagrams \eqref{STFdiag3}, \eqref{STFdiag3.1}  and \cite[Proposition 5.15, Corollary 5.16]{perling}, we get the following commutative diagram:
\begin{equation}\label{STFdiag1}
\begin{tikzpicture}[description/.style={fill=white,inner sep=2pt}]
\matrix (m) [matrix of math nodes, row sep=3em,
column sep=2.5em, text height=1.5ex, text depth=0.25ex]
{ \mathbf{F}^{\sigma}    & &  \mathbf{F}^{0}  \\
	\mathbf{E}^{\sigma}    & &  \mathbf{E}^{0}  \\ };
\path[->]  (m-1-1) edge node[auto] {}(m-1-3);
\path[->] (m-2-1) edge node[below] {}(m-2-3);
\path[right hook->] (m-1-1) edge node[auto] {}(m-2-1);
\path[->] (m-1-1) edge node[below] {$\cong$}(m-1-3);
\path[right hook ->] (m-1-3) edge node[auto] {} (m-2-3);
\path[->] (m-2-1) edge node[below] {$\cong$} (m-2-3);
\end{tikzpicture}	
\end{equation}

Hence we can realize all the spaces \(E^{\sigma}_m\) as subspace of the space \(\mathbf{E}^{0}\) and the spaces \(F^{\sigma}_m\) as subspace of the space \(\mathbf{F}^{0}\) such that the collection of subpaces \(\{E^{\sigma}_m\}_{m \in M}\) (respectively, \(\{F^{\sigma}_m\}_{m \in M}\)) of \(\mathbf{E}^0\) (respectively, \(\mathbf{F}^0\)) forms a family of multifiltrations. We have  \(F^{\sigma}_m \subseteq E^{\sigma}_m \cap \mathbf{F}^{0} \) for all \(\sigma \in \Delta \) and \(m \in M\). For the reverse inclusion, note that we have the following commutative diagram:
\begin{equation}\label{STFdiag2}
\begin{tikzpicture}[description/.style={fill=white,inner sep=2pt}]
\matrix (m) [matrix of math nodes, row sep=3em,
column sep=2.5em, text height=1.5ex, text depth=0.25ex]
{ F^{\sigma}_m    & &   \mathbf{F}^{0}   \\
	E^{\sigma}_m    & &   \mathbf{E}^{0}   \\ };
\path[right hook ->]  (m-1-1) edge node[auto] {}(m-1-3);
\path[right hook ->] (m-2-1) edge node[below] {}(m-2-3);
\path[right hook->] (m-1-1) edge node[auto] {}(m-2-1);
\path[right hook ->] (m-1-3) edge node[auto] {} (m-2-3);
\end{tikzpicture}	
\end{equation}

By the diagrams \eqref{STFdiag1} and \eqref{STFdiag2}, for  \(e \in E^{\sigma}_m \cap \mathbf{F}^{0}=E^{\sigma}_m \cap \mathbf{F}^{\sigma}\), we have \([e,m]=[e',m']\in \mathbf{F}^{\sigma} \subset \mathbf{E}^{\sigma} \), where \(e' \in F^{\sigma}_{m'}\). Then there exists \(m''\in M\) such that \(m, m' \leq_{\sigma }m''\) and \(\chi^{\sigma}_{m,m''}(e)=\chi^{\sigma}_{m',m''}(e') \in F^{\sigma}_{m''}\). Since \(F^{\sigma}\) is a \(T\)-stable submodule of \(E^{\sigma}\), it has a \(T\)-stable complement, say \(W^{\sigma}\) in \(E^{\sigma}\). Thus \(E^{\sigma}_m= F^{\sigma}_m \oplus W^{\sigma}_m\) for all \(m \in M \). Let us write \(e=e_1+e_2\), where \(e_1 \in F^{\sigma}_m \) and \(e_2 \in W^{\sigma}_m\). Then we have that \(\chi^{\sigma}_{m,m''}(e)=\chi^{\sigma}_{m,m''}(e_1)+ \chi^{\sigma}_{m,m''}(e_2)\) where \(\chi^{\sigma}_{m,m''}(e_1) \in F^{\sigma}_{m''} \) and \(\chi^{\sigma}_{m,m''}(e_2) \in W^{\sigma}_{m''}\). It follows that \(\chi^{\sigma}_{m,m''}(e_2)=0\) and since \(\chi^{\sigma}_{m,m''}\) is injective ($\mathcal{E}$ being torsion free), we have \(e_2=0\), i.e. \(e \in F^{\sigma}_m \). This concludes the proof of the forward direction of the proposition.

	Conversely, given a subspace \(\mathbf{F}^0\) of \(\mathbf{E}^0\), let us define \(F^{\sigma}_m:=E^{\sigma}_m \cap \mathbf{F}^0\) for \(\sigma \in \Delta, m \in M\). Then by Definition \ref{multfilt} and Remark \ref{rmultfilt}, \(\{F^{\sigma}_m\}_{\sigma \in \Delta, m \in M}\) forms a family of multifiltrations \(\{F^{\sigma}_m\}_{\sigma \in \Delta, m \in M}\) of the vector space \(\mathbf{F}^0\), and hence corresponds to a torsion free equivariant sheaf \(\mathcal{F}\) (see \cite[Theorem 5.18]{perling}). It remains to show that \(\mathcal{F}\) is an equivariant subsheaf of \(\mathcal{E}\) which follows from \(\Gamma(U_{\sigma}, \mathcal{F})=\bigoplus\limits_{m \in M} F^{\sigma}_m \subseteq \bigoplus\limits_{m \in M} E^{\sigma}_m = \Gamma(U_{\sigma}, \mathcal{E}).\)
	
\end{proof}

Recall that given a filtration $(V, \{F^pV\})$ on a vector space $V$ and a subspace $W \subseteq V$, there is an induced subfiltration on $W$ by setting $F^p(W):= W \cap F^p(V)$. As an immediate corollary of Proposition \ref{STF} we can characterize reflexive subsheaves in terms of induced subfiltrations.
\begin{cor} \label{RS1}
	Let $ \mathcal{E}$ be an equivariant reflexive sheaf on $X$ with associated filtrations \\*
	$\left( \mathbf{E}^0, \{E^{\rho}(i) \}_{\rho \in \Delta(1)} \right)$. Then  equivariant reflexive subsheaves of $\mathcal{E}$ are in one-to-one correspondence with the induced subfiltrations $\left( \mathbf{F}^0, \{F^{\rho}(i) \}_{\rho \in \Delta(1)} \right)$ of $\left( \mathbf{E}^0, \{E^{\rho}(i) \}_{\rho \in \Delta(1)} \right)$, where $\mathbf{F}^0$ is a subspace of $\mathbf{E}^0$.
\end{cor}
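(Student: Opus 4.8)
The plan is to bootstrap from Proposition \ref{STF}, using the fact (established in the discussion preceding Theorem \ref{reflexive}) that an equivariant reflexive sheaf is recorded entirely by its ray filtrations $(\mathbf{E}^0,\{E^{\rho}(i)\}_{\rho\in\Delta(1)})$, together with the identity $E^{\sigma}_m=\bigcap_{\rho\in\sigma(1)}E^{\rho}_m$ of subspaces of $\mathbf{E}^0$. For the forward direction, let $\mathcal{F}\subseteq\mathcal{E}$ be an equivariant reflexive subsheaf. Since $\mathcal{F}$ is in particular torsion free, Proposition \ref{STF} attaches to it a subspace $\mathbf{F}^0\subseteq\mathbf{E}^0$ together with the family of multifiltrations $F^{\sigma}_m=E^{\sigma}_m\cap\mathbf{F}^0$. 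Restricting to a ray $\rho$ and using the identification $F^{\rho}(i)=F^{\rho}_m$ for any $m$ with $\langle m,v_{\rho}\rangle=i$ yields $F^{\rho}(i)=E^{\rho}(i)\cap\mathbf{F}^0$, i.e. exactly the induced subfiltration of $(\mathbf{E}^0,\{E^{\rho}(i)\})$ on $\mathbf{F}^0$. Since $\mathcal{F}$ is reflexive it is recovered from $(\mathbf{F}^0,\{F^{\rho}(i)\}_{\rho\in\Delta(1)})$ by Theorem \ref{reflexive}, and the assignment $\mathcal{F}\mapsto(\mathbf{F}^0,\{F^{\rho}(i)\})$ is injective already by the injectivity in Proposition \ref{STF}.

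For the converse, fix a subspace $\mathbf{F}^0\subseteq\mathbf{E}^0$ and set $F^{\rho}(i):=E^{\rho}(i)\cap\mathbf{F}^0$ for all $\rho\in\Delta(1)$. Since $E^{\rho}(i)=\mathbf{E}^0$ for $i\gg 0$ and $E^{\rho}(i)=0$ for $i\ll 0$, each $\{F^{\rho}(i)\}_{i\in\Z}$ is a full filtration of $\mathbf{F}^0$, so by Theorem \ref{reflexive} the collection $(\mathbf{F}^0,\{F^{\rho}(i)\}_{\rho\in\Delta(1)})$ determines an equivariant reflexive sheaf $\mathcal{F}$. Its associated family of multifiltrations is
\[
F^{\sigma}_m=\bigcap_{\rho\in\sigma(1)}F^{\rho}_m=\bigcap_{\rho\in\sigma(1)}\bigl(E^{\rho}_m\cap\mathbf{F}^0\bigr)=\Bigl(\bigcap_{\rho\in\sigma(1)}E^{\rho}_m\Bigr)\cap\mathbf{F}^0=E^{\sigma}_m\cap\mathbf{F}^0,
\]
where the last equality uses reflexivity of $\mathcal{E}$. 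Thus $(\mathbf{F}^0,\{F^{\sigma}_m\})$ is precisely the datum that Proposition \ref{STF} assigns to an equivariant subsheaf of $\mathcal{E}$, so $\mathcal{F}$ is an equivariant reflexive subsheaf of $\mathcal{E}$ whose induced subfiltration is the prescribed one; this construction is inverse to the one of the previous paragraph, and the claimed bijection follows.

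The step I expect to be the main obstacle is the verification, in the converse, that the reflexive sheaf $\mathcal{F}$ built abstractly from the ray filtrations via Theorem \ref{reflexive} genuinely \emph{embeds} into $\mathcal{E}$ rather than merely admitting a morphism to it; this is exactly the point where the intersection identity $E^{\sigma}_m=\bigcap_{\rho\in\sigma(1)}E^{\rho}_m$ for the reflexive sheaf $\mathcal{E}$ is used, so that the ray description of $\mathcal{F}$ and its description through Proposition \ref{STF} coincide. A more hands-on alternative for this point: the inclusion $\mathbf{F}^0\hookrightarrow\mathbf{E}^0$ is compatible with all the ray filtrations, hence by the morphism part of Theorem \ref{reflexive} it induces a morphism $\mathcal{F}\to\mathcal{E}$ of equivariant reflexive sheaves, and this morphism is injective because on the dense torus $T$ it restricts to the inclusion $\mathbf{F}^0\hookrightarrow\mathbf{E}^0$ while $\mathcal{E}$ is torsion free.
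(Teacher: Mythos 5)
Your proposal is correct and follows essentially the same route as the paper, which states Corollary \ref{RS1} as an immediate consequence of Proposition \ref{STF} combined with the ray-filtration description of reflexive sheaves (Theorem \ref{reflexive}) and the intersection identity $E^{\sigma}_m=\bigcap_{\rho\in\sigma(1)}E^{\rho}_m$; you simply spell out the two directions that the paper leaves implicit. The details you supply (restricting the multifiltration of Proposition \ref{STF} to rays, and conversely checking that the sheaf built from the induced ray filtrations has $\sigma$-family $E^{\sigma}_m\cap\mathbf{F}^0$ and hence embeds via Proposition \ref{STF}) are exactly the intended justification.
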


The following proposition provides a combinatorial criterion of (semi)stability of equivariant sheaf (cf. \cite[Equation (19)]{knutson}, \cite[Page 1730]{kool}).

\begin{prop}\label{RS2}
	Let $\mathcal{E}$ be an equivariant torsion free sheaf on a nonsingular projective toric variety. Let \(\{E^{\sigma}_m\}_{\sigma \in \Delta, m \in M}\) be the family of multifiltrations of the vector space \(\mathbf{E}^0\) corresponding to it. Then $\mathcal{E}$ is (semi)stable  if and only if
	
	\[\frac{1}{\text{dim }F} \left( -\sum_{i\in \Z, \rho \in \Delta(1)} i \text{ dim}F^{[\rho]}(i) \text{deg}D_\rho \right) 	(\leq) < \frac{1}{\text{dim }E} \left( -\sum_{i\in \Z, \rho \in \Delta(1)} i \text{ dim}E^{[\rho]}(i) \text{deg}D_\rho\right)\]
	\noindent
	for every proper subspace $F$ of $E$, where $F^{\rho}(i)=F \cap E^{\rho}(i)$ for any ray $\rho$.
\end{prop}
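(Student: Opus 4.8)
The plan is to translate the definition of (semi)stability into the combinatorial language already developed. By Remark \ref{degpositive} and the definition of slope, for a torsion free sheaf $\mathcal{G}$ on a nonsingular projective toric variety we have $\mu(\mathcal{G}) = \frac{1}{\operatorname{rank}\mathcal{G}}\, c_1(\mathcal{G})\cdot H^{n-1}$, and by Proposition \ref{chern} the first Chern class of an equivariant coherent sheaf $\mathcal{G}$ with associated filtration data $\{G^{\rho}(i)\}$ is $c_1(\mathcal{G}) = -\sum_{\rho, i} i\,\dim G^{[\rho]}(i)\, D_\rho$. Intersecting with $H^{n-1}$ and using $\deg D_\rho = D_\rho \cdot H^{n-1}$, one gets $\deg \mathcal{G} = -\sum_{\rho,i} i\,\dim G^{[\rho]}(i)\,\deg D_\rho$, so the two sides of the asserted inequality are exactly $\mu(\mathcal{F})$ and $\mu(\mathcal{E})$ for an appropriate subsheaf $\mathcal{F}$. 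Thus the statement reduces to the claim that $\mathcal{E}$ is (semi)stable precisely when $\mu(\mathcal{F})(\leq)<\mu(\mathcal{E})$ holds for the subsheaves $\mathcal{F}$ arising from proper subspaces $F\subsetneq \mathbf{E}^0$ with the induced multifiltration $F^{\rho}(i) = F\cap E^{\rho}(i)$.

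The first direction is formal: by Remark \ref{ref2} (with the equivariant refinement from \cite{biswas2018stability}), to test (semi)stability of the torsion free — indeed, one reduces to reflexive — sheaf $\mathcal{E}$ it suffices to test equivariant subsheaves; and for each such $\mathcal{F}$, Proposition \ref{STF} (or Corollary \ref{RS1} in the reflexive case) gives a subspace $\mathbf{F}^0 = F \subseteq \mathbf{E}^0$ with $F^{\sigma}_m = E^{\sigma}_m \cap F$, whose rank is $\dim F$. So every relevant test subsheaf has the stated shape, and the Chern class formula makes its slope equal to the left-hand expression; hence if all these inequalities hold, $\mathcal{E}$ is (semi)stable. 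Conversely, given any proper subspace $F$ of $\mathbf{E}^0$, Proposition \ref{STF} produces a genuine equivariant torsion free subsheaf $\mathcal{F}\subseteq\mathcal{E}$ of rank $\dim F$ realizing the left-hand slope, so (semi)stability of $\mathcal{E}$ forces the displayed inequality. The two implications together give the equivalence.

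The one point needing care — and the step I expect to be the main obstacle — is the bookkeeping that $\operatorname{rank}\mathcal{F} = \dim F = \dim \mathbf{F}^0$ and, relatedly, that the quotient dimensions appearing in the Chern formula for $\mathcal{F}$ are indeed $\dim F^{[\rho]}(i) = \dim\!\big(F\cap E^{\rho}(i)\big)/\big(F\cap E^{\rho}(i-1)\big)$. The rank identification follows because $\mathbf{F}^0$ is identified (via the directed-limit construction recalled before Definition \ref{multfilt} and \cite[Corollary 5.16]{perling}) with the generic fibre data of $\mathcal{F}$, which has dimension equal to the rank; and the quotient dimensions are exactly what Proposition \ref{chern} records for $\mathcal{F}$ once we know its filtration is the induced one. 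A secondary subtlety is that Proposition \ref{chern} is stated for coherent sheaves while $F^{\rho}(\cdot)$ a priori defines only a torsion free sheaf; but $c_1$ and degree are insensitive to passing to the reflexive hull (they agree in codimension one), so one may as well compute with the reflexive sheaf attached to $\{F^{\rho}(i)\}$, and I would insert a one-line remark to that effect. Apart from this, the argument is a direct substitution of Proposition \ref{chern} and Remark \ref{degpositive} into the definition of $\mu$-(semi)stability, combined with Proposition \ref{STF}.
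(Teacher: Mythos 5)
Your proposal is correct and follows essentially the same route as the paper: express both slopes via Proposition \ref{chern} and $\deg D_\rho$, reduce to equivariant subsheaves via Remark \ref{ref2} (and \cite[Theorem 2.1]{biswas2018stability}), and use Proposition \ref{STF} to pass back and forth between such subsheaves and proper subspaces $F\subseteq \mathbf{E}^0$ with the induced multifiltrations. Your extra remark about passing to a reflexive hull is not needed, since the family $\{F\cap E^{\sigma}_m\}$ is itself a (finite) family of multifiltrations and hence already corresponds to a coherent torsion free subsheaf, to which Proposition \ref{chern} applies directly.
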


\begin{proof}
Note that by Proposition \ref{chern}, \(\mu(\mathcal{E})= \frac{1}{\text{dim }E} \left( -\sum_{i\in \Z, \rho \in \Delta(1)} i \text{ dim}E^{[\rho]}(i) \text{deg}D_\rho\right).\) Since subsheaf of a torsion free sheaf is again torsion free, using Proposition \ref{STF} and Remark \ref{ref2} the proposition follows.
\end{proof}

The following remark will be helpful for determining which subsheaves of the tangent bundle have maximum possible slope.

\begin{rmk}\label{sub_of_tangent}
	Let \(X=X(\Delta)\) be a nonsingular projective toric variety of dimension \(n\) and  \(\mathcal{F}\) be a proper equivariant reflexive subsheaf of $\mathcal{T}_X$. Let \(\left(F, \{F^{\rho}(i)\}_{\rho \in \Delta(1) \ i \in \Z} \right) \) be the filtrations associated to $\mathcal{F}$, where \(F\) is a vector subspace of \(N \otimes_{\Z} {k}  \cong k^{n}\) of dimension \(l \ (\leq n-1)\) and \(F^{\rho}(i)=F \cap \mathscr{T}^{\rho}(i)\) $($see Proposition \ref{RS1}$)$.  By Proposition \ref{chern}, we have 
	\begin{equation}
	c_1(\mathcal{F})= \left\{ \begin{array}{cc}
	\sum\limits_{v_{\rho} \in F \cap \Delta(1)}D_\rho & \text{ if } F \cap \Delta(1) \neq \emptyset \\ 
	0 & \text{ if } F \cap \Delta(1) = \emptyset.
	\end{array} \right. 
	\end{equation}
	
	Since we are interested in subsheaves of $\mathcal{T}_X$ with maximum possible slope and degree of \(D_{\rho}\) are positive (see Remark \ref{degpositive}), it is enough to consider	proper equivariant reflexive subsheaves with associated filtrations \(\left(F, \{F^{\rho}(i)\}_{\rho \in \Delta(1) \ i \in \Z} \right) \) where \(F= \text{Span}(F \cap \Delta(1)) \).
	
\end{rmk}

\section{Stability of tangent bundle on nonsingular projective toric variety with Picard number $\leq$ 2}

\subsection{Stability of tangent bundle on nonsingular projective toric variety with Picard number 1}
Note that the only nonsingular projective toric variety with Picard group \(\Z\) is the projective space (see \cite[Exercise 7.3.10]{Cox}). It is well known that tangent bundle on projective space is stable (see \cite[Theorem 1.3.2]{okonek}, \cite[Theorem 7.1]{biswas2018stability}). We give a simple proof of this fact using Proposition \ref{RS2}. 

\begin{prop}\label{stabtanP}
	The tangent bundle \(\mathcal{T}_{\mathbb{P}^n}\) is stable for all \(n >0\).
\end{prop}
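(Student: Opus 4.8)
The plan is to apply the combinatorial stability criterion of Proposition \ref{RS2} directly to the tangent bundle of $\mathbb{P}^n$, using the explicit filtration data from Corollary \ref{pertangb}. First I would recall the fan of $\mathbb{P}^n$: its rays are generated by $v_1, \ldots, v_n$ (the standard basis of $N \cong \Z^n$) together with $v_0 = -(v_1 + \cdots + v_n)$, so $\Delta(1)$ has $n+1$ elements. By Corollary \ref{pertangb}, for each ray $\rho$ the associated filtration of $N \otimes_\Z \C \cong \C^n$ is the two-step flag $0 \subsetneq \text{Span}(v_\rho) \subsetneq \C^n$, jumping at $i = -1$ and $i = 0$. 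By symmetry of $\mathbb{P}^n$ under the torus and the symmetric group action permuting the rays, all the divisors $D_\rho$ are rationally equivalent, so $\deg D_\rho = d > 0$ is independent of $\rho$ (Remark \ref{degpositive}).

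Next I would compute the slope of $\mathcal{T}_{\mathbb{P}^n}$ itself from the formula in Proposition \ref{RS2}. For each $\rho$, $\dim \mathscr{T}^{[\rho]}(-1) = 1$ and $\dim \mathscr{T}^{[\rho]}(0) = n-1$, with all other graded pieces zero; the only nonzero contribution to $-\sum_i i \dim \mathscr{T}^{[\rho]}(i)$ comes from $i = -1$, giving $+1$ per ray. Hence $\mu(\mathcal{T}_{\mathbb{P}^n}) = \frac{(n+1)d}{n}$. Now let $F \subsetneq \C^n$ be a proper nonzero subspace of dimension $l$, $1 \le l \le n-1$, corresponding to an equivariant reflexive subsheaf via Corollary \ref{RS1}. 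By Remark \ref{sub_of_tangent} it suffices to take $F = \text{Span}(F \cap \Delta(1))$, i.e. $F$ is spanned by some subset $S \subseteq \{v_0, v_1, \ldots, v_n\}$ of the ray generators. The induced filtration on $F$ at ray $\rho$ is $F^\rho(i) = F \cap \mathscr{T}^\rho(i)$: this equals $\text{Span}(v_\rho)$ if $v_\rho \in F$ and $0$ otherwise at level $i = -1$, and equals $F$ at level $i \ge 0$. So the jump of size $1$ at $i = -1$ occurs only for those rays $\rho$ with $v_\rho \in F$; the number of such rays is $|S|$, and the contribution to $-\sum_i i \dim F^{[\rho]}(i)$ is $+1$ for each. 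Therefore $\mu(\mathcal{F}) = \frac{|S| \cdot d}{l}$.

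The heart of the matter is then the purely combinatorial inequality: I must show $\frac{|S|}{l} < \frac{n+1}{n}$ for every subset $S \subseteq \{v_0, \ldots, v_n\}$ spanning an $l$-dimensional subspace with $l \le n-1$, and the key point is that such an $S$ satisfies $|S| \le l$ unless $S$ is contained in a set of dependent vectors — but in $\mathbb{P}^n$ \emph{any} $l+1$ of the $n+1$ generators $v_0, \ldots, v_n$ are linearly independent (they come from a simplex), so a spanning set of an $l$-dimensional space has $|S| \le l$ when $l \le n-1$; indeed if $|S| \ge l+1 \le n$ those vectors would be independent, forcing $\dim \text{Span}(S) \ge l+1$, a contradiction. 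Hence $|S| \le l$, giving $\mu(\mathcal{F}) \le \frac{ld}{l} = d < \frac{(n+1)d}{n} = \mu(\mathcal{T}_{\mathbb{P}^n})$, which is the strict inequality required for stability. I expect the main (though minor) obstacle to be bookkeeping the edge cases in Remark \ref{sub_of_tangent}'s reduction — confirming that subsheaves with $F \ne \text{Span}(F \cap \Delta(1))$ have even smaller slope and so need not be checked separately — and making the "any $l+1$ of the $n+1$ generators are independent" claim precise by reference to the simplicial fan structure of $\mathbb{P}^n$.
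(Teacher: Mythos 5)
Your proposal is correct and follows essentially the same route as the paper's proof: both reduce via Remark \ref{sub_of_tangent} and Corollary \ref{RS1} to subsheaves with $F=\operatorname{Span}(F\cap\Delta(1))$, compute $\mu(\mathcal{T}_{\mathbb{P}^n})=\bigl(1+\tfrac{1}{n}\bigr)\deg D_0$ and $\mu(\mathcal{F})=\tfrac{|S|}{l}\deg D_0$, and conclude from $|S|\le l$. The only difference is that you spell out why $|S|\le l$ (any at most $n$ of the $n+1$ ray generators are linearly independent), a point the paper states without justification.
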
	

\begin{proof}
	Let us fix  some ample divisor \(H\) on \(\mathbb{P}^n\). Let $\Delta$ denote the fan of \(\mathbb{P}^n\) in the lattice \(N=\Z^n\). Let \(e_1, \ldots, e_n\) denote the standard basis of \(\Z^n\) and set \(e_0=-e_1-\cdots-e_n\). Then the fan consists of \(n+1\) rays \(e_0, e_1 \ldots, e_n\) and \(n+1\) maximal cones \(\text{Cone}(e_0, \ldots, \widehat{e}_i, \ldots, e_n)\), where \(i=0, \ldots, n\). We can assume \(n \geq 2\) since the statement is trivial for \(n=1\). The divisors \(D_0,\ldots, D_n \) corresponding to the rays \(e_0, e_1 \ldots, e_n\) are all linearly equivalent and hence we have \(\text{deg }D_0=\ldots=\text{deg }D_n\). Note that \(\mu(\mathcal{T}_{\mathbb{P}^n})=(1+\frac{1}{n})\text{deg }D_0\).
	
By Remark \ref{sub_of_tangent},	let \(\mathcal{F}\) be a proper equivariant reflexive subsheaf of \(\mathcal{T}_{\mathbb{P}^n}\) of rank \( l<n\) with associated filtrations \(\left(F, \{F^{\rho}(i)\}_{\rho \in \Delta(1) \ i \in \Z} \right) \) where \(F= \text{Span}(F \cap \Delta(1)) \). Then we see that \(\mu(\mathcal{F})=\frac{p}{l}\text{deg }D_0 \leq \text{deg }D_0<\mu(\mathcal{T}_{\mathbb{P}^n})\), where \(|F \cap \Delta(1)|=p \leq l \). Hence by Proposition \ref{RS2}, \(\mathcal{T}_{\mathbb{P}^n}\) is stable with respect to \(H\).
\end{proof}	 

\subsection{Stability of tangent bundle on nonsingular projective toric variety with Picard number 2}

 Now we turn to nonsingular projective toric varieties with Picard group \(\Z^2\) which were classified by Kleinschimidt (see \cite[Theorem 7.3.7]{Cox}). He showed that if \(X\) is any nonsingular projective toric variety with \(\text{Pic}(X)\cong \Z^2 \), then there are integers \(s, r \geq 1\), \(s+r = \text{dim}(X)\) and \(0 \leq a_1 \leq \ldots \leq a_r\) such that \(X \cong \mathbb{P}(\mathcal{O}_{\mathbb{P}^s}  \oplus \mathcal{O}_{\mathbb{P}^s}(a_1) \oplus \cdots \oplus \mathcal{O}_{\mathbb{P}^s}(a_r)  ).\) We recall the fan structure of \(X\) from \cite[Example 7.3.5]{Cox}. Let $\Delta$ be the fan of \(X\) in the lattice \(N=\Z^s \times \Z^r\). Let \(\{u_1, \ldots, u_s\}\) and \(\{e_1', \ldots, e_r'\}\) be standard basis of \(\Z^s\) and \(\Z^r\) respectively. Set 
\begin{equation*}
\begin{split}
& v_i=(u_i, {\bf{0}}) \in N \text{ for } 1 \leq i \leq s \text{ ; } e_i=({\bf 0}, e_i') \in N  \text{ for } i=1, \ldots, r,\\   &v_0=-v_1-\cdots-v_s+a_1e_1+ \cdots+a_r e_r \text{ and }  e_0=-e_1- \cdots-e_r.
\end{split}
\end{equation*}
The rays of $\Delta$ are given by \( v_0, v_1 \ldots, v_s, e_0, e_1, \ldots, e_r \) and the maximal cones are given by
\[\text{Cone}(v_0, \ldots, \widehat{v}_j, \ldots, v_s ) + \text{Cone}(e_0, \ldots, \widehat{e}_i, \ldots, e_r ), 
\text{ for all } j=0, \ldots, s \text{ and } i=0, \ldots, r.\]

There is the following relations among the \(T\)-invariant prime divisors:
\begin{equation}
	\begin{split}
	\text{div}(\chi^{v_1^*}) & =D_{v_1}-D_{v_0}, \ldots, \text{div}(\chi^{v_s^*})=D_{v_s}-D_{v_0} \\
	\text{div}(\chi^{e_i^*}) &=D_{e_i}+a_i D_{v_0}-D_{e_0} \text{ for } i=1, \ldots, r.
	\end{split}
\end{equation}
Hence we have 
\begin{equation}\label{relnPic2}
D_{v_0} \sim_{\text{lin}} D_{v_i},  i=1, \ldots, s \text{ and }  D_{e_i} \sim_{\text{lin}} D_{e_0}-a_i D_{v_0}, i=1, \ldots, r. 
\end{equation}
By \eqref{relnPic2}, it follows that \(D_{v_0}\) and \(D_{e_0}\)  generate \(\text{Pic}(X)\). Now we show that \(D_{v_0}\) and \(D_{e_0}\) are not linearly equivalent. Consider the wall \[\tau=\text{Cone}(v_0, \ldots, \widehat{v}_i, \ldots, v_s, e_0, \ldots, \widehat{e}_j, \ldots, \widehat{e}_k, \ldots, e_r), \text{ where } 0 \leq i \leq s, \ 0 \leq j <k \leq r.\]
We can write \(\tau=\text{Cone}(\tau, e_j) \cap \text{Cone}(\tau, e_k) \) and hence the wall relation is given by \(e_0+e_1 \cdots+ e_r=0.\) Thus \(D_{v_0}\cdot V(\tau)=0\) and \(D_{e_0}\cdot V(\tau)=1\) (see Proposition \ref{wallreln}) which implies that \(D_{v_0}\) and \(D_{e_0}\) are not numerically equivalent and hence not linearly equivalent. This also shows that \(D_{v_0}\) and \(D_{e_0}\) are \( \Z\)-linearly independent and hence we have, $\text{Pic}(X)=\Z D_{v_0} \oplus  \Z D_{e_0}.  $
 In particular, the anticanonical divisor is given by 
 \begin{equation}\label{anti_can}
 -K_X=(s+1-a_1-\cdots-a_r) D_{v_0}+(r+1) D_{e_0}.
 \end{equation}

\begin{prop}
	Let \(D=a D_{v_0} + b D_{e_0}\), \(a, b \in \Z\) be a \(T\)-invariant divisor on $X = \mathbb{P}(\mathcal{O}_{\mathbb{P}^s}  \oplus \mathcal{O}_{\mathbb{P}^s}(a_1) \oplus \cdots \oplus \mathcal{O}_{\mathbb{P}^s}(a_r)  )$. Then \(D\) is ample $($respectively, nef$)$ if and only if \(a, b >0 \ ( \text{respectively,  }\\*
	 a, b \geq 0)\). In particular, \(X\) is Fano if and only if \(a_1+\cdots +a_r < s+1\).
\end{prop}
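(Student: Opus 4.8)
The plan is to invoke the toric Kleiman criterion: on a nonsingular complete toric variety a $T$-invariant Cartier divisor $D$ is nef (respectively, ample) precisely when $D \cdot V(\tau) \ge 0$ (respectively, $> 0$) for every wall $\tau \in \Delta(n-1)$, i.e. for every torus-invariant curve (see \cite[Theorem~6.3.12]{Cox}, the ample case being the strict-inequality version). Thus the statement reduces to computing $D_{v_0} \cdot V(\tau)$ and $D_{e_0} \cdot V(\tau)$ for all walls $\tau$, and for this I would use Proposition \ref{wallreln} together with the two relations among ray generators, namely $e_0 + e_1 + \cdots + e_r = 0$ and $v_0 + v_1 + \cdots + v_s = a_1 e_1 + \cdots + a_r e_r$.

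First I would list the walls. Two maximal cones of $\Delta$ meet along a wall exactly when they differ in a single ray, which happens in two ways: $(\mathrm{I})$ the $v$-parts agree and the $e$-parts omit two distinct rays $e_i, e_{i'}$, so $\tau = \text{Cone}(v_0, \ldots, \widehat{v}_j, \ldots, v_s) + \text{Cone}(e_0, \ldots, \widehat{e}_i, \ldots, \widehat{e}_{i'}, \ldots, e_r)$; or $(\mathrm{II})$ the $e$-parts agree and the $v$-parts omit two distinct rays $v_j, v_{j'}$, so $\tau = \text{Cone}(v_0, \ldots, \widehat{v}_j, \ldots, \widehat{v}_{j'}, \ldots, v_s) + \text{Cone}(e_0, \ldots, \widehat{e}_i, \ldots, e_r)$. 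For a wall of type $(\mathrm{I})$ the wall relation comes from $e_0 + \cdots + e_r = 0$, so every coefficient attached to an $e_\ell$ appearing in $\tau$ equals $1$ and every coefficient attached to a $v_\ell$ equals $0$; Proposition \ref{wallreln} then gives $D_{v_0} \cdot V(\tau) = 0$ and $D_{e_0} \cdot V(\tau) = 1$, which is exactly the computation carried out above for $D_{v_0}$ and $D_{e_0}$, hence $D \cdot V(\tau) = b$.

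For a wall of type $(\mathrm{II})$, omitting $v_j, v_{j'}$ and $e_i$, the ray $e_i$ is not a ray of $\tau$, so I would eliminate it from $v_0 + \cdots + v_s - \sum_{\ell = 1}^{r} a_\ell e_\ell = 0$ using $e_i = -\sum_{\ell \ne i} e_\ell$; writing $a_0 := 0$ this yields the wall relation $v_j + v_{j'} + \sum_{\ell \ne j, j'} v_\ell + \sum_{\ell \ne i} (a_i - a_\ell) e_\ell = 0$. Proposition \ref{wallreln} then gives $D_{v_0} \cdot V(\tau) = 1$ and $D_{e_0} \cdot V(\tau) = a_i$ (with $a_0 = 0$ when $i = 0$), so $D \cdot V(\tau) = a + b\,a_i$. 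Collecting everything: $D$ is nef iff $b \ge 0$ (from type $(\mathrm{I})$) and $a + b\,a_i \ge 0$ for $i = 0, \ldots, r$ (from type $(\mathrm{II})$); taking $i = 0$ forces $a \ge 0$, and then $0 \le a_1 \le \cdots \le a_r$ together with $b \ge 0$ makes the remaining inequalities automatic. Hence $D$ is nef $\iff a, b \ge 0$, and repeating the argument with strict inequalities gives $D$ ample $\iff a, b > 0$.

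For the Fano claim I would substitute the anticanonical class from \eqref{anti_can}: $-K_X = (s + 1 - a_1 - \cdots - a_r) D_{v_0} + (r+1) D_{e_0}$, so here $a = s + 1 - \sum_i a_i$ and $b = r + 1 > 0$; thus $-K_X$ is ample, i.e. $X$ is Fano, exactly when $s + 1 - \sum_i a_i > 0$, that is $a_1 + \cdots + a_r < s + 1$. The only delicate point is the type $(\mathrm{II})$ wall relation --- correctly substituting for the missing $e_i$ so that Proposition \ref{wallreln} applies, and keeping track of the boundary cases $j$ or $j'$ equal to $0$ and $i$ equal to $0$ --- but this is routine bookkeeping with the two given relations and presents no real difficulty.
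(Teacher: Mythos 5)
Your proposal is correct and follows essentially the same route as the paper: apply the toric Nakai/Kleiman criterion, enumerate the walls, derive the wall relations from $e_0+\cdots+e_r=0$ and $v_0+\cdots+v_s=\sum_\ell a_\ell e_\ell$, and read off $D_{v_0}\cdot V(\tau)$ and $D_{e_0}\cdot V(\tau)$ via Proposition \ref{wallreln}, then specialize to $-K_X$ for the Fano claim. The only difference is cosmetic: you merge the paper's two wall types $\tau_{\{i,j\},0}$ and $\tau_{\{i,j\},k}$ into a single family by setting $a_0:=0$ (and you compute the auxiliary coefficients $a_i-a_\ell$ explicitly, which the paper leaves unspecified), yielding the same conclusion $D\cdot V(\tau)\in\{b,\ a+a_i b\}$.
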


\begin{proof}
  Using Toric Nakai criterion (see \cite[Theorem 2.18]{Oda}), we have \(D\) is ample if and only if \(D\cdot V(\tau) >0 \) for all wall $\tau$. Thus we need to compute \(D\cdot V(\tau)\) for all walls $\tau \in \Delta(s+r-1)$. Note that the walls are of the following three types:
	
	\(\tau_{\{i,j\},0}=\text{Cone}(v_0, \ldots, \widehat{v}_i, \ldots, \widehat{v}_j, \ldots, v_s, \widehat{e}_0, e_1, \ldots, e_r)\), \(0 \leq i <j \leq s\), 
	
	\(\tau_{\{i,j\},k}=\text{Cone}(v_0, \ldots, \widehat{v}_i, \ldots, \widehat{v}_j, \ldots, v_s, e_0,\ldots, \widehat{e}_k, \ldots, e_r)\), \(0 \leq i <j \leq s\), \(0 < k \leq r\) and 
	
	\(\tau_{i,\{j,k\}}=\text{Cone}(v_0, \ldots, \widehat{v}_i, \ldots, v_s, e_0, \ldots, \widehat{e}_j, \ldots, \widehat{e}_k, \ldots, e_r)\), \(0 \leq i \leq s \), \(0 \leq j <k \leq r\).
	
	Note that the wall relation corresponding to the wall \(\tau_{\{i,j\},0}=\text{Cone}(\tau_{\{i,j\},0}, v_i) \cap \text{Cone}(\tau_{\{i,j\},0}, v_j) \) is given by
	\[v_0 + \cdots + v_s-a_1e_1 -\cdots-a_r e_r=0,\]
	which implies \(D_{v_0}\cdot V(\tau_{\{i,j\},0})=1\) and  \(D_{e_0}\cdot V(\tau_{\{i,j\},0})=0\) (see Proposition \ref{wallreln}) . This gives
	\begin{equation}\label{wall1}
	 D\cdot V(\tau_{\{i,j\},0})=a.
	\end{equation}
	
	Similarly, the wall \(\tau_{\{i,j\},k}=\text{Cone}(\tau_{\{i,j\},k}, v_i) \cap \text{Cone}(\tau_{\{i,j\},k}, v_j) \)  gives the relation
	\[v_0 + \cdots + v_s +a_k e_0+b_1e_1 +\cdots+ \widehat{e}_k+ \cdots+b_r e_r=0\] for some integers \(b_1, \ldots, b_{k-1}, b_{k+1}, \ldots, b_r\). Thus  \(D_{v_0}\cdot V(\tau_{\{i,j\},k})=1\) and  \(D_{e_0}\cdot V(\tau_{\{i,j\},k})=a_k\). Hence we have
	\begin{equation}\label{wall2}
	 D\cdot V(\tau_{\{i,j\},k})=a+a_k b.
	\end{equation}
	
	Finally the wall relation for \(\tau_{i,\{j,k\}}=\text{Cone}(\tau_{i,\{j,k\}}, e_j) \cap \text{Cone}(\tau_{i,\{j,k\}}, e_k) \) is as follows 
	\[e_0+e_1 \cdots+ e_r=0.\]
	 So we get \(D_{v_0}\cdot V(\tau_{i,\{j,k\}})=0\) and \(D_{e_0}\cdot V(\tau_{i,\{j,k\}})=1\).  Hence we have
	\begin{equation}\label{wall3}
 D\cdot V(\tau_{i,\{j,k\}})=b.
	\end{equation}
	Now considering the equations \eqref{wall1}, \eqref{wall2} and \eqref{wall3} it follows that \(D\) is ample (respectively, nef) if and only if \(a, b >0 \ ( \text{respectively,  } a, b \geq 0)\). 
	
	The second part of the proposition follows from \eqref{anti_can}.
\end{proof}

We fix a polarization \(H=a D_{v_0} + b D_{e_0}\), \(a, b >0\). Note that from \eqref{relnPic2}, we have \begin{equation}\label{degrel1}
\text{deg }D_{v_0}=\text{deg }D_{v_i} \text{ for } i=1, \ldots,s.
\end{equation}
\noindent
and \(\text{deg }D_{e_0}-\text{deg }D_{e_i}=a_i \text{deg }D_{v_0} \geq 0 \) for \(i=1, \ldots, r\) by Remark \ref{degpositive}. So we have 
\begin{equation}\label{degrel2}
\text{deg }D_{e_0} \geq \text{deg }D_{e_i} \text{ for } i=1, \ldots, r.
\end{equation}

Furthermore, we have \(\text{deg }D_{e_0}=\text{deg }D_{e_r}+a_r \text{deg }D_{v_0} >  a_r \text{deg }D_{v_0} \). Then if \(a_r\) is positive, we get
\begin{equation}\label{degrel3}
\text{deg }D_{e_0} > \text{deg }D_{v_0}.
\end{equation}

From \eqref{anti_can}, we have
\begin{equation}\label{slopeTpic2}
\mu(\mathcal{T}_X)=\left( \frac{s+1-a_1-\cdots-a_r}{s+r} \right) \text{deg }D_{v_0}+ \left( \frac{r+1}{s+r} \right) \text{deg }D_{e_0}
\end{equation}

Denote by $\alpha=\displaystyle \frac{s+1-a_1-\cdots-a_r}{s+r} $ and $\beta=\displaystyle \frac{r+1}{s+r}$, then \( \alpha < 1\) and \(0 < \beta \leq 1 \).

\begin{thm}\label{M1}
Let \(X = \mathbb{P}(\mathcal{O}_{\mathbb{P}^s}  \oplus \mathcal{O}_{\mathbb{P}^s}(a_1) \oplus \cdots \oplus \mathcal{O}_{\mathbb{P}^s}(a_r)  )\), where \(s, r \geq 1\), \(0 \leq a_1 \leq \ldots \leq a_r\) and \(a_r > 0\). Then the tangent bundle $\mathcal{T}_X$ is unstable with respect to any polarization whenever \((a_1, \ldots, a_r) \neq (0, 0, \ldots, 0, 1) \).
\end{thm}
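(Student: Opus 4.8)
The plan is to produce, for every ample divisor $H=aD_{v_0}+bD_{e_0}$ with $a,b>0$, a proper equivariant reflexive subsheaf of $\mathcal{T}_X$ whose slope strictly exceeds $\mu(\mathcal{T}_X)$, and then to conclude by Proposition \ref{RS2}. By Corollary \ref{pertangb} the filtration of $\mathcal{T}_X$ at a ray $\rho$ is the flag $0\subset\mathrm{Span}(v_\rho)\subset N\otimes_{\Z}k$, so by Corollary \ref{RS1} and Remark \ref{sub_of_tangent} such a subsheaf corresponds to a proper subspace $F=\mathrm{Span}(F\cap\Delta(1))\subseteq N\otimes_{\Z}k$, and it destabilizes $\mathcal{T}_X$ exactly when $\frac{1}{\dim F}\sum_{v_\rho\in F}\deg D_\rho>\mu(\mathcal{T}_X)$. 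Write $A=a_1+\cdots+a_r$; the hypothesis "$a_r>0$ and $(a_1,\dots,a_r)\neq(0,\dots,0,1)$" is exactly the condition $A\geq 2$, and by \eqref{slopeTpic2} we have $\mu(\mathcal{T}_X)=\alpha\deg D_{v_0}+\beta\deg D_{e_0}$ with $\alpha=\frac{s+1-A}{s+r}$ and $\beta=\frac{r+1}{s+r}$.

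The main candidate is the relative tangent bundle $\mathcal{T}_{X/\mathbb{P}^s}$ of $\pi\colon X\to\mathbb{P}^s$, whose filtration subspace is $\mathrm{Span}(e_1,\dots,e_r)$; since $s\geq 1$ this meets $\Delta(1)$ in $\{e_0,e_1,\dots,e_r\}$, so by \eqref{relnPic2} its degree is $(r+1)\deg D_{e_0}-A\deg D_{v_0}$ and a direct computation gives
\[
\mu(\mathcal{T}_{X/\mathbb{P}^s})-\mu(\mathcal{T}_X)=\frac{1}{r(s+r)}\Bigl((r+1)s\,\deg D_{e_0}-\bigl(sA+r(s+1)\bigr)\deg D_{v_0}\Bigr),
\]
so $\mathcal{T}_{X/\mathbb{P}^s}$ destabilizes $\mathcal{T}_X$ precisely when $\deg D_{e_0}/\deg D_{v_0}>T_1:=\frac{sA+r(s+1)}{s(r+1)}$. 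The positivity $\deg D_{e_r}>0$ (Remark \ref{degpositive}) together with $\deg D_{e_0}-\deg D_{e_r}=a_r\deg D_{v_0}$ already gives $\deg D_{e_0}/\deg D_{v_0}>a_r$, which exceeds $T_1$ once $a_r$ is not too small: in particular the cheap bound settles $r=1$ (where $a_r=A\geq T_1$), and a short intersection computation settles $s=1$ (where $\deg D_{e_0}/\deg D_{v_0}>A\geq T_1$).

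It remains to treat $s\geq 2$ with $a_r$ small, where the $a_i$ are forced to be small and roughly balanced. If some $a_i=0$, then $D_{e_i}\sim_{\mathrm{lin}}D_{e_0}$ by \eqref{relnPic2}, and the rank-one subsheaf with subspace $\mathrm{Span}(e_i)$ has slope $\deg D_{e_0}$; since $A\geq 2$ forces $s+1-A\leq s-1$, the inequality $\deg D_{e_0}>\mu(\mathcal{T}_X)$ reduces to $(s-1)\deg D_{e_0}>(s+1-A)\deg D_{v_0}$, which holds because $(s+1-A)\deg D_{v_0}\leq(s-1)\deg D_{v_0}<(s-1)\deg D_{e_0}$ by \eqref{degrel3}, so this subsheaf destabilizes $\mathcal{T}_X$ for all $H$. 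If all $a_i\geq 1$ (so $r\geq 2$, the case $r=1$ being settled) I use, besides $\mathcal{T}_{X/\mathbb{P}^s}$, the rank-$(s+1)$ subsheaf $\mathcal{F}_2$ attached to $F_2=\mathrm{Span}(v_1,\dots,v_s,e_0)$: when all $a_i$ are equal to some $a\geq 1$ then $v_0=-v_1-\cdots-v_s-ae_0$ lies in $F_2$, so $c_1(\mathcal{F}_2)=\sum_{i=0}^s D_{v_i}+D_{e_0}$ and, using \eqref{degrel1}, $\mathcal{F}_2$ destabilizes for $\deg D_{e_0}/\deg D_{v_0}<T_2:=\frac{(s+1)(A+r-1)}{rs+1}$, and one checks $T_1<T_2$ from $A=ra\geq 2$, so $\mathcal{T}_{X/\mathbb{P}^s}$ and $\mathcal{F}_2$ between them cover every polarization; when the $a_i$ are not all equal one instead uses a truncated fibre subsheaf $\mathrm{Span}(e_1,\dots,e_k)$ (with $k$ the last index for which $a_k<a_r$), whose destabilizing threshold tends to $1+\frac{1}{k}(a_1+\cdots+a_k)$ and again dominates the admissible range of $\deg D_{e_0}/\deg D_{v_0}$.

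I expect the hard part to be the coordination in the last step: one must confirm that the destabilizing half-planes cut out in the $\bigl(\deg D_{v_0},\deg D_{e_0}\bigr)$-plane by the subsheaves above jointly contain the whole ample cone of $X$. This rests on the sharp lower bound $\deg D_{e_0}/\deg D_{v_0}>h_s(a_1,\dots,a_r)/h_{s-1}(a_1,\dots,a_r)$, which one extracts from the Chow ring $A^{\bullet}(X)=\Z[\xi,\zeta]/\bigl(\xi^{s+1},\prod_{j=0}^{r}(\zeta-a_j\xi)\bigr)$ (with $\xi=D_{v_0}$, $\zeta=D_{e_0}$, $a_0=0$) and the push-forward identities $\pi_*(\zeta^{r+k})=h_k(a_1,\dots,a_r)\xi^k$; the comparison with the thresholds $T_1,T_2$ rests in turn on the log-concavity of the sequence $(h_k)$, valid since $\sum_k h_k z^k=\prod_j(1-a_jz)^{-1}$. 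All the individual slope computations are routine, via Proposition \ref{chern}, \eqref{relnPic2} and Proposition \ref{wallreln}.
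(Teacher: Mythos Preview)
Your plan is far more elaborate than the paper's, and its final case ($s\ge 2$, $r\ge 2$, all $a_i\ge 1$) is left as a sketch: you outline a covering of the ample cone by destabilizing half-planes, invoke the Chow-ring presentation and log-concavity of the $h_k$, but never actually carry out the comparison of $h_s/h_{s-1}$ with the thresholds $T_1,T_2$, nor do you make precise what the ``truncated fibre subsheaf'' argument gives when the $a_i$ are unequal. As written this is a genuine gap.

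The irony is that the gap is self-inflicted. The paper uses a \emph{single} destabilizing subsheaf in every case, namely the rank-one sheaf attached to $F=\mathrm{Span}(e_0)$, which is $\mathcal{O}_X(D_{e_0}+D_{e_1})$ when $r=1$ and $\mathcal{O}_X(D_{e_0})$ when $r\ge 2$. The entire proof is then the one-line estimate $\mu(\mathcal{T}_X)=\alpha\deg D_{v_0}+\beta\deg D_{e_0}<(\alpha+\beta)\deg D_{e_0}\le\deg D_{e_0}$, using $\deg D_{e_0}>\deg D_{v_0}$ from \eqref{degrel3} together with $\alpha+\beta\le 1$, which is exactly the hypothesis $A\ge 2$. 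You in fact rediscover this in your ``some $a_i=0$'' subcase, where you prove $(s-1)\deg D_{e_0}>(s+1-A)\deg D_{v_0}$ via $(s+1-A)\deg D_{v_0}\le(s-1)\deg D_{v_0}<(s-1)\deg D_{e_0}$; but observe that this chain of inequalities uses only $s\ge 2$, $A\ge 2$ and \eqref{degrel3}, and \emph{nowhere} that any $a_i$ vanishes. So that very computation already disposes of all of $s\ge 2$, $r\ge 2$ at once, making your subsequent case analysis (equal $a_i$, unequal $a_i$, $\mathcal{F}_2$, truncated fibre subsheaves, log-concavity) entirely superfluous. Combined with your correct treatments of $r=1$ and $s=1$ via the relative tangent bundle, you then have a complete proof.
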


\begin{proof}
Note that \(\mu(\mathcal{T}_X) < (\alpha + \beta )\text{deg }D_{e_0} \) from \eqref{degrel3} and \eqref{slopeTpic2}. Observe that  \(\alpha + \beta > 1\) if and only if \(a_1 + \cdots+ a_r \leq 1 \), i.e. \((a_1, \ldots, a_r) = (0, 0, \ldots, 0, 1) \). When \(\alpha + \beta \leq 1\), i.e. \((a_1, \ldots, a_r) \neq (0, 0, \ldots, 0, 1) \), then we see that $\mu(\mathcal{T}_X) < \text{deg }D_{e_0} $. From Proposition \ref{RS1}, it follows that for \(r=1\) (respectively, \(r \geq 2\)), \(\mathcal{O}_X(D_{e_0} +D_{e_1}) \)  (respectively, \(\mathcal{O}_X(D_{e_0}) \)) is a rank 1 reflexive subsheaf of $\mathcal{T}_X$ corresponding to the vector subspace \(\text{Span}(e_0)\) of \(N_{\C} \). Hence $\mathcal{T}_X$ is unstable.
\end{proof}

Next let us consider \(X = \mathbb{P}(\mathcal{O}_{\mathbb{P}^s}^r  \oplus \mathcal{O}_{\mathbb{P}^s}(1)  )\), where \(s, r \geq 1\). Then the relations in \eqref{relnPic2} simplifies to the following form 
\begin{equation}\label{relnPic2case2}
D_{v_0} \sim_{\text{lin}} D_{v_i},  i=1, \ldots, s; \ D_{e_0} \sim_{\text{lin}} D_{e_i}, i=1, \ldots, r-1 \text{ and } D_{e_r} \sim_{\text{lin}} D_{e_0}  - D_{v_0}. 
\end{equation}


\begin{lemma}\label{deg}
\(X = \mathbb{P}(\mathcal{O}_{\mathbb{P}^s}^r  \oplus \mathcal{O}_{\mathbb{P}^s}(1)  )\), where \(s, r \geq 1\). Then,

\begin{enumerate}
	\item \(\text{deg }D_{v_0}=\sum\limits_{i=r}^{r-1+s} \binom{r-1+s}{i} a^{r-1+s-i} b^{i}\)
	\item \(\text{deg }D_{e_0} =\sum\limits_{i=r-1}^{r-1+s} \binom{r-1+s}{i} a^{r-1+s-i} b^{i}\).
\end{enumerate} 
\end{lemma}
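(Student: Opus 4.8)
The plan is to compute both degrees directly from the fan structure by intersecting the relevant divisors with the polarization $H = aD_{v_0} + bD_{e_0}$ raised to the power $n-1 = s+r-1$. The key computational device is Proposition \ref{wallreln}, together with the linear equivalences \eqref{relnPic2case2} which allow every intersection to be reduced to monomials in $D_{v_0}$ and $D_{e_0}$. First I would use $\deg D_{v_0} = D_{v_0} \cdot H^{s+r-1}$ and expand $H^{s+r-1}$ multinomially as $\sum_j \binom{s+r-1}{j} a^{s+r-1-j} b^j D_{v_0}^{s+r-1-j} D_{e_0}^j$, so that $\deg D_{v_0} = \sum_j \binom{s+r-1}{j} a^{s+r-1-j} b^j \, (D_{v_0}^{s+r-j} \cdot D_{e_0}^j)$; similarly $\deg D_{e_0} = \sum_j \binom{s+r-1}{j} a^{s+r-1-j} b^j \, (D_{v_0}^{s+r-1-j} \cdot D_{e_0}^{j+1})$.

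The heart of the argument is therefore to evaluate the monomial intersection numbers $D_{v_0}^p \cdot D_{e_0}^q$ in the Chow ring, where $p + q = s+r$. Using the second part of Proposition \ref{wallreln}, such a product is nonzero only when the rays involved span a cone of $\Delta$; by \eqref{relnPic2case2} I would replace $D_{v_0}$ by any of $D_{v_1}, \ldots, D_{v_s}$ and $D_{e_0}$ by any of $D_{e_0}, \ldots, D_{e_{r-1}}$. Because the maximal cones of $\Delta$ are exactly $\text{Cone}(v_0,\ldots,\widehat v_j,\ldots,v_s) + \text{Cone}(e_0,\ldots,\widehat e_i,\ldots,e_r)$, a product of $s+r$ distinct such divisors is $1$ precisely when it corresponds to a maximal cone and $0$ otherwise. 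Combining these observations, $D_{v_0}^p \cdot D_{e_0}^q = 1$ exactly when one can realize it as a product of distinct divisors forming a maximal cone — which, after bookkeeping with the $s$ available $v$-rays and the $r$ available $e$-rays (with the constraint coming from the relation $D_{e_r} \sim D_{e_0} - D_{v_0}$), forces $p \geq r$ in the first sum and $q \geq r-1$ in the second; outside these ranges the intersection vanishes. This cut-off on the summation index is precisely what produces the stated lower limits $i = r$ and $i = r-1$.

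Concretely, I expect the cleanest route is induction on $r$, or equivalently a direct appeal to the projective bundle structure: writing $X \to \mathbb{P}^s$ as a $\mathbb{P}^r$-bundle, $D_{e_0}$ restricts to the relative hyperplane class $\xi$ and $D_{v_0}$ pulls back the hyperplane class of $\mathbb{P}^s$, with the single relation $\xi^r = (\text{pullback of } D_{v_0}) \cdot \xi^{r-1}$ in cohomology (this is the Grothendieck relation for $\mathbb{P}(\mathcal{O}^r \oplus \mathcal{O}(1))$, coming from $c(\mathcal{O}^r \oplus \mathcal{O}(1)) = 1 + D_{v_0}$, equivalently $D_{e_r} \sim D_{e_0} - D_{v_0}$). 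Then $D_{v_0}^p D_{e_0}^q$ pushes forward to $\mathbb{P}^s$ and is nonzero iff $p = s$ after absorbing excess powers of $\xi$ via the relation, which gives $D_{v_0}^p D_{e_0}^q = 1$ iff $q \geq r-1$ and $p = s + r - q \leq s$, i.e. $q \in \{r-1, \ldots, r-1+s\}$ — matching the index ranges in the statement. Substituting back into the multinomial expansions yields exactly the two formulas.

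The main obstacle is the careful combinatorial verification that the monomial intersection numbers $D_{v_0}^p \cdot D_{e_0}^q$ equal $1$ on the nose (not some larger integer) throughout the claimed range, and that they genuinely vanish just below it; this requires either a clean induction using the projective bundle push-forward formula or a somewhat delicate repeated application of the wall relations in Proposition \ref{wallreln} to track which products of distinct prime divisors land on a maximal cone. Once that is settled, assembling the two sums is routine.
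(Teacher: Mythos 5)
Your strategy is essentially the paper's own: expand $H^{s+r-1}=(aD_{v_0}+bD_{e_0})^{s+r-1}$ binomially, reduce each monomial $D_{v_0}^{p}D_{e_0}^{q}$ using the linear equivalences \eqref{relnPic2case2} together with the vanishings coming from sets of rays that span no cone (so $D_{v_0}^{s+1}=0$ and $D_{e_0}\cdots D_{e_r}=0$), and evaluate the surviving products of distinct prime divisors as $1$ via Proposition \ref{wallreln}. The only cosmetic difference is that the paper obtains $\text{deg }D_{e_0}$ as $\text{deg }D_{e_r}+\text{deg }D_{v_0}$ (after first computing $\text{deg }D_{e_r}=\binom{r-1+s}{r-1}a^{s}b^{r-1}$) rather than intersecting $H^{s+r-1}$ with $D_{e_0}$ directly, and it does not use the projective-bundle push-forward you sketch as an alternative.

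You should, however, tighten the key numerical claim, which as written is off by one in two places. For $p+q=s+r$ the correct evaluation is: $D_{v_0}^{p}D_{e_0}^{q}=1$ if $q\geq r$ (equivalently $p\leq s$), and $=0$ if $q\leq r-1$ (for then $p\geq s+1$ and $D_{v_0}^{s+1}=0$); indeed for $q\geq r$ one has $D_{e_0}^{q}=D_{e_0}\cdots D_{e_{r-1}}D_{v_0}^{q-r}$, hence $D_{v_0}^{p}D_{e_0}^{q}=D_{v_1}\cdots D_{v_s}D_{e_0}\cdots D_{e_{r-1}}=1$, the class of a point of a maximal cone. Your paraphrase ``$q\in\{r-1,\dots,r-1+s\}$'' and the displayed relation ``$\xi^{r}=\pi^{*}D_{v_0}\cdot \xi^{r-1}$'' are both one degree too low: the Grothendieck relation for the rank-$(r+1)$ bundle $\mathcal{O}_{\mathbb{P}^s}^{r}\oplus\mathcal{O}_{\mathbb{P}^s}(1)$ lives in degree $r+1$, namely $D_{e_0}^{r+1}=D_{v_0}D_{e_0}^{r}$ (from $D_{e_0}\cdots D_{e_r}=0$ and $D_{e_r}\sim D_{e_0}-D_{v_0}$); already for $r=s=1$ your degree-$r$ relation would assert $D_{e_0}\sim D_{v_0}$, which is false. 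With the correct criterion $q\geq r$ the lower limits come out as stated: in (1) the exponent of $D_{e_0}$ equals the summation index $i$, forcing $i\geq r$, while in (2) it equals $i+1$, forcing $i\geq r-1$; had one used $q\geq r-1$ instead, formula (1) would acquire a spurious $i=r-1$ term. Once this is fixed, the rest of your argument goes through and reproduces the paper's computation.
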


\begin{proof}
We first compute \(H^{s+r-1}\) as follows. \[H^{s+r-1}=(a D_{v_0} + b D_{e_0})^{s+r-1}
=\sum\limits_{i=0}^{r-1+s} \binom{r-1+s}{i}  a^{r-1+s-i} b^i \ D_{v_0}^{r-1+s-i} D_{e_0}^i .\]
From \eqref{relnPic2case2}, we have
\begin{equation}\label{binom1}
\begin{split}
& D_{v_0}^{r-1+s-i} =0 \text{ for } i < r-1, \\
& D_{e_0}^{r+j} =D_{e_0} \cdots D_{e_{r-1}} (D_{e_r}+D_{v_0})^j=D_{e_0} \cdots D_{e_{r-1}} D_{v_0}^j \text{ for } j>0.
\end{split}
\end{equation}

Put \(i=r+j\), \(j>0\), then the \(i\)-th term of the binomial expression \(H^{s+r-1}\) takes the form 
\begin{equation}\label{binom2}
D_{v_0}^{r-1+s-i} D_{e_0}^i=D_{v_0}^{s-1-j} D_{e_0}^{r+j}=D_{e_0} \cdots D_{e_{r-1}} D_{v_0}^{s-1}  
\end{equation}
 
 From \eqref{binom1} and \eqref{binom2} we see that
 
 \begin{equation}\label{H}
 \begin{split}
 H^{s+r-1}=& \binom{r-1+s}{r-1} a^s b^{r-1} \ D_{e_0}^{r-1} D_{v_0}^s + \binom{r-1+s}{r} a^{s-1} b^r  D_{e_0}^r D_{v_0}^{s-1}\\
 & + \sum\limits_{i=r+1}^{r-1+s} \left( \binom{r-1+s}{i} \ a^{r-1+s-i} b^i \right) D_{e_0} \cdots D_{e_{r-1}} D_{v_0}^{s-1}.
 \end{split}
 \end{equation}
 
 Now see that \begin{equation*}
 \begin{split}
 &D_{e_0}^{r-1} D_{v_0}^s \cdot D_{e_r}=1,\ D_{e_0}^r D_{v_0}^{s-1} \cdot D_{e_r}=0, \ D_{e_0} \cdots D_{e_{r-1}} D_{v_0}^{s-1} \cdots D_{e_r}=0.
 \end{split}
 \end{equation*} Hence we have 
 \begin{equation}\label{D_er}
\text{deg }D_{e_r} =\binom{r-1+s}{r-1} a^s b^{r-1}.
 \end{equation}

Similarly, we can see that 
\begin{equation}\label{D_v0}
\text{deg }D_{v_0}=\sum\limits_{i=r}^{r-1+s} \binom{r-1+s}{i}  a^{r-1+s-i} b^i
\end{equation}

Finally, from \eqref{relnPic2case2}, \eqref{D_er} and \eqref{D_v0} we get, 
\[\text{deg }D_{e_0} =\sum\limits_{i=r-1}^{r-1+s} \binom{r-1+s}{i} a^{r-1+s-i} b^{i} .\]
\end{proof}

The following lemma is very crucial in studying the stability of the tangent bundle of \(\mathbb{P}(\mathcal{O}_{\mathbb{P}^s}^r  \oplus \mathcal{O}_{\mathbb{P}^s}(1)  )\), where \(s, r \geq 1\).

\begin{lemma}\label{RSforr>1}
Let \(X = \mathbb{P}(\mathcal{O}_{\mathbb{P}^s}^r  \oplus \mathcal{O}_{\mathbb{P}^s}(1)  )\), where \(s, r \geq 1\). Then 
\[max\{\mu(\mathcal{F}) : \mathcal{F} \text{ is a proper subsheaf of } \mathcal{T}_X\}=\text{deg }D_{e_0}+\frac{1}{r}\left( \text{deg }D_{e_0}-  \text{deg }D_{v_0}\right).\]
\end{lemma}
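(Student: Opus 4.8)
The plan is to reduce the statement to a finite combinatorial optimization over subsets of rays. By Corollary \ref{RS1} and Remark \ref{sub_of_tangent} (and since, by uniqueness of the Harder--Narasimhan filtration, the maximal destabilizing subsheaf of the locally free sheaf $\mathcal{T}_X$ is $T$-invariant and reflexive, so the supremum of slopes of proper subsheaves is realized by an equivariant reflexive one), it suffices to compute
\[
\max_{S}\ \Phi(S),\qquad \Phi(S):=\frac{\sum_{\rho\in S}\deg D_\rho}{\dim\text{Span}(S)},
\]
where $S$ ranges over the subsets of $\Delta(1)=\{v_0,\dots,v_s,e_0,\dots,e_r\}$ that are \emph{spanning-closed}, i.e.\ $\text{Span}(S)\cap\Delta(1)=S$, and proper, i.e.\ $0<\dim\text{Span}(S)<s+r$. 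Indeed such an $S$ is precisely the set of rays lying in the subspace $\mathbf F^0\subseteq N_{\C}$ attached to a proper equivariant reflexive subsheaf $\mathcal F$ with $\mathbf F^0=\text{Span}(\mathbf F^0\cap\Delta(1))$, and for it $c_1(\mathcal F)=\sum_{\rho\in S}D_\rho$ and $\mathrm{rank}(\mathcal F)=\dim\text{Span}(S)$ by Remark \ref{sub_of_tangent}.

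Next I would record the numerical input. Put $d_v:=\deg D_{v_0}$ and $d_e:=\deg D_{e_0}$. By the relations \eqref{relnPic2case2} one has $\deg D_{v_i}=d_v$ for all $i$, $\deg D_{e_j}=d_e$ for $0\le j\le r-1$, and $\deg D_{e_r}=d_e-d_v$; since every $\deg D_\rho>0$ (Remark \ref{degpositive}), this forces $d_e>d_v>0$. The natural candidate for the maximum is $S_0=\{e_0,\dots,e_r\}$: the vectors $e_0,\dots,e_r$ satisfy the single relation $e_0+\cdots+e_r=0$, so $\text{Span}(S_0)$ is the coordinate subspace $\{\mathbf 0\}\times\C^{r}\subseteq N_{\C}$, of dimension $r$, and it contains none of the $v_i$; hence $S_0$ is spanning-closed, and it is proper since $s\ge1$. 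Its value is $\Phi(S_0)=\bigl(r\,d_e+(d_e-d_v)\bigr)/r=d_e+\tfrac1r(d_e-d_v)$, the asserted quantity.

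It remains to prove $\Phi(S)\le\Phi(S_0)$ for every other spanning-closed proper $S$. The structural input is that the lattice of linear relations among the $s+r+2$ rays is $2$-dimensional, generated by $R_1\colon v_0+\cdots+v_s-e_r=0$ and $R_2\colon e_0+\cdots+e_r=0$; the relations with support a proper subset of $\Delta(1)$ are exactly the nonzero multiples of $R_1$, $R_2$ and $R_1+R_2$, with supports $T_1=\{v_0,\dots,v_s,e_r\}$, $T_2=\{e_0,\dots,e_r\}$ and $T_3=\{v_0,\dots,v_s,e_0,\dots,e_{r-1}\}$, and $T_i\cup T_j=\Delta(1)$ for $i\ne j$. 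Consequently $\dim\text{Span}(S)=|S|-\rho(S)$ with $\rho(S)\in\{0,1\}$ for proper $S$, and the cases are: (a) $\rho(S)=0$: then $\Phi(S)$ is an average of the numbers $\deg D_\rho$, each $\le d_e$, so $\Phi(S)\le d_e<\Phi(S_0)$; (b) $S\supseteq T_3$: then $\dim\text{Span}(T_3)=|T_3|-1=s+r=\dim N_{\C}$, so $\text{Span}(S)=N_{\C}$ and $S=\Delta(1)$ is not proper — this case is empty; (c) $S\supseteq T_1$, $S\ne\Delta(1)$: writing $S=T_1\cup E'$ with $E'\subseteq\{e_0,\dots,e_{r-1}\}$, a direct computation gives $d_e\dim\text{Span}(S)-\sum_{\rho\in S}\deg D_\rho=s(d_e-d_v)>0$, so $\Phi(S)<d_e<\Phi(S_0)$; (d) $S\supseteq T_2$: writing $S=T_2\cup V'$ with $V'\subseteq\{v_0,\dots,v_s\}$, $|V'|=a\le s$, one finds $\Phi(S_0)-\Phi(S)$ has the same sign as $a(r+1)(d_e-d_v)\ge0$, with equality iff $a=0$, i.e.\ $S=S_0$. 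This yields the lemma.

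The step I expect to be the main obstacle is the bookkeeping in the last paragraph: isolating which spanning-closed subsets carry a relation, and in particular noticing that the a priori dangerous configuration $T_3$ (which, taken at face value, would out-perform $S_0$) is automatically excluded because its span exhausts $N_{\C}$, so it fails to be proper. Once the three relation-classes are identified, each remaining case is a one- or two-line inequality in $d_e$ and $d_v$.
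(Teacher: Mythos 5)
Your proof is correct, and its skeleton is the same as the paper's: reduce via Remark \ref{ref2}, Corollary \ref{RS1} and Remark \ref{sub_of_tangent} to a finite optimization of $\Phi(S)=\bigl(\sum_{\rho\in S}\deg D_\rho\bigr)/\dim\mathrm{Span}(S)$ over subsets of rays, plug in $\deg D_{v_i}=d_v$, $\deg D_{e_j}=d_e$ $(j\le r-1)$, $\deg D_{e_r}=d_e-d_v>0$ from \eqref{relnPic2case2}, and identify $S_0=\{e_0,\dots,e_r\}$ as the maximizer. Where you genuinely diverge is in how the finite check is organized: the paper simply lists five candidate configurations (its cases (i)--(v)) and asserts that these suffice "in view of Remark \ref{sub_of_tangent}", leaving to the reader configurations such as $\{v_0,\dots,v_j,e_0,\dots,e_k,e_r\}$ with $j<s$, $k\le r-2$, or all $v_i$'s together with only some of $e_0,\dots,e_{r-1}$; you instead classify all proper subsets by the $2$-dimensional relation lattice $\langle R_1,R_2\rangle$, observe that a proper $S$ can support at most a one-dimensional space of relations, and that the only a priori dangerous support $T_3$ is excluded because $\mathrm{Span}(T_3)=N_\C$. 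This makes the case analysis exhaustive by construction and reduces each case to a one-line inequality (your computations in cases (a), (c), (d) are correct: the key identities $d_e\dim\mathrm{Span}(S)-\sum_{\rho\in S}\deg D_\rho=s(d_e-d_v)$ and $\Phi(S_0)-\Phi(S)=\tfrac{a(r+1)(d_e-d_v)}{r(r+a)}$ check out), so your write-up is in this respect tighter than the paper's. One small caveat: your parenthetical reduction "the maximal destabilizing subsheaf is $T$-invariant, hence the supremum over proper subsheaves is attained by an equivariant reflexive one" literally works only when $\mathcal{T}_X$ is unstable (if $\mathcal{T}_X$ is semistable the first HN term is $\mathcal{T}_X$ itself, which is not proper); however, the paper's own proof makes exactly the same reduction by citing Remark \ref{ref2} and \cite[Theorem 2.1]{biswas2018stability}, and for the way the lemma is actually used (comparison with $\mu(\mathcal{T}_X)$ in Theorem \ref{M3}) that level of justification is what is needed, so this is not a defect specific to your argument.
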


\begin{proof}
Without loss of generality, we consider only proper equivariant reflexive subsheaves of \(\mathcal{T}_X\) (see Remark \ref{ref2}). 	
Let $\mathcal{F}$ be a proper equivariant reflexive subsheaf of \(\mathcal{T}_X\) with associated filtrations \(\left(F, \{F^{\rho}(i)\}_{\rho \in \Delta(1) \ i \in \Z} \right) \). In view of Remark \ref{sub_of_tangent}, to find the maximum of \(\{\mu(\mathcal{F}) : \mathcal{F} \text{ is a proper subsheaf of } \mathcal{T}_X\}\) it is enough to consider the following cases:

\begin{enumerate}[(i)]
	\item \(\text{rank}(\mathcal{F})=r\) and \(F \cap \Delta(1) =\{e_0, \ldots, e_r\}\). In this case \(c_1(\mathcal{F})=(r+1)D_{e_0}-D_{v_0}\) and hence $\mu(\mathcal{F})=\text{deg }D_{e_0}+\frac{1}{r} (\text{deg }D_{e_0}-\text{deg }D_{v_0})$.
	
	\item \(\text{rank}(\mathcal{F})=j+1\) and \(F \cap \Delta(1) =\{e_0, \ldots, e_j\}\), where  \(0 \leq j \leq r-2\). In this case \(c_1(\mathcal{F})=(j+1)D_{e_0}\) and hence $\mu(\mathcal{F})=\text{deg }D_{e_0} $.
	
	\item \(\text{rank}(\mathcal{F})=j+k+2\) and \(F \cap \Delta(1)=\{v_0, \ldots, v_j, e_0, \ldots, e_k\}\), where \(0 \leq j< s\) and \(-1 \leq k \leq r-2 \) (here \(k=-1\) should be interpreted as no \(e_k\) term belongs to \(F \cap \Delta(1)\)). In this case  \(c_1(\mathcal{F})=(j+1)D_{v_0}+(k+1)D_{e_0}\) and hence $\mu(\mathcal{F})=\frac{1}{j+k+2}((j+1)\text{deg }D_{v_0} +(k+1)\text{deg }D_{e_0} ) < \text{deg }D_{e_0} $.
	
	\item \(\text{rank}(\mathcal{F})=s+j+2\) and \(F \cap \Delta(1)=\{v_0, \ldots, v_s,e_r, e_0, \ldots, e_j\}\), where \(-1 \leq j \leq r-3 \) (here \(j=-1\) should be interpreted as no \(e_j\) term belongs to \(F \cap \Delta(1)\)). In this case   \(c_1(\mathcal{F})=(j+2)D_{e_0}+s D_{v_0}\) and hence $\mu(\mathcal{F})=\frac{1}{s+j+2}((j+2)\text{deg }D_{e_0} +s \ \text{deg }D_{v_0} ) < \text{deg }D_{e_0} $.
	
	\item \(\text{rank}(\mathcal{F})=r+j+1\) and \(F \cap \Delta(1)=\{v_0, \ldots, v_j, e_0, \ldots, e_r\}\), where \(0 \leq j \leq s-2 \). In this case \(c_1(\mathcal{F})=(r+1)D_{e_0}+j D_{v_0}\) and hence $\mu(\mathcal{F})=\frac{1}{r+j+1}((r+1)\text{deg }D_{e_0}+j~\text{deg }D_{v_0})< \text{deg }D_{e_0}$.
\end{enumerate}

Thus the desired maximum is achieved from the case \((i)\).
\end{proof}

\begin{thm}\label{M3}
Let \(X = \mathbb{P}(\mathcal{O}_{\mathbb{P}^s}^r  \oplus \mathcal{O}_{\mathbb{P}^s}(1)  )\), where \(s \geq 1, r \geq 1\). Consider the polarization \(H=a D_{v_0} + b D_{e_0}\), \(a, b >0\). Then the tangent bundle $\mathcal{T}_X$ is \(H\)-(semi)stable if and only if \[ \sum\limits_{i=r-1}^{r-1+s} \binom{r-1+s}{i}  a^{r-1+s-i} b^i (\leq)< \frac{(sr+s+r)}{s(r+1)} \sum\limits_{i=r}^{r-1+s} \binom{r-1+s}{i}  a^{r-1+s-i} b^i.\]

\end{thm}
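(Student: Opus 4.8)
The plan is to combine the combinatorial stability criterion of Proposition \ref{RS2} with the explicit maximal-slope computation already carried out in Lemma \ref{RSforr>1}. By Proposition \ref{RS2} (equivalently, Remark \ref{ref2} together with Proposition \ref{chern}), the bundle $\mathcal{T}_X$ is $H$-(semi)stable if and only if $\mu(\mathcal{F})\,(\leq)<\,\mu(\mathcal{T}_X)$ for every proper equivariant reflexive subsheaf $\mathcal{F}\subset\mathcal{T}_X$. Lemma \ref{RSforr>1} identifies the supremum of the left-hand side, namely
\[
\max\{\mu(\mathcal{F})\}=\text{deg }D_{e_0}+\frac{1}{r}\bigl(\text{deg }D_{e_0}-\text{deg }D_{v_0}\bigr),
\]
and, crucially, this value is attained (by the rank-$r$ subsheaf of case (i) in that lemma). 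Hence testing (semi)stability against this single number is both necessary and sufficient.

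Next I would compute $\mu(\mathcal{T}_X)$ in this particular case. Since $(a_1,\dots,a_r)=(0,\dots,0,1)$ we have $a_1+\cdots+a_r=1$, so by \eqref{anti_can} the anticanonical class is $-K_X=s\,D_{v_0}+(r+1)\,D_{e_0}$; as $c_1(\mathcal{T}_X)=-K_X$ and $\operatorname{rank}(\mathcal{T}_X)=s+r$, formula \eqref{slopeTpic2} specializes to
\[
\mu(\mathcal{T}_X)=\frac{s}{s+r}\,\text{deg }D_{v_0}+\frac{r+1}{s+r}\,\text{deg }D_{e_0}.
\]
Substituting the maximal slope and $\mu(\mathcal{T}_X)$ into the inequality above and clearing the positive denominator $r(s+r)$, the $\text{deg }D_{e_0}$ terms collapse using the identity $(r+1)(s+r)-r(r+1)=s(r+1)$, and the whole condition reduces to the clean form
\[
s(r+1)\,\text{deg }D_{e_0}\,(\leq)<\,(sr+s+r)\,\text{deg }D_{v_0}.
\]

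Finally I would insert the closed-form degrees supplied by Lemma \ref{deg}, namely $\text{deg }D_{v_0}=\sum_{i=r}^{r-1+s}\binom{r-1+s}{i}a^{r-1+s-i}b^i$ and $\text{deg }D_{e_0}=\sum_{i=r-1}^{r-1+s}\binom{r-1+s}{i}a^{r-1+s-i}b^i$, and divide through by $s(r+1)$; this yields exactly the stated inequality, and replacing $<$ by $\leq$ everywhere gives the semistable case. The argument is thus a short bookkeeping computation once Lemmas \ref{RSforr>1} and \ref{deg} are available; the only conceptual point requiring care is that the extremal subsheaf of Lemma \ref{RSforr>1} is genuinely realized, so that the single numerical inequality against $\mu(\mathcal{T}_X)$ is an equivalence and not merely a sufficient condition — and this is precisely what makes the ``if and only if'' statement go through.
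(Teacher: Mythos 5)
Your proposal is correct and follows essentially the same route as the paper: Proposition \ref{RS2} together with the attained maximum of Lemma \ref{RSforr>1}, the specialization of \eqref{slopeTpic2}, reduction to $s(r+1)\,\text{deg }D_{e_0}\,(\leq)<\,(sr+s+r)\,\text{deg }D_{v_0}$, and finally the degree formulas of Lemma \ref{deg}. The algebraic simplification you carry out is exactly the step the paper performs implicitly, so there is nothing to add.
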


\begin{proof}
Using Proposition \ref{RS2} and Lemma \ref{RSforr>1}, we obtain that 
\begin{equation*}
\begin{split}
\mathcal{T}_X \text{ is (semi)stable }  & \Longleftrightarrow \text{deg }D_{e_0}+\frac{1}{r}\left( \text{deg }D_{e_0}-  \text{deg }D_{v_0}\right) (\leq) <  \mu(\mathcal{T}_X)\\
& \Longleftrightarrow \text{deg }D_{e_0}( \leq) < \frac{(sr+s+r)}{s(r+1)} \text{deg }D_{v_0} \ ( \text{see } \eqref{slopeTpic2}). 
\end{split}
\end{equation*}
Now the theorem follows by Lemma \ref{deg}.
\end{proof}

The following remark is useful for studying (semi)stability of tangent bundle of product of nonsingular Fano varieties.

\begin{rmk}\label{prodstab} {\rm
		Let \(Y_1\) and \(Y_2\)	be two nonsingular Fano varieties of dimension \(n_1\) and \(n_2\) respectively. Then \(X=Y_1 \times Y_2\) is also a nonsingular Fano variety whose dimension is \(n=n_1+n_2\). Also one can see that \(\mathcal{T}_X=\pi^*_1 \mathcal{T}_{Y_1} \oplus \pi^*_2 \mathcal{T}_{Y_2} \) and \(\mu(\mathcal{T}_X)=  \mu(\pi^*_1 \mathcal{T}_{Y_1})=\mu(\pi^*_2 \mathcal{T}_{Y_2})\), where \(\pi_i:X \rightarrow Y_i\), \(i=1, 2\) is the projection map. Now if both $\mathcal{T}_{Y_1}$ and \(\mathcal{T}_{Y_2}\) are semistable, then \(\mathcal{T}_X\) is strictly semistable (see \cite[Examples 3.2]{steffens}).
		
	}
\end{rmk}

\begin{cor}\label{stabonPic2cor}
	Let \(X = \mathbb{P}(\mathcal{O}_{\mathbb{P}^s}  \oplus \mathcal{O}_{\mathbb{P}^s}(a_1) \oplus \cdots \oplus \mathcal{O}_{\mathbb{P}^s}(a_r)  )\), \(s, r \geq 1\), \(0 \leq a_1 \leq \ldots \leq a_r\), with \(a_1+\cdots +a_r < s+1\), i.e. \(X\) is Fano. Then with respect to the ample anticanonical divisor \(-K_X\), we have the following:
	
	\begin{enumerate}
		\item The tangent bundle $\mathcal{T}_X$ is unstable whenever \((a_1, \ldots, a_r) \neq (0, 0, \ldots, 0, 1) \) and \(a_r>0\).
		\item If \(r=1\) and $a_1=1$, the tangent bundle $\mathcal{T}_X$ is unstable for \(s \geq 2\). It is strictly semistable for \(s=1\).
		\item If \(r>1\) and \((a_1, \ldots, a_r) = (0, 0, \ldots, 0, 1) \), the tangent bundle $\mathcal{T}_X$ is (semi)stable if and only if \[ \sum\limits_{i=r-1}^{r-1+s} \binom{r-1+s}{i}  s^{r-1+s-i} (r+1)^i (\leq)< \frac{(sr+s+r)}{s(r+1)} \sum\limits_{i=r}^{r-1+s} \binom{r-1+s}{i}  s^{r-1+s-i} (r+1)^i .\]
	
		\item If \(a_r=0\), the tangent bundle $\mathcal{T}_X$ is strictly semistable.
		\end{enumerate}
	
\end{cor}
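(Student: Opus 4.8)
The plan is to reduce each of the four cases to results already proved, once I have observed that $-K_X$ is an ample divisor and hence a legitimate polarization. By \eqref{anti_can} I may write $-K_X=aD_{v_0}+bD_{e_0}$ with $a=s+1-a_1-\cdots-a_r$ and $b=r+1$; the Fano hypothesis $a_1+\cdots+a_r<s+1$ gives $a>0$, while $b>0$ is automatic, so all the earlier theorems apply with $H=-K_X$. I would also record that the listed cases are exhaustive: if $a_r=0$ then all $a_i=0$ (case (4)); if $a_r>0$ and $(a_1,\dots,a_r)\neq(0,\dots,0,1)$ this is case (1); and $(a_1,\dots,a_r)=(0,\dots,0,1)$ splits into $r=1$ (case (2)) and $r>1$ (case (3)).

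For part (1) I would simply invoke Theorem \ref{M1}, which already asserts instability of $\mathcal{T}_X$ with respect to every polarization under exactly these hypotheses. For part (4), the condition $a_r=0$ forces $a_1=\cdots=a_r=0$, so $X\cong\mathbb{P}^s\times\mathbb{P}^r$; then Proposition \ref{stabtanP} gives that $\mathcal{T}_{\mathbb{P}^s}$ and $\mathcal{T}_{\mathbb{P}^r}$ are stable, hence semistable, and Remark \ref{prodstab} forces $\mathcal{T}_X$ to be strictly semistable.

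For parts (2) and (3) the key observation is that when $(a_1,\dots,a_r)=(0,\dots,0,1)$ the bundle $\mathcal{O}\oplus\mathcal{O}(a_1)\oplus\cdots\oplus\mathcal{O}(a_r)$ coincides with $\mathcal{O}_{\mathbb{P}^s}^r\oplus\mathcal{O}_{\mathbb{P}^s}(1)$, so $X=\mathbb{P}(\mathcal{O}_{\mathbb{P}^s}^r\oplus\mathcal{O}_{\mathbb{P}^s}(1))$ and Theorem \ref{M3} applies with $a=s$ and $b=r+1$. Substituting these values into the inequality of Theorem \ref{M3} produces verbatim the inequality written in part (3). For part (2) one further sets $r=1$: the two binomial sums of Theorem \ref{M3} collapse to $(a+b)^s$ and $(a+b)^s-a^s$, the prefactor $\frac{sr+s+r}{s(r+1)}$ becomes $\frac{2s+1}{2s}$, and after clearing denominators the (semi)stability criterion reads $(2s+1)s^s\,(\leq)<\,(s+2)^s$.

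The only genuinely computational step, and where I expect the sole piece of real work, is the elementary inequality $(2s+1)s^s>(s+2)^s$ for all $s\ge 2$, with equality when $s=1$. I would prove it by rewriting it as $2s+1>(1+2/s)^s$: for $s=1$ both sides equal $3$; for $s=2,3$ a direct evaluation gives $(1+2/s)^s$ equal to $4$ and $125/27$, each strictly below $2s+1$; and for $s\ge4$ the bound $1+2/s\le e^{2/s}$ yields $(1+2/s)^s\le e^2<9\le 2s+1$. Hence $\mathcal{T}_X$ is strictly semistable for $s=1$ and unstable for $s\ge2$, which completes part (2) and the corollary.
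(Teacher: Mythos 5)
Your proposal is correct and follows essentially the same route as the paper: part (1) from Theorem \ref{M1}, part (4) from Remark \ref{prodstab}, and parts (2)--(3) by specializing Theorem \ref{M3} to $a=s$, $b=r+1$, with the $r=1$ case reducing to $(2s+1)s^s\,(\leq)<\,(s+2)^s$. The only (harmless) difference is in the elementary inequality $(2s+1)s^s>(s+2)^s$ for $s\ge 2$, which the paper handles by induction while you use the bound $(1+2/s)^s\le e^2$; both are valid.
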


\begin{proof}
	Clearly, (1) follows from Theorem \ref{M1}.
	
	For (2), using Theorem \ref{M3}, we get that \(\mathcal{T}_X\) is (semi)stable  if and only if \((2s+1)s^s(  \leq) <  (s+2)^s\) (see also \cite[Theorem 8.1]{biswas2018stability}). 
	
	Note that for \(s=1\), the equality holds, hence in this case \(\mathcal{T}_X\) is strictly semistable.
	
	For \(s\geq 2\), using induction it can be shown that \((2s+1)s^s > (s+2)^s\) holds. Hence \(\mathcal{T}_X\) unstable whenever \(s\geq 2\).
	
	Furthermore (3) follows from Theorem \ref{M3}, for the particular values \(a=s\), \(b=r+1\) and (4) is immediate from Remark \ref{prodstab}.
\end{proof}

\section{Stability of tangent bundle on Fano 4-folds with Picard number 3}\label{sec:stability-of-tangent-bundle-on-fano-4-folds-with-picard-number-3}

In this section we are interested in (semi)stability of tangent bundles (with respect to the anticanonical divisor) of toric Fano 4-folds with Picard number 3 which were classified by Batyrev \cite[Section 4]{batyrev}.

\subsection{ Stability of tangent bundle of $\mathbb{P}^2$-bundle over \(\mathbb{P}^1 \times \mathbb{P}^1\)}

Let \(X=\mathbb{P}(\mathcal{O}_{\mathbb{P}^1 \times \mathbb{P}^1} \oplus \mathcal{O}_{\mathbb{P}^1 \times \mathbb{P}^1}(\alpha,0) \\*
 \oplus \mathcal{O}_{\mathbb{P}^1 \times \mathbb{P}^1}(\beta,\gamma)) \). Let $\Delta$ be the fan of \(X\) whose rays are given by
\begin{equation*}
\begin{split}
&\mathbf{u}_0 =(-1, 0, \alpha, \beta), \mathbf{u}_1=(1, 0, 0, 0), \mathbf{v}_0=(0, -1, 0, \gamma),\mathbf{v}_1=(0, 1, 0, 0) \text{ and} \\
&\mathbf{e}_0=(0,0,-1,-1), \mathbf{e}_1=(0,0,1,0), \mathbf{e}_2=(0,0,0,1),
\end{split}
\end{equation*}
and the maximal cones are given by
\begin{equation*}
\begin{split}
& \text{Cone}(\mathbf{e}_0,\ldots, \widehat{\mathbf{e}}_i, \ldots,\mathbf{e}_2) + \text{Cone}(\mathbf{u}_j,\mathbf{v}_k) , \text{ where } 0 \leq i \leq 2 \text{ and } 0 \leq j,k \leq 1.
\end{split}
\end{equation*}
We have the following relations 
\begin{equation*}
\begin{split}
& D_{\mathbf{u}_0} \sim_{\text{lin}} D_{\mathbf{u}_1}, D_{\mathbf{v}_0}\sim_{\text{lin}} D_{\mathbf{v}_1}, D_{\mathbf{e}_1} \sim_{\text{lin}} D_{\mathbf{e}_0}-\alpha D_{\mathbf{u}_0}, D_{\mathbf{e}_2} \sim_{\text{lin}} D_{\mathbf{e}_0}-\beta D_{\mathbf{u}_0}-\gamma D_{\mathbf{v}_0}.
\end{split}
\end{equation*}

Hence $\text{Pic}(X)=\Z D_{\mathbf{u}_0} \oplus \Z D_{\mathbf{v}_0} \oplus \Z D_{\mathbf{e}_0}.$ Let \(H=aD_{\mathbf{u}_0}+ bD_{\mathbf{v}_0}+c D_{\mathbf{e}_0}\). Note that \(D_{\mathbf{u}_0}^2=0, D_{\mathbf{v}_0}^2=0\) and hence we see that
\[H^3=3ac^2 D_{\mathbf{u}_0} D_{\mathbf{e}_0}^2+3bc^2 D_{\mathbf{v}_0} D_{\mathbf{e}_0}^2+6abc D_{\mathbf{u}_0} D_{\mathbf{v}_0} D_{\mathbf{e}_0} +c^3 D_{\mathbf{e}_0}^3.\]

Using the relations 
\begin{equation*}
\begin{split}
& D_{\mathbf{e}_0}D_{\mathbf{e}_1}D_{\mathbf{e}_2}=0, D_{\mathbf{u}_0} D_{\mathbf{v}_0} D_{\mathbf{e}_0}^2 =1, D_{\mathbf{u}_0} D_{\mathbf{e}_0}^3 = \gamma, D_{\mathbf{v}_0} D_{\mathbf{e}_0}^3=\alpha + \beta, D_{\mathbf{e}_0}^4 = \alpha \gamma+2 \beta \gamma,
\end{split}
\end{equation*}
%
%

we get 
\begin{equation}
\begin{split}
& \text{deg }D_{\mathbf{u}_0}= 3bc^2 + c^3 \gamma, \text{deg }D_{\mathbf{v}_0}= 3ac^2 + c^3 (\alpha+\beta)\\
&\text{deg }D_{\mathbf{e}_0}= 3ac^2 \gamma + 3bc^2 (\alpha+\beta)+ 6abc+ c^3 (\alpha \gamma+2\beta \gamma).
\end{split}
\end{equation}



When \(H=-K_X\), we have $a=2-\alpha-\beta, b=2-\gamma, c=3$.
Following the notation of \cite[Section 4]{batyrev}, \(X=D_7\) when \(\alpha=0, \beta=\gamma=1\), and \(X=D_{17}\) when \(\alpha=1,  \beta=0, \gamma=1\). 
\begin{prop}\label{stabPic3D_7}
Let \(X=\mathbb{P}(\mathcal{O}_{\mathbb{P}^1 \times \mathbb{P}^1} \oplus \mathcal{O}_{\mathbb{P}^1 \times \mathbb{P}^1}(\alpha,0) \oplus \mathcal{O}_{\mathbb{P}^1 \times \mathbb{P}^1}(\beta,\gamma)) \). Then 
\begin{enumerate}
	\item $\mathcal{T}_X$ is unstable if \(\alpha=0, \beta=\gamma=1\). 
	
	\item $\mathcal{T}_X$ is stable if \(\alpha=1,  \beta=0, \gamma=1\). 
\end{enumerate}
\end{prop}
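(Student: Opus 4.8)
The plan is to combine the combinatorial stability criterion of Proposition \ref{RS2} with the description of the tangent bundle filtration in Corollary \ref{pertangb} and the reduction in Remark \ref{sub_of_tangent}, turning the question into a finite comparison of slopes. Since $\mathcal{T}_X$ has rank $4$ and $c_1(\mathcal{T}_X)=-K_X=\sum_\rho D_\rho$, we have $\mu(\mathcal{T}_X)=\frac14\sum_\rho\deg D_\rho$. Specializing the degree formulas displayed above to the anticanonical polarization (so $a=2-\alpha-\beta$, $b=2-\gamma$, $c=3$), and using the linear equivalences $D_{\mathbf{e}_1}\sim D_{\mathbf{e}_0}-\alpha D_{\mathbf{u}_0}$ and $D_{\mathbf{e}_2}\sim D_{\mathbf{e}_0}-\beta D_{\mathbf{u}_0}-\gamma D_{\mathbf{v}_0}$, I would record all seven degrees $\deg D_\rho$ as explicit integers in each of the two cases, hence $\mu(\mathcal{T}_X)$. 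By Corollary \ref{RS1} and Remark \ref{sub_of_tangent} it then suffices to run over subspaces $F\subseteq N_{\C}\cong\C^4$ of dimension $\le 3$ with $F=\text{Span}(F\cap\Delta(1))$; the corresponding proper equivariant reflexive subsheaf $\mathcal{F}\subset\mathcal{T}_X$ has $\mu(\mathcal{F})=\frac{1}{\dim F}\sum_{v_\rho\in F\cap\Delta(1)}\deg D_\rho$ (Proposition \ref{chern}), and $\mathcal{T}_X$ is (semi)stable if and only if every such $\mu(\mathcal{F})$ is $(\leq)<\mu(\mathcal{T}_X)$.

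For part (1), where $\alpha=0$, $\beta=\gamma=1$, I would simply exhibit a destabilizing subsheaf. Because $\mathbf{e}_0+\mathbf{e}_1+\mathbf{e}_2=0$, the subspace $F:=\text{Span}(\mathbf{e}_0,\mathbf{e}_1,\mathbf{e}_2)$ is only $2$-dimensional; geometrically the associated $\mathcal{F}$ is the relative tangent sheaf of the $\mathbb{P}^2$-bundle $X\to\mathbb{P}^1\times\mathbb{P}^1$. One checks $F\cap\Delta(1)=\{\mathbf{e}_0,\mathbf{e}_1,\mathbf{e}_2\}$, so $c_1(\mathcal{F})=D_{\mathbf{e}_0}+D_{\mathbf{e}_1}+D_{\mathbf{e}_2}$, and the degree computation gives $\mu(\mathcal{F})=\frac12\bigl(\deg D_{\mathbf{e}_0}+\deg D_{\mathbf{e}_1}+\deg D_{\mathbf{e}_2}\bigr)>\mu(\mathcal{T}_X)$. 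Hence $\mathcal{T}_X$ is unstable by Proposition \ref{RS2}.

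For part (2), where $\alpha=1$, $\beta=0$, $\gamma=1$, the work is to show that no admissible $F$ reaches the slope $\mu(\mathcal{T}_X)$. Here the relations among the ray generators are $\mathbf{e}_0+\mathbf{e}_1+\mathbf{e}_2=0$, $\mathbf{u}_0+\mathbf{u}_1=\mathbf{e}_1$ and $\mathbf{v}_0+\mathbf{v}_1=\mathbf{e}_2$. I would enumerate the saturated subsets $S=\text{Span}(S)\cap\Delta(1)$ with $\dim\text{Span}(S)\in\{1,2,3\}$, using these relations, and compute the average $\frac{1}{\dim\text{Span}(S)}\sum_{\rho\in S}\deg D_\rho$ for each. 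The extremal saturated subsets turn out to be the two $3$-dimensional ones $\text{Span}(\mathbf{u}_0,\mathbf{u}_1,\mathbf{e}_0)$ and $\text{Span}(\mathbf{v}_0,\mathbf{v}_1,\mathbf{e}_0)$, which contain the five rays $\{\mathbf{u}_0,\mathbf{u}_1,\mathbf{e}_0,\mathbf{e}_1,\mathbf{e}_2\}$, respectively $\{\mathbf{v}_0,\mathbf{v}_1,\mathbf{e}_0,\mathbf{e}_1,\mathbf{e}_2\}$; for these one finds $\mu(\mathcal{F})$ strictly less than $\mu(\mathcal{T}_X)$, and all remaining saturated subsets give even smaller averages. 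Proposition \ref{RS2} then yields that $\mathcal{T}_X$ is stable.

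The main obstacle is the combinatorial bookkeeping in part (2): being certain one has listed every saturated subspace of dimension $\le 3$ and not overlooked one that could violate stability. This stays manageable because the three linear relations force strong containments (any two of $\mathbf{e}_0,\mathbf{e}_1,\mathbf{e}_2$ pull in the third; $\{\mathbf{u}_0,\mathbf{u}_1\}$ pulls in $\mathbf{e}_1$; $\{\mathbf{v}_0,\mathbf{v}_1\}$ pulls in $\mathbf{e}_2$), so there are only finitely many saturated sets to inspect, and since $\deg D_{\mathbf{e}_0}$ is the unique largest of the seven degrees (by Remark \ref{degpositive} all are positive), the sets maximizing the average degree are quickly located and checked.
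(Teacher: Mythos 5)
Your proposal is correct and follows essentially the same route as the paper: the combinatorial slope criterion of Proposition \ref{RS2} together with Remark \ref{sub_of_tangent}, the anticanonical degree computations, a destabilizing subsheaf in case (1), and a finite enumeration of saturated spanning subspaces in case (2), where your two extremal $3$-dimensional subspaces are exactly the paper's rank-$3$ candidates of slope $99<101.25=\mu(\mathcal{T}_X)$. The only cosmetic differences are that in (1) you destabilize with the rank-$2$ relative tangent sheaf (slope $135$) while the paper uses the rank-$1$ subsheaf $\mathcal{O}_X(D_{\mathbf{e}_0})$ of slope $126$, and in (2) the subspace $\mathrm{Span}(\mathbf{e}_0)$ also attains the maximal subsheaf slope $99$, so ``even smaller'' should read ``no larger'' --- the conclusion is unaffected.
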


\begin{proof}
\underline{(1) \(\alpha=0, \beta=\gamma=1\) :} Then \(a=b=1\) and \(c=3\).

\(\text{deg }D_{\mathbf{u}_0}=54\), \(\text{deg }D_{\mathbf{v}_0}=54\), \(\text{deg }D_{\mathbf{e}_0}=126\) and \(\mu(\mathcal{T}_X)=121.5\). Note that \(F= \text{Span}(e_0)\) corresponds to the destabilizing subsheaf \(\mathcal{O}_X(D_{\mathbf{e}_0})\). Hence $\mathcal{T}_X$ is unstable.

\underline{(2) \(\alpha=1,  \beta=0, \gamma=1\) :} Then \(a=b=1\) and \(c=3\).

\(\text{deg }D_{\mathbf{u}_0}=54\), \(\text{deg }D_{\mathbf{v}_0}=54\), \(\text{deg }D_{\mathbf{e}_0}=99\), \(\text{deg }D_{\mathbf{e}_1}=45\), \(\text{deg }D_{\mathbf{e}_2}=45\) and \(\mu(\mathcal{T}_X)=101.25\). By Remark \ref{sub_of_tangent}, to check (semi)stability, we only need to consider the following equivariant reflexive subsheaves \(\mathcal{F}\) with associated filtrations \(\left(F, \{F^{\rho}(i)\}_{\rho \in \Delta(1) \ i \in \Z} \right) \).

\underline{rank$(\mathcal{F})= 1$}
	\begin{align*}
	& (i) F=\text{Span}(\mathbf{u}_0), \text{ then } \mu(\mathcal{F}) =54. ~	(ii) F=\text{Span}(\mathbf{v}_0), \text{ then } \mu(\mathcal{F}) =54.\\
	& (iii) F=\text{Span}(\mathbf{e}_0), \text{ then } \mu(\mathcal{F}) =99.  ~ (iv) F=\text{Span}(\mathbf{e}_1), \text{ then } \mu(\mathcal{F}) =45. \\
	&(v) F=\text{Span}(\mathbf{e}_2), \text{ then } \mu(\mathcal{F}) =45. 
	\end{align*}
\underline{rank$(\mathcal{F})= 2$}
\begin{enumerate}[(i)]
	\item \(F=\text{Span}(\mathbf{e}_0,\mathbf{e}_1,\mathbf{e}_2)\), then \(\mu(\mathcal{F}) =94.5. \)
	\item \(F=\text{Span}(\mathbf{u}_0,\mathbf{u}_1,\mathbf{e}_1)\), then \(\mu(\mathcal{F}) =76.5. \)
	\item \(F=\text{Span}(\mathbf{v}_0,\mathbf{v}_1,\mathbf{e}_2)\), then \(\mu(\mathcal{F}) =76.5. \)
\end{enumerate}
\underline{rank$(\mathcal{F})= 3$}
\begin{enumerate}[(i)]
	\item \(F=\text{Span}(\mathbf{e}_0,\mathbf{e}_1,\mathbf{e}_2, \mathbf{u}_0,\mathbf{u}_1)\), then \(\mu(\mathcal{F}) =99. \)
	\item \(F=\text{Span}(\mathbf{e}_0,\mathbf{e}_1,\mathbf{e}_2, \mathbf{v}_0,\mathbf{v}_1)\), then \(\mu(\mathcal{F}) =99. \)
\end{enumerate}
Hence we see that $\mathcal{T}_X $ is stable. 
\end{proof}

\subsection{ Stability of tangent bundle of $\mathbb{P}^1$-bundle over \(\mathbb{P}^1 \times \mathbb{P}^2\)}

Let \(X=\mathbb{P}(\mathcal{O}_{\mathbb{P}^1 \times \mathbb{P}^2} \oplus  \mathcal{O}_{\mathbb{P}^1 \times \mathbb{P}^2}(\alpha, \beta)  )\) with associated fan $\Delta$. The rays of $\Delta$ are given by 
\begin{align*}
& w_0=(-1, 0, 0, \alpha), w_1=(1, 0,0,0), z_0=(0,-1,-1, \beta), z_1=(0,1,0,0),\\
& z_2=(0,0,1,0), e_0=(0,0,0,-1), e_1=(0,0,0,1)
\end{align*}
and the maximal cones are given by \[\text{Cone}(w_i, z_0, \ldots, \widehat{z}_j, \ldots, z_2, e_k), \text{ where } i =0, 1, \ 0 \leq j \leq 2, \ k =0, 1.\]

We have the following relations:  
\begin{equation}\label{relnpic3}
\begin{split}
& D_{w_0} \sim_{\text{lin}}  D_{w_1},  D_{z_0} \sim_{\text{lin}}  D_{z_j}, \text{ for } j=1, 2; D_{e_1} \sim_{\text{lin}} D_{e_0}-\alpha D_{w_0}- \beta D_{z_0}.
\end{split}
\end{equation}	

So we get $\text{Pic}(X)=\Z D_{w_0} \oplus \Z D_{z_0} \oplus \Z D_{e_0}.$ Let \(H=a D_{w_0}+b D_{z_0} +c D_{e_0}\). Note that \(D_{w_0}^2=0,  D_{z_0}^3=0\) and hence 
\[H^3=3ab^2 D_{w_0} D_{z_0}^2+ 3ac^2 D_{w_0} D_{e_0}^2+ 6abc D_{w_0} D_{z_0} D_{e_0}+3bc^2 D_{z_0} D_{e_0}^2+3b^2c D_{z_0}^2 D_{e_0}+c^3 D_{e_0}^3.\]
Also we have 
\begin{equation*}
\begin{split}
&  D_{e_0} \cdot D_{e_1}=0, D_{w_0} D_{z_0} D_{e_0}^2=\beta, \\
& D_{w_0} D_{e_0}^3=\beta^2, D_{z_0}^2 D_{e_0}^2=\alpha, D_{z_0} D_{e_0}^3=2 \alpha \beta, D_{e_0}^4=3\alpha \beta^2.\\
\end{split}
\end{equation*}



Now we consider the polarization \(H=-K_X\), i.e. \(a=2-\alpha\), \(b=3-\beta\), \(c=2\). Note that \(X=D_1, D_6, D_{18} \text{ and } D_{19}\) when \((\alpha, \beta)=(1, 2), (1,1), (-1, 2)  \text{ and }(-1,1)\) respectively, following the notations of \cite[Section 4]{batyrev}.

\begin{prop}\label{stabpic3D_1}
Let \(X=\mathbb{P}(\mathcal{O}_{\mathbb{P}^1 \times \mathbb{P}^2} \oplus  \mathcal{O}_{\mathbb{P}^1 \times \mathbb{P}^2}(\alpha, \beta)  )\). Then 
\begin{enumerate}
	\item $\mathcal{T}_X$ is unstable for \((\alpha, \beta)=(1,1) ,(1,2) \text{ and } (-1,2) \).
	
	\item $\mathcal{T}_X$ is stable for \((\alpha, \beta)=(-1,1)\).
\end{enumerate}
\end{prop}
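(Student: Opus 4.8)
The plan is to apply the combinatorial stability criterion of Proposition~\ref{RS2} to $\mathcal{T}_X$ with the polarization $H=-K_X$, i.e.\ with $(a,b,c)=(2-\alpha,\,3-\beta,\,2)$. First I would convert the intersection data recorded above into closed formulas for the degrees $\text{deg }D_{\rho}=D_{\rho}\cdot H^{3}$ of the seven invariant divisors $D_{w_0},D_{w_1},D_{z_0},D_{z_1},D_{z_2},D_{e_0},D_{e_1}$; this uses one further number, $D_{w_0}D_{z_0}^{2}D_{e_0}=1$, which follows from $D_{e_0}^{2}=\alpha D_{w_0}D_{e_0}+\beta D_{z_0}D_{e_0}$ (a consequence of $D_{e_0}D_{e_1}=0$) together with $D_{z_0}^{2}D_{e_0}^{2}=\alpha$ and $D_{z_0}^{3}=0$. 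Since $\mathcal{T}_X$ has rank $4$ and $c_1(\mathcal{T}_X)=-K_X=\sum_{\rho}D_{\rho}$, one gets $\mu(\mathcal{T}_X)=\tfrac14\sum_{\rho}\text{deg }D_{\rho}=\tfrac14(-K_X)^{4}$, which comes out to $124,\ 148,\ 100,\ 100$ for $(\alpha,\beta)=(1,1),(1,2),(-1,2),(-1,1)$ respectively.

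For part~(1) it is enough, in each case, to exhibit one equivariant reflexive subsheaf $\mathcal{F}\subset\mathcal{T}_X$ with $\mu(\mathcal{F})\ge\mu(\mathcal{T}_X)$ and then quote Proposition~\ref{RS2}. By Corollary~\ref{RS1} and Remark~\ref{sub_of_tangent}, any linear subspace $F\subseteq N\otimes_{\Z}\C$ with $F=\text{Span}(F\cap\Delta(1))$ provides such an $\mathcal{F}$, of rank $\dim F$, with $c_1(\mathcal{F})=\sum_{v_{\rho}\in F\cap\Delta(1)}D_{\rho}$. The key observation is that the relations $e_1=-e_0$ and $w_0+w_1=\alpha e_1$ make small spans absorb extra rays: for $\alpha=1$ the line $F=\text{Span}(e_0)$ already contains $e_1$, so $\mathcal{F}=\mathcal{O}_X(D_{e_0}+D_{e_1})$ with $\mu(\mathcal{F})=\text{deg }D_{e_0}+\text{deg }D_{e_1}$, which a direct substitution shows strictly exceeds $\mu(\mathcal{T}_X)$ for $(1,1)$ and $(1,2)$; and for every $\alpha=\pm1$ the plane $F=\text{Span}(w_0,w_1)$ contains both $e_0$ and $e_1$ (since $w_0+w_1=\alpha e_1$), yielding the rank-$2$ subsheaf $\mathcal{O}_X(D_{w_0}+D_{w_1}+D_{e_0}+D_{e_1})$ with $\mu(\mathcal{F})=\tfrac12\bigl(\text{deg }D_{w_0}+\text{deg }D_{w_1}+\text{deg }D_{e_0}+\text{deg }D_{e_1}\bigr)$, which settles $(-1,2)$ (in fact all three). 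In every case $\mu(\mathcal{F})>\mu(\mathcal{T}_X)$, so $\mathcal{T}_X$ is unstable.

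For part~(2), where $(\alpha,\beta)=(-1,1)$, I would instead rule out every possible destabilizer. By Remark~\ref{sub_of_tangent} it suffices to enumerate all subspaces $F$ of dimension $1$, $2$ or $3$ with $F=\text{Span}(F\cap\Delta(1))$, determine $F\cap\Delta(1)$, and check that $\tfrac{1}{\dim F}\,\text{deg}\bigl(\sum_{v_{\rho}\in F\cap\Delta(1)}D_{\rho}\bigr)<\mu(\mathcal{T}_X)=100$. With the degrees $\text{deg }D_{w_0}=\text{deg }D_{w_1}=56$, $\text{deg }D_{z_0}=\text{deg }D_{z_1}=\text{deg }D_{z_2}=68$, $\text{deg }D_{e_0}=48$, $\text{deg }D_{e_1}=36$ computed, the candidates organize into: the six lines spanned by single rays (maximal slope $84=\text{deg }D_{e_0}+\text{deg }D_{e_1}$, attained on $\text{Span}(e_0)=\text{Span}(e_1)$); the planes spanned by pairs of rays, the extremal one being $\text{Span}(w_0,w_1)$, which contains $\text{Span}(e_0)$, has $F\cap\Delta(1)=\{w_0,w_1,e_0,e_1\}$ and slope $98$; and the hyperplanes spanned by triples of rays, the extremal one being $\text{Span}(z_0,z_1,z_2)$, which has $F\cap\Delta(1)=\{z_0,z_1,z_2,e_0,e_1\}$ (since $z_0+z_1+z_2=\beta e_1$) and slope $96$. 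Every remaining candidate has strictly smaller slope, so all proper equivariant reflexive subsheaves have slope $<100$, and $\mathcal{T}_X$ is stable by Proposition~\ref{RS2}.

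The main obstacle will be the bookkeeping in part~(2): one has to pin down precisely which subspaces equal the span of the rays they contain — equivalently, to detect exactly when the span of two or three rays absorbs a further ray — and it is precisely these ``absorbing'' subspaces that realize the largest subsheaf slopes and hence the tightest inequalities against $\mu(\mathcal{T}_X)$, the margins being only $98<100$ in rank $2$ and $96<100$ in rank $3$. One must also check that no subsheaf whose $F$ is \emph{not} of the form $\text{Span}(F\cap\Delta(1))$ beats these, but that is exactly what Remark~\ref{sub_of_tangent} guarantees, using $\text{deg }D_{\rho}>0$ for all $\rho$.
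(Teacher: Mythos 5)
Your proposal is correct and follows essentially the same route as the paper: fix $H=-K_X$, compute the invariant divisor degrees from the listed intersection numbers (your values, e.g.\ $56,76,144$ for $(1,1)$ and $56,68,48,36$ for $(-1,1)$, with $\mu(\mathcal{T}_X)=124,148,100,100$, all agree with the paper's table), and exhibit $\mathcal{O}_X(D_{e_0}+D_{e_1})$, resp.\ the rank-$2$ subsheaf from $\mathrm{Span}(w_0,w_1)=\mathrm{Span}(w_0,e_0)$, as destabilizers in the unstable cases. Your treatment of $(\alpha,\beta)=(-1,1)$ is also the paper's argument — enumerating ray-spanned subspaces via Remark \ref{sub_of_tangent} with the same extremal slopes $84$, $98$, $96$ against $\mu(\mathcal{T}_X)=100$.
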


\begin{proof}(1) Proof follows from the following table	
	
	\begin{longtable}{|c|c|c|c|c|c|c|c|c|}
		\hline 
	\((\alpha, \beta)\)  &  \(\text{deg} D_{w_0}\) &  $\text{deg} D_{z_0}$  &  \(\text{deg} D_{e_0}\) & \(\mu(\mathcal{T}_X)\) & \(F \)  & $c_1(\mathcal{F}) $ &\(\mu(\mathcal{F})\) \\ 
		\hline \hline
		 $(1,1)$ &  56 & 76 & 144 & 124 & $\text{Span}({e}_0)$ & $D_{e_0} +D_{e_1}$ & 156\\ 
		\hline 
		 $(1,2)$ &  62 & 80 & 225 & 148 & $\text{Span}({e}_0)$& $D_{e_0} +D_{e_1}$  & 228\\ 
		\hline 	
		 $(-1,2)$ &  62 & 64 & 75 & 100 & $\text{Span}(w_0, e_0)$& $D_{e_0} +D_{e_1}+D_{w_0} +D_{w_1}$  & 104\\ 
		\hline 
	\end{longtable}	
where $\mathcal{F}$ denotes the equivariant reflexive subsheaf of $\mathcal{T}_X$ corresponding to the subspace \(F\) of $\C^4$.
%
%
%
%
%
%

(2)
\underline{\(\alpha=-1, \beta=1\) :} Then \(a=3,b=2, c=2\).

 \(\text{deg }D_{w_0} =56\), \(\text{deg }D_{z_0} =68\), \(\text{deg }D_{e_0}  = 48\) and \(\mu(\mathcal{T}_X) =100\). Also \(\text{deg }(D_{e_0} +D_{e_1})=84\). Note that \(\mathcal{O}_X(D_{w_0}), \mathcal{O}_X(D_{w_1}), \mathcal{O}_X(D_{z_0}), \mathcal{O}_X(D_{z_1}), \mathcal{O}_X(D_{z_2}), \mathcal{O}_X(D_{e_0} +D_{e_1})\) are the only rank 1 equivariant reflexive subsheaves of $\mathcal{T}_X$ and all of them has degree less than \(\mu(\mathcal{T}_X)\).

Next we consider higher rank equivariant reflexive subsheaves of $\mathcal{T}_X$. The maximum possible slopes can occur only from the following situations.

\noindent
\underline{rank$(\mathcal{F})= 2$}
\begin{enumerate}[(i)]
	\item \(F=\text{Span}(e_0, e_1, z_j)\) for \(j=0,1,2\), then \(\mu(\mathcal{F})=76.\)
		\item \(F=\text{Span}(w_0, w_1, e_0, e_1)\), then \(\mu(\mathcal{F})=98.\)
\end{enumerate}
\underline{rank$(\mathcal{F})= 3$}
\begin{enumerate}[(i)]
	\item \(F=\text{Span}(e_0, e_1, z_0, z_1, z_2)\), then \(\mu(\mathcal{F})=96.\)
	\item \(F=\text{Span}(w_0, w_1, e_0, e_1,z_j)\) for \(j=0,1,2\), then \(\mu(\mathcal{F})=88.\)
\end{enumerate}

Hence in this case $\mathcal{T}_X$ is stable.
\end{proof}

\subsection{ Stability of tangent bundle of $\mathbb{P}^1$-bundle over $\mathbb{P}(\mathcal{O}_{\mathbb{P}^2} \oplus \mathcal{O}_{\mathbb{P}^2} (a_1)), \ a_1=1, 2$}

 Let \(X=\mathbb{P}(\mathcal{O}_{X'} \oplus \mathcal{O}_{X'}(\alpha, \beta) )\), where \(X'=\mathbb{P}(\mathcal{O}_{\mathbb{P}^2} \oplus \mathcal{O}_{\mathbb{P}^2} (a_1))\). The rays of the fan $\Delta$ of \(X\) are
\begin{equation*}
\begin{split}
& \mathbf{v}_0=(-1, -1, a_1, \alpha), \mathbf{v}_1=(1, 0, 0, 0), \mathbf{v}_2=(0, 1, 0, 0),\\
& \mathbf{e}_0'=(0, 0, -1, \beta), \mathbf{e}_1'=(0, 0, 1, 0), \mathbf{e}_1=(0, 0, 0, 1), \mathbf{e}_0=(0, 0, 0, -1)
\end{split}
\end{equation*}
and the maximal cones are
\begin{equation*}
\text{Cone}(\mathbf{v}_0, \ldots,\widehat{\mathbf{v}}_j, \ldots,\mathbf{v}_2, \mathbf{e}_p',\mathbf{e}_q), \text{ where }j=0, 1, 2 \text{ and } 0 \leq p, q \leq 1.
\end{equation*}

Note that we have the following relations
\begin{equation*}
\begin{split}
& D_{\mathbf{v}_0} \sim_{\text{lin}} D_{\mathbf{v}_1} \sim_{\text{lin}} D_{\mathbf{v}_2}, D_{\mathbf{e}_1'} \sim_{\text{lin}} D_{\mathbf{e}_0'} -a_1D_{\mathbf{v}_0}, D_{\mathbf{e}_1} \sim_{\text{lin}} D_{\mathbf{e}_0} - \alpha D_{\mathbf{v}_0} - \beta D_{\mathbf{e}_0'}.
\end{split}
\end{equation*}
Hence $\text{Pic}(X)=\Z D_{\mathbf{v}_0} \oplus \Z  D_{\mathbf{e}_0'} \oplus \Z   D_{\mathbf{e}_0}.$ Using Toric Nakai criterion and the fact that \(\mathbb{P}(\mathcal{O}_{X'} \oplus \mathcal{O}_{X'}(D) )  \cong \mathbb{P}(\mathcal{O}_{X'} \oplus \mathcal{O}_{X'}(-D)) \) for any divisor \(D\) on \(X'\), we only need to consider the following cases (comparing with the primitive relations listed in \cite[Proposition 3.1.2]{batyrev} ):
\begin{equation*}
\begin{split}
D_3& =\mathbb{P}(\mathcal{O}_{\mathcal{B}_2} \oplus \mathcal{O}_{\mathcal{B}_2}(1,1)), D_9=\mathbb{P}(\mathcal{O}_{\mathcal{B}_2} \oplus \mathcal{O}_{\mathcal{B}_2}(1,0)) \\
D_8&=\mathbb{P}(\mathcal{O}_{\mathcal{B}_2} \oplus \mathcal{O}_{\mathcal{B}_2}(0,1)) , D_{16}=\mathbb{P}(\mathcal{O}_{\mathcal{B}_2} \oplus \mathcal{O}_{\mathcal{B}_2}(-1,1)) \\
D_2&=\mathbb{P}(\mathcal{O}_{\mathcal{B}_1} \oplus \mathcal{O}_{\mathcal{B}_1}(0,1)), D_5= \mathbb{P}^1 \times \mathcal{B}_1, D_{12} =\mathbb{P}^1 \times \mathcal{B}_2,
\end{split}
\end{equation*}
where \(X'=\mathcal{B}_1\) for \(a_1=2\) and \(X'=\mathcal{B}_2\) for \(a_1=1\) from the notations of \cite[Remark 2.5.10]{batyrev}.

Let \(H=a D_{\mathbf{v}_0} + b D_{\mathbf{e}_0'} +c D_{\mathbf{e}_0}\). We have the following relations 
\[D_{\mathbf{v}_0}^3=D_{\mathbf{v}_1}^3=D_{\mathbf{v}_2}^3=0, D_{\mathbf{e}_0'} D_{\mathbf{e}_1'}=0,   D_{\mathbf{e}_0}  D_{\mathbf{e}_1}=0.\]
So we have 
\begin{equation*}
\begin{split}
H^3=& 3ab^2 D_{\mathbf{v}_0} D_{\mathbf{e}_0'}^2+3ac^2 D_{\mathbf{v}_0} D_{\mathbf{e}_0}^2 +3a^2b D_{\mathbf{v}_0}^2 D_{\mathbf{e}_0'}+ 6abc D_{\mathbf{v}_0} D_{\mathbf{e}_0'} D_{\mathbf{e}_0} +3a^2c D_{\mathbf{v}_0}^2 D_{\mathbf{e}_0} \\
& +b^3 D_{\mathbf{e}_0'}^3 + 3bc^2 D_{\mathbf{e}_0'} D_{\mathbf{e}_0}^2 + 3b^2c D_{\mathbf{e}_0'}^2 D_{\mathbf{e}_0} + c^3 D_{\mathbf{e}_0} ^3.
\end{split}
\end{equation*}

%
%
%

Furthermore, we have 
\begin{equation*}
\begin{split}
&D_{\mathbf{v}_0}^2 D_{\mathbf{e}_0'}^2=0, D_{\mathbf{v}_0}^2 D_{\mathbf{e}_0}^2=\beta, D_{\mathbf{v}_0} D_{\mathbf{e}_0'}^3=0, D_{\mathbf{v}_0} D_{\mathbf{e}_0'} D_{\mathbf{e}_0}^2= \alpha +a_1 \beta , \\
&D_{\mathbf{v}_0} D_{\mathbf{e}_0}^3= 2 \alpha \beta +a_1 \beta^2, D_{\mathbf{v}_0} D_{\mathbf{e}_0'}^2 D_{\mathbf{e}_0}=a_1, D_{\mathbf{e}_0'}^2 D_{\mathbf{e}_0}^2=a_1 \alpha+a_1^2 \beta, D_{\mathbf{e}_0'}^3 D_{\mathbf{e}_0}=a_1^2,\\
& D_{\mathbf{e}_0'} D_{\mathbf{e}_0}^3=(\alpha+a_1\beta)^2, D_{\mathbf{e}_0'}^4=0, D_{\mathbf{e}_0}^4=3 \alpha^2\beta + 3a_1 \alpha \beta^2 + a_1^2 \beta^3.
\end{split}
\end{equation*}

Now let us fix the polarization \(H=-K_X\), i.e. \(a=3-a_1-\alpha\), \(b=2-\beta\), \(c=2\).
\begin{prop}\label{D_2}
The tangent bundle of the following toric Fano 4-folds are unstable

\begin{tabular}{ccccccc}
 $(i) D_{12}$ & $(ii) D_9$	& \((iii) D_8\) & \((iv) D_3\) & $(v) D_{16}$ &	$(vi) D_5 $ &  $(vii) D_2$ 
\end{tabular} 
\end{prop}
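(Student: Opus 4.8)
The strategy follows exactly the template established in Propositions \ref{stabPic3D_7} and \ref{stabpic3D_1}: for each of the seven toric Fano $4$-folds $D_{12}, D_9, D_8, D_3, D_{16}, D_5, D_2$, I will exhibit a single destabilizing equivariant reflexive subsheaf of $\mathcal{T}_X$. By Remark \ref{ref2} and Corollary \ref{RS1}, such a subsheaf corresponds to a vector subspace $F \subseteq N_{\C} \cong \C^4$, and by Remark \ref{sub_of_tangent} it suffices to take $F = \text{Span}(F \cap \Delta(1))$ spanned by a subset of the rays. The first Chern class is then $c_1(\mathcal{F}) = \sum_{v_\rho \in F \cap \Delta(1)} D_\rho$, and $\mu(\mathcal{F}) = \frac{1}{\dim F}\, c_1(\mathcal{F})\cdot H^2$ with $H = -K_X$. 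The task reduces to: (a) plug in the relevant values $(\alpha,\beta)$ and $a_1$ to get $a = 3-a_1-\alpha$, $b = 2-\beta$, $c = 2$; (b) compute $\deg D_{\mathbf{v}_0}, \deg D_{\mathbf{e}_0'}, \deg D_{\mathbf{e}_0}$ (hence also $\deg D_{\mathbf{e}_1'}, \deg D_{\mathbf{e}_1}$ via the linear equivalences) using the intersection numbers already tabulated just before the statement; (c) compute $\mu(\mathcal{T}_X) = \tfrac14\,(-K_X)\cdot(-K_X)^3 / 1$, or rather $\mu(\mathcal{T}_X) = \tfrac14 \deg(-K_X)$ where $\deg(-K_X) = (\sum_\rho D_\rho)\cdot H^2$; (d) identify one ray-spanned subspace $F$ with $\mu(\mathcal{F}) > \mu(\mathcal{T}_X)$.

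For the concrete choice of destabilizer, the natural first guess in each case is $F = \text{Span}(\mathbf{e}_0)$, giving the rank-$1$ reflexive subsheaf $\mathcal{O}_X(D_{\mathbf{e}_0} + D_{\mathbf{e}_1})$ when $D_{\mathbf{e}_0} + D_{\mathbf{e}_1}$ is the effective representative (as in Proposition \ref{stabpic3D_1}(1)), or $\mathcal{O}_X(D_{\mathbf{e}_0})$ otherwise; if that fails, one tries larger rays-spanned spaces such as $\text{Span}(\mathbf{v}_0, \mathbf{e}_0)$ or $\text{Span}(\mathbf{e}_0', \mathbf{e}_1', \mathbf{e}_0)$, precisely as in the higher-rank cases of Propositions \ref{stabPic3D_7} and \ref{stabpic3D_1}. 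For the product cases $D_5 = \mathbb{P}^1 \times \mathcal{B}_1$ and $D_{12} = \mathbb{P}^1 \times \mathcal{B}_2$ there is a shortcut: by Remark \ref{prodstab}, $\mathcal{T}_X$ is strictly semistable iff both factors have semistable tangent bundle, so it suffices to observe that $\mathcal{T}_{\mathcal{B}_i}$ is \emph{unstable} — indeed $\mathcal{B}_1 = \mathbb{P}(\mathcal{O}_{\mathbb{P}^2}\oplus\mathcal{O}_{\mathbb{P}^2}(2))$ and $\mathcal{B}_2 = \mathbb{P}(\mathcal{O}_{\mathbb{P}^2}\oplus\mathcal{O}_{\mathbb{P}^2}(1))$ both have $s = 2$, $r = 1$, so Corollary \ref{stabonPic2cor}(1)--(2) (the case $a_1 = 2$, and $a_1 = 1$ with $s \ge 2$) gives instability of the factor, hence pulling back the destabilizing subsheaf destabilizes $\mathcal{T}_X$ as well. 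I will present these two cases via this reduction and the remaining five by the direct degree computation, organized in a table like the one in Proposition \ref{stabpic3D_1}.

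The main obstacle is purely bookkeeping: one must correctly specialize the (somewhat long) list of intersection numbers $D_{\mathbf{v}_0}^2 D_{\mathbf{e}_0}^2 = \beta$, $D_{\mathbf{v}_0} D_{\mathbf{e}_0'} D_{\mathbf{e}_0}^2 = \alpha + a_1\beta$, $D_{\mathbf{e}_0}^4 = 3\alpha^2\beta + 3a_1\alpha\beta^2 + a_1^2\beta^3$, etc., to each $(\alpha,\beta,a_1)$, then carry the degrees through the expansion of $(-K_X)\cdot H^2$ and of $c_1(\mathcal{F})\cdot H^2$ without arithmetic slips. There is no conceptual difficulty — the combinatorial classification of equivariant reflexive subsheaves from Corollary \ref{RS1}, together with Proposition \ref{RS2} and Remark \ref{sub_of_tangent}, reduces everything to a finite, explicit check — but since seven varieties must be handled and the intersection data is intricate, the real work is to verify each numerical inequality $\mu(\mathcal{F}) > \mu(\mathcal{T}_X)$ and record the witnessing subspace $F$ in each row of the table.
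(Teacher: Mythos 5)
Your plan coincides with the paper's proof in all essentials: the paper disposes of the seven varieties in one table, recording for each value of $a_1$ and $(\alpha,\beta)$ the degrees of $D_{\mathbf{v}_0}$, $D_{\mathbf{e}_0'}$, $D_{\mathbf{e}_0}$, the slope $\mu(\mathcal{T}_X)$, and a single ray-spanned subspace $F$ whose associated equivariant reflexive subsheaf destabilizes --- exactly the reduction via Remark \ref{ref2}, Corollary \ref{RS1}, Remark \ref{sub_of_tangent} and Proposition \ref{chern} that you describe. The one genuine difference is your treatment of the product cases: the paper handles $D_{12}=\mathbb{P}^1\times\mathcal{B}_2$ and $D_5=\mathbb{P}^1\times\mathcal{B}_1$ as the $(\alpha,\beta)=(0,0)$ rows of the same table, whereas you reduce to instability of $\mathcal{T}_{\mathcal{B}_i}$ (Corollary \ref{stabonPic2cor}, parts (1) and (2)) and pull back the destabilizer. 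This is valid, but note that Remark \ref{prodstab} only records the slope equality $\mu(\mathcal{T}_X)=\mu(\pi_2^*\mathcal{T}_{\mathcal{B}_i})$ and the semistable case; to conclude you must add the (easy) observation that for sheaves pulled back from one factor the $-K_X$-degree is a fixed positive multiple of the $-K_{\mathcal{B}_i}$-degree, by expanding $(-K_X)^3$ in Künneth classes and applying the projection formula, so that a destabilizer of $\mathcal{T}_{\mathcal{B}_i}$ pulls back to a destabilizer sitting inside $\pi_2^*\mathcal{T}_{\mathcal{B}_i}\subset\mathcal{T}_X$. That one-line argument buys you two of the seven cases without intersection-number bookkeeping. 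Two small corrections to the remaining bookkeeping: on a fourfold the degree is $c_1\cdot H^{3}$, not $c_1\cdot H^{2}$ (you use the correct $\tfrac14(-K_X)^4$ for $\mu(\mathcal{T}_X)$ elsewhere, so this is just a slip); and the first guess $F=\text{Span}(\mathbf{e}_0)$ succeeds only for $D_8$ and $D_3$ --- in the paper's table the witness is $\text{Span}(\mathbf{e}_0')$ (giving $D_{\mathbf{e}_0'}+D_{\mathbf{e}_1'}$, resp.\ $D_{\mathbf{e}_0'}$ alone when $\beta=1$) for $D_9$, $D_2$ (and for $D_{12}$, $D_5$ if done directly), and the rank-two space $\text{Span}(\mathbf{e}_0',\mathbf{e}_0)$ is needed for $D_{16}$ --- which your fallback step anticipates, so the plan is sound once the arithmetic is carried out.
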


\begin{proof}
Proof follows from the following table	
	
\begin{longtable}{|c|c|c|c|c|c|c|c|c|}
	\hline 
	\(a_1\)	& \((\alpha, \beta)\)  &  \(\text{deg} D_{\mathbf{v}_0}\) &  $\text{deg} D_{\mathbf{e}_0'}$  &  \(\text{deg} D_{\mathbf{e}_0}\) & \(\mu(\mathcal{T}_X)\) & \(F \)  & \(c_1(\mathcal{F})\)  &\(\mu(\mathcal{F})\) \\ 
	\hline \hline
	1	& $(0,0)$ &  72 & 96 & 56 & 112 & $\text{Span}(\mathbf{e}_0)$& \(D_{\mathbf{e}_0'}+D_{\mathbf{e}_1'}\) & 120\\ 
	\hline 	
	1 & $(1,0)$ & 72 & 98 & 98 & 116 & $\text{Span}(\mathbf{e}'_0)$& \(D_{\mathbf{e}_0'}+D_{\mathbf{e}_1'}\)  & 124 \\
	\hline
	1 & $(0,1)$ & 74 & 98 & 117 & 120 & $\text{Span}(\mathbf{e}_0)$&  \(D_{\mathbf{e}_0}+D_{\mathbf{e}_1}\) & 136 \\
	\hline
	1 & $(1,1)$ & 78 & 104 & 189 & 140 & $\text{Span}(\mathbf{e}_0)$& \(D_{\mathbf{e}_0}+D_{\mathbf{e}_1}\)  & 196 \\
	\hline
	1 & $(-1,1)$ & 70 & 96 & 63 & 108 & $\text{Span}(\mathbf{e}'_0, \mathbf{e}_0)$& \(D_{\mathbf{e}_0'}+D_{\mathbf{e}_1'}+ D_{\mathbf{e}_0}+D_{\mathbf{e}_1}\)  & 111 \\
	\hline
	2& $(0,0)$ & 72 & 150 & 62 & 124 & $\text{Span}(\mathbf{e}'_0)$& \(D_{\mathbf{e}_0'}+D_{\mathbf{e}_1'}\)  & 156 \\
	\hline
	2 & $(0,1)$ & 76 & 158 & 171 & 144 & $\text{Span}(\mathbf{e}'_0)$&  \(D_{\mathbf{e}_0'}\) & 158 \\
	\hline	
\end{longtable}	
where $\mathcal{F}$ denotes the equivariant reflexive subsheaf of $\mathcal{T}_X$ corresponding to the subspace \(F\) of $\C^4$.
\end{proof}

\subsection{Stability of tangent bundle of \(\mathbb{P}^2 \)-bundle over the Hirzebruch surface $\mathcal{H}_1$}

Let \(X=\mathbb{P}(\mathcal{O}_{\mathcal{H}_1} \oplus \mathcal{O}_{\mathcal{H}_1} \oplus \mathcal{O}_{\mathcal{H}_1}(\alpha, \beta)) \). The rays of the fan $\Delta$ of \(X\) are given by:
\begin{equation*}
\begin{split}
& \mathbf{v}_1=(1,0,0,0), \mathbf{v}_2=(0, 1, 0, 0), \mathbf{v}_3=(-1, 1, 0, \alpha),\mathbf{v}_4=(0, -1, 0, \beta)\\
& \mathbf{e}_0=(0, 0, -1, -1), \mathbf{e}_1=(0, 0, 1, 0), \mathbf{e}_2=(0, 0, 0, 1),
\end{split}
\end{equation*}
and the maximal cones are given by
\begin{equation*}
\begin{split}
& \text{Cone}(\mathbf{e}_0,\ldots, \widehat{\mathbf{e}}_j,\ldots,  \mathbf{e}_2) + \text{Cone}(\mathbf{v}_i, \mathbf{v}_{i+1}), \text{ where } j=0, 1, 2 \text{ and } 1 \leq i \leq 4.
\end{split}
\end{equation*}

Now we have the following relations
\begin{equation*}
\begin{split}
& D_{\mathbf{v}_1} \sim_{\text{lin}} D_{\mathbf{v}_3}, D_{\mathbf{v}_2} \sim_{\text{lin}} D_{\mathbf{v}_4}-D_{\mathbf{v}_3}, D_{\mathbf{e}_1} \sim_{\text{lin}} D_{\mathbf{e}_0}, D_{\mathbf{e}_2} \sim_{\text{lin}} D_{\mathbf{e}_0} - \alpha D_{\mathbf{v}_3} - \beta D_{\mathbf{v}_4}.
\end{split}
\end{equation*}

Hence $\text{Pic}(X)=\Z D_{\mathbf{v}_3} \oplus \Z  D_{\mathbf{v}_4} \oplus \Z   D_{\mathbf{e}_0}.$ Note that using Toric Nakai criterion we see that 	\(X=\mathbb{P}(\mathcal{O}_{X'} \oplus \mathcal{O}_{X'} \oplus \mathcal{O}_{X'}(\alpha, \beta)) \) is Fano if and only if \(\alpha=0, \beta=0,1\). We consider the case for \((\alpha, \beta)=(0,1)\), i.e. \(X=D_{11}\) in the notation of \cite[Section 4]{batyrev}).

\begin{prop}\label{D_{11}}
Let \(X=\mathbb{P}(\mathcal{O}_{\mathcal{H}_1} \oplus \mathcal{O}_{\mathcal{H}_1} \oplus \mathcal{O}_{\mathcal{H}_1}(0, 1)) \). Then $\mathcal{T}_X$ is unstable. 
\end{prop}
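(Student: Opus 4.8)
The argument will parallel the other $\mathbb{P}^{r}$-bundle cases treated earlier in this section: fix the polarization $H=-K_X$, compute the degrees of the relevant $T$-invariant divisors, read off $\mu(\mathcal{T}_X)$ from $c_1(\mathcal{T}_X)=-K_X$ (Corollary \ref{pertangb}, Proposition \ref{chern}), and then exhibit a single destabilizing equivariant reflexive subsheaf. First I would specialize $(\alpha,\beta)=(0,1)$ in the linear equivalences above; combining them with $-K_X=\sum_{\rho}D_{\rho}$ gives $-K_X=D_{\mathbf{v}_3}+D_{\mathbf{v}_4}+3D_{\mathbf{e}_0}$, so I take $H=-K_X$, i.e.\ $(a,b,c)=(1,1,3)$ in the basis $D_{\mathbf{v}_3},D_{\mathbf{v}_4},D_{\mathbf{e}_0}$ of $\mathrm{Pic}(X)$.

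Next, writing $u=D_{\mathbf{v}_3}$, $w=D_{\mathbf{v}_4}$, $h=D_{\mathbf{e}_0}$, the Stanley--Reisner relations $D_{\mathbf{v}_1}D_{\mathbf{v}_3}=0$, $D_{\mathbf{v}_2}D_{\mathbf{v}_4}=0$, $D_{\mathbf{e}_0}D_{\mathbf{e}_1}D_{\mathbf{e}_2}=0$ together with the linear equivalences reduce everything to $u^2=0$, $w^2=uw$, $h^3=wh^2$, normalized by $uwh^2=1$ (since $\mathrm{Cone}(\mathbf{v}_1,\mathbf{v}_2,\mathbf{e}_0,\mathbf{e}_1)$ is a smooth maximal cone, whence $D_{\mathbf{v}_1}D_{\mathbf{v}_2}D_{\mathbf{e}_0}D_{\mathbf{e}_1}=uwh^2=1$ by Proposition \ref{wallreln}). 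Expanding $H^3=(u+w+3h)^3$ modulo these relations and intersecting with each $D_\rho$ yields $\deg D_{\mathbf{v}_3}=54$, $\deg D_{\mathbf{v}_4}=81$, $\deg D_{\mathbf{e}_0}=\deg D_{\mathbf{e}_1}=108$ and $\deg D_{\mathbf{e}_2}=27$ (as a consistency check, $\sum_{\rho}\deg D_{\rho}=\deg(-K_X)=459$), so $\mu(\mathcal{T}_X)=\tfrac14\deg(-K_X)=\tfrac{459}{4}$.

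For the destabilizing subsheaf I would take the $2$-dimensional subspace $F=\mathrm{Span}(\mathbf{e}_0,\mathbf{e}_1,\mathbf{e}_2)\subset N\otimes\mathbb{C}$, the fibre direction of the $\mathbb{P}^2$-bundle, which satisfies $F=\mathrm{Span}(F\cap\Delta(1))$ with $F\cap\Delta(1)=\{\mathbf{e}_0,\mathbf{e}_1,\mathbf{e}_2\}$. By Corollary \ref{RS1} together with Remark \ref{sub_of_tangent}, the induced subfiltration defines a rank-$2$ equivariant reflexive subsheaf $\mathcal{F}\subset\mathcal{T}_X$ (in fact the relative tangent sheaf of $X\to\mathcal{H}_1$) with $c_1(\mathcal{F})=D_{\mathbf{e}_0}+D_{\mathbf{e}_1}+D_{\mathbf{e}_2}$, hence $\mu(\mathcal{F})=\tfrac12(108+108+27)=\tfrac{243}{2}>\tfrac{459}{4}=\mu(\mathcal{T}_X)$. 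By Proposition \ref{RS2} (equivalently, Remark \ref{ref2}) this shows $\mathcal{T}_X$ is unstable.

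The only slightly laborious part is the intersection-number bookkeeping in the middle step, but it is entirely routine given Proposition \ref{wallreln} and the explicit fan; no conceptual obstacle arises. The real content of the statement is the familiar phenomenon that, for a projective bundle whose twisting data $(0,1)$ is small relative to the fibre dimension, the relative tangent sheaf already over-slopes $\mathcal{T}_X$ with respect to $-K_X$, exactly as in the other $\mathbb{P}^r$-bundle cases of this section.
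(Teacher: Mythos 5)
Your proposal is correct and follows essentially the same route as the paper: fix $H=-K_X=D_{\mathbf{v}_3}+D_{\mathbf{v}_4}+3D_{\mathbf{e}_0}$, compute $\deg D_{\mathbf{v}_3}=54$, $\deg D_{\mathbf{v}_4}=81$, $\deg D_{\mathbf{e}_0}=108$, hence $\mu(\mathcal{T}_X)=\tfrac{459}{4}$, and destabilize with the rank-$2$ equivariant reflexive subsheaf given by $F=\mathrm{Span}(\mathbf{e}_0,\mathbf{e}_1,\mathbf{e}_2)$, of slope $\tfrac{243}{2}$. The only difference is presentational: you package the intersection numbers via the relations $u^2=0$, $w^2=uw$, $h^3=wh^2$ with $uwh^2=1$, whereas the paper tabulates the monomial values directly; the numbers agree.
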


\begin{proof}
Note that $-K_X= D_{\mathbf{v}_3} + D_{\mathbf{v}_4}+ 3D_{\mathbf{e}_0}$. Since \(D_{\mathbf{v}_3}^2=0\), we have
\begin{equation*}
\begin{split}
(-K_X)^3=3D_{\mathbf{v}_3}D_{\mathbf{v}_4}^2+27D_{\mathbf{v}_3}D_{\mathbf{e}_0}^2+18D_{\mathbf{v}_3}D_{\mathbf{v}_4}D_{\mathbf{e}_0}+D_{\mathbf{v}_4}^3+27 D_{\mathbf{v}_4}D_{\mathbf{e}_0}^2+9D_{\mathbf{v}_4}^2D_{\mathbf{e}_0}+27 D_{\mathbf{e}_0}^3.
\end{split}
\end{equation*}
%
%
%

Now we compute the following intersection products
 \begin{equation*}
 \begin{split}
 &D_{\mathbf{v}_3}D_{\mathbf{v}_4}^3=0,D_{\mathbf{v}_3}D_{\mathbf{v}_4}D_{\mathbf{e}_0}^2=1 , D_{\mathbf{v}_3}D_{\mathbf{v}_4}^2D_{\mathbf{e}_0}=0, D_{\mathbf{v}_3}D_{\mathbf{e}_0}^3=1, D_{\mathbf{v}_4}^4=0, \\
 &  D_{\mathbf{v}_4}^2D_{\mathbf{e}_0}^2=1, D_{\mathbf{v}_4}^3D_{\mathbf{e}_0}=0, D_{\mathbf{v}_4}D_{\mathbf{e}_0}^3=1, D_{\mathbf{e}_0}^4=1.
 \end{split}
 \end{equation*}


%






Thus we have $\text{deg}D_{\mathbf{v}_3}=54, \text{deg}D_{\mathbf{v}_4}=81, \text{deg}D_{\mathbf{e}_0}=108$ and $\mu(\mathcal{T}_X)= 114.75$.


Let \(F=\text{Span}(\mathbf{e}_1,\mathbf{e}_2, \mathbf{e}_0)\). Then it corresponds to a rank 2  destabilizing reflexive subsheaf $\mathcal{F}$ of $\mathcal{T}_X$ with $\mu(\mathcal{F})=121.5.$ Hence $\mathcal{T}_X$ is unstable.
\end{proof}

\subsection{Stability of tangent bundle of \(\mathbb{P}^1\)-bundle over \(\mathbb{P}(\mathcal{O}_{\mathbb{P}^1} \oplus \mathcal{O}_{\mathbb{P}^1} \oplus \mathcal{O}_{\mathbb{P}^1}(1))\)}

Let \(X=\mathbb{P}(\mathcal{O}_{X'} \oplus \mathcal{O}_{X'}(\alpha, \beta)) \), where \(X'=\mathbb{P}(\mathcal{O}_{\mathbb{P}^1} \oplus \mathcal{O}_{\mathbb{P}^1} \oplus \mathcal{O}_{\mathbb{P}^1}(1))\). The rays of the fan $\Delta$ of \(X\) are given by
\begin{equation*}
\begin{split}
&\mathbf{v}_0=(-1, 0, 1, \alpha), \mathbf{v}_1=(1, 0, 0, 0), \mathbf{e}_1'=(0, 1, 0, 0),\mathbf{e}_2'=(0, 0, 1, 0)\\
&\mathbf{e}_0'=(0, -1, -1, \beta), \mathbf{e}_1=(0, 0, 0, 1), \mathbf{e}_0=(0, 0, 0, -1)
\end{split}
\end{equation*}
and the maximal cones are given by \[\text{Cone}( \mathbf{v}_i,\mathbf{e}_0', \ldots, \widehat{\mathbf{e}_j'},  \ldots, \mathbf{e}_2', \mathbf{e}_k) \text{ for } i=0,1, j=0,1,2 \text{ and } k=0,1.\]

We have the following relations 
\begin{equation*}
\begin{split}
&D_{\mathbf{v}_1}\sim_{\text{lin}} D_{\mathbf{v}_0}, D_{\mathbf{e}_1'} \sim_{\text{lin}} D_{\mathbf{e}_0'}, D_{\mathbf{e}_2'} \sim_{\text{lin}} D_{\mathbf{e}_0'}-D_{\mathbf{v}_0}, D_{\mathbf{e}_1} \sim_{\text{lin}} D_{\mathbf{e}_0}-\alpha_1 D_{\mathbf{v}_0}- \beta D_{\mathbf{e}_0'}.
\end{split}
\end{equation*}

Hence $\text{Pic}(X)=\Z D_{\mathbf{v}_0} \oplus \Z D_{\mathbf{e}_0'} \oplus \Z D_{\mathbf{e}_0}.$ Now using Toric Nakai criterion one can see that \(X=\mathbb{P}(\mathcal{O}_{X'} \oplus \mathcal{O}_{X'}(\alpha, \beta)) \) is Fano if and only if \(\alpha=0, -2 \leq \beta \leq 2\). It suffices to consider $\beta=1, 2$. Note that \(X=D_{10}\) for $(\alpha, \beta)=(0,1)$ and \(X=D_4\) for $(\alpha, \beta)=(0,2)$ from the notations of \cite[Remark 2.5.10, Section 4]{batyrev}.

\begin{prop}\label{D_4}
Let \(X=\mathbb{P}(\mathcal{O}_{X'} \oplus \mathcal{O}_{X'}(0, \beta)) \), where \(X'=\mathbb{P}(\mathcal{O}_{\mathbb{P}^1} \oplus \mathcal{O}_{\mathbb{P}^1} \oplus \mathcal{O}_{\mathbb{P}^1}(1))\) and \(\beta=1,2\). Then $\mathcal{T}_X$ is unstable.	
\end{prop}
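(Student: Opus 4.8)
The plan is to follow exactly the template already used in Proposition \ref{stabPic3D_7} and Proposition \ref{D_2}: produce an explicit equivariant reflexive subsheaf of $\mathcal{T}_X$ whose slope strictly exceeds $\mu(\mathcal{T}_X)$. The natural candidate is the rank-$1$ subsheaf corresponding to the one-dimensional subspace $F=\text{Span}(\mathbf{e}_0)\subset\C^4$, possibly a higher-rank subsheaf spanned by the ``fiber-direction'' rays if the rank-$1$ one does not suffice. Concretely, first I would compute $-K_X=\sum_{\rho}D_\rho$ in terms of the basis $D_{\mathbf{v}_0},D_{\mathbf{e}_0'},D_{\mathbf{e}_0}$ of $\text{Pic}(X)$ using the linear relations listed just above the statement; writing $-K_X=aD_{\mathbf{v}_0}+bD_{\mathbf{e}_0'}+cD_{\mathbf{e}_0}$ one gets $a=2-\alpha$ (so $a=2$ here), $b=3-\beta$, $c=2$, by the usual count of how many rays map to each generator.

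Next I would expand $H^3=(-K_X)^3$, discard the terms that vanish for dimension reasons (note $D_{\mathbf{v}_0}^2=0$ since $\mathbf{v}_0,\mathbf{v}_1$ already span a $\mathbb{P}^1$-factor of $X'$, and $D_{\mathbf{e}_0'}^3=0$ since $\mathbf{e}_0',\mathbf{e}_1',\mathbf{e}_2'$ come from a $\mathbb{P}^2$), and then compute the handful of top intersection numbers $D_{\mathbf{v}_0}D_{\mathbf{e}_0'}D_{\mathbf{e}_0}^2$, $D_{\mathbf{v}_0}D_{\mathbf{e}_0}^3$, $D_{\mathbf{e}_0'}^2D_{\mathbf{e}_0}^2$, $D_{\mathbf{e}_0'}D_{\mathbf{e}_0}^3$, $D_{\mathbf{e}_0}^4$ via Proposition \ref{wallreln}, exactly as in the analogous computations in the preceding subsections. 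From these I get $\deg D_{\mathbf{v}_0}$, $\deg D_{\mathbf{e}_0'}$, $\deg D_{\mathbf{e}_0}$, $\deg D_{\mathbf{e}_1}=\deg(D_{\mathbf{e}_0}-\alpha D_{\mathbf{v}_0}-\beta D_{\mathbf{e}_0'})$, etc. Then $\mu(\mathcal{T}_X)=\tfrac14 c_1(\mathcal{T}_X)\cdot H^3$ where $c_1(\mathcal{T}_X)=-K_X$, so $\mu(\mathcal{T}_X)=\tfrac14 H^4$.

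Finally, using Corollary \ref{RS1} and Remark \ref{sub_of_tangent}, the subspace $F=\text{Span}(\mathbf{e}_0)$ (or, if needed for $\beta=2$, the rank-$2$ subspace $\text{Span}(\mathbf{e}_0,\mathbf{e}_0')$ spanned by the fiber rays together with the base-fiber ray of $X'$) gives an equivariant reflexive subsheaf $\mathcal{F}$ with $c_1(\mathcal{F})=D_{\mathbf{e}_0}+D_{\mathbf{e}_1}$ (respectively $c_1(\mathcal{F})$ equal to a larger sum of the relevant $D_\rho$), and I would check that $\mu(\mathcal{F})=\deg(D_{\mathbf{e}_0}+D_{\mathbf{e}_1})$ (respectively the rank-$2$ slope) is strictly greater than $\mu(\mathcal{T}_X)$ for $\beta=1$ and for $\beta=2$. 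This is most cleanly presented in a short table, one row for each value of $\beta$, listing the three degrees, $\mu(\mathcal{T}_X)$, the choice of $F$, $c_1(\mathcal{F})$, and $\mu(\mathcal{F})$, mirroring the table in Proposition \ref{D_2}. The only real obstacle is arithmetical bookkeeping: getting the intersection numbers right in the presence of the parameters $\alpha,\beta$ and making sure the chosen $F$ actually destabilizes for \emph{both} $\beta=1$ and $\beta=2$ (if a single uniform choice of $F$ does not work for both, I would split into the two cases $D_{10}$ and $D_4$ and pick the destabilizing subspace separately, which is guaranteed to exist by the classification since neither variety appears in the stable list of Table 1).
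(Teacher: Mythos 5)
Your plan is exactly the paper's proof: compute $-K_X$ in the basis $D_{\mathbf{v}_0},D_{\mathbf{e}_0'},D_{\mathbf{e}_0}$, extract the degrees of the invariant divisors from $(-K_X)^3$ via the wall relations, and exhibit the rank-one equivariant reflexive subsheaf $\mathcal{O}_X(D_{\mathbf{e}_0}+D_{\mathbf{e}_1})$ corresponding to $F=\text{Span}(\mathbf{e}_0)$ as the destabilizer. With the correct numbers this single choice of $F$ works uniformly: one gets $\mu(\mathcal{T}_X)=116$, $\deg(D_{\mathbf{e}_0}+D_{\mathbf{e}_1})=132$ for $\beta=1$ and $\mu(\mathcal{T}_X)=140$, $\deg(D_{\mathbf{e}_0}+D_{\mathbf{e}_1})=204$ for $\beta=2$, so your rank-two fallback is never needed.

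However, two concrete slips in your setup would derail the computation if carried out literally, and both come from treating the base $X'$ as if it were $\mathbb{P}^1\times\mathbb{P}^2$ rather than $\mathbb{P}(\mathcal{O}_{\mathbb{P}^1}\oplus\mathcal{O}_{\mathbb{P}^1}\oplus\mathcal{O}_{\mathbb{P}^1}(1))$. First, the coefficient of $D_{\mathbf{v}_0}$ in $-K_X$ is not $2-\alpha$: since $D_{\mathbf{e}_2'}\sim_{\text{lin}} D_{\mathbf{e}_0'}-D_{\mathbf{v}_0}$ (the twist by $\mathcal{O}_{\mathbb{P}^1}(1)$ in the base), summing all seven prime divisors gives $-K_X=(1-\alpha)D_{\mathbf{v}_0}+(3-\beta)D_{\mathbf{e}_0'}+2D_{\mathbf{e}_0}$, i.e.\ $a=1$ here, not $2$. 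Using $a=2$ you would be computing slopes with respect to a divisor that is neither $c_1(\mathcal{T}_X)$ nor the stipulated polarization, so the stated instability with respect to $-K_X$ would not follow. Second, $D_{\mathbf{e}_0'}^3\neq 0$: only the product of the three distinct divisors $D_{\mathbf{e}_0'}D_{\mathbf{e}_1'}D_{\mathbf{e}_2'}$ vanishes (and $D_{\mathbf{e}_0'}^4=0$), while $D_{\mathbf{e}_0'}^3D_{\mathbf{e}_0}=1$; discarding the $(3-\beta)^3D_{\mathbf{e}_0'}^3$ term in $(-K_X)^3$ would lose a contribution of $(3-\beta)^3$ to $\deg D_{\mathbf{e}_0}$ and hence distort both $\mu(\mathcal{T}_X)$ and $\mu(\mathcal{F})$. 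Once these are corrected the remainder of your outline reproduces the paper's argument.
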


\begin{proof}
The anticanonical divisor is given by \(-K_X=D_{\mathbf{v}_0}+(3- \beta)D_{\mathbf{e}_0'}+ 2 D_{\mathbf{e}_0}.\) Since $D_{\mathbf{v}_0}^2=0 , D_{\mathbf{e}_0'}D_{\mathbf{e}_1'}D_{\mathbf{e}_2'}=0, D_{\mathbf{e}_0}D_{\mathbf{e}_1}=0$, we have
\begin{equation*}
\begin{split}
(-K_X)^3=&3(3- \beta)^2D_{\mathbf{v}_0} D_{\mathbf{e}_0'}^2+12 D_{\mathbf{v}_0} D_{\mathbf{e}_0}^2 +12(3- \beta)D_{\mathbf{v}_0} D_{\mathbf{e}_0'}D_{\mathbf{e}_0}+(3- \beta)^3 D_{\mathbf{e}_0'}^3\\
&+12(3- \beta)D_{\mathbf{e}_0'}D_{\mathbf{e}_0}^2+6(3- \beta)^2D_{\mathbf{e}_0'}^2 D_{\mathbf{e}_0} +8 D_{\mathbf{e}_0}^3.
\end{split}
\end{equation*}

%
%
%

Furthermore, we have the following relations 
\begin{equation*}
\begin{split}
& D_{\mathbf{v}_0} D_{\mathbf{e}_0'}^3=0, D_{\mathbf{v}_0} D_{\mathbf{e}_0'}D_{\mathbf{e}_0}^2=\beta, D_{\mathbf{v}_0} D_{\mathbf{e}_0}^3= \beta^2,  D_{\mathbf{e}_0'}^2 D_{\mathbf{e}_0}^2= \beta,\\
& D_{\mathbf{e}_0'}^3 D_{\mathbf{e}_0}=1,
D_{\mathbf{e}_0'} D_{\mathbf{e}_0}^3= \beta^2,
D_{\mathbf{e}_0}^4= \beta ^3, D_{\mathbf{e}_0'}^4=0.
\end{split}
\end{equation*}

Hence we have 

$
\text{deg } D_{\mathbf{v}_0}
= \left\{ \begin{array}{ccc}
56 & \beta=1 \\
62 & \beta=2
\end{array} \right.; \ 
\text{deg } D_{\mathbf{e}_0'}
= \left\{ \begin{array}{ccc}
	92 & \beta=1 \\
	98 & \beta=2
\end{array} \right.; \ 
\text{deg } D_{\mathbf{e}_0}
= \left\{ \begin{array}{ccc}
	112 & \beta=1 \\
	200 & \beta=2.
\end{array} \right.
$

Hence 
$
\mu(\mathcal{T}_X)
= \left\{ \begin{array}{ccc}
116 & \beta=1 \\
140 & \beta=2.
\end{array} \right.
$
\\
Note that \(\mathcal{O}_X(D_{\mathbf{e}_0}+ D_{\mathbf{e}_1})\) is a rank 1 reflexive subsheaf of $\mathcal{T}_X$, whose degree is given by

$
\text{deg } (D_{\mathbf{e}_0} + D_{\mathbf{e}_1} )=2 \text{deg } D_{\mathbf{e}_0} -  \beta \text{deg } D_{\mathbf{e}_0'}  
= \left\{ \begin{array}{ccc}
132 & \beta=1 \\
204 & \beta=2.
\end{array} \right.
$

Hence $\mathcal{T}_X$ is unstable.
\end{proof}

\subsection{Stability of tangent bundle of blow up of $\mathbb{P}^2$ on \(\mathbb{P}(\mathcal{O}_{\mathbb{P}^3} \oplus  \mathcal{O}_{\mathbb{P}^3}(a_1)), a_1=0,1,2\)}

Let \(X'=\mathbb{P}(\mathcal{O}_{\mathbb{P}^3} \oplus  \mathcal{O}_{\mathbb{P}^3}(a_1))\). The fan $\Delta'$ associated to \(X'\) is given as follows. Let \(u_1, u_2, u_3\) be the standard basis of \(\Z^3\) and \(e_1'\) be that of \(\Z\). Set \(v_i=(u_i, 0) \in \Z^4 \) for \(i=1, 2,3\), \(e_1=(0,0,0, e_1') \in \Z^4 \), \(e_0=-e_1\) and \(v_0=-v_1-v_2-v_3+a_1 e_1\). Then $\Delta'(1)=\{v_0, v_1, v_2, v_3, e_0, e_1\}$ and maximal cones are of the form 
\[\text{Cone}(v_0,\ldots, \widehat{v}_j, \ldots, v_3, e_0) \text{ and } \text{Cone}(v_0,\ldots, \widehat{v}_j, \ldots, v_3, e_1) \text{ for } j=0, 1, 2, 3.\]
Note that \(\text{Pic}(X')=\Z D_{v_0}  \oplus \Z  D_{e_0}\). For $\tau=\text{Cone}(v_0, e_1) \in \Delta'$, we have \(V(\tau)=\mathbb{P}^2\). Let \(X=Bl_{V(\tau)}(X)\) with associated fan $\Delta$. Then the rays  of \(\Delta\) are
\begin{equation*}
	\begin{split}
		& v_1=(1,0,0,0), v_2=(0,1,0,0), v_3=(0,0,1,0), v_0=(-1,-1,-1, a_1)\\
		& e_1=(0,0,0,1), e_0=(0,0,0,-1), u_{\tau}=(-1,-1,-1,a_1+1).
	\end{split}
\end{equation*}
We have the following relations
\begin{equation}\label{blow1rel1}
	\begin{split}
		& D_{v_1} \sim_{\text{lin}} D_{v_2} \sim_{\text{lin}} D_{v_3} \sim_{\text{lin}} D_{v_0}+D_{u_{\tau}}, D_{e_1} \sim_{\text{lin}} D_{e_0}-a_1D_{v_0} -(a_1+1)D_{u_{\tau}}. 
	\end{split}
\end{equation}
Hence $\text{Pic}(X)=\Z D_{v_0}  \oplus \Z  D_{e_0} \oplus \Z D_{u_{\tau}}.$ The anticanonical divisor is given by	$-K_X=(4-a_1) D_{v_0}+2 D_{e_0}+(3-a_1)D_{u_{\tau}}.$

Note that \(X=E_1,E_2,E_3\) for \(a_1=2,1,0\) respectively in the notation of \cite[Section 4]{batyrev}.

\begin{prop}\label{E_1}
Let $X=Bl_{V(\tau)}(X')$, where \(X'=\mathbb{P}(\mathcal{O}_{\mathbb{P}^3} \oplus  \mathcal{O}_{\mathbb{P}^3}(a_1))\) and $\tau=\text{Cone}(v_0, e_1) \in \Delta'$. Then
\begin{itemize}
	\item[(1)] $\mathcal{T}_X$ is unstable for \(a_1=1,2\).
	\item[(2)] $\mathcal{T}_X$ is stable for \(a_1=0\).
\end{itemize}
\end{prop}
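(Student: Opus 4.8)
The plan is to compute, for each value of $a_1 \in \{0,1,2\}$, the slope $\mu(\mathcal{T}_X)$ with respect to $-K_X$ and then compare it with the slopes of the candidate destabilizing subsheaves coming from Remark \ref{sub_of_tangent}. First I would compute the relevant intersection numbers on $X = Bl_{V(\tau)}(X')$ using the blowup fan description in Section \ref{blowup} together with the wall relations of Proposition \ref{wallreln}. Since $\mathrm{Pic}(X) = \Z D_{v_0} \oplus \Z D_{e_0} \oplus \Z D_{u_\tau}$, I would write $-K_X = (4-a_1)D_{v_0} + 2 D_{e_0} + (3-a_1) D_{u_\tau}$, expand $(-K_X)^3$, and use the linear equivalences \eqref{blow1rel1} together with the vanishing products (e.g. $D_{e_0}D_{e_1} = 0$, $D_{v_0}D_{u_\tau}\cdots$ from the structure of $\Delta$) to reduce everything to a single top intersection number, exactly as in the proofs of Propositions \ref{stabPic3D_7}--\ref{D_4}. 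This yields $\mathrm{deg}\, D_{v_0}$, $\mathrm{deg}\, D_{e_0}$, $\mathrm{deg}\, D_{u_\tau}$ and hence $\mu(\mathcal{T}_X) = \frac{1}{4}\mathrm{deg}(-K_X)$ by Corollary \ref{pertangb} and Proposition \ref{chern} (the tangent bundle having $c_1 = -K_X$).

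Next, by Corollary \ref{pertangb}, $\mathcal{T}_X$ has the two-step filtration $\mathscr{T}^\rho(-1) = \mathrm{Span}(v_\rho)$, $\mathscr{T}^\rho(0) = N_\C$. By Corollary \ref{RS1} and Remark \ref{sub_of_tangent}, proper equivariant reflexive subsheaves correspond to subspaces $F = \mathrm{Span}(F \cap \Delta(1))$ of $N_\C \cong \C^4$, with $c_1(\mathcal{F}) = \sum_{v_\rho \in F \cap \Delta(1)} D_\rho$. For the \emph{unstable} cases $a_1 = 1, 2$, it suffices to exhibit one subspace $F$ with $\mu(\mathcal{F}) \geq \mu(\mathcal{T}_X)$; the natural candidate is $F = \mathrm{Span}(e_0)$ (so $\mathcal{F}$ has rank $1$ with $c_1 = D_{e_0} + D_{e_1}$, or possibly just $D_{e_0}$, giving $\mu(\mathcal{F}) = \mathrm{deg}\,D_{e_0}$ or $2\mathrm{deg}\,D_{e_0} - a_1 \mathrm{deg}\,D_{v_0} - (a_1+1)\mathrm{deg}\,D_{u_\tau}$), mirroring the pattern in Proposition \ref{D_2}. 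I would verify numerically that this beats or ties $\mu(\mathcal{T}_X)$. For the \emph{stable} case $a_1 = 0$ (where $X = E_3$), one must check \emph{all} subspaces $F$ spanned by subsets of the seven rays: this amounts to enumerating the finitely many $F \cap \Delta(1)$ that can arise as $F = \mathrm{Span}(F\cap\Delta(1))$ with $\dim F \le 3$, computing $c_1(\mathcal{F})$ and $\mu(\mathcal{F})$ in each case, and confirming $\mu(\mathcal{F}) < \mu(\mathcal{T}_X)$ throughout, organized by rank as in the proof of Proposition \ref{stabpic3D_1}(2).

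The main obstacle is the exhaustive case analysis for $a_1 = 0$: one has to be careful about which subsets of $\{v_0, v_1, v_2, v_3, e_0, e_1, u_\tau\}$ actually span their linear hull (to invoke the reduction of Remark \ref{sub_of_tangent}), and about collinearities among the rays — note for instance that $v_0$, $u_\tau$ and $e_1$ are coplanar through the relation $u_\tau = v_0 + e_1$ (indeed $u_\tau - v_0 = e_1$), which constrains the possible configurations. Using the linear equivalences \eqref{blow1rel1} one sees $\mu(\mathcal{F})$ depends only on how many $v_i$'s, how many $e$'s, and whether $u_\tau$ lies in $F \cap \Delta(1)$, which collapses the analysis to a handful of rank-$1$, rank-$2$, and rank-$3$ cases. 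Once the arithmetic inequality $\mathrm{deg}\,D_{e_0} < \mu(\mathcal{T}_X)$ and its higher-rank analogues are checked, stability for $a_1 = 0$ follows from Proposition \ref{RS2}, completing the proof.
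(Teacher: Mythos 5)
Your proposal follows essentially the same route as the paper: compute $\deg D_{v_0}$, $\deg D_{e_0}$, $\deg D_{u_\tau}$ via the blowup fan and wall relations, exhibit the rank-one subsheaf $\mathcal{O}_X(D_{e_0}+D_{e_1})$ (coming from $F=\mathrm{Span}(e_0)$, which indeed contains both rays $e_0$ and $e_1$) as the destabilizer for $a_1=1,2$, and for $a_1=0$ run the exhaustive rank-$1$, $2$, $3$ check of induced subfiltrations against $\mu(\mathcal{T}_X)=107.75$. This is exactly the paper's argument, so the proposal is correct modulo carrying out the stated numerical verifications.
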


\begin{proof}
Let \(-K_X=a D_{v_0}+2 D_{e_0}+bD_{u_{\tau}}\), where \(a=4-a_1\) and $b=3-a_1$. Note that we have \(D_{e_0} D_{u_{\tau}}=0, D_{e_0} D_{e_1}=0 \text{ and } D_{v_0} D_{e_1}=0\). So we have 
\begin{equation*}
\begin{split}
(-K_X)^3&=a^3 D_{v_0}^3+12a D_{v_0}D_{e_0}^2+3ab^2 D_{v_0} D_{u_{\tau}}^2+ 6a^2 D_{v_0}^2 D_{e_0}+ 8 D_{e_0}^3+ 3a^2b D_{v_0}^2 D_{u_{\tau}}+b^3 D_{u_{\tau}}^3.
\end{split}
\end{equation*}

Furthermore, we have
\begin{equation*}
\begin{split}
& D_{v_0}^4=-a_1^2-3a_1-3, D_{v_0}^2 D_{e_0}^2=a_1, D_{v_0}^2 D_{u_{\tau}}^2=-a_1^2-a_1, D_{v_0}^3 D_{e_0}=1,\\
& D_{v_0} D_{e_0}^3=a_1^2, D_{v_0} D_{u_{\tau}}^3=a_1^2, D_{v_0}^3 D_{u_{\tau}}=(a_1+1)^2, D_{e_0}^4=a_1^3, D_{u_{\tau}}^4=-a_1^2+a_1-1.
\end{split}
\end{equation*}

\((1)\)\underline{\(a_1=2\):}  Here \(a=2, b=1\). We compute that \(\text{deg }D_{v_0} =76, \text{deg }D_{e_0}=216, \text{deg }D_{u_{\tau}}=21 \text{ and }\mu(\mathcal{T}_X)=151.25.\)

Note that \(\mathcal{O}_X(D_{e_0}+D_{e_1})\) is a rank 1 reflexive subsheaf of \(\mathcal{T}_X\) with degree \(\text{deg }(D_{e_0}+D_{e_1}) =217\). Hence $\mathcal{T}_X$ is unstable.

\underline{\(a_1=1\):} Here \(a=3, b=2\). We have \(\text{deg }D_{v_0}=61, \text{deg }D_{e_0}=125, \text{deg }D_{u_{\tau}}=28 \text{ and } \mu(\mathcal{T}_X) =122.25.\)

Note that \(\mathcal{O}_X(D_{e_0}+D_{e_1})\) is a rank 1 reflexive subsheaf of \(\mathcal{T}_X\) with degree \(\text{deg }(D_{e_0}+D_{e_1})=133\). Hence $\mathcal{T}_X$ is unstable.

(2) \underline{\(a_1=0\):} Here \(a=4, b=3\). We have \(\text{deg }D_{v_0}=48, \text{deg }D_{e_0}=64, \text{deg }D_{u_{\tau}} =37 \text{ and } \mu(\mathcal{T}_X)=107.75.\) Also \(\text{deg }(D_{e_0}+D_{e_1})=91, \text{deg }D_{v_1}=85.\) Note that rank 1 equivariant reflexive subsheaves are \(\mathcal{O}_X (D_{v_0}), \mathcal{O}_X(D_{v_1}), \mathcal{O}_X(D_{v_2}), \mathcal{O}_X(D_{v_3}), \mathcal{O}_X(D_{e_0}+D_{e_1})\) and \(\mathcal{O}_X(D_{u_{\tau}})\).

Next we consider reflexive subsheaves of $\mathcal{T}_X$ of rank 2 and 3. The maximum possible slopes can occur only from the following situations.

\noindent
\underline{rank$(\mathcal{F})= 2$}
\begin{enumerate}[(i)]
	\item \(F=\text{Span}( v_1, e_0, e_1)\), then \(\mu(\mathcal{F})=88.\)
	\item \(F=\text{Span}(v_0, e_0, e_1,u_{\tau})\), then \(\mu(\mathcal{F})=88.\)
\end{enumerate}
\underline{rank$(\mathcal{F})= 3$}
\begin{enumerate}[(i)]
	\item \(F=\text{Span}(v_0, v_1, v_2, v_3)\), then \(\mu(\mathcal{F})=101.\)
	\item \(F=\text{Span}(v_0, v_1, e_0, e_1,u_{\tau})\), then \(\mu(\mathcal{F})=87.\)
\end{enumerate}
Hence in this case $\mathcal{T}_X$ is stable.
\end{proof}

\subsection{Stability of tangent bundle of \(G_1\)-\(G_6\) in the notation of \cite[Section 4]{batyrev}}

Let \(X=G_1\). We write down the associated fan $\Delta$ using the primitive relations from \cite[Proposition 3.1.2]{batyrev}). The rays of $\Delta$ are
\begin{equation*}
\begin{split}
& v_{1}=(1,0,0,0), v_2=(0,1,0,0), v_3=(1,-1,-1,0), v_4=(0,0,1,0)\\
& v_5=(0,0,0,1), v_6=(2,0,-1,-1), v_7=(-1,0,0,0),
\end{split}
\end{equation*}
and the maximal cones are given by the following condition
\begin{equation*}
\begin{split}
\sigma=\text{Cone}(v_i, v_j, v_k, v_l) \in \Delta \Longleftrightarrow & \text{Cone}(v_1, v_7), \text{Cone}(v_2,v_3,v_4), \text{Cone}(v_4,v_5,v_6),\\
&  \text{Cone}(v_5, v_6, v_7), \text{Cone}(v_1, v_2, v_3) \nsubseteq \sigma. 
\end{split}
\end{equation*}
We have the following relations
\begin{equation*}
\begin{split}
& D_{v_2} \sim_{\text{lin}} D_{v_3}, D_{v_5} \sim_{\text{lin}} D_{v_6}, D_{v_4} \sim_{\text{lin}} D_{v_3} + D_{v_6}, D_{v_1} \sim_{\text{lin}} D_{v_7}-D_{v_3}-2 D_{v_6}. 
\end{split}
\end{equation*}
Hence $\text{Pic}(X)=\Z D_{v_3} \oplus \Z D_{v_6} \oplus \Z D_{v_7}.$ The anticanonical divisor is \(-K_X=2 D_{v_3} +D_{v_6}+ 2 D_{v_7}.\)

\begin{prop}\label{G_1}
	The tangent bundle of \(X=G_1\) is unstable.
\end{prop}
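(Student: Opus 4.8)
The plan is to follow the scheme used for the other Fano $4$-folds with Picard number $3$ treated above (Propositions \ref{stabPic3D_7}--\ref{E_1}): compute $\mu(\mathcal{T}_X)=\tfrac14(-K_X)^4$ and then exhibit an equivariant reflexive subsheaf of $\mathcal{T}_X$ of slope $\geq\mu(\mathcal{T}_X)$ using Corollary \ref{RS1} together with the combinatorial criterion of Proposition \ref{RS2}.

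First I would compute the top self-intersection $(-K_X)^4$. Expanding $(-K_X)^3=(2D_{v_3}+D_{v_6}+2D_{v_7})^3$ and reducing each monomial to the basis $\{D_{v_3},D_{v_6},D_{v_7}\}$ of $\operatorname{Pic}(X)$ via the linear equivalences $D_{v_2}\sim_{\text{lin}}D_{v_3}$, $D_{v_5}\sim_{\text{lin}}D_{v_6}$, $D_{v_4}\sim_{\text{lin}}D_{v_3}+D_{v_6}$, $D_{v_1}\sim_{\text{lin}}D_{v_7}-D_{v_3}-2D_{v_6}$, the problem reduces to evaluating the degree-$4$ products $D_{v_3}^{a}D_{v_6}^{b}D_{v_7}^{c}$ with $a+b+c=4$. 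Each such product equals $[V(\sigma)]$ or $0$ according to whether the rays involved span a cone $\sigma\in\Delta$, by Proposition \ref{wallreln}; combining this with the maximal-cone description of $\Delta$ gives explicit integers, hence $\deg D_{v_3},\deg D_{v_6},\deg D_{v_7}$, then $\deg D_{v_1},\dots,\deg D_{v_7}$ from the relations above, and finally $\mu(\mathcal{T}_X)$.

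Next, by Remark \ref{sub_of_tangent} it suffices to test equivariant reflexive subsheaves $\mathcal{F}\subset\mathcal{T}_X$ whose associated subspace satisfies $F=\operatorname{Span}(F\cap\Delta(1))$, for which $c_1(\mathcal{F})=\sum_{v_\rho\in F}D_\rho$. There are two natural candidates with unusually large first Chern class: the line $F=\operatorname{Span}(v_1)=\operatorname{Span}(v_7)$ (note $v_7=-v_1$), giving the rank $1$ subsheaf $\mathcal{O}_X(D_{v_1}+D_{v_7})$; and the coordinate hyperplane $F=\{x_4=0\}=\operatorname{Span}(v_1,v_2,v_3)$, which contains the five rays $v_1,v_2,v_3,v_4,v_7$ and gives a rank $3$ subsheaf with $c_1=D_{v_1}+D_{v_2}+D_{v_3}+D_{v_4}+D_{v_7}\sim_{\text{lin}}-K_X-2D_{v_6}$. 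I would compute the slope of each of these (and, if necessary, of the remaining short list of subspaces $\operatorname{Span}(S)$, $S\subseteq\Delta(1)$, spanned by collinear or coplanar collections of rays, exactly as in the proof of Proposition \ref{E_1}) and check that at least one exceeds $\mu(\mathcal{T}_X)$; that subsheaf then destabilizes $\mathcal{T}_X$.

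The main obstacle is bookkeeping rather than idea: because the maximal cones of $G_1$ are specified by an exclusion condition, one has to enumerate carefully which $4$-element subsets of $\{v_1,\dots,v_7\}$ actually generate cones of $\Delta$ before Proposition \ref{wallreln} can be applied, and then locate the destabilizing subspace among the candidates. Once the intersection table and the anticanonical degrees are in hand, producing the destabilizing subsheaf is a finite, routine computation.
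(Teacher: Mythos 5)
Your plan is essentially the paper's proof: one computes the intersection numbers in the basis $\{D_{v_3},D_{v_6},D_{v_7}\}$, obtaining $\deg D_{v_3}=61$, $\deg D_{v_6}=55$, $\deg D_{v_7}=176$ and $\mu(\mathcal{T}_X)=132.25$, and your first candidate, the rank $1$ subsheaf $\mathcal{O}_X(D_{v_1}+D_{v_7})$ coming from $F=\operatorname{Span}(v_1)$, is exactly the destabilizer used in the paper, of degree $176+(176-61-110)=181>132.25$. So the proposal is correct and follows the same route; the remaining work is only the routine intersection-number bookkeeping you already describe.
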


\begin{proof}
We have
	\begin{equation*}
	\begin{split}
	(-K_X)^3 &= 8 D_{v_3}^3+ 6 D_{v_3} D_{v_6}^2+ 24D_{v_3} D_{v_7}^2+ 12 D_{v_3}^2 D_{v_6}+ 24 D_{v_3} D_{v_6} D_{v_7}+ 24 D_{v_3}^2 D_{v_7}  \\ 
	& + D_{v_6}^3 + 12 D_{v_6} D_{v_7}^2+ 6D_{v_6}^2 D_{v_7}+ 8 D_{v_7}^3.
	\end{split}
	\end{equation*}

	Using the following relations 
	\begin{equation*}
	\begin{split}
	& D_{v_3}^4=1, D_{v_3}^2 D_{v_6}^2=1, D_{v_3}^2 D_{v_7}^2=1, D_{v_3}^3 D_{v_6}=-1, D_{v_3}^2 D_{v_6} D_{v_7}=1,\\
	&  D_{v_3}^3 D_{v_7}=-1, D_{v_3} D_{v_6} D_{v_7}^2=1, D_{v_3} D_{v_6}^2 D_{v_7}=0, D_{v_3} D_{v_7}^3=3, \\
	& D_{v_3} D_{v_6}^3=-1, D_{v_6}^2 D_{v_7}^2=0, D_{v_6}^3 D_{v_7}=0, D_{v_6} D_{v_7}^3=1, D_{v_6}^4=1, D_{v_7}^4=5,
	\end{split}
	\end{equation*}
	we have \(\text{deg }D_{v_3}=61, \text{deg }D_{v_6}=55, \text{deg }D_{v_7}=176 \text{ and } \mu(\mathcal{T}_X)=132.25.\) Note that \(\mathcal{O}_X(D_{v_1}+ D_{v_7})\) is a destabilizing subsheaf of \(\mathcal{T}_X\) with degree \(181\), hence \(\mathcal{T}_X\) is unstable.
\end{proof}

Let \(X'=\mathbb{P}(\mathcal{O}_{\mathbb{P}^2}  \oplus \mathcal{O}_{\mathbb{P}^2}(\alpha)  \oplus \mathcal{O}_{\mathbb{P}^2}(\beta))\). The fan $\Delta'$ associated to \(X'\) is given as follows. Let \(u_1, u_2\) be the standard basis of $\Z^2$ and \(e_1', e_2'\) also denote the standard basis of $\Z^2$. Set \(v_i=(u_i, 0, 0)\) for \(i=1, 2\) and \(e_j=(0,0, e_j')\) for \(j=1,2\), \(e_0=-e_1-e_2\) and \(v_0=-v_1-v_2+ \alpha e_1+ \beta e_2\). Then $\Delta'(1)=\{v_0, v_1, v_2, e_0, e_1, e_2\}$ and the maximal cones are of the form 
\[\text{Cone}(v_0,\ldots, \widehat{v}_i, \ldots, v_2, e_0,\ldots, \widehat{e}_j,\ldots,e_2 ) \text{ for } i, j=0, 1, 2.\]
Note that \(\text{Pic}(X')=\Z D_{v_0} \oplus  \Z D_{e_0}\).

\begin{prop}\label{G_2}
Let \(X\) be the	blow up of $V(\tau)$ on \(X'\), where \(\tau=\text{Cone}(v_0, e_2) \in \Delta' \) and $\alpha=0, \beta=1$ $($note that \(X=G_2\) in the notation of \cite[Section 4]{batyrev}$)$. Then $\mathcal{T}_X$ is unstable.
\end{prop}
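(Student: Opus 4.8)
The plan is to proceed exactly as in the previous cases of this section: write down the fan of $X=G_2$ explicitly using the blowup description from Section \ref{blowup}, compute $\operatorname{Pic}(X)$ and the anticanonical divisor $-K_X$, find all the intersection numbers needed to evaluate $\operatorname{deg} D_\rho = D_\rho\cdot(-K_X)^3$ for every ray $\rho$, compute $\mu(\mathcal{T}_X)$ via the wall-relation formula of Corollary \ref{pertangb} together with Proposition \ref{chern}, and then exhibit a single destabilizing equivariant reflexive subsheaf. First I would spell out the fan: starting from $X'=\mathbb{P}(\mathcal{O}_{\mathbb{P}^2}\oplus\mathcal{O}_{\mathbb{P}^2}\oplus\mathcal{O}_{\mathbb{P}^2}(1))$ with rays $v_0,v_1,v_2,e_0,e_1,e_2$ (here $\alpha=0,\beta=1$, so $v_0=-v_1-v_2+e_2$) and cone $\tau=\operatorname{Cone}(v_0,e_2)$, the star subdivision adds the ray $u_\tau = v_0+e_2 = -v_1-v_2+2e_2$ and replaces the cones containing $\tau$ as described in Section \ref{blowup}. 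This yields $\Delta(1)=\{v_0,v_1,v_2,e_0,e_1,e_2,u_\tau\}$.

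Next I would read off the linear equivalences among the $D_\rho$. From $\operatorname{div}(\chi^{u_i^*})$ and $\operatorname{div}(\chi^{e_j^*})$ on $X'$ together with the blowup relation, one gets relations of the shape $D_{v_1}\sim_{\mathrm{lin}} D_{v_2}\sim_{\mathrm{lin}} D_{v_0}+D_{u_\tau}$, $D_{e_1}\sim_{\mathrm{lin}}D_{e_0}$, and $D_{e_2}\sim_{\mathrm{lin}} D_{e_0}-D_{v_0}-2D_{u_\tau}$ (mirroring the computation in Proposition \ref{E_1}), so that $\operatorname{Pic}(X)=\Z D_{v_0}\oplus\Z D_{e_0}\oplus\Z D_{u_\tau}$ and $-K_X = \sum_\rho D_\rho$ expressed in this basis. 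I would then set $-K_X = aD_{v_0}+cD_{e_0}+bD_{u_\tau}$ with the appropriate integer coefficients, expand $(-K_X)^3$ using the vanishing relations $D_{e_0}D_{u_\tau}=0$, $D_{e_0}D_{e_1}=0$, $D_{v_0}D_{e_2}=0$ etc. (which follow from Proposition \ref{wallreln}, second part, since the corresponding rays do not span a cone), and compute the handful of nonzero quadruple products $D_{v_0}^iD_{e_0}^jD_{u_\tau}^k$. These last numbers are the only genuine computation; I expect them to be small integers obtainable by repeatedly applying the wall relations, just as in the $E_1$–$E_3$ and $G_1$ cases.

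Having the degrees $\operatorname{deg}D_{v_0},\operatorname{deg}D_{e_0},\operatorname{deg}D_{u_\tau}$ (and hence $\operatorname{deg}D_{v_i}=\operatorname{deg}D_{v_0}+\operatorname{deg}D_{u_\tau}$, $\operatorname{deg}D_{e_1}=\operatorname{deg}D_{e_0}$, $\operatorname{deg}D_{e_2}=\operatorname{deg}D_{e_0}-\operatorname{deg}D_{v_0}-2\operatorname{deg}D_{u_\tau}$), I compute $\mu(\mathcal{T}_X)=\tfrac14\sum_\rho \operatorname{deg}D_\rho$ using Corollary \ref{pertangb}, which gives $c_1(\mathcal{T}_X)=\sum_\rho D_\rho$ via Proposition \ref{chern}. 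Then I search for a destabilizing rank-one or rank-two equivariant reflexive subsheaf: by Remark \ref{sub_of_tangent}, these correspond to subspaces $F=\operatorname{Span}(F\cap\Delta(1))$ of $N_\C$, and the natural candidate is $F=\operatorname{Span}(e_2)$ or $F=\operatorname{Span}(e_0)$, giving the line bundle $\mathcal{O}_X(D_{e_2})$, respectively $\mathcal{O}_X(D_{e_0}+D_{e_1})$, as in the earlier propositions. One of these should have degree exceeding $\mu(\mathcal{T}_X)$, which proves instability.

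The main obstacle is purely bookkeeping: correctly performing the star subdivision to get the right fan (in particular getting all the new maximal cones and hence the vanishing intersection relations right), and then not making an arithmetic slip among the intersection numbers $D_{v_0}^iD_{e_0}^jD_{u_\tau}^k$ — a wrong sign there propagates into every degree. There is no conceptual difficulty; the toric intersection theory of Proposition \ref{wallreln} and the combinatorial criterion of Proposition \ref{RS2} reduce everything to a finite check, and past experience with $G_1$ and $E_1$–$E_3$ shows the destabilizing subsheaf is always one of the few obvious line bundles coming from the "fiber direction" rays $e_0,e_1,e_2$.
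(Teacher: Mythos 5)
Your overall scheme (fan of the star subdivision, relations in $\operatorname{Pic}(X)$, $-K_X=2D_{v_0}+3D_{e_0}+D_{u_\tau}$, intersection numbers, then a destabilizing equivariant reflexive subsheaf via Remark \ref{sub_of_tangent} and Proposition \ref{RS2}) is exactly the paper's, and your fan and linear equivalences are correct. The gap is in the final, decisive step: the candidates you commit to do not destabilize. With the actual numbers one finds $\deg D_{v_0}=44$, $\deg D_{e_0}=\deg D_{e_1}=111$, $\deg D_{u_\tau}=29$, $\deg D_{e_2}=9$, and $\mu(\mathcal{T}_X)=112.5$. Your first candidate $F=\operatorname{Span}(e_2)$ gives $\mathcal{O}_X(D_{e_2})$ of degree $9$. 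Your second candidate is moreover misidentified: since here the fiber-direction rays satisfy $e_0=-e_1-e_2$ (the fiber of $X'$ is $\mathbb{P}^2$, not $\mathbb{P}^1$), the line $\operatorname{Span}(e_0)$ contains no ray other than $e_0$, so by Remark \ref{sub_of_tangent} it corresponds to $\mathcal{O}_X(D_{e_0})$ of degree $111$, not to $\mathcal{O}_X(D_{e_0}+D_{e_1})$. In fact no rank-one equivariant reflexive subsheaf of $\mathcal{T}_X$ has slope exceeding $112.5$, so the claim ``one of these should have degree exceeding $\mu(\mathcal{T}_X)$'' fails and the proof as written does not close.

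The instability here is genuinely a rank-two phenomenon: the paper takes $F=\operatorname{Span}(e_0,e_1,e_2)$, which is two-dimensional because $e_0+e_1+e_2=0$, so the corresponding subsheaf has $c_1(\mathcal{F})=D_{e_0}+D_{e_1}+D_{e_2}$ and slope $\tfrac{1}{2}(111+111+9)=115.5>112.5=\mu(\mathcal{T}_X)$. You do hedge by saying you would search among rank-one and rank-two subsheaves, so the framework would eventually find this, but as stated your argument rests on a false expectation imported from the $\mathbb{P}^1$-bundle cases ($D_4$, $E_1$--$E_3$); to repair it you must replace the rank-one candidates by the rank-two subsheaf spanned by the fiber directions and verify the margin $115.5$ versus $112.5$ numerically.
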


\begin{proof}
Then the rays of the fan $\Delta$ associated to \(X\) are as follows 
\begin{equation*}
\begin{split}
& v_1=(1,0,0,0), v_2=(0,1,0,0), v_0=(-1,-1,0,1), e_1=(0,0,1,0)\\
& e_2=(0,0,0,1), e_0=(0,0,-1,-1), u_{\tau}=(-1,-1,0,2).
\end{split}
\end{equation*}
We have the following relations
\begin{equation*}
\begin{split}
 D_{v_1} \sim_{\text{lin}} D_{v_2}  \sim_{\text{lin}} D_{v_0}+D_{u_{\tau}}, D_{e_1} \sim_{\text{lin}} D_{e_0}, D_{e_2} \sim_{\text{lin}} D_{e_0} -D_{v_0}-2D_{u_{\tau}}.
\end{split}
\end{equation*}
Hence $\text{Pic}(X)=\Z D_{v_0}  \oplus \Z  D_{e_0} \oplus \Z D_{u_{\tau}} .$ Then the anticanonical divisor is \(-K_X=2 D_{v_0}+3 D_{e_0}+D_{u_{\tau}}\). We have 
\begin{equation*}
\begin{split}
(-K_X)^3 &=8 D_{v_0}^3 + 54 D_{v_0} D_{e_0}^2+ 6D_{v_0} D_{u_{\tau}}^2+ 36 D_{v_0}^2 D_{e_0} + 36 D_{v_0} D_{e_0} D_{u_{\tau}}\\
& + 12 D_{v_0}^2 D_{u_{\tau}}+ 27 D_{e_0}^3+ 9 D_{e_0} D_{u_{\tau}}^2+ 27 D_{e_0}^2 D_{u_{\tau}}+ D_{u_{\tau}}^3.
\end{split}
\end{equation*}

Using the following relations
\begin{equation*}
\begin{split}
& D_{v_0} D_{e_2}=0, D_{v_0}^4=5, D_{v_0}^3D_{u_{\tau}}=-4, D_{v_0}D_{e_0}D_{u_{\tau}}^2=-1, D_{v_0}^2 D_{e_0}D_{u_{\tau}}=2, D_{v_0}^3 D_{e_0}=-3, \\
& D_{v_0}^2 D_{u_{\tau}}^2=3, D_{v_0} D_{u_{\tau}}^3=-2, D_{v_0}^2 D_{e_0}^2 =1, D_{v_0} D_{e_0}^2 D_{u_{\tau}}=0, D_{v_0} D_{e_0}^3=1, \\
& D_{e_0} D_{u_{\tau}}^3=0, D_{e_0}^2 D_{u_{\tau}}^2=0, D_{e_0}^3 D_{u_{\tau}}=0, D_{e_0}^4=1, D_{u_{\tau}}^4=1,
\end{split}
\end{equation*}
\noindent
we have \(\text{deg }D_{v_0}=44, \text{deg }D_{e_0}=111, \text{deg }D_{u_{\tau}}=29\) and \(\mu(\mathcal{T}_X)=112.5\). Note that $\text{deg }D_{e_1}=111$ and \(\text{deg }D_{e_2}=9\). Now consider \(F=\text{Span}(e_0,e_1,e_2)\), which corresponds to a rank 2 reflexive subsheaf of $\mathcal{T}_X$ with slope \(115.5\). Hence $\mathcal{T}_X$ is unstable.
\end{proof}

\begin{prop}\label{G_3}
Let \(X\) be the	blow up of $V(\tau)$ on \(X'\), where \(\tau=\text{Cone}(v_1, v_2, e_0) \in \Delta' \) and $\alpha=1, \beta=1$ $($note that \(X=G_3\) in the notation of \cite[Section 4]{batyrev}$)$. Then $\mathcal{T}_X$ is unstable.	
\end{prop}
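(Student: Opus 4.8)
The plan is to follow the scheme of Propositions \ref{G_1} and \ref{G_2}: write down the fan of the blow-up $X$, read off $\text{Pic}(X)$ and $-K_X$, compute the intersection numbers needed for the degrees of the invariant divisors, and then exhibit an equivariant reflexive subsheaf of $\mathcal{T}_X$ of slope strictly larger than $\mu(\mathcal{T}_X)$, which by Remark \ref{sub_of_tangent} and Proposition \ref{RS2} proves instability.

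First I would apply the blow-up construction of Section \ref{blowup} to $\tau=\text{Cone}(v_1,v_2,e_0)\in\Delta'$ (with $\alpha=\beta=1$, so $v_0=(-1,-1,1,1)$). Since $\dim\tau=3$, the center $V(\tau)$ is a curve and the new ray is $u_\tau=v_1+v_2+e_0=(1,1,-1,-1)=-v_0$; the two maximal cones $\text{Cone}(v_1,v_2,e_0,e_1)$ and $\text{Cone}(v_1,v_2,e_0,e_2)$ of $\Delta'$ containing $\tau$ get subdivided by $u_\tau$, while the remaining maximal cones are unchanged. Evaluating $\text{div}(\chi^m)$ on the dual basis of $M$ gives $D_{v_1}\sim_{\text{lin}} D_{v_2}\sim_{\text{lin}} D_{v_0}-D_{u_\tau}$ and $D_{e_1}\sim_{\text{lin}} D_{e_2}\sim_{\text{lin}} D_{e_0}-D_{v_0}+D_{u_\tau}$, so $\text{Pic}(X)=\Z D_{v_0}\oplus\Z D_{e_0}\oplus\Z D_{u_\tau}$ and $-K_X=D_{v_0}+3D_{e_0}+D_{u_\tau}$.

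Next I would set up the intersection theory on $X$. The Stanley--Reisner relations of $\Delta$ include $D_{v_0}D_{u_\tau}=0$ and $D_{v_0}^3=0$; combined with the linear equivalences above they reduce every degree-$4$ monomial in $D_{v_0},D_{e_0},D_{u_\tau}$ to a multiple of the point class, which I would pin down via $D_{v_1}D_{v_2}D_{e_1}D_{e_2}=1$ (the class of a maximal cone). One then obtains $\text{deg } D_{v_0}=81$, $\text{deg } D_{e_0}=108$ and $\text{deg } D_{u_\tau}=28$, hence $\text{deg }\mathcal{T}_X=(-K_X)^4=433$ and $\mu(\mathcal{T}_X)=433/4$. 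Since $\text{deg } D_{e_1}=\text{deg } D_{e_2}=\text{deg }(D_{e_0}-D_{v_0}+D_{u_\tau})=55$, the $2$-dimensional subspace $F=\text{Span}(e_0,e_1,e_2)$ of $N_{\C}$ (note $e_0+e_1+e_2=0$ and $F\cap\Delta(1)=\{e_0,e_1,e_2\}$) corresponds to a rank-$2$ equivariant reflexive subsheaf $\mathcal{F}$ with $c_1(\mathcal{F})=D_{e_0}+D_{e_1}+D_{e_2}$ by Remark \ref{sub_of_tangent}. Then $\mu(\mathcal{F})=\tfrac12(108+55+55)=109>433/4=\mu(\mathcal{T}_X)$, so $\mathcal{T}_X$ is unstable.

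The main obstacle is the intersection-theory bookkeeping in the blow-up step: correctly determining which faces of $\Delta'$ persist and which are replaced by cones through $u_\tau$, hence the correct Stanley--Reisner ideal of $\Delta$, and then reducing the self-intersection monomials (such as $D_{u_\tau}^4$, $D_{e_0}^3D_{u_\tau}$ and $D_{e_0}^2D_{u_\tau}^2$) to the basis $\{D_{v_0},D_{e_0},D_{u_\tau}\}$ with the correct normalisation. Once the three degrees are in hand, the rest is the short numerical comparison above, exactly parallel to the proof of Proposition \ref{G_2}.
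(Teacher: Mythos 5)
Your proposal is correct and follows essentially the same route as the paper: the same fan for the blow-up, the same linear equivalences and anticanonical class, the same degrees $\text{deg}\,D_{v_0}=81$, $\text{deg}\,D_{e_0}=108$, $\text{deg}\,D_{u_\tau}=28$ (so $\mu(\mathcal{T}_X)=108.25$), and the same destabilizing rank $2$ subsheaf given by $F=\text{Span}(e_0,e_1,e_2)$ of slope $109$. The only slight imprecision is calling $D_{v_0}^3=0$ a Stanley--Reisner relation (those involve products of distinct rays); the identity does hold in the rational Chow ring since $D_{v_0}^3$ pairs to zero with $D_{v_0}, D_{e_0}, D_{u_\tau}$, and in any case the paper works instead with the list of individual intersection numbers, which yields the same degrees.
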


\begin{proof}
Then the rays of the fan $\Delta$ associated to \(X\) are as follows 
\begin{equation*}
\begin{split}
& v_1=(1,0,0,0), v_2=(0,1,0,0), v_0=(-1,-1,1,1), e_1=(0,0,1,0)\\
& e_2=(0,0,0,1), e_0=(0,0,-1,-1), u_{\tau}=(1,1,-1,-1,).
\end{split}
\end{equation*}
We have the following relations
\begin{equation*}
\begin{split}
& D_{v_1} \sim_{\text{lin}} D_{v_2}  \sim_{\text{lin}} D_{v_0}-D_{u_{\tau}}, D_{e_1} \sim_{\text{lin}}D_{e_2} \sim_{\text{lin}} D_{e_0} -D_{v_0}+D_{u_{\tau}}.
\end{split}
\end{equation*}
Hence $\text{Pic}(X)=\Z D_{v_0}  \oplus \Z  D_{e_0} \oplus \Z D_{u_{\tau}} .$ The anticanonical divisor is \(-K_X= D_{v_0}+3 D_{e_0}+D_{u_{\tau}}\). Since \(D_{v_0} D_{u_{\tau}}=0\), we have 
\begin{equation*}
\begin{split}
(-K_X)^3 &= D_{v_0}^3 + 27 D_{v_0} D_{e_0}^2+ 9 D_{v_0}^2 D_{e_0} +27 D_{e_0}^3+ 9 D_{e_0} D_{u_{\tau}}^2+ 27 D_{e_0}^2 D_{u_{\tau}}+ D_{u_{\tau}}^3.
\end{split}
\end{equation*}

Using the following relations
\begin{equation*}
\begin{split}
&  D_{v_0}^4=0, D_{v_0}^2 D_{e_0}^2=1, D_{v_0}^3 D_{e_0}=0, D_{v_0} D_{e_0}^3=2, D_{e_0}^4=0,\\
& D_{e_0}^2 D_{u_{\tau}}^2=-1,  D_{e_0}^3 D_{u_{\tau}}=2, D_{e_0} D_{u_{\tau}}^3=0, D_{u_{\tau}}^4=1,
\end{split}
\end{equation*}
\noindent
we have \(\text{deg }D_{v_0}=81, \text{deg }D_{e_0}=108, \text{deg }D_{u_{\tau}}=28\) and \(\mu(\mathcal{T}_X)=108.25\). Note that $\text{deg }D_{e_1}=\text{deg }D_{e_2}=55$. Now consider \(F=\text{Span}(e_0,e_1,e_2)\), which corresponds to a rank 2 reflexive subsheaf of $\mathcal{T}_X$ with slope \(109\). Hence $\mathcal{T}_X$ is unstable.
\end{proof}

\begin{prop}\label{G_4}
Let \(X\) be the	blow up of $V(\tau)$ on \(X'\), where \(\tau=\text{Cone}(v_0, e_0) \in \Delta' \) and $(\alpha,\beta)=(0,0), (0,1)$ and \((1,1)\) $($note that \(X=G_6, G_4, G_5\) respectively, in the notation of \cite[Section 4]{batyrev}$)$. Then $\mathcal{T}_X$ is stable.	
\end{prop}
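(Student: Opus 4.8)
The plan is to follow the same line of argument as in Proposition \ref{E_1}(2) and Proposition \ref{G_2}. First I would make the fan $\Delta$ of $X$ completely explicit. Since $\tau=\text{Cone}(v_0,e_0)\in\Delta'$, the blow-up recipe of \S\ref{blowup} adjoins the single new ray $u_\tau=v_0+e_0$, so that $\Delta(1)=\{v_0,v_1,v_2,e_0,e_1,e_2,u_\tau\}$ (with coordinates in $\Z^4$ depending on $(\alpha,\beta)$) and the maximal cones are obtained by star-subdividing those cones of $\Delta'$ that contain $\tau$. From the relation $v_0+e_0-u_\tau=0$ among ray generators, the pullback of $\text{Pic}(X')$ carries $D_{v_0}\mapsto D_{v_0}+D_{u_\tau}$ and $D_{e_0}\mapsto D_{e_0}+D_{u_\tau}$; combining this with the relations already known on $X'$ yields $\text{Pic}(X)=\Z D_{v_0}\oplus\Z D_{e_0}\oplus\Z D_{u_\tau}$, the equivalences $D_{v_1}\sim_{\text{lin}}D_{v_2}\sim_{\text{lin}}D_{v_0}+D_{u_\tau}$, and expressions for $D_{e_1},D_{e_2}$ in the three generators (depending on $\alpha,\beta$). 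Summing all seven $D_\rho$ then gives $-K_X$ in these coordinates.

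Next I would compute enough of the intersection ring to evaluate slopes. Using Proposition \ref{wallreln} one reads off the monomials $D_{v_0}^{\,i}D_{e_0}^{\,j}D_{u_\tau}^{\,k}$ with $i+j+k=4$, hence $(-K_X)^4$ and $\mu(\mathcal{T}_X)=\tfrac14(-K_X)^4$, and likewise $\deg D_\rho=D_\rho\cdot(-K_X)^3>0$ for every ray $\rho$ (Remark \ref{degpositive}). This step has to be carried out once for each of the three cases $(\alpha,\beta)\in\{(0,0),(0,1),(1,1)\}$, i.e.\ for $G_6,G_4,G_5$.

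By Corollary \ref{RS1}, Proposition \ref{RS2} and Remark \ref{sub_of_tangent}, checking $(-K_X)$-stability of $\mathcal{T}_X$ reduces to verifying $\mu(\mathcal{F})<\mu(\mathcal{T}_X)$ for every proper equivariant reflexive subsheaf $\mathcal{F}$ whose associated filtration subspace $F\subseteq N_{\C}$ satisfies $F=\text{Span}(F\cap\Delta(1))$; for such $\mathcal{F}$ one has $\text{rank}(\mathcal{F})=\dim F\in\{1,2,3\}$ and, by Corollary \ref{pertangb} and Proposition \ref{chern}, $c_1(\mathcal{F})=\sum_{v_\rho\in F\cap\Delta(1)}D_\rho$. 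So I would enumerate the subspaces of $N_{\C}$ that are spanned by subsets of the seven rays and have dimension at most $3$ (the flats of the ray arrangement spanning a proper subspace): in rank $1$ these give the finitely many line bundles $\mathcal{O}_X(D_\rho)$ (after grouping linearly equivalent $D_\rho$), and in ranks $2$ and $3$ only a short list of flats occurs. Since all $\deg D_\rho$ are positive, for each fixed rank the slope $\tfrac1{\dim F}\sum_{v_\rho\in F\cap\Delta(1)}\deg D_\rho$ is maximized by the flat(s) collecting the rays of largest degree (the fiber directions $e_i$ together with the $v_i$), so it is enough to tabulate those few candidates; plugging in the degrees computed above shows $\max_{\mathcal{F}}\mu(\mathcal{F})<\mu(\mathcal{T}_X)$ in each of the three cases, which yields stability.

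The main obstacle is not conceptual but organizational: besides computing the Chow ring of each of the three blow-ups, one must correctly enumerate all flats of dimension $\leq 3$ of the seven-ray configuration and be sure that the short list of candidate maximal-slope subsheaves genuinely dominates every other equivariant reflexive subsheaf — this is where positivity of the $\deg D_\rho$ and a careful look at the linear dependencies among $v_0,v_1,v_2,e_0,e_1,e_2,u_\tau$ (which vary with $\alpha,\beta$) are needed.
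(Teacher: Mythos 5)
Your plan is essentially the paper's own proof: write out the blown-up fan and the relations giving $\text{Pic}(X)=\Z D_{v_0}\oplus\Z D_{e_0}\oplus\Z D_{u_\tau}$ and $-K_X$, compute the intersection numbers and hence $\deg D_\rho$ and $\mu(\mathcal{T}_X)$ case by case, and then use Corollary \ref{RS1}, Proposition \ref{RS2} and Remark \ref{sub_of_tangent} to reduce stability to checking the finitely many subspaces spanned by rays, tabulating only the candidate maximal-slope flats in ranks $1$, $2$, $3$. The paper does exactly this (with explicit numerical tables for $(\alpha,\beta)=(0,0),(0,1),(1,1)$), so the only remaining work in your proposal is carrying out the arithmetic you have correctly set up.
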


\begin{proof}
Then the rays of the fan $\Delta$ associated to \(X\) are as follows 
\begin{equation*}
\begin{split}
& v_1=(1,0,0,0), v_2=(0,1,0,0), v_0=(-1,-1,\alpha,\beta), e_1=(0,0,1,0)\\
& e_2=(0,0,0,1), e_0=(0,0,-1,-1), u_{\tau}=(-1,-1,\alpha-1,\beta-1).
\end{split}
\end{equation*}
We have the following relations
\begin{equation*}
\begin{split}
& D_{v_1} \sim_{\text{lin}} D_{v_2}  \sim_{\text{lin}} D_{v_0}+D_{u_{\tau}}, D_{e_1} \sim_{\text{lin}} D_{e_0} -\alpha D_{v_0}-(\alpha-1) D_{u_{\tau}},\\
& D_{e_2} \sim_{\text{lin}} D_{e_0} -\beta D_{v_0}-(\beta-1) D_{u_{\tau}}.
\end{split}
\end{equation*}
Hence $\text{Pic}(X)=\Z D_{v_0}  \oplus \Z  D_{e_0} \oplus \Z D_{u_{\tau}}$. The anticanonical divisor is \(-K_X=(3-\alpha-\beta) D_{v_0}+3 D_{e_0}+(5-\alpha-\beta) D_{u_{\tau}}\). Since \(D_{v_0} D_{e_0}=0\), we have 
\begin{equation*}
\begin{split}
(-K_X)^3 &=a^3 D_{v_0}^3 + 3ab^2 D_{v_0} D_{u_{\tau}}^2+ 3a^2 b D_{v_0}^2 D_{u_{\tau}}+ 27 D_{e_0}^3 +9b^2 D_{e_0} D_{u_{\tau}}^2+ 27b D_{e_0}^2 D_{u_{\tau}}+ b^3 D_{u_{\tau}}^3.
\end{split}
\end{equation*}

Now consider the following cases.

\underline{\((\alpha, \beta)=(0,0)\) :} Then \(a=3, b=5\). Using the following
\begin{equation*}
\begin{split}
& D_{v_0}^4=3, D_{v_0}^3 D_{u_{\tau}}=-2, D_{v_0}^2 D_{u_{\tau}}^2=1, D_{v_0} D_{u_{\tau}}^3=0, D_{e_0}^4=3, \\
& D_{e_0}^3 D_{u_{\tau}}=-2, D_{e_0}^2 D_{u_{\tau}}^2=1, D_{e_0} D_{u_{\tau}}^3=0, D_{u_{\tau}}^4=-1,
\end{split}
\end{equation*}
we have \(\text{deg }D_{v_0}=\text{deg }D_{e_0}=36, \text{deg }D_{u_{\tau}} =37 \text{ and } \mu(\mathcal{T}_X)=100.25.\) Note also that \(\text{deg }D_{v_1}=\text{deg }D_{v_2}=\text{deg }D_{e_1}=\text{deg }D_{e_2}=73.\)

Next we consider rank 2 equivariant reflexive subsheaves of $\mathcal{T}_X$. We list those having maximum possible slope below.
\begin{itemize}
	\item[(i)] \(F=\text{Span}(v_0, e_0, u_{\tau})\), then $\mu(\mathcal{F})=54.5$.
	\item[(ii)] \(F=\text{Span}(v_0, v_1, v_2)\) or \(\text{Span}(e_0, e_1, e_2)\), then $\mu(\mathcal{F})=91$.
\end{itemize}

Finally we list rank 3 equivariant reflexive subsheaves of $\mathcal{T}_X$ possibly having maximum slope.
\begin{itemize}
	\item[(i)] \(F=\text{Span}(v_0, e_0,e_1, e_2, u_{\tau})\) or \(\text{Span}(v_0,v_1, v_2, e_0, u_{\tau})\), then $\mu(\mathcal{F})=85$.
	\item[(ii)] \(F=\text{Span}(e_0, e_1, e_2, v_1)\), then $\mu(\mathcal{F})=85$.
\end{itemize}
Hence $\mathcal{T}_X$ is stable.

\underline{\((\alpha, \beta)=(0,1)\) :} Then \(a=2, b=4\). Using the following
\begin{equation*}
\begin{split}
& D_{v_0}^4=2, D_{v_0}^3 D_{u_{\tau}}=-1, D_{v_0}^2 D_{u_{\tau}}^2=0, D_{v_0} D_{u_{\tau}}^3=1, D_{e_0}^4=1, \\
& D_{e_0}^3 D_{u_{\tau}}=-1, D_{e_0}^2 D_{u_{\tau}}^2=1, D_{e_0} D_{u_{\tau}}^3=0, D_{u_{\tau}}^4=-2,
\end{split}
\end{equation*}
we have \(\text{deg }D_{v_0}=32, \text{deg }D_{e_0}=63, \text{deg }D_{u_{\tau}} =41 \text{ and } \mu(\mathcal{T}_X)=104.25.\) Note also that \(\text{deg }D_{v_1}=\text{deg }D_{v_2}=73, \text{deg }D_{e_1}=104, \text{deg }D_{e_2}=31.\)

Next we list down rank 2 equivariant reflexive subsheaves of $\mathcal{T}_X$ possibly giving maximum slope.
\begin{itemize}
	\item[(i)] \(F=\text{Span}(e_0, e_1, e_2)\), then $\mu(\mathcal{F})=99$.
	\item[(ii)] \(F=\text{Span}(v_0, e_0, u_{\tau})\), then $\mu(\mathcal{F})=68$.
\end{itemize}

Finally consider the following rank 3 equivariant reflexive subsheaves of $\mathcal{T}_X$ contributing to maximum slope.
\begin{itemize}
	\item[(i)] \(F=\text{Span}(v_0, e_0, e_1, e_2, u_{\tau})\), then $\mu(\mathcal{F}) =90.33$.
	\item[(ii)] \(F=\text{Span}(v_1, v_2, e_1, u_{\tau})\), then $\mu(\mathcal{F})=97 $.
	\item[(iii)] \(F=\text{Span}(v_0, v_1, v_2, e_2)\),  then $\mu(\mathcal{F}) \sim 66.67$.
	\item[(iv)] \(F=\text{Span}(v_1, e_0, e_1, e_2)\),  then $\mu(\mathcal{F}) =90.33$.
\end{itemize}
Hence $\mathcal{T}_X$ is stable.

\underline{\((\alpha, \beta)=(1,1)\) :} Then \(a=1, b=3\). Using the following
\begin{equation*}
\begin{split}
& D_{v_0}^4=1, D_{v_0}^3 D_{u_{\tau}}=0, D_{v_0}^2 D_{u_{\tau}}^2=-1, D_{v_0} D_{u_{\tau}}^3=2, D_{e_0}^4=0, \\
& D_{e_0}^3 D_{u_{\tau}}=0, D_{e_0}^2 D_{u_{\tau}}^2=1, D_{e_0} D_{u_{\tau}}^3=0, D_{u_{\tau}}^4=-3
\end{split}
\end{equation*}
we have \(\text{deg }D_{v_0}=28, \text{deg }D_{e_0}=81, \text{deg }D_{u_{\tau}} =45 \text{ and }  \mu(\mathcal{T}_X)=101.5.\) Note also that \(\text{deg }D_{v_1}=\text{deg }D_{v_2}=73, \text{deg }D_{e_1}=\text{deg }D_{e_2}=53.\)

Next we consider rank 2 equivariant reflexive subsheaves of $\mathcal{T}_X$. We list those having maximum possible slope below.
\begin{itemize}
	\item[(i)] \(F=\text{Span}(v_1, v_2, u_{\tau})\), then $\mu(\mathcal{F})=95.5$.
	\item[(ii)] \(F=\text{Span}(v_0, e_0, u_{\tau})\), then $\mu(\mathcal{F})=77$.
	\item[(iii)] \(F=\text{Span}(e_0, e_1, e_2)\), then $\mu(\mathcal{F})=93.5$.
\end{itemize}

Finally we list rank 3 equivariant reflexive subsheaves of $\mathcal{T}_X$ having maximum possible slope.
\begin{itemize}
	\item[(i)] \(F=\text{Span}(v_0, e_0, e_1, e_2, u_{\tau})\), then $\mu(\mathcal{F}) \sim 86.67 $.
	\item[(ii)] \(F=\text{Span}(v_0, v_1, v_2, e_0, u_{\tau})\), then $\mu(\mathcal{F})=100$.
	\item[(iii)] \(F=\text{Span}( v_1, e_0, e_1, e_2)\), then $\mu(\mathcal{F}) \sim 86.67$.
	\item[(iv)] \(F=\text{Span}(v_0, v_1,e_0, u_{\tau})\), then $\mu(\mathcal{F}) \sim 75.67 $.
\end{itemize}
Hence $\mathcal{T}_X$ is stable.
\end{proof}

In the following table 	we summarize results regarding stability of tangent bundle of toric Fano 4-folds obtained in this paper, following the notations of Batyrev \cite[Section 4]{batyrev}.
\begin{center}
\text{Table 1: Stability of tangent bundle of toric Fano 4-folds}
\end{center}
	
\begin{longtable}{|c|l|l|l|}
	\hline 
Picard No.	&  \(X\)  & Stability of $\mathcal{T}_X$ & Reference  \\ 
	\hline \hline
1	& $\mathbb{P}^4$ &  \text{Stable} & Proposition \ref{stabtanP}\\ 
	\hline 
2	& \(B_1= \mathbb{P}(\mathcal{O}_{\mathbb{P}^3} \oplus  \mathcal{O}_{\mathbb{P}^3}(3)) \) &  \text{Unstable} & Corollary \ref{stabonPic2cor}, (1)\\ 
	\hline 
2	&  \(B_2= \mathbb{P}(\mathcal{O}_{\mathbb{P}^3} \oplus  \mathcal{O}_{\mathbb{P}^3}(2)) \)&  \text{Unstable} & Corollary \ref{stabonPic2cor}, (1)\\ 
	\hline 
2	&  \(B_3= \mathbb{P}(\mathcal{O}_{\mathbb{P}^3} \oplus  \mathcal{O}_{\mathbb{P}^3}(1)) \)&  \text{Unstable} & Corollary \ref{stabonPic2cor}, (2)\\ 
	\hline 
2	&  \(B_4= \mathbb{P}^1 \times \mathbb{P}^3 \)& \text{Strictly semistable} & Remark \ref{prodstab}\\ 
	\hline 
2	& \(B_5=\mathbb{P}( \mathcal{O}_{\mathbb{P}^1} \oplus \mathcal{O}_{\mathbb{P}^1} \oplus \mathcal{O}_{\mathbb{P}^1} \oplus \mathcal{O}_{\mathbb{P}^1}(1)  ) \) &  \text{Strictly semistable} & Corollary \ref{stabonPic2cor}, (3)\\ 
	\hline 
2	&  \(C_1=\mathbb{P}( \mathcal{O}_{\mathbb{P}^2} \oplus \mathcal{O}_{\mathbb{P}^2} \oplus \mathcal{O}_{\mathbb{P}^2}(2)  ) \)&  \text{Unstable} & Corollary \ref{stabonPic2cor}, (1)\\ 
	\hline 
2	& \(C_2=\mathbb{P}( \mathcal{O}_{\mathbb{P}^2} \oplus \mathcal{O}_{\mathbb{P}^2} \oplus \mathcal{O}_{\mathbb{P}^2}(1)  ) \) &\text{Unstable}  & Corollary \ref{stabonPic2cor}, (3)\\ 
	\hline 
2	& \(C_3=\mathbb{P}( \mathcal{O}_{\mathbb{P}^2} \oplus \mathcal{O}_{\mathbb{P}^2}(1) \oplus \mathcal{O}_{\mathbb{P}^2}(1)  ) \)  &\text{Unstable}  & Corollary \ref{stabonPic2cor}, (1)\\ 
	\hline 
2	&  \(C_4=\mathbb{P}^2 \times \mathbb{P}^2 \)& \text{Strictly semistable} & Remark \ref{prodstab}\\ 
	\hline 
3	& \(D_1=\mathbb{P}( \mathcal{O}_{\mathbb{P}^1 \times \mathbb{P}^2} \oplus   \mathcal{O}_{\mathbb{P}^1 \times \mathbb{P}^2}(1,2) )   \) &  \text{Unstable} & Proposition \ref{stabpic3D_1} (1)\\ 
	\hline 
3	& \(D_2=\mathbb{P}( \mathcal{O}_{\mathcal{B}_1} \oplus \mathcal{O}_{\mathcal{B}_1}(0,1))\) & \text{Unstable} & Proposition \ref{D_2}\\ 
	\hline 
3	& \(D_3= \mathbb{P}( \mathcal{O}_{\mathcal{B}_2} \oplus \mathcal{O}_{\mathcal{B}_2}(1,1))  \) & \text{Unstable} & Proposition \ref{D_2} \\ 
	\hline 
3	&  \(D_4=\mathbb{P}( \mathcal{O}_{\mathcal{B}_3} \oplus \mathcal{O}_{\mathcal{B}_3}(0,2))  \) & \text{Unstable} & Proposition \ref{D_4}\\ 
	\hline 
3	&  \(D_5=\mathbb{P}^1 \times  \mathbb{P}( \mathcal{O}_{ \mathbb{P}^2} \oplus  \mathcal{O}_{ \mathbb{P}^2}(2))\)&  \text{Unstable} & Proposition \ref{D_2}\\ 
	\hline 
3	&  \(D_6=\mathbb{P}( \mathcal{O}_{\mathbb{P}^1 \times \mathbb{P}^2} \oplus   \mathcal{O}_{\mathbb{P}^1 \times \mathbb{P}^2}(1,1) )   \) & \text{Unstable} & Proposition \ref{stabpic3D_1} (1)\\ 
	\hline 
3	& \(D_7=\) \tiny{\(\mathbb{P}( \mathcal{O}_{\mathbb{P}^1 \times \mathbb{P}^1} \oplus \mathcal{O}_{\mathbb{P}^1 \times \mathbb{P}^1} \oplus   \mathcal{O}_{\mathbb{P}^1 \times \mathbb{P}^2}(1,1) )   \)}  &  \text{Unstable} & Proposition \ref{stabPic3D_7} (1)\\ 
	\hline 
3	& \(D_8=  \mathbb{P}( \mathcal{O}_{\mathcal{B}_2} \oplus \mathcal{O}_{\mathcal{B}_2}(0,1))  \)  &  \text{Unstable} & Proposition \ref{D_2}\\ 
	\hline 
3	& \(D_9= \mathbb{P}( \mathcal{O}_{\mathcal{B}_2} \oplus \mathcal{O}_{\mathcal{B}_2}(1,0)) \)  &  \text{Unstable} & Proposition \ref{D_2}\\ 
	\hline 
3	& \(D_{10} =\mathbb{P}( \mathcal{O}_{\mathcal{B}_3} \oplus \mathcal{O}_{\mathcal{B}_3}(0,1)) \)  &  \text{Unstable} & Proposition \ref{D_4}\\ 
	\hline 
3	& \(D_{11}= \mathbb{P}( \mathcal{O}_{\mathcal{H}_1} \oplus  \mathcal{O}_{\mathcal{H}_1} \oplus \mathcal{O}_{\mathcal{H}_1}(0,1)  )\) & \text{Unstable} & Proposition \ref{D_{11}}\\ 
	\hline 
3	& \(D_{12}=\mathbb{P}^1 \times  \mathbb{P}( \mathcal{O}_{ \mathbb{P}^2} \oplus  \mathcal{O}_{ \mathbb{P}^2}(1))\) &  \text{Unstable} & Proposition \ref{D_2}\\ 
	\hline 
3	& \(D_{13}=\mathbb{P}^1 \times \mathbb{P}^1 \times   \mathbb{P}^2 \)& \text{Strictly semistable} & Remark \ref{prodstab}\\ 
	\hline 
3	&  \(D_{14}=\mathbb{P}^1 \times \mathbb{P}( \mathcal{O}_{\mathbb{P}^1} \oplus \mathcal{O}_{\mathbb{P}^1} \oplus \mathcal{O}_{\mathbb{P}^1}(1) )   \)&  \text{Strictly semistable} & Remark \ref{prodstab} and\\ 
 & & & Corollary \ref{stabonPic2cor}, (3)\\
	\hline 
3	&  \(D_{15} =\mathcal{H}_1 \times \mathbb{P}^2 \) &  \text{Strictly semistable} & Remark \ref{prodstab}\\ 
	\hline 
3	& \(D_{16} =  \mathbb{P}( \mathcal{O}_{\mathcal{B}_2} \oplus \mathcal{O}_{\mathcal{B}_2}(-1,1)) \) &  \text{Unstable} & Proposition \ref{D_2}\\ 
	\hline 
3	&  \(D_{17}=\) \tiny{\(\mathbb{P}( \mathcal{O}_{\mathbb{P}^1 \times \mathbb{P}^1} \oplus \mathcal{O}_{\mathbb{P}^1 \times \mathbb{P}^1}(1,0) \oplus   \mathcal{O}_{\mathbb{P}^1 \times \mathbb{P}^1}(0,1) )   \)} &  \text{Stable} & Proposition \ref{stabPic3D_7} (2)\\ 
	\hline 
3	&  \(D_{18}=\mathbb{P}( \mathcal{O}_{\mathbb{P}^1 \times \mathbb{P}^2} \oplus   \mathcal{O}_{\mathbb{P}^1 \times \mathbb{P}^2}(-1,2) )   \) &  \text{Unstable} & Proposition \ref{stabpic3D_1} (1)\\ 
	\hline 
3	&\(D_{19}=\mathbb{P}( \mathcal{O}_{\mathbb{P}^1 \times \mathbb{P}^2} \oplus   \mathcal{O}_{\mathbb{P}^1 \times \mathbb{P}^2}(-1,1) )   \)   &  \text{Stable} & Proposition \ref{stabpic3D_1} (2)\\ 
	\hline 
	3 & \(E_1=Bl_{\mathbb{P}^2}(B_2)\) & \text{Unstable} & Proposition \ref{E_1} (1)\\
	\hline 
	3 & \(E_2=Bl_{\mathbb{P}^2}(B_3)\) & \text{Unstable} & Proposition \ref{E_1} (1)\\
	\hline 
	3 & \(E_3=Bl_{\mathbb{P}^2}(B_4)\) & \text{Stable} & Proposition \ref{E_1} (2)\\
	\hline 
        3 & \(G_1\) & \text{Unstable} & Proposition \ref{G_1}\\
	\hline 
	3 & \(G_2=Bl_{\mathbb{P}^1 \times \mathbb{P}^1}(C_2)\) & \text{Unstable} & Proposition \ref{G_2}\\
	\hline 
	3 & \(G_3=Bl_{\mathbb{P}^1}(C_3)\) & \text{Unstable} & Proposition \ref{G_3}\\
	\hline 
	3 & \(G_4=Bl_{\mathcal{H}_1}(C_2)\) & \text{Stable} & Proposition \ref{G_4}\\
	\hline 
	3 & \(G_5=Bl_{\mathbb{P}^1 \times \mathbb{P}^1}(C_3)\) & \text{Stable} & Proposition \ref{G_4}\\
	\hline 
	3 & \(G_6=Bl_{\mathbb{P}^1 \times \mathbb{P}^1}(C_4)\) & \text{Stable} & Proposition \ref{G_4}\\
	\hline 
\end{longtable}

\section{Existence of equivariant indecomposable rank 2 vector bundles}\label{sec:existence-of-rank-2-stable-equivariant-vector-bundle-on-bott-tower}

In this section we construct a collection of equivariant indecomposable rank $2$ vector bundles over some special class of toric varieties of any dimension, namely Bott tower and pseudo-symmetric toric Fano varieties. Moreover, we show that in case of Bott tower, among the constructed vector bundles, there is a vector bundle which is stable with respect to a suitable choice of polarization.

\subsection{Existence of equivariant indecomposable rank 2 vector bundles on Bott tower}

A Bott tower is a tower $M_n \rightarrow M_{n-1} \rightarrow \cdots \rightarrow M_2 \rightarrow M_1 \rightarrow M_0=\{ \text{point} \}$, consisting of nonsingular projective toric varieties constructed as an iterated sequence of $\mathbb{P}^1$-bundles.
We briefly recall the fan $\Delta_k$ of the \(k\)-th stage Bott tower \(M_k\) (see \cite{civanyusuf} for more details). Let \(N=\Z^k\) with standard basis \(e_1, \ldots, e_k\). Rays of $\Delta_k$ are given by
 \begin{align*}
 & v_i   =e_i \text{ for } i=1, \ldots, k; \ v_{2k}  =-e_k \text{ and}\\
  & v_{k+i}  =-e_i+c_{i, i+1}e_{i+1}+\cdots +c_{i, k}e_k \text{ for }i=1, \ldots, k-1, 
 \end{align*} 
 where \(c_{i, j}\)'s are integers, called Bott numbers which can be assumed to be non-negative (see \cite[Theorem 2.2.1]{bj}). There are $2^k$ maximal cones of dimension $k$ generated by these rays such that no cone contains $v_i$ and $v_{k+i}$ simultaneously for $i=1, \ldots, k$. Let $D_i:=D_{v_i}$ denote the invariant prime divisor corresponding to the edge $v_i$ for \(i=1, \ldots, 2k\). We have the following relations among invariant prime divisors:
 \begin{equation}\label{reln}
 \begin{split}
 & D_{k+1} \sim_{\text{lin}} D_1, D_{k+2} \sim_{\text{lin}} D_{2}+c_{1, 2} D_{k+1}, \\
 & D_{k+i} \sim_{\text{lin}} D_{i}+c_{1, i}D_{k+1}+\cdots+c_{i-1, i}D_{k+i-1} \text{ for } i=3, \ldots, k.
 \end{split}
 \end{equation}

 \begin{prop}\label{existance of indecomp b on Bott T}
 	Let \(X=M_k\) with \(k \geq 2\) and \(1\leq p \leq k, 1\leq q \leq 2k, q \neq p,k+p \). Then there exists a collection of rank \(2\) indecomposable equivariant vector bundle \(\mathcal{E}_{p,q}\) on \(X\) with \(c_1(\mathcal{E}_{p,q})=D_p + D_q + D_{k+p}\).
 \end{prop}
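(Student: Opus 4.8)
The plan is to write the bundle down explicitly through its Klyachko--Perling filtration datum (Theorem \ref{reflexive}) and to read off all three required properties from that combinatorial datum. First I would fix a two-dimensional space $\mathbf{E}^0$ together with three \emph{pairwise distinct} lines $L_p, L_q, L_{k+p} \subset \mathbf{E}^0$ --- concretely, writing $\mathbf{E}^0 = \C e_1 \oplus \C e_2$, one may take $L_p = \C e_1$, $L_{k+p} = \C e_2$, $L_q = \C(e_1 + e_2)$ --- and then define full filtrations on $\mathbf{E}^0$ by setting, for $j \in \{p, q, k+p\}$,
\[
E^{v_j}(i) = \begin{cases} 0 & i \leq -2, \\ L_j & i = -1, \\ \mathbf{E}^0 & i \geq 0, \end{cases}
\]
and $E^{\rho}(i) = 0$ for $i \leq -1$, $E^{\rho}(i) = \mathbf{E}^0$ for $i \geq 0$, for every remaining ray $\rho$ of $\Delta_k$. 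By Theorem \ref{reflexive} this datum corresponds to an equivariant reflexive sheaf $\mathcal{E}_{p,q}$ of rank $2$ on $X = M_k$.

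Next I would verify that $\mathcal{E}_{p,q}$ is locally free using the distributivity criterion of Remark \ref{dist}: it suffices to check that for every maximal cone $\sigma$ of $\Delta_k$ the subspaces $\{E^{\rho}(i)\}_{\rho \in \sigma(1),\, i \in \Z}$ form a distributive lattice. The key point is that a maximal cone of the Bott fan is generated by a set of rays containing exactly one of $v_i, v_{k+i}$ for each $i$; in particular no maximal cone contains both $v_p$ and $v_{k+p}$, and since $q \neq p, k+p$ by hypothesis, any maximal cone sees at most two of the three special rays $v_p, v_q, v_{k+p}$. Hence the relevant collection of subspaces consists of $0$, $\mathbf{E}^0$, and at most two lines, and two lines in a two-dimensional space always span a distributive lattice (choose a basis of $\mathbf{E}^0$ adapted to the two lines). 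Granting local freeness, Proposition \ref{chern} computes the first Chern class: the non-special rays contribute $0$ (their only jump occurs at $i=0$), while each special ray $v_j$ satisfies $\dim E^{[v_j]}(-1) = \dim E^{[v_j]}(0) = 1$ and contributes $-(-1)\cdot 1 \cdot D_{v_j} = D_{v_j}$; summing yields $c_1(\mathcal{E}_{p,q}) = D_p + D_q + D_{k+p}$.

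Finally, for indecomposability I would argue that a splitting $\mathcal{E}_{p,q} = \mathcal{L}_1 \oplus \mathcal{L}_2$ into equivariant line bundles corresponds, under Theorem \ref{reflexive} and the description of direct sums of filtrations following Proposition \ref{perdual}, to a decomposition $\mathbf{E}^0 = W_1 \oplus W_2$ with $E^{\rho}(i) = (W_1 \cap E^{\rho}(i)) \oplus (W_2 \cap E^{\rho}(i))$ for all $\rho$ and $i$; since each $W_s$ is a line, every $E^{\rho}(i)$ would then lie in $\{0, W_1, W_2, \mathbf{E}^0\}$, forcing the three distinct lines $L_p, L_q, L_{k+p}$ all into the two-element set $\{W_1, W_2\}$ --- which is impossible. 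Thus $\mathcal{E}_{p,q}$ admits no equivariant decomposition; and since the summands of a Krull--Schmidt decomposition of a vector bundle are permuted by the connected group $T$, each such summand is $T$-stable and the decomposition may be taken equivariant, so $\mathcal{E}_{p,q}$ is indecomposable. The only delicate step is the local-freeness check: everything there hinges on confirming that the hypotheses on $p$ and $q$ together with the combinatorics of the Bott fan keep all three special lines out of any single maximal cone; once that is in place the distributivity is elementary, and the Chern-class and indecomposability arguments are then purely formal.
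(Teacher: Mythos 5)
Your construction, the local-freeness check via distributivity on each maximal cone (using that no maximal cone of the Bott fan contains both $v_p$ and $v_{k+p}$, so at most two of the three special rays appear), and the Chern class computation via Proposition \ref{chern} are exactly the paper's proof. The one place you deviate is indecomposability, and it is also the one place where your argument is not airtight as written. Your direct argument correctly rules out an \emph{equivariant} splitting: a compatible decomposition $\mathbf{E}^0=W_1\oplus W_2$ would force the three distinct lines $L_p,L_q,L_{k+p}$ into $\{W_1,W_2\}$. But your reduction from ordinary to equivariant decomposability --- ``the summands of a Krull--Schmidt decomposition are permuted by the connected group $T$, hence $T$-stable'' --- is not correct as stated: the summands of a direct-sum decomposition are not canonical subsheaves (think of $\mathcal{O}\oplus\mathcal{O}$), so $\Phi_t(t^*\mathcal{L}_1)$ need not be one of the chosen summands, and there is no literal permutation action to invoke. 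What is true, and what closes the gap, is that an equivariant bundle which splits as an ordinary bundle splits equivariantly (equivalently, non-distributivity of the full filtration family obstructs any splitting); this is precisely the content of \cite[Corollary 2.2.3]{kly}, which the paper cites at this step instead of arguing by hand, and which rests on Klyachko's comparison of equivariant and ordinary isomorphism (\cite[Theorem 1.2.3, Corollary 1.2.4]{kly}). Alternatively, one can argue through the algebraic $T$-action on the finite-dimensional algebra $\mathrm{End}(\mathcal{E}_{p,q})$ to produce an equivariant idempotent from an ordinary one. So: replace the Krull--Schmidt sentence by an appeal to \cite[Corollary 2.2.3]{kly} (or a genuine equivariant-idempotent argument), and note that the delicate point is this passage, not the local-freeness check, which is elementary once the combinatorial observation about $v_p, v_{k+p}$ is made.
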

 
 \begin{proof}
 	Consider the vector space \(E=\C^2\) and three distinct one dimensional subspaces \(L_p, L_q\) and \(L_{k+p}\) in \(E \). Now define the filtrations \( \left( E, \{ E_{p,q}^{v_j}(i) \}_{j=1, \ldots, 2k} \right) \)  as follows:
 	
 	$
 	E_{p,q}^{v_j}(i) = \left\{ \begin{array}{ccc}
 	
 	0 & i \leqslant -2 \\
 	
 	L_j & i =-1 \\ 
 	
 	E & i \geq 0
 \end{array} \right. 
 $
 for \(j=p, k+p, q\) 
%
%
%
 and 
 $
 E_{p,q}^{v_j}(i) = \left\{ \begin{array}{ccc}
 
 0 & i < 0 \\
 
 E & i \geq  0
\end{array} \right.
$
for all \(j \neq p, q, k+p\).

Hence the filtrations \( \left( E, \{ E_{p,q}^{v_j}(i) \}_{j=1, \ldots, 2k} \right) \) correspond to a rank \(2\) equivariant reflexive sheaf on \(X\), say \(\mathcal{E}_{p,q}\) (see Proposition \ref{reflexive}). Fix a maximal dimensional cone \(\sigma \in \Delta_k \). To prove that \(\mathcal{E}_{p,q}\) is also locally free, we need to show that the collection of subspaces \(\mathfrak{E}_{p,q}^{\sigma}=\{\{ E_{p,q}^{v_j}(i) \}_{v_j \in \sigma(1)} \}\) of \(E\) forms a distributive lattice (see Proposition \ref{compatibility for loc free sheaf}, Remark \ref{dist}). This follows because \(\sigma(1)\) contains at most two of the ray generators \(v_p, v_q, v_{k+p}\), since both \(v_p\) and \(v_{k+p}\) cannot belong to the same cone. 
Note that since \(L_p, L_q\) and \(L_{k+p}\) are distinct, the collection of subspaces \(\{ E_{p,q}^{v_j}(i) \}_{j=1, \ldots, 2k}\) do not form a distributive lattice. Hence by \cite[Corollary 2.2.3]{kly}, \(\mathcal{E}_{p,q}\) is in fact indecomposable. 

Note that for \(j=p, q, k+p\),

$
\text{dim}(E^{[v_j]}(i)) = \left\{ \begin{array}{ccc}

1 & i=-1, 0 \\

0 & \text{otherwise}
\end{array} \right.
$  
\ and for \(j \neq p, q, k+p\) \  
$
\text{dim}(E^{[v_j]}(i)) = \left\{ \begin{array}{ccc}

2 & i=0 \\

0 & \text{otherwise.}
\end{array} \right.
$

\vspace{0.3 cm}
We have  \(c_1 ( \mathcal{E})=D_p+D_q + D_{k+p}\) using Proposition \ref{chern}. 

\end{proof}

 \begin{rmk}
 	The above construction only depends on the choice of \(p,q\). Any three distinct lines \(L_p, L_q\) and \(L_{k+p}\) will give rise to the same equivariant vector bundle $\mathcal{ E }_{p, q}$ since two set of three distinct points in \(\mathbb{P}^1\) are equivalent by an automorphism of \(\mathbb{P}^1\). For \((p, q) \neq (p', q') \), the corresponding vector bundles $\mathcal{ E }_{p, q}$ and $\mathcal{ E }_{p', q'}$ are non isomorphic by \cite[Theorem 1.2.3, Corollary 1.2.4]{kly}.
 \end{rmk}
 
Now we will show that the vector bundle \(\mathcal{E}_{1,2}\) is stable with respect to a suitable choice of polarization. 

\begin{lemma}\label{nncomb}
	Let \(H=D_{k+1}+b D_{k+2}+D_{k+3}+ \cdots+D_{2k}\), where \(b > 0\) be an ample divisor on \(M_k, k \geq 2\). Then \(H^{k-1}\) is a non-negative integral combination of \(V(\tau)\)'s, where \(\tau\) varies over all walls in \(\Delta_k\) such that \(\tau(1) \subseteq \{v_{k+1}, \ldots, v_{2k}\} \).
\end{lemma}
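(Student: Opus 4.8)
The plan is to expand $H^{k-1}$ as a polynomial in the prime divisors $D_{k+1},\ldots,D_{2k}$ and then reduce each monomial that appears to a non-negative integral combination of the classes $[V(\tau_i)]$, where for $i=1,\ldots,k$ we set $\tau_i:=\text{Cone}(v_{k+1},\ldots,\widehat{v}_{k+i},\ldots,v_{2k})$. First I would record two elementary facts. Since no cone of $\Delta_k$ contains $v_i$ and $v_{k+i}$ simultaneously, the second part of Proposition \ref{wallreln} gives $D_i\cdot D_{k+i}=0$ for every $i$; combining this with the linear relation $D_{k+i}\sim_{\text{lin}}D_i+\sum_{j<i}c_{j,i}D_{k+j}$ from \eqref{reln} yields
\begin{equation*}
D_{k+i}^{\,2}=\sum_{j<i}c_{j,i}\,D_{k+i}D_{k+j}\qquad(1\le i\le k),
\end{equation*}
which for $i=1$ reads $D_{k+1}^{\,2}=0$. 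Secondly, $\text{Cone}(v_{k+1},\ldots,v_{2k})$ is one of the $2^{k}$ maximal cones of $\Delta_k$, as it selects one ray from each pair $\{v_i,v_{k+i}\}$ and the $v_{k+j}$ are linearly independent; hence for any $k-1$ distinct indices $j_1,\ldots,j_{k-1}\in\{1,\ldots,k\}$ the cone $\text{Cone}(v_{k+j_1},\ldots,v_{k+j_{k-1}})$ is a wall of $\Delta_k$ of exactly the type occurring in the statement, and the second part of Proposition \ref{wallreln} identifies the product $D_{k+j_1}\cdots D_{k+j_{k-1}}$ with $[V(\tau_i)]$, where $i$ is the omitted index.

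Next I would apply the multinomial theorem to $H^{k-1}=(D_{k+1}+bD_{k+2}+D_{k+3}+\cdots+D_{2k})^{k-1}$, obtaining a sum of monomials $D_{k+1}^{m_1}\cdots D_{2k}^{m_k}$ with $m_1+\cdots+m_k=k-1$, each weighted by the coefficient $\binom{k-1}{m_1,\ldots,m_k}b^{m_2}\ge 0$ (this is where $b>0$ enters). A squarefree monomial, with all $m_j\le 1$, is by the previous paragraph one of the classes $[V(\tau_i)]$. For a non-squarefree monomial I would induct on $\Phi(m):=\sum_j j\,m_j$: pick the largest $i$ with $m_i\ge 2$; if $i=1$ the monomial is zero since $D_{k+1}^{\,2}=0$, and if $i\ge 2$ the displayed identity rewrites the monomial as $\sum_{j<i}c_{j,i}$ times monomials of $\Phi$-value $\Phi(m)-i+j<\Phi(m)$. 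Since $\Phi$ takes non-negative integer values and every $c_{j,i}$ is non-negative, this recursion terminates and expresses each monomial as a non-negative integral combination of the $[V(\tau_i)]$. Adding the contributions of all monomials gives $H^{k-1}=\sum_{i=1}^{k}n_i[V(\tau_i)]$, with each $n_i$ a non-negative integer assembled from multinomial coefficients, a power of $b$, and Bott numbers $c_{j,i}$, as required.

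The only steps needing genuine care are the termination of the reduction, which the monovariant $\Phi$ settles, and the check that a squarefree product $D_{k+j_1}\cdots D_{k+j_{k-1}}$ is an admissible wall class rather than zero---which is precisely where $\{v_{k+1},\ldots,v_{2k}\}$ spanning a maximal cone is used. I would also remark, though it is not needed in the sequel, that $[V(\tau_1)],\ldots,[V(\tau_k)]$ form a basis of $A_1(M_k)$ dual to $D_1,\ldots,D_k$, since $D_i\cdot V(\tau_{i'})=\delta_{ii'}$ by the wall relations; consequently $n_i=H^{k-1}\cdot D_i=(H|_{D_i})^{k-1}>0$, which gives a second, purely numerical, justification of non-negativity.
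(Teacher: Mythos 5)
Your proposal is correct and follows essentially the same route as the paper: expand $H^{k-1}$ into monomials in $D_{k+1},\ldots,D_{2k}$ and use the relations \eqref{reln} together with $D_i\cdot D_{k+i}=0$ to reduce every non-squarefree monomial to squarefree products, which are exactly the wall classes $[V(\tau)]$ with $\tau(1)\subseteq\{v_{k+1},\ldots,v_{2k}\}$; your packaging of the relations as $D_{k+i}^2=\sum_{j<i}c_{j,i}D_{k+j}D_{k+i}$ and the monovariant $\Phi$ just make the paper's stagewise termination argument cleaner. Your closing remark ($D_i\cdot V(\tau_{i'})=\delta_{ii'}$, so $n_i=H^{k-1}\cdot D_i=(H|_{D_i})^{k-1}>0$) would even give a shorter independent proof of non-negativity, though the explicit reduction is what later yields the finer coefficient information recorded in Remark \ref{coeffb}.
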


\begin{proof}
	Note that \(H^{k-1}\) is a positive integral combination of monomials of the form \(D^{\underline{\alpha}}:=D^{\alpha_1}_{k+1} \cdots  D^{\alpha_k}_{2k}\) with non-negative integers \(\alpha_1, \ldots, \alpha_k\) satisfying \(\sum\limits_{j=1}^k \alpha_j=k-1\). To prove the lemma, it suffices to write such a monomial \(D^{\underline{\alpha}}\) as a non-negative integral combination of monomials of the form \(D^{\underline{\beta}}=D^{\beta_1}_{k+1} \cdots  D^{\beta_k}_{2k}\) with \(\beta_j \in \{0, 1\} \) for \(j=1, \ldots, k\) (see \cite[Lemma 12.5.2]{Cox}). 
	
	Since \(D_{k+1}^2=0\), without loss of generality we can assume \(\alpha_1 \leq 1 \). Now if \(\alpha_2 > 1\) using relations in \eqref{reln} and observing that \(v_2\) and \(v_{k+2}\) do not form a cone, we can write 
	\begin{equation*}
		\begin{split}
			D^{\underline{\alpha}} &=D^{\alpha_1}_{k+1} D^{\alpha_2-1}_{k+2} ( D_{2}+c_{1, 2} D_{k+1}) D^{\alpha_3}_{k+3}  \cdots  D^{\alpha_k}_{2k}=c_{1, 2} D^{\alpha_1+1}_{k+1} D^{\alpha_2-1}_{k+2} D^{\alpha_3}_{k+3}  \cdots  D^{\alpha_k}_{2k}\\
			&=c_{1, 2} D^{\beta_1}_{k+1} D^{\alpha_2-1}_{k+2} D^{\alpha_3}_{k+3}  \cdots  D^{\alpha_k}_{2k} \text{ where } \beta_1 \leq 1 \text{ if the monomial is non-zero.}
		\end{split}
	\end{equation*}
	
	Hence we have reduced the exponent of \(D_{k+2}\) by one and repeating this process we can write \(D^{\underline{\alpha}}\) as a non-negative integral combination of monomials of the form \begin{equation*}
		\begin{split}
			D^{\underline{\beta}}=D^{\beta_1}_{k+1} \cdots  D^{\beta_k}_{2k} \text{ with } \beta_1, \beta_2 \in \{0, 1\} \text{ and } \beta_3=\alpha_3, \ldots, \beta_k=\alpha_k.
		\end{split}
	\end{equation*}
	
	At the \(i\)-th stage, we arrive at monomials of the form \(D^{\underline{\alpha}}\), where \(\alpha_1, \ldots, \alpha_{i-1} \in \{0,1 \}\). Suppose \(\alpha_{i} > 1\). Then again using relations in \eqref{reln} and observing that \(v_{i}\) and \(v_{k+i}\) do not form a cone, we can write \(D^{\underline{\alpha}}\) as a non-negative integral combination of monomials of the form \(D^{\underline{\beta}}\)'s with \(\beta_{i} < \alpha_{i}\) and \( \beta_{i+1}=\alpha_{i+1}, \ldots, \beta_k=\alpha_k\). If \(\beta_j > 1\) for some \(j=1, \ldots, i-1\), appealing to Stage \(j\), we will write this monomial as a non-negative integral combination of monomials of the form \(D^{\underline{\beta'}}\)'s with \(\beta'_1, \ldots, \beta'_j \in \{0,1 \}\) and \(\beta'_{j+1}= \beta_{j+1}, \ldots, \beta'_k= \beta_k\) . Hence eventually we write \(D^{\underline{\alpha}}\) as a non-negative integral combination of monomials of the form \(D^{\underline{\gamma}}\)'s with \(\gamma_1, \ldots, \gamma_{i} \in \{0,1 \}\) and \( \gamma_{i+1}=\alpha_{i+1}, \ldots, \gamma_k=\alpha_k\). Continuing this process at the \(k\)-th stage we can express \(D^{\underline{\alpha}}\) in the desired form.
\end{proof}


\begin{rmk} \label{coeffb} 
	Using Lemma \ref{nncomb} we can write \(H^{k-1}=\sum_{\tau } a_{\tau} V(\tau) \), where $\tau$ varies over all such walls with $\tau(1) \subseteq \{v_{k+1}, \ldots, v_{2k}\}$, \(a_{\tau} \in \Z_{\geq 0}[b, c_{i, j} : 1 \leq i< j \leq k] \). Also observe that  \(a_{\tau}\) involves \(b\) only if \(v_{k+2} \in \tau(1) \).
\end{rmk}

\begin{prop}\label{existance of stable b on Bott T}
	Let \(X=M_k\) with \(k \geq 2\) and consider the polarization \(H=D_{k+1}+b D_{k+2}+D_{k+3}+ \cdots+D_{2k}\), where \(b > 0\). Then there exists a rank \(2\) stable equivariant vector bundle \(\mathcal{E}\) on \(X\) with \(c_1(\mathcal{E})=2D_1+D_2\), which is \(H\)-stable for sufficiently large \(b\).
\end{prop}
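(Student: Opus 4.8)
The plan is to take $\mathcal{E}=\mathcal{E}_{1,2}$, the bundle supplied by Proposition \ref{existance of indecomp b on Bott T}: for $k\geq 2$ the pair $(p,q)=(1,2)$ is admissible ($q\neq p$ and $q\neq k+p$, using $k\geq 2$), and that proposition yields an equivariant rank $2$ vector bundle with $c_1(\mathcal{E})=D_1+D_2+D_{k+1}$, which equals $2D_1+D_2$ by the relation $D_{k+1}\sim_{\text{lin}}D_1$ of \eqref{reln}. So the task is to prove $H$-stability for $b$ large. By Remark \ref{ref2} and Corollary \ref{RS1} it suffices to control the equivariant reflexive subsheaves of rank $1$, i.e. the subfiltrations attached to the one-dimensional subspaces $F\subseteq E=\C^2$. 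Running the recipe of Proposition \ref{chern} as in Remark \ref{sub_of_tangent}: if $F$ is one of the three distinguished lines $L_1,L_2,L_{k+1}$ of the construction then the associated rank $1$ subsheaf has first Chern class $D_1$, $D_2$, $D_{k+1}$ respectively, and for any other $F$ it is $0$. Since $\mu(\mathcal{E})=\tfrac12(\deg D_1+\deg D_2+\deg D_{k+1})=\deg D_1+\tfrac12\deg D_2$ (using $\deg D_{k+1}=\deg D_1$), the bundle $\mathcal{E}$ is $H$-stable if and only if $\max\{\deg D_1,\deg D_2\}<\mu(\mathcal{E})$. As $\deg D_2>0$ (Remark \ref{degpositive}), the inequality $\deg D_1<\mu(\mathcal{E})$ is automatic, so everything reduces to showing $\deg D_2<2\deg D_1$ for $b\gg 0$.

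For $\deg D_2$ I would invoke Lemma \ref{nncomb}, writing $H^{k-1}=\sum_\tau a_\tau V(\tau)$ with $\tau$ ranging over the walls with $\tau(1)\subseteq\{v_{k+1},\dots,v_{2k}\}$; these are exactly $\text{Cone}(v_{k+1},\dots,\widehat{v_{k+j}},\dots,v_{2k})$ for $j=1,\dots,k$. By the corresponding wall relations and Proposition \ref{wallreln}, $D_2\cdot V(\tau)\neq 0$ only for $j=2$, that is for $\tau_2:=\text{Cone}(v_{k+1},v_{k+3},\dots,v_{2k})$, where $D_2\cdot V(\tau_2)=1$; and since $v_{k+2}\notin\tau_2(1)$, Remark \ref{coeffb} shows $a_{\tau_2}$ is independent of $b$. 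Thus $\deg D_2=a_{\tau_2}$ is a fixed positive integer.

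For $\deg D_1$ I would compute directly via $D_1\sim_{\text{lin}}D_{k+1}$:
\[
\deg D_1=D_{k+1}\cdot\big(D_{k+1}+bD_{k+2}+R\big)^{k-1},\qquad R:=D_{k+3}+\dots+D_{2k}.
\]
Here $D_{k+1}^2=0$ (as $v_1,v_{k+1}$ span no common cone, so $D_{k+1}^2=D_{k+1}D_1=0$) and $D_{k+1}D_{k+2}^2=0$ (from $D_{k+2}\sim_{\text{lin}}D_2+c_{1,2}D_{k+1}$ and $D_2D_{k+2}=0$), so only multinomial terms with at most one factor $bD_{k+2}$ survive:
\[
\deg D_1=D_{k+1}R^{k-1}+(k-1)\,\big(D_{k+1}D_2\,R^{k-2}\big)\,b,
\]
an affine-linear function of $b$. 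The crucial point — and the main obstacle — is that the leading coefficient $B:=D_{k+1}D_2R^{k-2}$ is strictly positive. To show this, expand $R^{k-2}$: its unique squarefree monomial is $D_{k+3}\cdots D_{2k}$, with multinomial coefficient $(k-2)!$, and $D_{k+1}D_2D_{k+3}\cdots D_{2k}=[V(\sigma_0)]=1$ since $\sigma_0=\text{Cone}(v_{k+1},v_2,v_{k+3},\dots,v_{2k})$ is a maximal cone of $\Delta_k$; each remaining monomial of $R^{k-2}$, after repeated use of $D_{k+i}^{\alpha}=\sum_{j<i}c_{j,i}D_{k+i}^{\alpha-1}D_{k+j}$ for $\alpha\geq 2$ (valid because $D_iD_{k+i}=0$, and with all $c_{j,i}\geq 0$), reduces to a non-negative integral combination of squarefree monomials, each evaluating to $0$ or $1$. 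Hence $B\geq (k-2)!>0$, so $\deg D_1\to\infty$ as $b\to\infty$ while $\deg D_2$ stays constant; choosing $b$ large forces $\deg D_2<2\deg D_1$, which is exactly the required inequality.

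The one genuinely delicate step is this positivity $B>0$ (equivalently, that $D_1\cdot H^{k-1}$ really grows in $b$ rather than staying bounded): it rests on the combinatorics of the Bott fan, and when writing the details one should check carefully that the monomial reductions never cancel the $(k-2)!$ contribution of the maximal cone $\sigma_0$.
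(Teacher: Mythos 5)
Your proposal is correct and follows essentially the same route as the paper: the same bundle $\mathcal{E}_{1,2}$, the same reduction of stability to the single inequality $\deg D_2 < 2\deg D_1$ via the rank-one equivariant reflexive subsheaves, and the same use of Lemma \ref{nncomb} and Remark \ref{coeffb} to see that $\deg D_2$ is independent of $b$ while $\deg D_1$ grows with $b$; your direct multinomial expansion of $D_{k+1}\cdot H^{k-1}$ merely makes explicit the positive $b$-coefficient that the paper records as $\deg D_1 = b + Q(b,c_{i,j})$ with $Q\in\Z_{\geq 0}[b,c_{i,j}]$. The cancellation you flag as delicate cannot occur, since the Bott numbers $c_{i,j}$ are non-negative, so every reduction step contributes non-negatively and the $(k-2)!$ contribution of the maximal cone $\sigma_0$ survives — exactly the observation underlying Lemma \ref{nncomb}.
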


\begin{proof}
	Consider the equivariant vector bundle vector \(\mathcal{E}_{1,2}\) associated to the filtrations \\* \( \left( E, \{ E_{1,2}^{v_j}(i) \}_{j=1, \ldots, 2k} \right) \)  from Proposition \ref{existance of indecomp b on Bott T}. Furthermore, \(\text{deg}(\mathcal{E})=2\text{deg}(D_1)+\text{deg}(D_2)\). Hence $\mu(\mathcal{E})=\text{deg}(D_1)+\frac{1}{2} \text{deg}(D_2)$. 

The only equivariant reflexive subsheaves of \(\mathcal{E}\) are \(\mathcal{O}_X(D_1), \mathcal{O}_X(D_2), \mathcal{O}_X(D_{k+1})\) and \(\mathcal{O}_X\) and both \(\text{deg}(D_1) (=\text{deg}(D_{k+1}))\) and \(\text{deg}(\mathcal{O}_X)\) are less than \(\mu(\mathcal{E})\). It remains to show that \(\text{deg}(D_2) < \mu(\mathcal{E}), \text{ i.e. } \)
\begin{equation}\label{degexistence}
\text{deg}(D_2) <2 \ \text{deg}(D_1).
\end{equation}
Now using Lemma \ref{nncomb} and Remark \ref{coeffb}, we see that $\text{deg}(D_2)=P(c_{i, j}: 1 \leq i<  j \leq k  )$ and $\text{deg}(D_1)=b + Q(b,c_{i, j}: 1 \leq i<  j \leq k  )$, $\text{ where } P(c_{i, j}: 1 \leq i<  j \leq k  ) \in \Z_{\geq 0}[c_{i, j}: 1 \leq i<  j \leq k ] \text{ and }  Q(c_{i, j}: 1 \leq i<  j \leq k  ) \in \Z_{\geq 0}[b, c_{i, j}: 1 \leq i<  j \leq k ].$

So \eqref{degexistence} holds for sufficiently large \(b\), and hence we conclude that \(\mathcal{E}\) is \(H\)-stable.
\end{proof}

\begin{rmk}
It can be shown that for the polarization \(H=b_1 D_{k+1}+\ldots + b_k D_{2k}\) with \( b_i >0\) for all \(i=1, \ldots, k\), the vector bundle \(\mathcal{E}\) constructed above is \(H\)-stable whenever \(b_1 < b_2\) for the cases \(k=2, 3\).
\end{rmk}

\subsection{Existence of equivariant indecomposable rank 2 vector bundles on pseudo\\ -symmetric Fano toric varieties}

A toric Fano variety is called pseudo-symmetric if its fan
contains two centrally symmetric maximal cones, i.e. there exists \(\sigma, \sigma' \in \Delta \), maximal cones such that \(\sigma =-\sigma'\). For any pseudo-symmetric toric Fano variety \(X\), there exists
\(s, p, q \in \Z_{\geq 0} \) and \(k_1, \ldots, k_p, l_1, \ldots, l_q \in \Z_{\geq 0}\) such that
 \begin{equation}\label{pseudosymm}
 X \cong (\mathbb{P}^1)^s \times V^{2k_1} \times \ldots \times V^{2k_p} \times  \widetilde{V}^{2l_1} \times \ldots \times \widetilde{V}^{2l_q},
 \end{equation}
where \(V^n \) (respectively, $\widetilde{V}^n$) is a \(n\)-dimensional toric Fano variety called the $n$-dimensional Del Pezzo variety (respectively, pseudo Del Pezzo variety) (see \cite{ewald}). We briefly recall the fan structures of \(V^n \) and $\widetilde{V}^n$ from \cite[Section 3]{Cinzia}. Let \(v_1, \ldots, v_n\) be a basis of \(N=\Z^n\), where \(n\) is even, say \(n=2r\). Set \(v_0=-v_1-\cdots-v_n\) and \(w_i=-v_i\) for \(i=0, \ldots, n\). Then \(\Delta_{V^n}(1)=\{v_0, w_0, \ldots, v_n, w_n\}\) and \(\Delta_{\widetilde{V}^n}(1)=\{v_0,v_1, w_1, \ldots, v_n, w_n\}\). Explicitly the fans are given as follows:
\begin{align*}
\Delta_{V^n} =&\{\text{Cone}(v_i, w_j : i \in I^r, j \in J^r) \text{ and their faces} | I^r,  J^r \subseteq \{0, \ldots, n\} \text{ disjoint} \};\\
\Delta_{\widetilde{V}^n} =&\{\text{Cone}(v_0, v_i, w_j : i \in I^{r-1}, j \in J^r), \text{Cone}(v_i, w_j : i \in \widetilde{I}^{r+s}, j \in \widetilde{J}^{r-s}) \text{ and their faces}\\
&  | I^{r-1},  J^r \subseteq \{1, \ldots, n\} \text{ disjoint}, s \in \{0, \ldots,r \} \text{ and } \widetilde{I}^{r+s}, \widetilde{J}^{r-s} \text{ a partition of } \{1, \ldots,n \}\}.
\end{align*}

We construct a collection of equivariant indecomposable rank 2 vector bundle on \(X\). When \(X\) is a product of $\mathbb{P}^1$'s, then we are done by Proposition \ref{existance of indecomp b on Bott T}. Let us first prove the existence of a collection of equivariant indecomposable rank 2 vector bundles on Del Pezzo variety \(V^n\). 

Consider the vector space \(E=\C^2\) and three distinct one dimensional subspaces \(L_a, L_b\) and \(L'_a\) in \(E \) where \(0\leq a,b \leq n, a \neq b\). Define the filtrations \( \left( E, \{ E_{\{a,b\},a}^{\rho}(i) \} \right) \)  as follows:

$
E_{\{a,b\},a}^{v_j}(i) = \left\{ \begin{array}{ccc}

0 & i \leqslant -2 \\

L_j & i =-1 \\ 

E & i \geq 0,
\end{array} \right. 
$
for \(j=a, b\); \hspace{1.5 cm}
$
E_{\{a,b\},a}^{w_a}(i) = \left\{ \begin{array}{ccc}

0 & i \leqslant -2 \\

L'_a & i =-1 \\ 

E & i \geq 0
\end{array} \right. 
$
and 

$
E_{\{a,b\},a}^{\rho}(i) = \left\{ \begin{array}{ccc}

0 & i < 0 \\

E & i \geq  0,
\end{array} \right.
$
for any ray except \(v_a, v_b, w_a\).

By Proposition \ref{reflexive}, the filtrations \( \left( E, \{ E_{\{a,b\},a}^{\rho}(i) \} \right) \) correspond to a rank 2 equivariant reflexive sheaf $\mathcal{ E }_{\{a,b\},a}$ on \(V^n\). Since the rays \(v_a, v_b, w_a\) do not form a cone in $\Delta_{V^n}$, it follows that the filtrations satisfy the compatibility condition given in Remark \ref{dist}, and hence $\mathcal{ E }_{\{a,b\},a}$ is locally free by Proposition \ref{compatibility for loc free sheaf}. As the one dimensional subspaces \(L_a, L_b, L'_a\) are distinct, the filtrations \( \left( E, \{ E_{\{a,b\},a}^{\rho}(i) \} \right) \) do not form a distributive lattice which implies that $\mathcal{ E }_{\{a,b\},a}$ does not split and hence is indecomposable.

Consider three distinct one dimensional subspaces \(L_a, L'_a\) and \(L'_b\) in \(E \) where \(0\leq a,b \leq n, a \neq b\). By similar arguments, we have an equivariant indecomposable rank 2 locally free sheaf $\mathcal{ E }_{a,\{a,b\}}$ on \({V}^n\) associated to the filtrations \( \left( E, \{ E_{a,\{a,b\}}^{\rho}(i) \} \right) \) given as follows: 

$
E_{a,\{a,b\}}^{v_a}(i) = \left\{ \begin{array}{ccc}

0 & i \leqslant -2 \\

L_a & i =-1 \\ 

E & i \geq 0,
\end{array} \right. 
$; \hspace{1.5 cm}
$
E_{a,\{a,b\}}^{w_j}(i) = \left\{ \begin{array}{ccc}

0 & i \leqslant -2 \\

L'_j & i =-1 \\ 

E & i \geq 0
\end{array} \right. 
$
for \(j=a, b\);  and 

$
E_{a,\{a,b\}}^{\rho}(i) = \left\{ \begin{array}{ccc}

0 & i < 0 \\

E & i \geq  0,
\end{array} \right.
$
for any ray except \(v_a, w_a, w_b\).

By similar arguments, we have a collection of equivariant indecomposable rank 2 vector bundles on pseudo Del Pezzo variety \(\widetilde{V}^n\), given by $\mathcal{ F }_{\{a,b\},a}$ associated to the filtrations  \( \left( E, \{ E_{\{a,b\},a}^{\rho}(i) \} \right) \), $\mathcal{ F }_{a,\{a,b\}}$ associated to the filtrations  \( \left( E, \{ E_{a,\{a,b\}}^{\rho}(i) \} \right) \)  for \(1\leq a,b \leq n, a \neq b\) and $\mathcal{ F }_{\{0, a\},a}$ associated to the filtrations  \( \left( E, \{ E_{\{0, a\},a}^{\rho}(i) \} \right) \) for \(0< a \leq n\).

When \(X\) is not a product of $\mathbb{P}^1$'s, from \eqref{pseudosymm} at least one of \(k_p\) or \(l_q\) is positive. Without loss of generality, let us assume \(k_p\) is positive. We have an equivariant rank 2 indecomposable vector bundle on \(V^{2k_p}\), say $\mathcal{ E }^{V^{2k_p}}$. Pulling back  $\mathcal{ E }^{V^{2k_p}}$ to \(X\) via the projection map, we get an equivariant rank 2 vector bundle which is still indecomposable by \cite[Remark 3.3]{Alexandru}. 

From the above discussion we get the following proposition.
\begin{prop}\label{pseudo} 
Let \(X\) pseudo-symmetric toric Fano variety. There exists a collection of equivariant indecomposable rank 2 vector bundles on \(X\).
\end{prop}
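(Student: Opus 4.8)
The plan is to use the structure theorem \eqref{pseudosymm} to reduce to a single factor and then pull back. If $X$ is a product of $\mathbb{P}^1$'s, then $X\cong M_s$ is a Bott tower (with all Bott numbers zero), and Proposition \ref{existance of indecomp b on Bott T} already produces an equivariant indecomposable rank $2$ bundle on it. Otherwise at least one Del Pezzo factor $V^{2k_i}$ or pseudo Del Pezzo factor $\widetilde{V}^{2l_j}$ occurring in \eqref{pseudosymm} has positive dimension; assume without loss of generality that it is $V^{2k_p}$. It then suffices to build an equivariant indecomposable rank $2$ bundle $\mathcal{E}^{V^{2k_p}}$ on $V^{2k_p}$, since its pullback to $X$ along the projection is still a rank $2$ equivariant bundle and remains indecomposable by \cite[Remark 3.3]{Alexandru}. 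So the whole proposition rests on the case of the Del Pezzo and pseudo Del Pezzo varieties $V^n$ and $\widetilde{V}^n$.

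For $V^n$ I would take $E=\C^2$, choose three pairwise distinct lines $L_a,L_b,L'_a\subset E$, and define filtrations $\{E^{\rho}(i)\}$ by letting $v_a,v_b,w_a$ carry the two-step filtration that is $0$ for $i\le -2$, equals $L_a$ (resp. $L_b$, $L'_a$) for $i=-1$, and equals $E$ for $i\ge 0$, while every other ray carries the trivial filtration jumping from $0$ to $E$ at level $0$. By Theorem \ref{reflexive} this determines an equivariant reflexive sheaf $\mathcal{E}_{\{a,b\},a}$ of rank $2$. To see it is locally free I would apply Proposition \ref{compatibility for loc free sheaf} via Remark \ref{dist}: for each maximal cone $\sigma$ one checks that $\{E^{\rho}(i)\}_{\rho\in\sigma(1)}$ is a distributive lattice. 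The key point is that $v_a,v_b,w_a$ never all lie in one cone of $\Delta_{V^n}$ — in particular $v_a$ and $w_a=-v_a$ are never in a common cone — so $\sigma(1)$ contains at most two of these three rays, and a two-step filtration determined by at most two lines in $\C^2$ is always distributive. On the other hand, three distinct lines in $\C^2$ do not form a distributive lattice, so by \cite[Corollary 2.2.3]{kly} the bundle $\mathcal{E}_{\{a,b\},a}$ is indecomposable. The same recipe with the three lines placed on $v_a,w_a,w_b$ yields $\mathcal{E}_{a,\{a,b\}}$, and the evident analogues on $\widetilde{V}^n$ (placing three distinct lines on $v_a,v_b,w_a$ or $v_a,w_a,w_b$ for $1\le a,b\le n$, or on $v_0,v_a,w_a$ for $0<a\le n$) give the bundles $\mathcal{F}_{\{a,b\},a}$, $\mathcal{F}_{a,\{a,b\}}$, $\mathcal{F}_{\{0,a\},a}$; in each case local freeness and indecomposability follow by the same argument.

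The only step requiring genuine care is the combinatorial verification that the three distinguished rays never span a cone of $\Delta_{V^n}$ or $\Delta_{\widetilde{V}^n}$, i.e. that the assigned filtration data satisfies the distributivity condition of Remark \ref{dist} on every maximal cone. This I would settle by direct inspection of the explicit descriptions of $\Delta_{V^n}$ and $\Delta_{\widetilde{V}^n}$ recalled above, using that no cone of either fan contains both $v_i$ and $w_i$ for any $i$. Once this is established, indecomposability is immediate from Klyachko's criterion, and Proposition \ref{pseudo} follows from the reduction in the first paragraph.
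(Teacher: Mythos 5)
Your proposal is correct and follows essentially the same route as the paper: reduce via the decomposition \eqref{pseudosymm} (Bott tower case for products of $\mathbb{P}^1$'s, pullback from a positive-dimensional factor using \cite[Remark 3.3]{Alexandru} otherwise), then construct the bundles on $V^n$ and $\widetilde{V}^n$ by placing three distinct lines of $\C^2$ on three rays no two of which are $v_i$, $w_i$ for the same $i$, with local freeness from the distributive-lattice criterion and indecomposability from \cite[Corollary 2.2.3]{kly}. This matches the paper's proof in all essentials, including the key combinatorial observation that at most two of the three distinguished rays lie in any cone.
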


\bibliographystyle{plain}
\bibliography{refs_paper}

\end{document}